\newtheorem{definition}{Definition}
\newtheorem{proposition}[definition]{Proposition}
\newtheorem{lemma}[definition]{Lemma}
\newtheorem{theorem}[definition]{Theorem}
\newtheorem{corollary}[definition]{Corollary}
\newtheorem{question}{Question}
\theoremstyle{definition}
\newtheorem{remark}[definition]{Remark}
\begin{document}

\newcommand{\riem}{(M^n, \langle \, , \, \rangle)}
\newcommand{\Hess}{\mathrm{Hess}\, }
\newcommand{\hess}{\mathrm{hess}\, }
\newcommand{\cut}{\mathrm{cut}}
\newcommand{\ind}{\mathrm{ind}}

\newcommand{\ess}{\mathrm{ess}}

\newcommand{\longra}{\longrightarrow}

\newcommand{\eps}{\varepsilon}

\newcommand{\ra}{\rightarrow}

\newcommand{\vol}{\mathrm{vol}}

\newcommand{\di}{\mathrm{d}}

\newcommand{\R}{\mathbb R}

\newcommand{\C}{\mathbb C}

\newcommand{\Z}{\mathbb Z}

\newcommand{\N}{\mathbb N}

\newcommand{\HH}{\mathbb H}

\newcommand{\esse}{\mathbb S}

\newcommand{\bull}{\rule{2.5mm}{2.5mm}\vskip 0.5 truecm}

\newcommand{\binomio}[2]{\genfrac{}{}{0pt}{}{#1}{#2}} 

\newcommand{\metric}{\langle \, , \, \rangle}

\newcommand{\lip}{\mathrm{Lip}}

\newcommand{\loc}{\mathrm{loc}}

\newcommand{\diver}{\mathrm{div}}

\newcommand{\disp}{\displaystyle}

\newcommand{\rad}{\mathrm{rad}}

\newcommand{\inj}{\mathrm{inj}}

\newcommand{\Ric}{\mathrm{Ric}}
\newcommand{\Sec}{\mathrm{Sec}}

\newcommand{\LL}{\mathscr{L}}

\newcommand{\MAX}{\mathrm{MAX}} 

\newcommand{\mmetric}{\langle\langle \, , \, \rangle\rangle}

\newcommand{\hol}{\mathrm{H\ddot{o}l}}

\newcommand{\capac}{\mathrm{cap}}

\newcommand{\bmo}{\{b <0\}}

\newcommand{\bmuo}{\{b \le 0\}}

\newcommand{\dist}{\mathrm{dist}}

\newcommand{\MM}{\mathscr{M}}
\newcommand{\GG}{\mathscr{G}}

\newcommand{\vp}{\varphi}

\renewcommand{\div}[1]{{\mathop{\mathrm div}}\left(#1\right)}

\newcommand{\divphi}[1]{{\mathop{\mathrm div}}\bigl(\vert \nabla #1

\vert^{-1} \varphi(\vert \nabla #1 \vert)\nabla #1   \bigr)}

\newcommand{\nablaphi}[1]{\vert \nabla #1\vert^{-1}

\varphi(\vert \nabla #1 \vert)\nabla #1}

\newcommand{\modnabla}[1]{\vert \nabla #1\vert }

\newcommand{\modnablaphi}[1]{\varphi\bigl(\vert \nabla #1 \vert\bigr)

\vert \nabla #1\vert }

\newcommand{\ds}{\displaystyle}

\newcommand{\cL}{\mathcal{L}}

\newcommand{\essem}{\mathds{S}^n}

\newcommand{\erre}{\mathds{R}}

\newcommand{\errem}{\mathds{R}^n}

\newcommand{\enne}{\mathds{N}}

\newcommand{\acca}{\mathds{H}}

\newcommand{\cvett}{\Gamma(TM)}

\newcommand{\cinf}{C^{\infty}(M)}

\newcommand{\sptg}[1]{T_{#1}M}

\newcommand{\partder}[1]{\frac{\partial}{\partial {#1}}}

\newcommand{\partderf}[2]{\frac{\partial {#1}}{\partial {#2}}}

\newcommand{\ctloc}{(\mathcal{U}, \varphi)}

\newcommand{\fcoord}{x^1, \ldots, x^n}

\newcommand{\ddk}[2]{\delta_{#2}^{#1}}

\newcommand{\christ}{\Gamma_{ij}^k}

\newcommand{\supp}{\operatorname{supp}}

\newcommand{\sgn}{\operatorname{sgn}}

\newcommand{\rg}{\operatorname{rg}}

\newcommand{\inv}[1]{{#1}^{-1}}

\newcommand{\id}{\operatorname{id}}

\newcommand{\jacobi}[3]{\sq{\sq{#1,#2},#3}+\sq{\sq{#2,#3},#1}+\sq{\sq{#3,#1},#2}=0}

\newcommand{\lie}{\mathfrak{g}}

\newcommand{\wedgedot}{\wedge\cdots\wedge}

\newcommand{\rp}{\erre\mathds{P}}

\newcommand{\II}{\operatorname{II}}

\newcommand{\gradh}[1]{\nabla_{H^n}{#1}}

\newcommand{\absh}[1]{{\left|#1\right|_{H^n}}}

\newcommand{\mob}{\mathrm{M\ddot{o}b}}

\newcommand{\mab}{\mathfrak{m\ddot{o}b}}

\newcommand{\foc}{\mathrm{foc}}

\newcommand{\F}{\mathcal{F}}

\newcommand{\Cf}{\mathcal{C}_f}

\newcommand{\cutf}{\mathrm{cut}_{f}}

\newcommand{\Cn}{\mathcal{C}_n}

\newcommand{\cutn}{\mathrm{cut}_{n}}

\newcommand{\Ca}{\mathcal{C}_a}

\newcommand{\cuta}{\mathrm{cut}_{a}}

\newcommand{\cutc}{\mathrm{cut}_c}

\newcommand{\cutcf}{\mathrm{cut}_{cf}}

\newcommand{\rk}{\mathrm{rk}}

\newcommand{\crit}{\mathrm{crit}}

\newcommand{\diam}{\mathrm{diam}}

\newcommand{\haus}{\mathscr{H}}

\newcommand{\po}{\mathrm{po}}

\newcommand{\gr}{\mathcal{G}}

\newcommand{\sn}{\mathrm{sn}_H}

\newcommand{\cn}{\mathrm{cn}_H}

\newcommand{\Tr}{\mathrm{Tr}}

\newcommand{\bh}{\mathbb{B}}
\newcommand{\Gr}{\mathrm{Gr}}
\newcommand{\Pa}{\mathcal{P}}

\newcommand{\aaa}{a(|\nabla u|)}
\newcommand{\aap}{a'(|\nabla u|)}
\newcommand{\inte}{\mathrm{Int}}

\newcommand{\tcr}{\textcolor{red}}
\newcommand{\tcb}{\textcolor{blue}}
\newcommand{\tco}{\textcolor{red}}
\newcommand{\critu}{{\rm crit}(u)}

\author{Giulio Colombo${}^{(1)}$ 
	\and 
	Alberto Farina${}^{(2)}$
\and Marco Magliaro${}^{(3)}$
\and Luciano Mari${}^{(1)}$
\and Marco Rigoli${}^{(1)}$ 
}
\title{\textbf{On the classification of capillary graphs in Euclidean and non-Euclidean spaces}}
\date{}
\maketitle
\scriptsize \begin{center} (1) -- Dipartimento di Matematica ``Federigo Enriques"\\
Universit\`a degli Studi di Milano\\ 
Via C. Saldini 50, 20133 Milano (Italy).
\end{center}    
\scriptsize \begin{center} (2) -- Laboratoire 
Ami\'enois de Math\'ematique Fondamentale et Appliqu\'ee\\
UMR CNRS 7352, Universit\'e Picardie ``Jules Verne'' \\33 Rue St Leu, 80039 Amiens (France).\\ 
\end{center}
\scriptsize \begin{center} (3) -- Dipartimento di Scienza e Alta Tecnologia\\
Universit\`a degli Studi dell'Insubria\\
Via Valleggio 11, 22100 Como (Italy)
\end{center}    

\bigskip

\noindent \textbf{E-mail addresses:}  giulio.colombo@unimi.it, alberto.farina@u-picardie.fr, marco.magliaro@uninsubria.it,\\
luciano.mari@unimi.it, marco.rigoli55@gmail.com

\vspace{0.2cm}

\noindent\textbf{Keywords:} overdetermined problem, capillarity, CMC graph, splitting, minimal hypersurface.

\vspace{0.2cm}

\noindent \textbf{Mathematics subject classification:} Primary 53C21, 53C42; Secondary 53C24, 58J32, 
35B06, 35B35

\normalsize

\begin{abstract}
	We prove some rigidity and classification results for graphs with prescribed mean curvature and locally constant Dirichlet and Neumann data, for instance as they appear in capillarity problems. We consider domains in Riemannian manifolds, with emphasis on $\R^2$ and $\R^3$. We classify both the underlying domain and the resulting solution, providing general splitting theorems in this setting. 	
\end{abstract}	

\tableofcontents

\section{Introduction}

If a cylindrical tube in vertical position $\Omega \times (a,\infty) \subseteq \R^{n+1} = \R^n \times \R$, with $a<<-1$, is dipped into a large reservoir filled with liquid modelled by the half-space $\R^n \times (-\infty,0]$, the combination of gravity and surface tension demands that the height function $u : \overline\Omega \to \R$ reached by the liquid in the tube satisfies the capillarity equation
\[
	\diver \left( \dfrac{\nabla u}{\sqrt{1+|\nabla u|^2}} \right) - \kappa u = 0 \qquad \text{in } \, \Omega,
\]
where $\kappa > 0$ is a physical constant, $\nabla, \diver$ are the gradient and divergence in $\R^n$ and 
\[
	\MM[u] : = \diver \left( \dfrac{\nabla u}{\sqrt{1+|\nabla u|^2}} \right)
\]
is the mean curvature of the graph of the liquid profile
\[
	\GG_u \ : \ \overline\Omega \to \R^{n+1}, \qquad \GG_u(x) \doteq \big(x, u(x)\big)
\]
with respect to the upward pointing unit normal. Furthermore, $\GG_u$ must intersect the boundary of the tube with constant angle. For a detailed account on the capillarity problem, we refer the interested reader to \cite{finn}.

A classical question, motivating the present paper, asks for which domains $\Omega$ the surface of the liquid also attains a (locally) constant height on $\partial \Omega$. In this case, the angle condition rewrites as constant Neumann data for $u$. Henceforth we will consider mean curvature prescriptions more general than $\kappa u$, so given $f \in C^1(\R)$ we call a domain $\Omega$ a \emph{capillary domain} if it supports a non-constant solution $u$ to 
\begin{equation}\label{eq_PMC}
	\left\{ \begin{array}{ll}
		\MM[u]  + f(u) = 0 & \text{in } \, \Omega, \\[0.2cm]
		u, \partial_\eta u \, \text{ locally constant} & \text{on $\partial \Omega$,} 
	\end{array}\right.
\end{equation}
where $\eta$ is the interior unit normal to $\partial \Omega$. The associated graph  $\GG_u(\Omega)$ will then be called a \emph{capillary graph}. If $f(u)$ is constant  the equation describes CMC graphs, in particular minimal ones if $f(u) \equiv 0$, while the capillarity equation corresponds to the choice $f(u) = -\kappa u$. The goal of this paper is to investigate the shape of capillary graphs over $\Omega \subseteq \R^n$ or, more generally, over domains in a complete Riemannian manifold.

Hereafter, 
\begin{itemize}
\item $\{\partial_j\Omega\}$, $j \le \infty$ denotes  the collection of connected components of $\partial \Omega$;
\item $\partial_\star \Omega = \big\{ x \in \partial \Omega \ : \ \partial_\eta u(x) \neq 0\big\}$ 
\end{itemize}

In his pioneering paper on the moving plane method, Serrin \cite[Theorem 2]{serrin} treated the case where $\Omega \subseteq \R^n$ is a bounded $C^2$ domain and $u \in C^2(\overline\Omega)$ solves
\begin{equation}\label{eq_PMC_serrin}
	\left\{ \begin{array}{ll}
		\MM[u]  + f(u) = 0 & \text{in } \, \Omega, \\[0.2cm]
		u > 0 & \text{in $\Omega$.} \\[0.2cm]
		u=0, \ \partial_\eta u = {\rm const} & \text{on $\partial \Omega$}
	\end{array}\right.
\end{equation}
for some $f \in C^1(\R)$, proving that $\Omega$ must be a ball and $u$ is radially symmetric about its center. The method was then refined by Reichel \cite{reichel_1} and Sirakov \cite{sirakov} to be applied to exterior domains. The most general result to our knowledge, \cite[Theorem 2]{sirakov}, states that a $C^2$ domain $\Omega$ with bounded complement supporting a solution $u \in C^2 (\overline{\Omega})$ to
\begin{equation}\label{eq_PMC_reichel}
	\left\{ \begin{array}{ll}
		\MM[u] + f(u) = 0 & \text{in } \, \Omega, \\[0.2cm]
		u \ge 0 & \text{in } \, \Omega, \\[0.2cm]
		u(x) \to 0 & \text{as $x \to \infty$} \\[0.2cm]
		u=b_j > 0 & \text{on $\partial_j \Omega$,} \\[0.2cm]
		\partial_\eta u = c_j \le 0 & \text{on $\partial_j \Omega$,}
	\end{array}\right.
\end{equation}
for some constants $b_j,c_j$ and for $f \in \lip_\loc(\R)$ non-increasing in a neighbourhood of zero, must be the exterior of a ball, and $u$ must be radially symmetric. The paper \cite{sirakov} also contains a similar characterization for $\Omega$ a $C^2$ bounded domain with holes.
 
If $\partial \Omega$ is unbounded, or if the word ``constant'' in \eqref{eq_PMC_serrin} is replaced by ``locally constant'', the problem is widely open even for CMC graphs. In fact, even though the results in \cite{serrin,reichel_1,sirakov} hold for a large class of quasilinear operators, some of which have been extensively studied in the literature, relatively few results have appeared so far for the mean curvature operator. 
In particular, an analogue of the famous Berestycki-Caffarelli-Nirenberg conjecture \cite{bcn} for capillary graphs is still completely open. More generally, one may ask:
 
\begin{question}\label{Q1}
	Can we classify all smooth enough domains $\Omega \subseteq \R^n$ supporting a solution to \eqref{eq_PMC}? 
\end{question}
 
Among the non-constant solutions to \eqref{eq_PMC} with  $\partial \Omega$ unbounded we single out those that are one-dimensional, namely, those for which 
\[
	\Omega = (0,T) \times \R^{n-1} \qquad \text{for some } \, T \le \infty 
\]
and $u$ is 1D. More precisely, there exists an affine hyperplane $\pi$ with unit normal $w$ such that $\Omega = \{ y + tw : y \in \pi, \ t \in (0,T)\}$, and in the coordinates $(t,y)$ the function $u$ only depends on $t$. For instance, when  $f(u)$ is constant, analyzing the resulting ODE for $u$ leads to the following classification of 1D solutions:

\begin{itemize}
	\item If $f(u) \equiv 0$, then either $T = \infty$ and $\GG_u$ is a half-hyperplane, or $T< \infty$ and $\GG_u$ is a piece of a half-hyperplane.
	\item If $f(u) \equiv H \neq 0$, then $T< \infty$ and $\GG_u$ is a piece of a cylinder with circular cross-section, i.e. the graph of $u(t)$ is a piece of $\mathbb{S}^1$ of suitable radius. Hereafter, we refer to this case as ``a piece of cylinder'', leaving implicit that the cross-section is circular. 	 
\end{itemize}

Question \ref{Q1} was first raised in a paper by three of the authors \cite{cmr}, where some classification results were obtained for domains in manifolds which are globally Lipschitz epigraphs or globally Lipschitz slabs, see below. Among them, we highlight \cite[Theorem 1.3]{cmr}, the first result on Question 1 when $\partial \Omega$ is unbounded: if $\Omega \subseteq \R^2$ is a globally Lipschitz epigraph supporting a solution to 
\[
	\left\{ \begin{array}{ll}
		\MM[u] + H = 0 & \text{in } \, \Omega, \\[0.2cm]
		\inf_\Omega u > -\infty, & \text{in } \, \Omega, \\[0.2cm]
		u=0, \, \partial_\eta u = c \neq 0 & \text{on $\partial \Omega$} \\[0.2cm]
	\end{array}\right.
\]
for some $H,c \in \R$, then $\Omega$ is a half-plane, $H=0$ and $\GG_u$ is a half-plane. Below, we shall significantly improve on this result. Very recently, Lian and Sicbaldi treated the case of any $C^1$ domain $\Omega \subseteq \R^2$ with $\partial \Omega$ unbounded and connected. By \cite[Theorem 1]{lian_sicbaldi}, if $u$ solves   
\begin{equation}\label{eq_PMC_sic}
	\left\{ \begin{array}{ll}
		\MM[u] + f(u) = 0 & \text{in } \, \Omega, \\[0.2cm]
		0 < u \le b & \text{in } \, \Omega, \\[0.2cm]
		u= 0, \, \partial_\eta u = c \neq 0 & \text{on $\partial \Omega$} \\[0.2cm]
		|\nabla u| \in L^\infty(\Omega) 		
	\end{array}\right.
\end{equation}
for some $b,c \in \R$, and if $f \in C^1(\R)$ admits a primitive $F$ satisfying 
\begin{equation}\label{eq_condiprimi}
	F \le 0 \quad \text{on } \, \R, \qquad F(0) \ge \frac{1}{\sqrt{1+c^2}} - 1, 
\end{equation}
then $\Omega$ is a half-plane and $u$ is 1D. Moreover, if $f' \le 0$ on $[0,b]$ the gradient request in \eqref{eq_PMC_sic} can be dropped. They also observed that condition \eqref{eq_condiprimi} is necessary for the existence of a 1D-solution in a half-plane.

We call a 1D solution \emph{monotone} if $u(t)$ is monotone in $t$. In particular, for a monotone solution $\partial_\eta u$ has different signs in the two components of $\partial \Omega$, allowing for $\partial_\eta u \equiv 0$ in one of them.
 
It is known that monotone solutions are a particular class of \emph{stable} solutions to the equation $\MM[u] + f(u) = 0$ (see \cite {FSV}), i.e. those for which the linearized operator is non-negative in the spectral sense on $C^\infty_c(\Omega)$. Variationally, for each $\Omega' \Subset \Omega$ they correspond to stationary points of the energy
\[
	\int_{\Omega'} \sqrt{1+|\nabla \psi|^2} - \int_{\Omega'} F(\psi), \qquad F(t) \doteq \int_0^t f(s) \di s 
\]
whose second variation is non-negative. 
 
\begin{remark}
	Since the linearized mean curvature operator is elliptic, if $f'(u) \le 0$ then every solution to $\MM[u] + f(u) = 0$ is automatically stable. This applies both to the capillary case $f(u) = -\kappa u$, $\kappa >0$ and to the CMC case. 
\end{remark}

 
In this paper, we will obtain some  characterization results for stable solutions to \eqref{eq_PMC} in the direction of Question 1, new even in the case of constant $f$. We first introduce the class of domains we will be interested in. 

\begin{definition}[\textbf{Mildly $\mathbf{v}$-transverse boundary}]\label{def_mildlytransv}
	Let $\Omega \subseteq \R^n$ be a $C^1$ domain, and let $\partial^\dagger\Omega \subseteq \partial \Omega$ be the union of some connected components of $\partial \Omega$ (possibly, a single component or the entire $\partial \Omega$). Then, $\partial^\dagger \Omega$ is said to be \emph{mildly $v$-transverse} for some $v \in \mathbb{S}^{n-1}$ if $\langle \eta,v \rangle$ does not change sign on any component of $\partial^\dagger\Omega$.
\end{definition}

Note that, if $\partial^\dagger \Omega$ is mildly $v$-transverse, the product $\langle \eta,v \rangle$ is allowed to take different signs on different components of $\partial^\dagger \Omega$. Also, a mildly $v$-transverse portion of $\partial\Omega$ may have any number of connected components, even countably many. Letting $e_j = \partial_{x^j}$, and writing points $x \in \R^n$ as $x = (x^1,x')$ with $x' \in \R^{n-1}$, the following two examples have mildly $e_1$-transverse boundaries:
 
\begin{itemize}
	\item An \emph{epigraph}: a set of the type $\{x^1 > \phi(x')\}$ for some function $\phi \in C^1(\R^n)$.
	\item A \emph{slab}: a set of the type $\{\phi_1(x') < x^1 < \phi_2(x')\}$ for some functions $\phi_j \in C^1(\R^{n-1})$ with $\phi_1(x') < \phi_2(x')$ for each $x' \in \R^{n-1}$.
\end{itemize}
In what follows, an epigraph (respectively, slab) is said to be bounded, or globally Lipschitz, if so is its defining function $\phi$ (resp. $\phi_1,\phi_2$). 

\vspace{0.3cm} 

Our first main result regards capillary CMC graphs in $\R^2$. Recently, Cui in \cite{cui} made a significant advance in the minimal case by proving the following theorem: if $\Omega$ is a $C^2$ domain with connected boundary, then the graph of a solution to 
\begin{equation}\label{eq_PMC_cui}
	\left\{ \begin{array}{ll}
		\MM[u] = 0 & \text{in } \, \Omega \subseteq \R^2, \\[0.2cm]
		u > 0 & \text{in $\Omega$} \\[0.2cm]
		u=0, \ \partial_\eta u = {\rm const} & \text{on $\partial \Omega$}
	\end{array}\right.
\end{equation}
must either be a half-plane or part of a vertical catenoid (thus, $\Omega$ is either a half-space or the complement of a ball). The argument relies on a reflection technique developed by Choe \cite{choe} and deep classification results for complete, boundaryless  minimal surfaces. As such, these techniques are specific to the minimal setting and to three-dimensional ambient spaces.

%


We obtain: 

\begin{theorem}\label{teo_CMC_intro_R2}
	Let $\Omega \subseteq \R^2$ be a $C^2$ domain supporting a non-constant solution $u \in C^2(\overline\Omega)$ to 
	\[
		\left\{ 
		\begin{array}{ll}
			\MM[u]  + H = 0 & \text{on } \Omega, \\[0.2cm]
			u = b_j, \ \partial_\eta u = c_j  & \text{on $\partial_j \Omega$} \\[0.2cm]
			\inf_\Omega u > -\infty,
		\end{array} \right.
	\]
	for some $H,b_j,c_j \in \R$.
	\begin{itemize}
		\item[(i)] If $\partial \Omega$ is connected and mildly $v$-transverse for some $v$, then $\Omega$ is a half-plane and $\GG_u$ is a half-plane;
		\item[(ii)] Assume that $\partial \Omega$ is disconnected, and  that $\partial_\star \Omega$ is connected and mildly $v$-transverse for some $v$. Then, $\Omega = (0,T) \times \R$ and $\GG_u$ is a monotone piece of a cylinder.
		\item[(iii)] Assume that $\partial_\star \Omega = \partial_1\Omega \cup \partial_2\Omega$ is disconnected and mildly $v$-transverse for some $v$, with $\langle \eta,v \rangle \ge 0$ on $\partial_1\Omega$ and $\langle \eta,v \rangle \le 0$ on $\partial_2\Omega$. If $c_1c_2<0$, then $\Omega = (0,T) \times \R$ and $u$ is a monotone piece of either a cylinder or a plane.
		\item[(iv)] Assume that $\partial_\star \Omega = \partial_1\Omega \cup \partial_2\Omega$ is disconnected and that $\partial_j \Omega$ is mildly $v_j$-transverse for $j \in \{1,2\}$ and some $v_j$ (possibly with $v_1 \neq v_2$). If $\inf_\Omega u$ is attained in a set $\Gamma$ disconnecting $\Omega$, which is the case for instance if  
		\[
		\inf_\Omega u < b_j \qquad \forall \, j \ge 3 
		\]
		and $\Gamma$ has an accumulation point in $\overline{\Omega}$, then $\Omega=(0,T) \times \R$ and $\GG_u$ is a piece of cylinder.
	\end{itemize}
\end{theorem}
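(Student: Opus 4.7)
The plan is, in each case, to show that $u$ is effectively one-dimensional (it depends only on a single direction), after which the classification follows from the elementary analysis of the 1D CMC equation
\[
\left(\frac{u'(t)}{\sqrt{1+u'(t)^2}}\right)' + H = 0
\]
with the prescribed boundary values $b_j, c_j$ and the bound $\inf_\Omega u > -\infty$. The reduction to 1D I would perform in two steps: first a sliding (translation) argument in the direction $v$ that produces monotonicity of $u$ in some direction; then a planar Liouville/rigidity step, available since $n=2$, that upgrades monotonicity to genuine $v$-invariance.

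The core sliding ingredient works as follows. For $\tau \in \R$, set $u^\tau(x) = u(x + \tau v)$ on $\Omega - \tau v$; by translation-invariance of the CMC equation, $\MM[u^\tau] + H = 0$ there. The difference $u - u^\tau$ satisfies a linear elliptic equation on the overlap $\Omega \cap (\Omega - \tau v)$ with bounded measurable coefficients (as $u \in C^2(\overline\Omega)$ ensures local $C^1$-boundedness of $\nabla u$), so the strong maximum principle and Hopf's boundary-point lemma both apply. Starting from $|\tau|$ large --- where the bound $\inf_\Omega u > -\infty$ and the locally constant boundary data make the comparison trivial --- one slides $\tau$ and analyses the first contact. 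Mild $v$-transversality ensures that the new boundary points of the shifted domain entering $\Omega$ do so with a consistent "incoming side"; at an interior contact the strong maximum principle forces $u \equiv u^\tau$, and at a boundary contact the locally constant Dirichlet/Neumann data together with the Hopf lemma give the same conclusion. The outcome is monotonicity of $u$ along $v$, i.e.\ $\partial_v u$ has a sign.

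In case (i), once $v$-invariance is obtained the connectedness of $\partial\Omega$ forces $\partial\Omega$ to be a single line parallel to $v$, so $\Omega$ is a half-plane; the 1D ODE on $[0,\infty)$ with bounded-below $u$ admits only the linear $H=0$ profile, so $\GG_u$ is a half-plane. In cases (ii) and (iii), $v$-invariance together with the boundary structure forces $\Omega = (0,T)\times\R$, and the ODE on a bounded interval produces a monotone piece of cylinder (and also, in case (iii) with $c_1c_2<0$, the planar profile $H=0$); the assumption $c_1c_2 < 0$ is needed precisely to get compatible monotonicity from the two components. For case (iv), the strong minimum principle forces $\Gamma = \{u = \inf_\Omega u\}$ into $\partial\Omega$, and $\inf_\Omega u < b_j$ for $j \ge 3$ localises $\Gamma \subseteq \partial_1\Omega \cup \partial_2\Omega$; the accumulation of $\Gamma$ together with the two (possibly distinct) mild transversalities provides an infinitesimal tangency that initiates a sliding as above, yielding the desired conclusion.

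The main obstacle I expect is the rigorous execution of the sliding under only mild transversality, as opposed to the globally Lipschitz graph hypothesis of \cite{cmr}. The delicate set is $\{x \in \partial\Omega : \langle\eta(x),v\rangle = 0\}$, where $\partial\Omega$ is tangent to $v$ and the classical sliding setup degenerates; the sign-definiteness of $\langle\eta,v\rangle$ forbids two-sided tangencies but still requires a careful analysis, probably via a corner Hopf lemma or a desingularisation through smooth approximations. The bound $\inf_\Omega u > -\infty$ enters both to initiate the comparison and to prevent the critical translation parameter from escaping to infinity, while the restriction $n=2$ is crucially used in the Liouville step that upgrades monotonicity to 1D symmetry and in controlling the asymptotic geometry of $\partial\Omega$.
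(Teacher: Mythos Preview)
Your approach via sliding is fundamentally different from the paper's, which explicitly avoids moving-plane and sliding arguments. The paper instead proceeds through integral identities: a global gradient bound (Theorem~\ref{prop_grad}) gives moderate energy growth; stability plus an energy argument (Lemma~\ref{Lemma_w-}, Theorem~\ref{teo_monotonicity}) forces $\langle\nabla u, X\rangle > 0$ for a suitable parallel field $X$ without any comparison of translates; and a geometric Poincar\'e formula (Theorem~\ref{teo_Poincare_loc}) tested against log cut-offs yields $|\II|\equiv 0$ and $|\nabla^\top|\nabla u||\equiv 0$, from which the splitting Proposition~\ref{prop_splitting_alt} gives 1D. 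Your unspecified ``planar Liouville step'' from monotonicity to 1D is exactly this last piece, and you have not indicated how you would perform it without these tools.

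More seriously, your proposal contains a genuine error and an unresolved gap. In case~(iv) your claim that ``the strong minimum principle forces $\Gamma$ into $\partial\Omega$'' is false: an interior minimum forces $H<0$ (since $\MM[u]\ge 0$ at such a point), and for $H<0$ interior minima are perfectly compatible with the equation --- indeed the hypothesis that $\Gamma$ \emph{disconnects} $\Omega$ requires $\Gamma$ to lie in the interior, so your reading contradicts the assumption. The paper uses analyticity and the \L ojasiewicz structure theorem to extract from $\Gamma$ a properly embedded analytic curve, invokes Jordan--Brouwer for the disconnection, and then runs the splitting machinery separately on each component of $\Omega\setminus\Gamma$ before gluing (Theorem~\ref{teo_twopieces}). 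Your sliding initiation is also unjustified: with $u$ only bounded below and $\Omega$ not assumed to be an epigraph (mild $v$-transversality permits $\langle\eta,v\rangle$ to vanish on large sets, so translates of $\Omega$ need not be nested), there is no starting position where $u^\tau \le u$ is automatic, and the first-contact analysis at tangential boundary points is precisely the obstruction that forced \cite{cmr} to assume a globally Lipschitz epigraph --- you flag this as ``the main obstacle'' but offer no mechanism to overcome it.
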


\noindent The figures below show potentially capillary domains to which Theorem \ref{teo_CMC_intro_R2} applies, thereby excluding them. 

\begin{figure}[ht]\label{fig_1}
\caption{Cases $(i)$ and $(ii)$}
\begin{minipage}[b]{0.45\linewidth}
\centering
\includegraphics[scale=0.052]{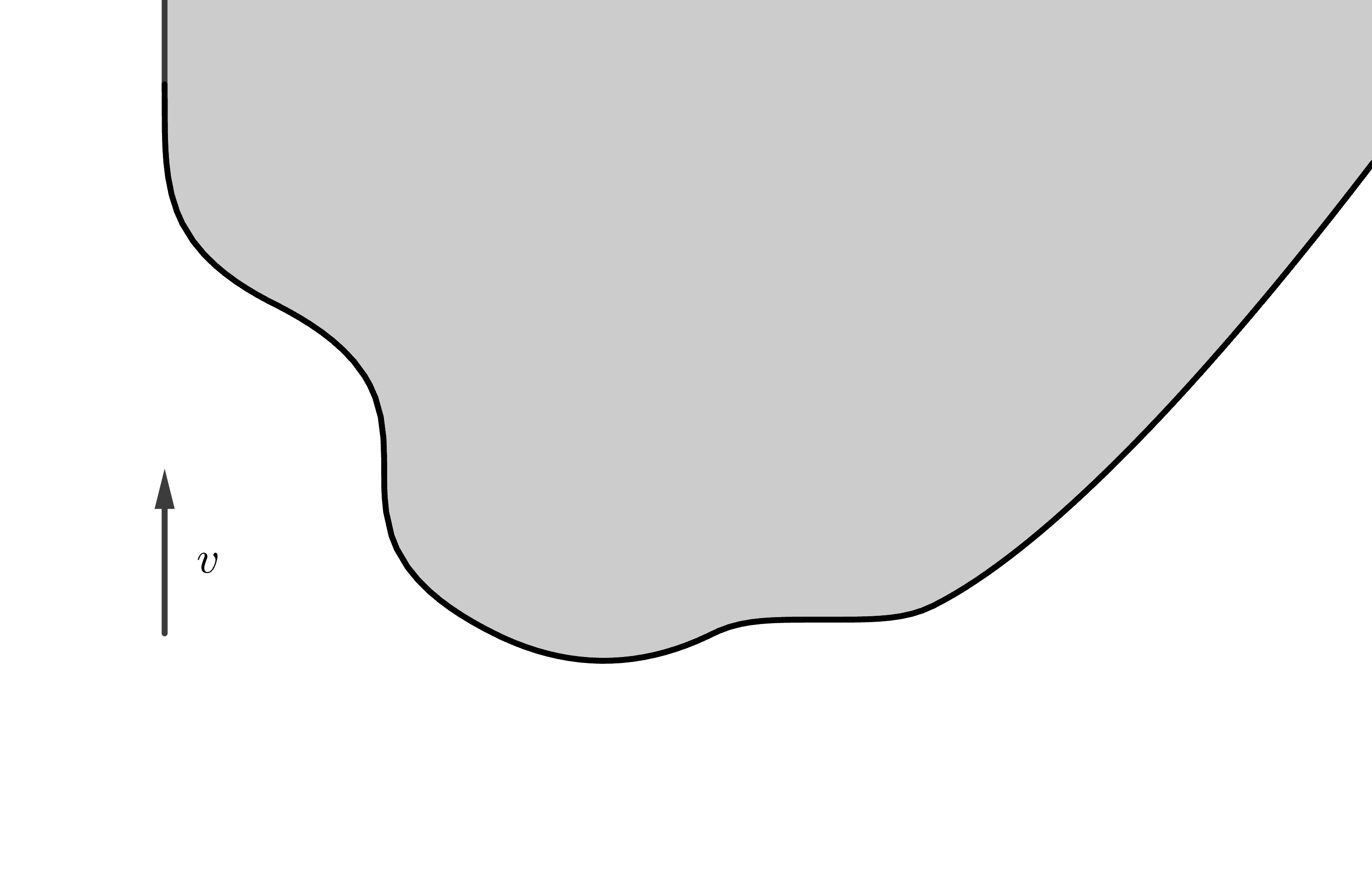}
\end{minipage}
\hspace{0.1cm}
\begin{minipage}[b]{0.45\linewidth}
\centering
\includegraphics[scale=0.051]{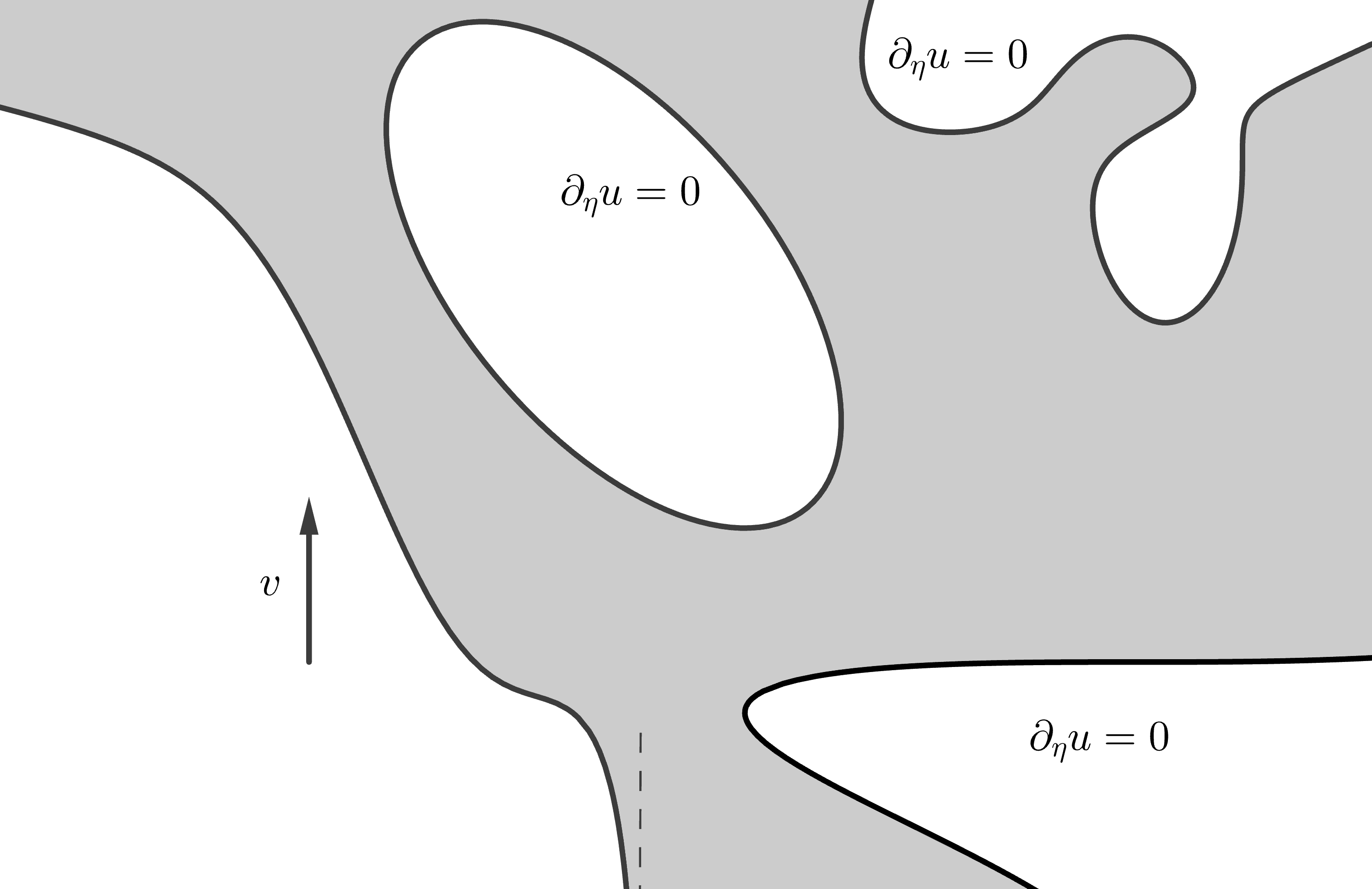}
\end{minipage}
\end{figure}

\begin{figure}[ht]\label{fig_2}
\caption{Cases $(iii)$ and $(iv)$}
\begin{minipage}[b]{0.45\linewidth}
\centering
\includegraphics[scale=0.05]{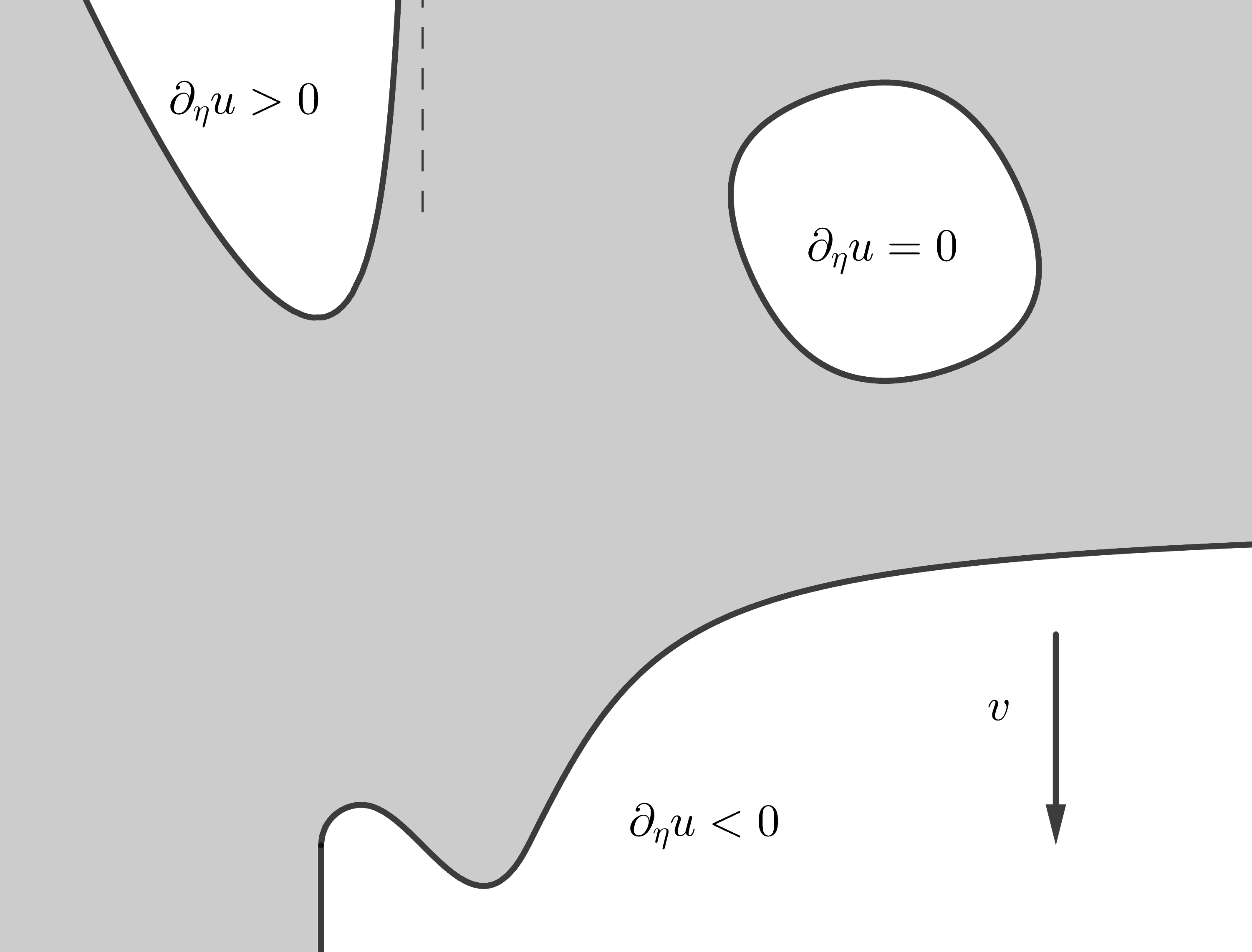}
\end{minipage}
\hspace{0.1cm}
\begin{minipage}[b]{0.45\linewidth}
\centering
\includegraphics[scale=0.054]{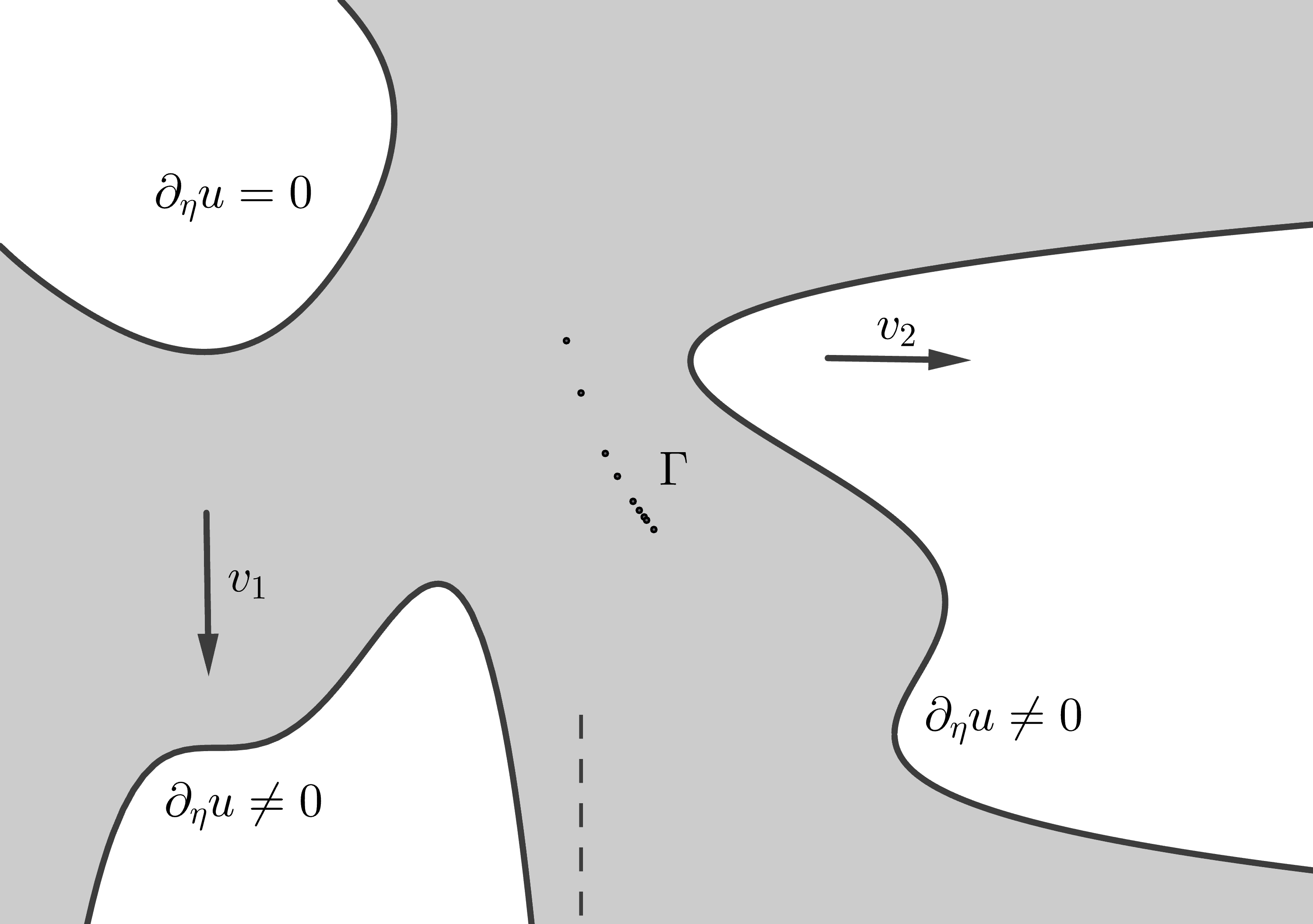}
\end{minipage}
\end{figure}

\begin{remark}
	Some remarks are in order:
	\begin{itemize}
		\item[-] Differently from most of the literature quoted above, we make no assumption on the sign of $u$ compared to that of its boundary values. In particular, if $H\equiv 0$ the conclusion in case $(i)$ does not follow from Cui's result \cite{cui}. Moreover, $u$ may possibly be unbounded above. 
		\item[-] In $(ii),(iii)$ and $(iv)$ no restriction is made a-priori on the number of connected components of $\partial \Omega$, which may possibly be infinite. 
		\item[-] Cases $(i)$ and $(iii)$ improve on \cite[Thm. 1.3]{cmr}, where the conclusion was obtained under the stronger assumptions that $\Omega$ is a globally Lipschitz epigraph (for $(i)$) or slab (for $(iii)$). 
		\item[-] Case $(iv)$ is related to the recent \cite{abm}. There, Agostiniani, Borghini and Mazzieri proved that a solution to 
		\begin{equation}\label{pb_abm}
			\left\{
			\begin{array}{ll}
				\Delta u - 1 = 0 & \text{in an annular domain }  \Omega \subseteq \R^2,  \\[0.2cm]
				u < 0 & \text{in } \Omega, \\[0.2cm]
				u=0, \ \partial_\eta u = c_1 \in \R & \text{on } \, \partial_1\Omega, \\[0.2cm]
				u=0, \ \partial_\eta u = c_2 \in \R  & \text{on } \, \partial_2\Omega
			\end{array}\right.
		\end{equation}
		must be radially symmetric provided that the set of minima of $u$ has an accumulation point. They also exhibit a non-radial annular domain $\Omega$ supporting a solution to \eqref{pb_abm} with a finite number of minimum points. This may suggest that our assumption on $\Gamma$ in $(iv)$ is necessary. We mention that an analogue of the main theorem in \cite{abm} for solutions to $\MM[u] - 1 = 0$ is yet to be established. The higher dimensional case of \eqref{pb_abm} was recently tackled in \cite{abbm}. 		
	\end{itemize}
\end{remark}

For domains $\Omega \subseteq \R^3$, we have a similar statement. 

\begin{theorem}\label{teo_CMC_intro_R3}
	Let $\Omega \subseteq \R^3$ be a $C^2$ domain satisfying
	\begin{equation}\label{eq_quadraticgrowth}
		|\Omega \cap B_R| = o(R^2 \log R) \qquad \text{as } \, R \to \infty,
	\end{equation}
	and supporting a non-constant solution $u \in C^2(\overline\Omega)$ to 
	\[
		\left\{ 
		\begin{array}{ll}
			\MM[u]  + H = 0 & \text{in } \Omega, \\[0.2cm]
			u = b_j, \ \partial_\eta u = c_j  & \text{on $\partial_j \Omega$} \\[0.2cm]
			\inf_\Omega u > -\infty,
		\end{array} \right.
	\]
	for some $H,b_j,c_j \in \R$. Then,   
	\begin{itemize}
		\item[(i)] If $\partial_1 \Omega$ is mildly $v$-transverse for some $v$ and $\langle v,\eta \rangle \not\equiv 0$ on $\partial_1 \Omega$, then $\partial \Omega$ must be disconnected;
		\item[(ii)] Assume that $\partial \Omega$ is disconnected, that $\partial_\star \Omega$ is connected and mildly $v$-transverse for some $v$, and that $\langle v,\eta \rangle \not\equiv 0$ on $\partial_\star \Omega$. Then, $\Omega = (0,T) \times \R^2$ and $\GG_u$ is a monotone piece of a cylinder.
		\item[(iii)] Assume that $\partial_\star \Omega = \partial_1\Omega \cup \partial_2\Omega$ is disconnected and mildly $v$-transverse for some $v$, with $\langle \eta,v \rangle \ge 0$ on $\partial_1\Omega$, $\langle \eta,v \rangle \le 0$ on $\partial_2\Omega$ and $\langle \eta, v \rangle \not\equiv 0$ on $\partial_\star \Omega$. If $c_1c_2<0$, then $\Omega = (0,T) \times \R^2$ and $u$ is a monotone piece of either a cylinder or a hyperplane.
		\item[(iv)] Assume that $\partial_\star \Omega = \partial_1\Omega \cup \partial_2\Omega$ is disconnected and that $\partial_j \Omega$ is mildly $v_j$-transverse for $j \in \{1,2\}$ and some $v_j$ (possibly with $v_1 \neq v_2$), with $\langle \eta, v_j \rangle \not \equiv 0$ on $\partial_j\Omega$. If $\inf_\Omega u$ is attained in a set $\Gamma$ disconnecting $\Omega$, which is the case for instance if  
		\[
		\inf_\Omega u < b_j \qquad \forall \, j \ge 3 
		\]
		and $\Gamma$ has Hausdorff measure $\haus^{2}(\Gamma)>0$, then $\Omega=(0,T) \times \R^2$ and $\GG_u$ is a piece of cylinder.
	\end{itemize}
\end{theorem}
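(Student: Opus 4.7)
The plan is to follow the same scheme as Theorem~\ref{teo_CMC_intro_R2}, with the volume-growth hypothesis $|\Omega \cap B_R| = o(R^2 \log R)$ replacing the automatic two-dimensional estimates used there. Since $H$ is constant, the linearization of $\MM[\cdot]+H$ has no zeroth-order term, so by the remark above $\GG_u$ is automatically a stable CMC hypersurface in $\R^4$. The strategy is to combine stability, a positive Jacobi field produced by a sliding argument, and the borderline volume growth to force $\GG_u$ to be totally umbilical; a totally umbilical graph is then a piece of either a hyperplane or a round cylinder $\esse^1\times\R^2$, and since a half-space has cubic volume growth, the hyperplane alternative is ruled out and $\Omega$ must be a slab $(0,T)\times\R^2$.

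For each of the cases $(i)$--$(iv)$ I would first adapt the sliding step used in the proof of Theorem~\ref{teo_CMC_intro_R2}. Fixing the direction $v$ (or one of $v_1,v_2$), one compares $u(\cdot+tv)$ with $u$ on the overlap: the mild $v$-transversality of $\partial_\star \Omega$ guarantees compatible boundary data for small $t>0$, while on components of $\partial\Omega\setminus\partial_\star \Omega$ the vanishing Neumann condition permits reflection/unique continuation. Hopf's lemma applied at any point of $\partial_\star \Omega$ where $\langle v,\eta\rangle\neq 0$ --- which is precisely what the extra hypothesis $\langle v,\eta\rangle\not\equiv 0$ buys us in $\R^3$ --- then gives either strict monotonicity of $u$ in direction $v$ or full $v$-invariance. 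In $(iii)$ the sign condition $c_1c_2<0$ is the compatibility needed to run the sliding on the whole slab, and in $(iv)$, where $v_1$ and $v_2$ may differ, the set $\Gamma$ with $\haus^2(\Gamma)>0$ (a genuinely three-dimensional hypothesis, compared with the accumulation-point hypothesis in dimension two) separates $\Omega$ into two pieces on each of which the sliding argument runs independently.

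Monotonicity of $u$ in direction $v$ says that $\langle v,N\rangle$ is a strictly positive Jacobi function on $\GG_u$, where $N$ is the upward unit normal. Stability plus the existence of a positive Jacobi field, combined with the Simons-type identity for the traceless second fundamental form $|\mathring A|^2$ of the CMC hypersurface $\GG_u\subseteq \R^4$, yields the familiar integral inequality
\[
\int_{\GG_u}|\mathring A|^2\varphi^2 \,\leq\, C\int_{\GG_u}|\nabla_{\GG_u}\varphi|^2
\]
for every Lipschitz cutoff $\varphi$ with compact support. Plugging in a logarithmic cutoff built from $|x|$ on $\R^3$ and transplanted to $\GG_u$, and bounding the induced area of $\GG_u$ above $\Omega\cap B_R$ via the first-variation/flux formula together with the locally constant boundary data (which make the boundary flux across $\partial\Omega\cap B_R$ either identically zero or of definite sign), one obtains an estimate whose right-hand side decays as $R\to\infty$ precisely under the borderline growth $|\Omega\cap B_R|=o(R^2\log R)$. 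Letting $R\to\infty$ forces $\mathring A\equiv 0$ on $\GG_u$.

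Once $\GG_u$ is umbilical, the graph property determines $\Omega$: a hyperplane graph would sit over a half-space, whose cubic volume violates the hypothesis, so $\GG_u$ must be a piece of a cylinder and $\Omega=(0,T)\times\R^2$ for some $T\le\infty$. In case $(i)$ this slab structure directly contradicts the assumption that $\partial\Omega$ is connected, yielding the stated conclusion, while in cases $(ii)$--$(iv)$ the 1D ODE for the profile together with the prescribed Dirichlet/Neumann data classifies $u$ explicitly. The main obstacle I expect is the stability step in the third paragraph: implementing the logarithmic cutoff rigorously in the presence of a nontrivial boundary $\partial\Omega$, estimating the induced area of $\GG_u$ above $\Omega\cap B_R$ in terms of $|\Omega\cap B_R|$ (rather than an uncontrolled pointwise gradient bound), and verifying that the boundary fluxes produced by integration by parts on $\Omega$ are absorbed by the locally constant Dirichlet/Neumann conditions are the delicate ingredients that make the borderline growth $o(R^2\log R)$ exactly sharp.
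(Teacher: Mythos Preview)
Your rigidity step contains a fatal error. You claim that the Simons-type inequality plus a logarithmic cutoff forces $\mathring A\equiv 0$ on $\GG_u$, and that ``a totally umbilical graph is then a piece of either a hyperplane or a round cylinder $\esse^1\times\R^2$.'' This is false: totally umbilical hypersurfaces in Euclidean space are pieces of hyperplanes or round spheres, never cylinders. A cylinder $\esse^1\times\R^2\subset\R^4$ has principal curvatures $(\kappa,0,0)$ with $\kappa\neq 0$, so $\mathring A\neq 0$. Since cylinders are exactly the nontrivial output in cases $(ii)$--$(iv)$, any argument concluding $\mathring A\equiv 0$ on $\GG_u$ proves too much and contradicts the theorem rather than establishing it. Your sliding step is also problematic: mild $v$-transversality only says $\langle\eta,v\rangle$ has a sign on each boundary component, which is far too weak to make $\Omega$ monotone under translation by $v$ or to guarantee ``compatible boundary data on the overlap''; the paper explicitly notes that moving-plane/sliding methods are \emph{not} used here.

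The paper works entirely in $\Omega\subset\R^3$, never on $\GG_u\subset\R^4$. First, a Korevaar-type gradient estimate (Theorem~\ref{prop_grad}, using $f'\le 0$) gives $|\nabla u|\in L^\infty(\Omega)$, and then the volume hypothesis yields moderate energy growth (Proposition~\ref{prop_moderate_boundgradient}). Monotonicity $\langle\nabla u,X\rangle>0$ is obtained not by sliding but by an energy comparison: the partial derivative $w=\langle\nabla u,X\rangle$ solves the linearized equation, and Lemma~\ref{Lemma_w-}/Theorem~\ref{teo_monotonicity} compare $w_-$ to the positive solution $v$ of that equation coming from stability to force $w_-\equiv 0$. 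The geometric Poincar\'e inequality (Theorem~\ref{teo_Poincare_loc}) then yields $|\nabla^\top|\nabla u||\equiv 0$ and $\II\equiv 0$ for the second fundamental form of the \emph{level sets of $u$ inside $\Omega$}; this forces $\Omega=(0,T)\times\R^2$ with $u$ one-dimensional (Proposition~\ref{prop_splitting_alt}), which is perfectly compatible with $\GG_u$ being a cylinder. The half-space alternative in $(i)$ is then excluded by cubic volume growth, and in $(iv)$ the claim that $\Gamma$ disconnects $\Omega$ when $\haus^2(\Gamma)>0$ is handled via the \L ojasiewicz structure theorem and Jordan--Brouwer separation, not by your sliding on two pieces.
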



\begin{remark}
	While not including the example of half-spaces in $\R^3$, condition \eqref{eq_quadraticgrowth} is still quite general and satisfied, for instance, if $\Omega$ is a slab  whose defining functions $\phi_1,\phi_2$ grow as follows:
	\[
		|\phi_j(x')| = o\Big( \log |x'|\Big) \qquad \text{as } \, |x'| \to \infty
	\]
	(note that no $L^\infty$-bound is imposed on the gradient of $\phi_j$). Also, mildly $e_1$-transverse regions like, for instance,
	\[
		\left\{ (x^1,x')\in \R^3 \ : \ |x'|\le 1 \ \text{or } \, |x^1| \le \frac{|x'|^2 + 1}{|x'|^2 -1} + \sqrt{\log|x'|} \right\}
	\]
	satisfy \eqref{eq_quadraticgrowth}. 
\end{remark}

\begin{remark}
	Case $(ii)$ improves on \cite[Thm 1.3]{cmr}, where the authors obtained the same conclusion by assuming that $\Omega$ is a globally Lipschitz, globally bounded slab.
\end{remark}

Theorems \ref{teo_CMC_intro_R2} and \ref{teo_CMC_intro_R3} follow from a general splitting result, which holds in a manifold setting without dimensional restriction and for general sources $f$. Hereafter, $(M^n,\metric)$ will be a complete Riemannian manifold of dimension $n \ge 2$ with Ricci curvature $\Ric$ and sectional curvature $\Sec$. Having fixed an origin $o \in M$, we denote by $B_r$ the geodesic ball of radius $r$ centered at $o$. Hereafter, a vector field $X$ of class $C^1$ in $\overline{\Omega}$ will be said to be Killing in $\overline{\Omega}$ if the Killing condition $\mathscr{L}_X g = 0$ is satisfied pointwise there.

\begin{theorem}\label{teo_main_MC_manifolds}
	Let $\Omega \subseteq M^n$ be a $C^2$ domain satisfying
	\[
		\Ric \ge 0 \quad \text{in } \, \Omega, \qquad |\Omega \cap B_R| = o(R^2 \log R) \qquad \text{as } \, R \to \infty
	\]
	and supporting a non-constant, stable solution $u \in C^3(\Omega) \cap C^2(\overline\Omega)$ of 
	\begin{equation}\label{eq_PMC_cmc2}
		\left\{ 
		\begin{array}{ll}
			\MM[u]  + f(u) = 0 & \quad\text{in } \Omega, \\[0.2cm]
			u = b_j, \ \partial_\eta u = c_j  & \quad \text{on $\partial_j \Omega$} \\[0.2cm]
			\inf_\Omega u > -\infty
		\end{array} \right.
	\end{equation}
	for some $b_j,c_j \in \R$ and $f \in C^1(\R)$. Assume that one of the following sets of assumptions is met for some constant $C_0$:
	\begin{itemize}
		\item[(A)] $- C_0 \le f(u) \le 0$ in $\Omega$ and $u$ is constant on $\partial_\star \Omega$.
		\item[(B)] $- C_0 \le f(u) \le 0$ in $\Omega$, $u$ is bounded on $\partial_\star \Omega$ and 
		\[
		\haus^{n-1}(\partial_\star^{b}\Omega \cap B_R)= o \big(R^2 \log R\big) \qquad \text{as } \, R \to \infty
		\]
		for some $b \in \R$, where $\partial_\star^b\Omega=\{ x \in \partial_\star \Omega : u(x) \neq b\}$.
		\item[(C)] $u$ is constant on $\partial_\star \Omega$ and bounded in $\Omega$.
		\item[(D)] $|f(u)| \leq C_0$ and $f'(u) \le 0$ in $\Omega$, $\sup_j |c_j| < \infty$ and $\Sec \ge - \kappa$ in $M$ for some $\kappa \in \R^+$. 
	\end{itemize}			
%
%
%
%
%
	If there exists a Killing field $X$ on $\overline\Omega$ with $|X| \in L^\infty(\Omega)$ and  
	\begin{equation}\label{condi_Hcj}
		\begin{array}{l}
			\disp c_j \langle \eta, X \rangle \ge 0 \qquad \text{for each $j$,} \\[0.2cm]
			\disp c_j \langle \eta, X \rangle \not \equiv 0 \qquad \text{for some $j$,}
		\end{array} 
	\end{equation}
	Then:
	\begin{itemize}
		\item[i)] $\Omega = (0,T) \times N$ with the product metric, for some $T \le \infty$ and some complete, totally geodesic hypersurface $N \subseteq M$. 
		\item[ii)] $u= u(t)$ is 1D and strictly monotone in the arclength $t \in (0,T)$, 
		\item[iii)] $\langle \partial_t, X \rangle$ is constant in $\Omega$.
	\end{itemize}
\end{theorem}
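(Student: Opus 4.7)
The plan is to extract rigidity from a Sternberg--Zumbrun type stability inequality in which the test function is a Jacobi function generated by the Killing field $X$. Since the flow of $X$ is an isometry of $\overline\Omega$ that preserves the equation $\MM[u]+f(u)=0$, differentiating the equation along the flow shows that $\psi \doteq \langle X, \nabla u\rangle$ solves the linearized Jacobi equation $L\psi = 0$ in $\Omega$, where $L$ is the Jacobi operator of the graph $\GG_u$ in $M \times \R$ and carries a contribution of $f'(u)$. Testing the stability of $u$ against $\chi\psi$ for a compactly supported cutoff $\chi\in C^\infty_c(M)$ and performing the standard Sternberg--Zumbrun manipulation yields, schematically, the weighted inequality
\[
\int_\Omega \frac{|A|^{2} + \Ric(\nu_h,\nu_h)}{W}\,\psi^{2}\chi^{2} \;\le\; \int_\Omega \frac{|\nabla \chi|^{2}}{W}\,\psi^{2} + (\text{boundary terms}),
\]
with $A$ the second fundamental form of $\GG_u$, $W=\sqrt{1+|\nabla u|^{2}}$, and $\nu_h = \nabla u/W$ the horizontal part of the upward unit normal.

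The boundary terms produced by integration by parts on each $\partial_j\Omega$ collapse, thanks to the locally constant Dirichlet and Neumann data, into integrals proportional to $c_j\langle \eta, X\rangle$ on $\partial_j\Omega$. Condition (\ref{condi_Hcj}) is precisely what makes these terms have a favorable sign and thus be discardable, while the second half of (\ref{condi_Hcj}) guarantees that $\psi\not\equiv 0$. What remains is to ensure that $\psi^{2}/W$ is globally bounded, which reduces to proving $|\nabla u|\in L^\infty(\Omega)$. This a priori gradient bound is derived case by case: in (A)--(C) it follows by combining $\inf_\Omega u > -\infty$ with the sign (and/or boundedness) of $f(u)$, a Bernstein--type interior estimate for the PMC equation, and boundary barriers; the auxiliary hypothesis $\haus^{n-1}(\partial_\star^{b}\Omega\cap B_R)=o(R^{2}\log R)$ in (B) is needed to absorb contributions from the portion of $\partial_\star\Omega$ where $u$ is not constantly equal to $b$. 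In case (D) the gradient bound is supplied by Korevaar's global gradient estimate for the prescribed mean curvature equation under a lower sectional curvature bound, which requires $\sup_j|c_j|<\infty$ and $|f|\le C_0$.

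Once $\psi^{2}/W\in L^\infty(\Omega)$, one chooses $\chi=\chi_R$ the classical logarithmic cutoff with $\chi_R\equiv 1$ on $B_R$, $\chi_R\equiv 0$ outside $B_{R^{2}}$, and $|\nabla \chi_R|\lesssim 1/(R\log R)$. The right-hand side is then dominated by $|\Omega\cap B_{R^{2}}|/(R\log R)^{2}$, which vanishes as $R\to\infty$ by the volume growth hypothesis. Hence $|A|^{2} + \Ric(\nu_h,\nu_h)\equiv 0$ on the open set $\{\psi\neq 0\}$, and by unique continuation for the second--order elliptic equation satisfied by $\psi$, this identity propagates to all of $\Omega$. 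The vanishing of $|A|$ says that $\GG_u$ is totally geodesic in $M\times\R$; together with $\Ric\ge 0$ and a de Rham--style splitting argument, this forces $\Omega$ to split as a Riemannian product $(0,T)\times N$ with $N$ complete and totally geodesic, and $u=u(t)$ to depend only on the arclength parameter $t$, proving (i) and the first part of (ii). Strict monotonicity of $u(t)$ follows from $\psi\not\equiv 0$ combined with the splitting; the constancy of $\langle \partial_t, X\rangle$ is an immediate consequence of $X$ being Killing on a Riemannian product whose $N$-fibres are parallel.

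The main obstacle will be the clean derivation of the weighted stability identity and the book-keeping of its boundary terms, together with the case-dependent gradient estimate. The quasilinear nature of $\MM$ makes the Sternberg--Zumbrun computation considerably more involved than in the semilinear setting: one must carefully decompose $\nabla \chi$ into tangential and normal parts with respect to $\GG_u$ to exploit the conformal structure of the area element, and track how $f'(u)$ enters through the Jacobi equation. Case (D) is in some sense the cleanest because Korevaar's estimate is tailor-made for it; in (A)--(C), where $f'\le 0$ is not assumed, obtaining $|\nabla u|\in L^\infty$ from the one-sided sign of $f(u)$ and the lower bound on $u$ is delicate, and in (B) the quantitative measure-theoretic assumption on $\partial_\star^{b}\Omega$ is precisely what allows the varying Dirichlet data to be absorbed into the cutoff estimate.
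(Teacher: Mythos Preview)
Your overall strategy---a Jacobi function from the Killing field, a Sternberg--Zumbrun-type inequality, and logarithmic cutoffs---is in the right spirit, but there is a genuine gap in cases (A)--(C). You assert that $|\nabla u|\in L^\infty(\Omega)$ follows there from $\inf_\Omega u>-\infty$, the sign of $f(u)$, and ``a Bernstein-type interior estimate plus barriers''. Global gradient estimates for the prescribed mean curvature equation with a general $C^1$ source are not available unless $f'(u)\le 0$, which is precisely the extra hypothesis distinguishing case (D); the paper explicitly flags this. In (A)--(C) the paper does \emph{not} prove a gradient bound. Instead it uses a calibration argument (Proposition~\ref{prop_1_calib}, Theorem~\ref{teo_goodcutoff_MC}): integrating the vector field $\psi\,\gamma(u)\,a(|\nabla u|)\nabla u$ against suitable cutoffs $\psi,\gamma$ directly bounds the weighted energy $\int_{\Omega\cap B_R}|\nabla u|^2/\sqrt{1+|\nabla u|^2}$ by $C|\Omega\cap B_R|$ (plus, in case (B), a term controlled by the measure hypothesis on $\partial^b_\star\Omega$). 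This delivers ``moderate energy growth'' with no pointwise control on $\nabla u$. Your reduction of (A)--(C) to a gradient estimate is therefore a missing idea, not just a missing detail; note also that your cutoff bound $|\nabla\chi_R|\lesssim 1/(R\log R)$ on $B_{R^2}\setminus B_R$ gives a right-hand side of order $|\Omega\cap B_{R^2}|/(R\log R)^2$, which does \emph{not} tend to zero under the hypothesis $|\Omega\cap B_S|=o(S^2\log S)$.

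There is also a misattribution of the role of \eqref{condi_Hcj}. In the paper the boundary integral in the Poincar\'e inequality does not merely have a favourable sign: it \emph{vanishes identically} by a cancellation (Lemma~\ref{lem_bordo}) as soon as $X$ is Killing and the data are locally constant, independently of \eqref{condi_Hcj}. Condition \eqref{condi_Hcj} is instead the statement $w=\langle\nabla u,X\rangle\ge 0$, $w\not\equiv 0$ on $\partial\Omega$, and is used \emph{before} the Poincar\'e step (Theorem~\ref{teo_monotonicity}), together with moderate energy growth, to force $w>0$ throughout $\Omega$; this strict positivity is what allows the Poincar\'e inequality (Theorem~\ref{teo_Poincare_loc}) to be written for cutoffs supported in $\overline\Omega$ rather than only in $\Omega$. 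Since stability is only assumed for $\varphi\in\lip_c(\Omega)$, your direct insertion of $\chi\psi$ with $\chi$ not vanishing on $\partial\Omega$ is itself unjustified, and the naive manipulation (stability on $\chi\psi$ minus the Jacobi equation tested with $\chi^2\psi$) yields only $\int\psi^2\langle A(\nabla u)\nabla\chi,\nabla\chi\rangle$, with no $|\II|^2$ or Ricci term appearing.
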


\begin{remark}
	\item[-] Again, no growth from above is imposed on $u$, and no restriction is made on the number of components of $\partial \Omega$. 

%
	%
	\item[-] The $t$-direction in the splitting $\Omega = (0,T) \times \R^{n-1}$ may not coincide with $X$. 
	\item[-] Condition \eqref{condi_Hcj}, automatically satisfied by monotone solutions with $X = \pm \partial_t$, implies the request that $\partial_\star \Omega$ be mildly $X$-transverse. We stress that \eqref{condi_Hcj} only depends on $X$ and on the boundary conditions imposed on $u$, and is therefore easily checkable in explicit examples.
	\item[-] The actual existence of monotone solutions depends on the properties of $f$, and should be examined case by case (for solutions on $\R$, see \cite{FSV}). Since our methods do not require the a priori knowledge of such solutions, we have not performed the 1D analysis here. Observe that, in case no monotone 1D solutions exist, Theorems \ref{teo_main_MC_manifolds} and \ref{teo_splitting_intro} can be seen as non-existence results. 	
	%
\end{remark}

\begin{remark}
	Conditions $(A)$ to $(D)$ are instrumental to prove that $u$ has \emph{moderate energy growth}, a crucial property to obtain Theorem \ref{teo_main_MC_manifolds}, see Section \ref{sec_energy} for more details. Indeed, further conditions are shown to imply moderate energy growth, and we refer to Theorem \ref{teo_goodcutoff_MC} for a more extensive list. We have  selected $(A)-(D)$ for the sake of simplicity. 	
\end{remark}

\begin{remark}
	A closely related result was obtained in \cite[Theorem 1.6]{cmr} for constant $f$. In fact, the assumption that $f$ be constant is essential for the methods therein to work. However, even in this case Theorem \ref{teo_main_MC_manifolds} improves on \cite{cmr} in some aspects, including the fact that the set $\{c_j\}$ is not a priori required to be bounded in cases $(A)$, $(B)$ and $(C)$.
\end{remark}

In view of Theorem \ref{teo_main_MC_manifolds}, Theorems \ref{teo_CMC_intro_R2} and \ref{teo_CMC_intro_R3} allow for extension to  stable solutions to $\MM[u] + f(u) = 0$. We only state the result in $\R^2$.

\begin{theorem}\label{teo_CMC_intro_R2_gen}
	Let $\Omega \subseteq \R^2$ be a $C^2$ domain supporting a non-constant stable solution $u \in C^3(\Omega) \cap C^2(\overline\Omega)$ to 
	\[
		\left\{ 
		\begin{array}{ll}
			\MM[u]  + f(u) = 0 & \text{on } \Omega, \\[0.2cm]
			u = b_j, \ \partial_\eta u = c_j  & \text{on $\partial_j \Omega$} \\[0.2cm]
			\inf_\Omega u > -\infty,
		\end{array} \right.
	\]
	for some $b_j,c_j \in \R$ and $f \in C^1(\R)$. Consider the following assumptions for some constant $C_0$:
	\begin{itemize}
		\item[(A)] $- C_0 \le f(u) \le 0$ in $\Omega$; 
		\item[(B)] $- C_0 \le f(u) \le 0$ in $\Omega$ and $\haus^{1}(\partial_1\Omega \cap B_R)= o \big(R^2 \log R\big)$ as $R \to \infty$;
		\item[(C)] $u$ is bounded in $\Omega$; 
		\item[(D)] $|f(u)| \leq C_0$ and $f'(u) \le 0$ in $\Omega$.
	\end{itemize}			
	Then, 
	\begin{itemize}
		\item[(i)] Assume one among $(A),(C)$ and $(D)$.\\
		If $\partial \Omega$ is connected and mildly $v$-transverse for some $v$, then $\Omega$ is a half-plane and $u$ is a $1D$, monotone solution;
		\item[(ii)] Assume one among $(A),(C)$ and $(D)$.\\
		If $\partial \Omega$ is disconnected, and $\partial_\star \Omega$ is connected and mildly $v$-transverse for some $v$, then $\Omega = (0,T) \times \R$ and $u$ is a $1D$, monotone solution.
		\item[(iii)] Assume one between $(B)$ and $(D)$.\\
		Assume that $\partial_\star \Omega = \partial_1\Omega \cup \partial_2\Omega$ is disconnected and mildly $v$-transverse for some $v$, with $\langle \eta,v \rangle \ge 0$ on $\partial_1\Omega$ and $\langle \eta,v \rangle \le 0$ on $\partial_2\Omega$. If $c_1c_2<0$, then $\Omega = (0,T) \times \R$ and $u$ is a $1D$, monotone solution.	
		\item[(iv)] Assume one among $(A),(C)$ and $(D)$.\\
		Assume that $\partial_\star \Omega = \partial_1\Omega \cup \partial_2\Omega$ is disconnected, and that $\partial_j \Omega$ is mildly $v_j$-transverse for $j \in \{1,2\}$ and some $v_j$ (possibly with $v_1 \neq v_2$). If $\inf_\Omega u$ is attained in a set $\Gamma$ disconnecting $\Omega$, which is the case for instance if all of the following are satisfied: 
		\begin{itemize}
			\item $\inf_\Omega u < b_j$ for each $j \ge 3$;
			\item $f$ is analytic in a neighbourhood of $\inf_\Omega u$ and $f(\inf_\Omega u) \neq 0$, 
			\item $\Gamma$ has an accumulation point in $\overline\Omega$,
		\end{itemize}	
		then $\Omega=(0,T) \times \R$ and $u$ is a $1D$ solution.
	\end{itemize}
\end{theorem}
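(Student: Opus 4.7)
The plan is to reduce each case to the splitting Theorem \ref{teo_main_MC_manifolds} with $M = \R^2$. In this ambient the curvature hypotheses $\Ric \equiv 0$ and $\Sec \equiv 0 \ge -\kappa$, together with $|\Omega \cap B_R| = O(R^2) = o(R^2 \log R)$, are all automatic, so only the existence of an appropriate bounded Killing field and one of the moderate-energy-growth conditions have to be verified. The Killing fields I will use are the constant vector fields $\pm v, \pm v^\perp$, i.e.\ bounded translation generators of $\R^2$.

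For cases (i) and (ii), since $\partial_\star \Omega$ is contained in a single connected component of $\partial \Omega$, the function $u$ is constant there; hence (A), (C), (D) here match (A), (C), (D) of Theorem \ref{teo_main_MC_manifolds} respectively. I take $X = \sigma v$ with $\sigma \in \{\pm 1\}$ chosen so that $c\, \langle \eta, X \rangle \ge 0$ on $\partial_\star \Omega$, which is permitted because $c$ is a single constant and $\langle \eta, v \rangle$ has constant sign by the mildly $v$-transverse assumption. The non-triviality of $c\,\langle \eta, X \rangle$ reduces to $c \neq 0$ (ruled out by a Hopf-type argument on the linearisation of $\MM[u] + f(u) = 0$ at $u = b$, which would otherwise force $u \equiv b$) and $\langle \eta, v \rangle \not\equiv 0$ (otherwise $\partial_\star \Omega$ is a line parallel to $v$ and one replaces $X$ by $v^\perp$). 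Theorem \ref{teo_main_MC_manifolds} then yields $\Omega = (0,T) \times N$ with $N$ a complete, totally geodesic line in $\R^2$; in (i), connectivity of $\partial \Omega$ forces $T = \infty$.

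For case (iii), set $X = v$. Combining $c_1 c_2 < 0$ with the prescribed signs of $\langle \eta, v \rangle$ on each boundary component ensures $c_j \langle \eta, v \rangle$ has a common non-negative sign on $\partial_1 \Omega$ and $\partial_2 \Omega$, after possibly replacing $X$ with $-X$. Under (D), hypothesis (D) of Theorem \ref{teo_main_MC_manifolds} applies directly; under (B), I pick $b \in \{b_1, b_2\}$ so that $\partial_\star^b \Omega$ becomes the single boundary component whose $\haus^1$-growth is controlled by the assumption in (B), thereby verifying (B) of Theorem \ref{teo_main_MC_manifolds}. For case (iv), the disconnecting infimum set $\Gamma$ partitions $\Omega \setminus \Gamma$ into pieces $\Omega^{(k)}$ adjacent to $\partial_k \Omega$; since $u = \inf_\Omega u$ and $\nabla u = 0$ on $\Gamma$, on each $\Omega^{(k)}$ the function $u$ solves the PDE with $\partial_\star \Omega^{(k)} \subseteq \partial_k \Omega$, a single mildly $v_k$-transverse component. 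Applying the argument of case (ii) to each half yields a 1D, strictly monotone splitting; $C^1$-matching across $\Gamma$ (through $\nabla u = 0$) then produces a global 1D solution on $\Omega$. That $\Gamma$ disconnects $\Omega$ under the listed sufficient conditions follows from the \L ojasiewicz inequality at the interior infimum, available by analyticity of $f$ and $f(\inf u) \neq 0$, together with the strict inequality $\inf u < b_j$ for $j \ge 3$ ruling out, via the strong maximum principle, that $\Gamma$ reach other components.

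The main obstacle I foresee is in case (iv): a priori the two halves split in distinct directions $v_1$ and $v_2$, and one must show that these coincide so that the reconstructed profile is genuinely one-dimensional on all of $\Omega$. This compatibility should follow from the fact that $\Gamma$ is simultaneously a level line of both monotone 1D profiles, hence a single affine line in $\R^2$, to which both splitting directions must be orthogonal; the final matching then reduces to a uniqueness statement for the Cauchy data of the 1D ODE obtained from $\MM[u] + f(u) = 0$.
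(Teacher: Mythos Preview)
Your overall strategy is the same as the paper's, but there are two genuine gaps.

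\textbf{Case (i), ruling out $c_1=0$.} Your claim that $c_1=0$ forces $u\equiv b$ ``by a Hopf-type argument on the linearisation'' is incorrect. The difference $u-b$ does not solve the linearised equation unless $f$ is affine, and vanishing Cauchy data $u=b$, $\partial_\eta u=0$ on $\partial\Omega$ do not by themselves force $u\equiv b$: think of a non-constant 1D profile on a half-line with $u(0)=b$, $u'(0)=0$. The paper handles this differently. When $\nabla u\equiv 0$ on $\partial\Omega$, one applies Lemma~\ref{lem_Du0} (which uses stability plus moderate energy growth, not Hopf): each partial derivative $w_j=\partial_j u$ solves the linearised equation with zero boundary data, and by comparison with the positive Jacobi field $v$ coming from stability one finds $w_j=\lambda_j v$ for constants $\lambda_j$, hence some $\langle\nabla u,X\rangle>0$ in $\Omega$. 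One then feeds this monotonicity directly into the splitting Theorem~\ref{teo_splitting}. The contradiction (or the conclusion) is then obtained from the resulting 1D structure, not from a boundary Hopf lemma.

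\textbf{Case (iv), applying ``case (ii)'' on each half.} You cannot invoke Theorem~\ref{teo_main_MC_manifolds} on the pieces $\Omega^{(k)}$ because they are not $C^2$ domains: their boundary contains $\Gamma\subseteq\critu$, which a priori has no regularity. The paper's machinery is set up precisely for this: Theorem~\ref{teo_monotonicity} and Theorem~\ref{teo_splitting} are stated for subdomains $\Omega_0$ with $\partial\Omega_0\subseteq\partial\Omega\cup\critu$ and only require locally finite perimeter in $\Omega$, the latter coming from unique continuation (Lemma~\ref{Om_finper}). Moreover, you assert that $\Omega\setminus\Gamma$ splits into exactly two pieces adjacent to $\partial_1\Omega$ and $\partial_2\Omega$; this is not automatic and is proved in the paper by another application of Lemma~\ref{lem_Du0}: any piece whose boundary lies entirely in $\critu\cup(\partial\Omega\setminus\partial_\star\Omega)$ would force $\critu=\emptyset$, a contradiction. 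Your proposed resolution of the compatibility of directions is morally right; the paper phrases it as: the two resulting slabs are disjoint open subsets of $\R^2$, hence their bounding lines are all parallel, and the residual set $\Omega\cap\overline\Gamma$ has empty interior and must be a single line. Finally, under (A) or (C) with $b_1\neq b_2$ you also need to check moderate energy growth separately on each $\Omega^{(k)}$ (since $u$ is not constant on the full $\partial_\star\Omega$), which the paper does via Theorem~\ref{teo_goodcutoff_MC} applied to each piece.
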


The domains depicted in Figures \ref{fig_1} and 2 above provide examples to which Theorem \ref{teo_CMC_intro_R2_gen} applies.

\begin{remark}
	Observe that the capillarity case $f(u) = - \kappa u$ is included by $(D)$. Moreover, because of $(C)$ Theorem \ref{teo_CMC_intro_R2_gen} allows to treat the pairs $(u,f)$ considered by Lian and Sicbaldi \cite{lian_sicbaldi}, providing complementary classification results when $\partial \Omega$ is disconnected.
\end{remark}

We conclude this introduction by describing the main novelties of our work. In the generality of Theorem \ref{teo_main_MC_manifolds} the manifold does not possess enough symmetries to exploit the moving plane method as in \cite{serrin,reichel_1,sirakov,lian_sicbaldi}; indeed, even in our applications to $\R^2$ and $\R^3$ the method is not used. The strategy is instead based on some integral formulas relating $u$ to the function $\langle\nabla u,X\rangle$, which is a particular solution of the linearized problem. This approach follows \cite{cmr}, itself inspired by the integral inequalities obtained in \cite{Arma, Arma2} for the semilinear equation $\Delta u+f(u)=0$ in $\R^n$ and extended to manifolds in \cite{FarMarVal}. In the latter, the authors proved that if $u$ is monotone in the direction of a bounded Killing field $X$ in $\Omega$, then
\begin{equation}\label{eq_poica_lapla}
\int_\Omega\left\{|\nabla u|^2 |\II|^2 +\left|\nabla^\top|\nabla u|\right|^2+\Ric(\nabla u,\nabla u)\right\}\varphi^2\le\int_\Omega|\nabla\varphi|^2|\nabla u|^2 
\end{equation}
holds for each $\varphi\in\lip_c(\overline\Omega)$, where $\nabla^\top$ and $\II$ are, respectively, the tangential gradient and the second fundamental form of the level sets of $u$. In particular, test functions $\varphi$ whose support intersects $\partial\Omega$ are allowed. In fact, by a remarkable cancellation, the overdetermined boundary conditions force the boundary term appearing in the computations leading to \eqref{eq_poica_lapla} to vanish identically \emph{provided that} $\langle \nabla u, X \rangle$ has a sign in $\Omega$, say it is positive there. Testing \eqref{eq_poica_lapla} with suitable cut-off functions $\varphi$ and assuming $\Ric \ge 0$, one is able to deduce that 
\begin{equation}\label{eq_split}
|\nabla u|^2 |\II|^2 +\left|\nabla^\top|\nabla u|\right|^2 \equiv 0 \qquad \text{in } \, \Omega
\end{equation}
whenever the energy of $u$ satisfies 
\begin{equation}\label{eq_bounden_u}
\int_{B_R}|\nabla u|^2 = O \left( R^2 \log R \right) \qquad \text{as } \, R \to \infty 
\end{equation}
for balls $B_R$ centered at a fixed origin. Property \eqref{eq_split} implies that $\Omega$ splits and $u$ is 1D. In the present paper, we first obtain a new integral inequality for monotone solutions in the direction of $X$, extending \eqref{eq_poica_lapla} and valid for  all quasilinear equations of the form
\begin{equation}\label{eq_alapla_gen}
\diver \big( a(|\nabla u|)\nabla u \big) +f(u) = 0
\end{equation}
under mild regularity assumptions on $a$ (essentially, those guaranteeing locally uniform ellipticity even at critical points of $u$), see Theorem \ref{teo_Poincare_loc}. Clearly, to get rigidity, condition \eqref{eq_bounden_u} shall be replaced by a growth tailored to $a$: for operators of mean curvature type and under suitable conditions on $f$ and $u_{|\partial_\star \Omega}$, by a calibration argument,  such a growth is granted by a mild volume growth condition on $\Omega$ and (in some cases) $\partial_\star \Omega$, see Theorem \ref{teo_goodcutoff_MC}.

A key point is therefore to show that $w \doteq \langle \nabla u, X \rangle > 0$ in $\Omega$. To the best of our knowledge, proofs of the monotonicity of $u$ in the direction of a Killing field (usually, a parallel field in $\R^n$) are currently available only for certain classes of nonlinearities $f$ and non-compact domains $\Omega$, and typically rely on the moving plane and/or the sliding method (see for instance \cite{bcn,FarMarVal} and the references therein). These techniques often require  \emph{global} gradient or H\"older estimates on $u$ to be applied. For the mean curvature operator such estimates turn out to be subtle, especially when $f$ is not monotone or under just lower bounds on the Ricci curvature of $M$, see \cite{cmmr,cmr}. In \cite{cmr}, we proposed a new approach to get monotonicity for CMC capillary graphs, by combining pointwise gradient estimates and parabolicity arguments. We here complement \cite{cmr} with a new technique depending on energy estimates.
The method is quite versatile, and allows to consider much more general classes of nonlinearities $f$. In particular, we do not  need $f' \le 0$. Observe that request \eqref{condi_Hcj} rephrases as $w \ge 0$ on $\partial \Omega$, a necessary condition for monotonicity, and $w \not\equiv 0$ on $\partial \Omega$, the latter just to avoid a pathological case (which can be handled in the case of $\R^2$, see Section \ref{sec_critical}).

The solutions in Theorem \ref{teo_main_MC_manifolds} are monotone, and so are those in all items in Theorems \ref{teo_CMC_intro_R2}, \ref{teo_CMC_intro_R3} and \ref{teo_CMC_intro_R2_gen} but $(iv)$. To obtain $(iv)$, where 1D solutions are not monotone, one needs to separately treat each connected component $\Omega_0$ of $\Omega \backslash \Gamma$, prove rigidity for $\Omega_0$ and then glue the resulting 1D solutions. This requires a careful analysis, carried out in Proposition \ref{prop_splitting_alt}, Lemma \ref{lem_Du0} and Theorem \ref{teo_twopieces}, of independent interest.

A further feature to point out is that our techniques are not specific to the mean curvature operator, but allow to treat a large class of quasilinear equations under mild structural conditions, as described in the next section. Besides allowing for a greater generality, such an extension simplifies the resulting integral formulas making the role of the linearized operator of \eqref{eq_alapla_gen} and its eigenvalues more transparent.
%
%
%
%
%

\subsection{The general setting}

Let $(M^n,\metric)$ be a complete Riemannian manifold of dimension $n \ge 2$, $f \in C^1(\R)$ and consider the quasilinear equation 
\begin{equation}\label{equazu}
	\Delta_a u + f(u)=0 \qquad \text{on } \Omega \subseteq M,
\end{equation}
where
\begin{equation}\label{def_T}
	\Delta_a u = \diver\big(a(|\nabla u|)\nabla u\big) 
\end{equation}
and $a$ satisfies
\begin{equation}\label{assu_A}
	\begin{array}{rcl}
		 & \quad a(t) \in C^1(\R^+) \cap C(\R^+_0). \\[0.1cm]
		& \quad a(t)>0 \ \  \text{ for } t \in \R^+, \qquad \big(ta(t) \big)' >0 \ \ \text{ for } t \in \R^+,
	\end{array}
\end{equation}
For instance: 
\begin{itemize}
	\item[(a)] the mean curvature operator is obtained for the choice $a(t) = \frac{1}{\sqrt{1+t^2}}$, 
	\item[(b)] the $p$-Laplacian $\Delta_p$ for $p \in [2,\infty)$ is obtained with $a(t) = t^{p-2}$. More generally, for $2 \le p < q < \infty$ the $(p,q)$-Laplacian, introduced in  \cite{zhikov,marcellini}, is obtained with the choice $a(t) = t^{p-2} + t^{q-2}$. 
	\item[(c)] the operator of exponentially harmonic functions, first introduced in \cite{duc_eels, eels_lemaire}, is obtained for $a(t)= e^{t^2}$.
\end{itemize}

More examples can be found in \cite{serrin2}. In most of our results we shall restrict to operators satisfying
\begin{equation}\label{assu_A_strong}
a \in C^1(\R^+_0), \qquad a(t), \big( ta(t)\big)' > 0 \quad \text{for } \, t \in \R^+_0
\end{equation}
or the stronger
\begin{equation}\label{assu_A_superstrong}
	a\in C^{1,1}_\loc(\R_0^+), \qquad a(t), \big( ta(t)\big)' > 0 \quad \text{for } \, t \in \R^+_0.
\end{equation}
In particular, referring to the above examples, (a) and (c) satisfy \eqref{assu_A_superstrong}, as well as the standard $2$-Laplacian, while $\Delta_{2,q}$ in (b) satisfies \eqref{assu_A} for each $q>2$, \eqref{assu_A_strong} for $q \ge 3$ and \eqref{assu_A_superstrong} for $q \in \{3\} \cup [4,\infty)$. Further interesting operators treatable with the techniques below include that of the polytropic flow from gas dynamics, see \cite{sibner_sibner}, for which   
\[
a(t) = \left( 1 - \frac{\gamma-1}{2}t^2 \right)^\frac{1}{\gamma-1}, \qquad \gamma >1.
\]
Here, $a$ represents the density of a fluid whose velocity is $\nabla u$, and \eqref{assu_A_strong} is met if $t$ restricts to the interval 
\[
t \in (0, \upsilon^*), \qquad \upsilon^* \doteq \sqrt{ \frac{2}{\gamma+1}},
\]
which identifies the subsonic regime. Hence, our results are applicable provided that $\|\nabla u\|_\infty < \upsilon^*$.

The formal linearization of $\Delta_a$ at $u$ is
\begin{equation}\label{def_linearized}
\left.\frac{\di}{\di s} \right|_{s=0} \Delta_a(u+s\varphi) =  \diver\big(A(\nabla u) \nabla \varphi\big),
\end{equation}
where $A : \Gamma(TM\backslash \{0\}) \ra \Gamma(S^{(1,1)}(M))$ is the field of endomorphisms of $TM$ given by 
\begin{equation}\label{def_A}
	A(X) = \frac{a'(|X|)}{|X|} X_\flat \otimes X + a(|X|) {\rm Id},
\end{equation}
and $\flat$ is the musical isomorphism induced by $\metric$. Note that the eigenvalues of $A(X)$ are $\{\lambda_j(|X|)\}_{j=1}^n$ with
\begin{equation}\label{eigenvalues}
	\lambda_1(t) = a(t) + ta'(t) = \big(t a(t)\big)' \qquad \lambda_2(t) = \ldots = \lambda_n(t) = a(t),
\end{equation}
hence the second in \eqref{assu_A_strong} rephrases as $\lambda_1(t),\lambda_2(t) > 0$ on $\R^+_0$, equivalently $A(X)$ is elliptic, nonsingular and non-degenerate. Henceforth, we write
\[
\lambda_{\max}(t) = \max \big\{ \lambda_1(t), \lambda_2(t) \big\}.
\]
For instance, for the mean curvature operator, 
\[
\lambda_1(t) = (1+t^2)^{-3/2}, \qquad \lambda_2(t) = (1+t^2)^{-1/2} = \lambda_{\max}(t).
\]

\begin{definition}
	Let $a$ satisfy \eqref{assu_A_strong}. A function $u \in C^1(\Omega)$ weakly solving \eqref{equazu} is said to be \emph{stable} if
	\begin{equation}\label{hardy}
		\int_{M} f'(u) \varphi^2\di x \le \int_M \langle A(\nabla u)\nabla\varphi, \nabla \varphi\rangle \di x \qquad \forall \, \varphi \in \lip_c(\Omega).
	\end{equation}
\end{definition}

Here is our first main theorem for general operators: 

\begin{theorem} \label{teo_splitting_intro}
	Let $(M^n, \metric)$ be a complete Riemannian manifold. Let $\Omega$ be a $C^2$ domain with interior normal $\eta$, and assume that $\Ric\geq 0$ in $\Omega$. Let $a$ satisfy \eqref{assu_A_superstrong}, $f \in C^1(\R)$ and let $u \in C^3(\Omega)\cap C^2(\overline{\Omega})$ be a non-constant, stable solution to
	\begin{equation}\label{equazu_conbordo}
	\left\{
	\begin{array}{l@{\qquad}l}
		\Delta_a u + f(u) = 0 & \text{in } \Omega \\[0.3cm]
		u = b_j, \ \partial_\eta u = c_j  & \text{on $\partial_j \Omega$}. 
	\end{array}
	\right.
	\end{equation}
	If there exists a Killing field $X$ on $\overline\Omega$ with $|X| \in L^\infty(\Omega)$ and  
	\begin{equation}\label{condi_Hcj_con_a}
		\begin{array}{l}
			\disp c_j \langle \eta, X \rangle \ge 0 \qquad \text{for each $j$,} \\[0.2cm]
			\disp c_j \langle \eta, X \rangle \not \equiv 0 \qquad \text{for some $j$,}
		\end{array} 
	\end{equation}
	and 
	\begin{equation}\label{eq_lowenergy}
			\int_{\Omega \cap B_R} |\nabla u|^2 \lambda_{\max}(|\nabla u|) = O(R^2 \log R) \qquad \text{as } \, R \to \infty \, ,
	\end{equation}
	then: 
	\begin{itemize} 
		\item[i)] $\Omega = (0,T) \times N$ with the product metric, for some $T \le \infty$ and some complete, totally geodesic hypersurface $N \subseteq M$. 
		\item[ii)] $u= u(t)$ is 1D and strictly monotone in the arclength $t \in (0,T)$, 
		\item[iii)] $\langle \partial_t, X \rangle$ is constant in $\Omega$.
	\end{itemize}
	%
	%
\end{theorem}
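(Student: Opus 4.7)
The plan is to study $w \doteq \langle \nabla u, X\rangle$, which, since $X$ is Killing and the coefficient $A(\nabla u)$ in \eqref{def_A} depends only on the metric and on $|\nabla u|$, weakly solves the linearized equation
\[
\diver\bigl(A(\nabla u)\nabla w\bigr) + f'(u)\,w = 0 \qquad \text{in } \Omega.
\]
On each component $\partial_j\Omega$ the overdetermined boundary conditions force $\nabla u = c_j\,\eta$ and hence $w = c_j\langle \eta, X\rangle$, so that assumption \eqref{condi_Hcj_con_a} translates exactly into $w\ge 0$ on $\partial\Omega$ with $w\not\equiv 0$ there. The heart of the argument is to upgrade this to the strict inequality $w>0$ in $\Omega$, that is, $u$ is monotone in the direction of $X$. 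I would do this via an energy estimate for the negative part $w^-$: testing the equation for $w$ against $\zeta^2 w^-$ (with $\zeta$ a suitable cutoff), the overdetermined data together with $w \ge 0$ on $\partial\Omega$ annihilate the boundary contributions, while the stability inequality \eqref{hardy} applied to $\varphi = \zeta\,w^-$ yields, through a Picone-type manipulation, an integral bound for the Dirichlet form of $w^-$ in terms of $\int \lambda_{\max}(|\nabla u|)(w^-)^2|\nabla\zeta|^2$. Since $|X|\in L^\infty$ gives $(w^-)^2 \le \|X\|_\infty^2|\nabla u|^2$, the growth assumption \eqref{eq_lowenergy} together with a logarithmic choice of $\zeta$ lets the right-hand side vanish as the support tends to $\Omega$, forcing $w^- \equiv 0$. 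The strong maximum principle for the linearized operator then yields $w>0$ throughout $\Omega$.

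Once monotonicity is established, I would invoke the quasilinear Poincar\'e-type inequality of Theorem \ref{teo_Poincare_loc}, which extends \eqref{eq_poica_lapla} to the operator $\Delta_a$. Its derivation, of Reilly/Bochner type, requires the positivity of $w$ precisely to cancel the boundary terms on $\partial\Omega$, as in \cite{cmr,FarMarVal}. Using $\Ric\geq 0$, the identity gives, for every $\varphi \in \lip_c(\overline\Omega)$,
\[
\int_\Omega \Bigl\{ a(|\nabla u|)^2|\nabla u|^2 |\II|^2 + \lambda_2(|\nabla u|) \bigl|\nabla^\top |\nabla u|\bigr|^2 \Bigr\}\varphi^2 \;\le\; \int_\Omega \lambda_{\max}(|\nabla u|)\,|\nabla u|^2\, |\nabla\varphi|^2.
\]
Plugging a Cheng--Yau logarithmic cutoff $\varphi_R$ with $|\nabla\varphi_R|^2\lesssim (R^2\log R)^{-1}$ and support in $\Omega \cap B_{R^2}$, the growth condition \eqref{eq_lowenergy} forces the right-hand side to vanish in the limit $R\to\infty$; hence on every regular level set of $u$ we conclude $|\II|\equiv 0$ and $|\nabla^\top|\nabla u||\equiv 0$. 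The regular level sets are therefore totally geodesic and $|\nabla u|$ depends only on $u$.

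From these two rigidity facts the splitting is standard: since $w>0$ the function $u$ has no interior critical points, its level sets smoothly foliate $\Omega$ by totally geodesic hypersurfaces, and the flow of $\nabla u/|\nabla u|$ realizes an isometry $\Omega\cong (0,T)\times N$ with $N$ a regular level set endowed with the induced metric, proving (i) and (ii). For (iii), writing $\partial_t = \nabla u/|\nabla u|$ one has $\langle \partial_t, X\rangle = w/|\nabla u|$; since $X$ is Killing with respect to the product metric and $w$ solves a linearized equation whose coefficients are independent of the transverse variable in the split manifold, both $w$ and $|\nabla u|$ are functions of $t$ alone, and $\langle \partial_t, X\rangle$ is constant in $\Omega$. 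The main obstacle is the monotonicity step: stability alone does not propagate the sign of $w$ from the boundary when $f'$ has no a priori sign, and closing the $w^-$ energy estimate globally requires the careful interplay between the Picone-type bound coming from stability, the boundary cancellation forced by the overdetermined data, and the growth hypothesis \eqref{eq_lowenergy}.
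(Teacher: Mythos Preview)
Your overall architecture matches the paper's: reduce \eqref{condi_Hcj_con_a} to $w\ge 0$, $w\not\equiv 0$ on $\partial\Omega$; upgrade to $w>0$ in $\Omega$; feed this into the Poincar\'e inequality of Theorem~\ref{teo_Poincare_loc}; exploit \eqref{eq_lowenergy} via logarithmic cutoffs to kill the right-hand side; read off the splitting. However, two steps as you describe them do not close.

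\textbf{Monotonicity.} Your plan is to test the linearized equation for $w$ against $\zeta^2 w^-$ and to insert $\varphi=\zeta w^-$ into the stability inequality~\eqref{hardy}. If you carry this out, the two identities differ exactly by $\int_\Omega (w^-)^2\langle A(\nabla u)\nabla\zeta,\nabla\zeta\rangle$ and combine to the tautology $0\le \int (w^-)^2\langle A\nabla\zeta,\nabla\zeta\rangle$; no bound on the Dirichlet form of $w^-$ emerges. The point is that $w^-$ itself solves the linearized equation on its support, so using it as the test function in stability is vacuous. The paper's route (Lemma~\ref{lem_mosspie} and Lemma~\ref{Lemma_w-}) is the genuine Picone argument: stability produces a strictly positive solution $v$ of the linearized equation on all of $\Omega$, and one estimates $z=w^-/v$ instead, obtaining
\[
\int_\Omega \varphi^2 v^2\langle A(\nabla u)\nabla z,\nabla z\rangle \;\le\; 4\int_\Omega (w^-)^2\langle A(\nabla u)\nabla\varphi,\nabla\varphi\rangle
\]
for $\varphi\in\lip_c(\overline\Omega)$; now $(w^-)^2\le\|X\|_\infty^2|\nabla u|^2$ and the moderate energy growth (Corollary~\ref{cor_lowenergy}) force $z$ constant, hence $w^-\equiv 0$ or $w<0$ everywhere, and the boundary condition excludes the latter. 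Your phrase ``Picone-type'' points in the right direction, but the argument you actually wrote does not use $v$ and does not work.

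\textbf{Item (iii).} You drop from the Poincar\'e inequality the term
\[
\int_{\Omega} w^2\Big\langle A(\nabla u)\nabla\!\Big(\tfrac{\varphi|\nabla u|}{w}\Big),\nabla\!\Big(\tfrac{\varphi|\nabla u|}{w}\Big)\Big\rangle \;\ge\;0,
\]
which in the paper is precisely what yields $\nabla(|\nabla u|/w)\equiv 0$, i.e.\ $\langle\nu,X\rangle$ constant. Your alternative argument (``$w$ and $|\nabla u|$ are functions of $t$ alone because the linearized coefficients are'') is not justified: once $\Omega\cong(0,T)\times N$, the Killing equations only give $\langle\partial_t,X\rangle=X^0(y)$ with $\nabla^N\di X^0=0$, and one still has to invoke boundedness of $X$ and completeness of $N$ to force $X^0$ constant. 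Keeping the extra term in Theorem~\ref{teo_Poincare_loc} gives (iii) for free.
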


For operators of mean curvature type, that is, satisfying
\[
t^2 \lambda_2(t) \le C_1 \lambda_1(t) \le \frac{C_2}{t} \qquad \forall \, t \in \R^+
\]
for some positive constants $C_1,C_2$, condition \eqref{eq_lowenergy} can be replaced by 
\[
|\Omega \cap B_R| = o\big( R^2 \log R \big) \qquad \text{as } \, R \to \infty, 
\]
provided that any of assumptions $(i)$ to $(vi)$ in Theorem \ref{teo_goodcutoff_MC} are satisfied in $\Omega_0 = \Omega$.

\vspace{0.5cm}

The paper is structured as follows: in Section \ref{sec_energy} we introduce the moderate energy growth assumption needed for our main theorems, and obtain some sufficient conditions for its validity. Among them, we stress the bound on the growth of $|\Omega \cap B_R|$ for operators of mean curvature type in Theorem \ref{teo_goodcutoff_MC}. In Section \ref{sec_stability} we study the stability condition and its consequences for solutions with moderate energy growth. The main result is Theorem \ref{teo_monotonicity}, which guarantees the monotonicity of $u$ in the direction of a Killing field $X$ under mild conditions. The goal of Section \ref{sec_poinc} is to prove the key integral formula for monotone solutions to \eqref{equazu_conbordo}, see Theorem \ref{teo_Poincare_loc} and the related splitting results including Theorems \ref{teo_main_MC_manifolds} and \ref{teo_splitting_intro}. In Section \ref{sec_critical} we will focus on Euclidean space, analysing the set of critical points of $u$, and finally prove Theorems \ref{teo_CMC_intro_R2}, \ref{teo_CMC_intro_R3} and \ref{teo_CMC_intro_R2_gen}.
 
\vspace{0.5cm}

\noindent \textbf{Acknowledgements:} A.F. was partially supported by ANR EINSTEIN CONSTRAINTS:
past, present, and future. Research project ANR-23-CE40-0010-02. L.M. and M.R. were supported by the PRIN project no. 20225J97H5 ``Differential-geometric aspects of manifolds via Global Analysis''. 

\vspace{0.2cm}

\noindent \textbf{Conflict of Interests.} The authors have no conflict of interest.
\vspace{0.2cm}

\noindent \textbf{Data availability statement.} No data was generated by this research.

\section{Moderate energy growth}\label{sec_energy}

Let $\Omega \subseteq M$ be an open subset with $C^1$ boundary, and $u \in C^1(\overline\Omega)$ be a weak solution to 
\[
\Delta_a u + f(u) = 0,
\]
where $a$ satisfies \eqref{assu_A} and $f \in C(\R)$. 
One of the core steps to achieve our classification results is to prove that $u$ has moderate energy growth, according to the next

\begin{definition}
	Let $\Omega_0\subseteq\Omega$ be an open set. We say that $u$ has \emph{moderate energy growth in $\Omega_0$} if there exists a sequence of functions $\{\varphi_j\} \subseteq \lip_c(\overline{\Omega}_0)$ such that, as $j \to \infty$, 
	\[
		0 \le \varphi_j \to 1 \quad \text{in } \, C_\loc(\overline{\Omega}_0), \qquad \int_{\Omega_0} |\nabla u|^2 \langle A(\nabla u) \nabla \varphi_j, \nabla \varphi_j \rangle \to 0 .
	\]
	If the condition is satisfied for $\Omega_0 = \Omega$ we simply say that $u$ has moderate energy growth.
\end{definition}

In this section, we describe some sufficient conditions for this property to hold. Hereafter, we consider balls $B_R$ centered at a fixed origin $o \in M$ and we define $r(x) = {\rm dist}(x,o)$. We begin with the following extension of the well-known ``logarithmic cut-off trick'', which is related to an argument by Karp \cite{karp}.

\begin{lemma} \label{lem_phij}
	Let $(M^n,\metric)$ be a complete Riemannian manifold and $0\leq f\in L^1_\loc(M)$. If
	\[
	\int_{B_R} f \leq R^2 \, \theta(R) \qquad \text{for all } \, R > 0
	\]
	for some non-decreasing, $C^1$ function $\theta : [R_1,\infty) \to \R^+$ such that
	\[
	\int_{R_1}^\infty \frac{\di s}{s \theta(s)} = \infty
	\]
	then there exists a sequence $\{\varphi_j\} \subseteq \lip_c(M)$ satisfying
	\[
	\varphi_j \to 1 \quad \text{in} \quad W^{1,\infty}_\loc(M) , \qquad \int_M f |\nabla\varphi_j|^2 \to 0
	\]
	as $j\to\infty$.
\end{lemma}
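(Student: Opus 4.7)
The plan is to construct explicit radial cutoffs by composing a $1$-Lipschitz function of $r(x)=\dist(x,o)$ with the antiderivative associated to $\theta$. Concretely, I would set
\[
\psi(R)=\int_{R_1}^{R}\frac{\di s}{s\,\theta(s)},
\]
which is $C^1$, nondecreasing, and diverges as $R\to\infty$ by hypothesis, with $\psi'(s)=1/(s\theta(s))$. I would pick any sequence $R_j\to\infty$ and then $T_j>R_j$ determined by $\psi(T_j)-\psi(R_j)=j$, which is possible since $\psi$ is unbounded. Then define
\[
\varphi_j(x)=\min\Bigl\{1,\ \max\Bigl\{0,\ 1-\frac{\psi(r(x))-\psi(R_j)}{j}\Bigr\}\Bigr\},
\]
so that $\varphi_j\in\lip_c(M)$, $\varphi_j\equiv 1$ on $B_{R_j}$, $\varphi_j\equiv 0$ outside $B_{T_j}$, and the convergence $\varphi_j\to 1$ in $W^{1,\infty}_\loc(M)$ is automatic because any compact set is eventually contained in $B_{R_j}$.

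Next I would estimate the energy. Since $r$ is $1$-Lipschitz, on the annular shell $\{R_j\le r\le T_j\}$ one has $|\nabla\varphi_j|^2\le j^{-2}(r\,\theta(r))^{-2}$ a.e., so by the coarea formula applied to $V(s)\doteq\int_{B_s}f$ (nondecreasing, with $V(s)\le s^2\theta(s)$) one obtains
\[
\int_M f\,|\nabla\varphi_j|^2\,\di x \;\le\; \frac{1}{j^2}\int_{R_j}^{T_j}\frac{V'(s)}{s^2\theta(s)^2}\,\di s.
\]
I would then integrate by parts on the right-hand side, with $g(s)=1/(s^2\theta(s)^2)$, and exploit the algebraic identity
\[
s^2\theta(s)\,\bigl(-g'(s)\bigr)=\frac{2}{s\theta(s)}+\frac{2\theta'(s)}{\theta(s)^2}=2\psi'(s)-2\bigl(1/\theta\bigr)'(s),
\]
combined with the hypothesis $V(s)\le s^2\theta(s)$ to bound the non-boundary terms. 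The $\psi'$ contribution integrates to exactly $2[\psi(T_j)-\psi(R_j)]=2j$—this is the whole point of calibrating the cutoff by $\psi$—while the $(1/\theta)'$ and boundary contributions telescope into terms bounded by $C/\theta(R_1)$ since $\theta$ is nondecreasing. Gathering everything,
\[
\int_M f\,|\nabla\varphi_j|^2\,\di x \;\le\; \frac{2}{j}+\frac{C}{j^2}\ \longrightarrow\ 0,
\]
and the claim follows.

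The main difficulty will be the integration-by-parts step: one must engineer precisely the algebraic matching between the upper bound $V(s)\le s^2\theta(s)$ and the derivative of $g(s)=1/(s^2\theta(s)^2)$ so that the leading term in the estimate reproduces $\psi'=1/(s\theta)$, thereby cancelling exactly one power of $j$ from the $j^{-2}$ prefactor. A minor subsidiary issue, handled by working almost everywhere in $r$, is that the distance function $r$ is only Lipschitz because of the cut locus of $o$; this is harmless since coarea and the chain rule for $\psi\circ r$ both hold a.e., and $V$ is monotone so the integration by parts is legitimate in the Stieltjes sense.
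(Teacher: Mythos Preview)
Your proposal is correct and follows essentially the same route as the paper: define the radial cutoff through the antiderivative $\psi(R)=\int_{R_1}^{R}\frac{\di s}{s\theta(s)}$, apply the coarea formula to express the energy as $\int V'(s)\,g(s)\,\di s$ with $g(s)=(s\theta(s))^{-2}$ times a normalization, integrate by parts, and use $V(s)\le s^2\theta(s)$ together with the monotonicity of $\theta$ to control all terms. The only cosmetic difference is the normalization of the cutoff---the paper chooses $r_j$ via the doubling condition $\psi(r_j)=2\psi(R_j)$ rather than your increment condition $\psi(T_j)-\psi(R_j)=j$---but the algebra and the resulting decay are the same.
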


\begin{remark}
	A typical example of function $\theta(R)$ satisfying the above assumptions, that we will hereafter use in the paper, is $\theta(R) = \log R$.	
\end{remark}

\begin{proof}
	We define $\Theta(s) = s\theta(s)$ and for each $r\geq 1$ we set
	\[
	\psi(t) = \int_{R_1}^t \frac{\di s}{\Theta(s)} \, .
	\]
	The function $\psi : [R_1,\infty) \to \R^+_0$ is $C^2$, strictly increasing, strictly positive on $(R_1,\infty)$ and such that $\psi(t) \to \infty$ as $t\to\infty$. Let $\{R_j\} \subseteq [R_1,\infty)$ be a diverging sequence. For each $j\in\N$ let $r_j > R_j$ be the real number such that $\psi(r_j) = 2\psi(R_j)$ and define
	\[
	\psi_j(t) = \begin{cases}
		1 & \text{if } \, 0 < t \leq R_j \\
		2\left(1-\dfrac{\psi(t)}{\psi(r_j)}\right) & \text{if } \, R_j < t \leq r_j \\
		0 & \text{if } \, t > r_j \, .
	\end{cases}
	\]
	Then for $R_j < t < r_j$ we have
	\[
	\psi_j'(t) = -\frac{2}{\psi(r_j)} \frac{1}{\Theta(t)} \, , \qquad \psi_j''(t) = \frac{2}{\psi(r_j)} \frac{\Theta'(t)}{\Theta(t)^2} \, .
	\]
	In particular, by our assumption on $\theta$ we have $\psi_j'\leq 0$ and $\psi_j''\geq 0$ on $(R_j,r_j)$. 
	For each $j\in\N$ define $\varphi_j = \psi_j\circ r \in \lip_c(M)$. 
	Since $\{\varphi_j = 1\} = \overline{B}_{R_j}$ and $R_j\to\infty$ as $j\to\infty$, we also have $\varphi_j \to 1$ in $W^{1,\infty}_\loc(M)$. We now prove that $f|\nabla\varphi_j|^2 \to 0$ in $L^1(M)$. The function $H : \R^+ \to \R^+_0$ defined by
	\[
	H(t) = \int_{B_t} f
	\]
	is absolutely continuous on compact intervals contained in $\R^+$. By the co-area formula
	\begin{align*}
		\int_M f|\nabla\varphi_j|^2 & = \int_0^\infty \left(\int_{\partial B_t} f|\nabla\varphi_j|^2 \right) \, \di t \\
		& = \int_0^\infty H'(t) \psi_j'(t)^2 \, \di t \\
		& = \int_{R_j}^{r_j} H'(t) \psi_j'(t)^2 \, \di t \, .
	\end{align*}
	Integrating by parts, using that $0 \leq H(t)\leq t \Theta(t)$, $\psi_j' \leq 0$ and $\psi_j'' \geq 0$ and also using the explicit expression for $\psi_j'$ we obtain
	\begin{align*}
		\int_{R_j}^{r_j} H'(t) \psi_j'(t)^2 \, \di t & = H(t) \psi_j'(t)^2 \Big|_{t=R_j}^{t=r_j} - 2 \int_{R_j}^{r_j} H(t) \psi_j'(t) \psi_j''(t) \, \di t \\
		& \leq H(r_j) \psi_j'(r_j)^2 - 2 \int_{R_j}^{r_j} H(t) \psi_j'(t) \psi_j''(t) \, \di t \\
		& \leq r_j \Theta(r_j) \psi_j'(r_j)^2 - 2 \int_{R_j}^{r_j} t \Theta(t) \psi_j'(t) \psi_j''(t) \, \di t \\
		& = \frac{4}{\psi(r_j)^2} \frac{r_j}{\Theta(r_j)} + \frac{4}{\psi(r_j)} \int_{R_j}^{r_j} t \psi_j''(t) \, \di t \, .
	\end{align*}
	Another integration by parts yields
	\begin{align*}
		\int_{R_j}^{r_j} t \psi_j''(t) \, \di t & = t\psi_j'(t) \Big|_{t=R_j}^{t=r_j} - \int_{R_j}^{r_j} \psi_j'(t) \, \di t \\
		& = -\frac{2}{\psi(r_j)}\frac{t}{\Theta(t)} \Big|_{t=R_j}^{t=r_j} - \psi_j(t) \Big|_{t=R_j}^{t=r_j} \\
		& = \frac{2}{\psi(r_j)} \left(\frac{1}{\theta(R_j)} - \frac{1}{\theta(r_j)}\right) + 1
	\end{align*}
	so we get
	\[
	\int_M f|\nabla\varphi_j|^2 \leq \frac{8}{\psi(r_j)^2}\frac{1}{\theta(R_j)} + \frac{4}{\psi(r_j)} \to 0 \qquad \text{as } \, j \to \infty
	\]
	as desired.
\end{proof}

As a direct consequence, we have

\begin{corollary}\label{cor_lowenergy}
	If $a$ satisfies \eqref{assu_A}, $u$ has moderate energy growth in $\Omega_0$ whenever
	\[
	\int_{\Omega_0 \cap B_R} |\nabla u|^2\lambda_{\max}(|\nabla u|) = O(R^2 \log R) \qquad \text{as } \, R \to \infty. 
	\]
\end{corollary}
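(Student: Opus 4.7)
The plan is to reduce the corollary to a direct application of Lemma \ref{lem_phij} via a pointwise bound on the quadratic form associated to $A(\nabla u)$. The whole argument is essentially a one-line consequence of the lemma, so I expect no serious obstacle; the only care needed is in checking the integrability hypothesis and the choice of the weight $\theta$.

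First, I would observe that by \eqref{eigenvalues} the endomorphism $A(\nabla u)$ is symmetric with eigenvalues $\lambda_1(|\nabla u|),\lambda_2(|\nabla u|),\ldots,\lambda_n(|\nabla u|)$, all bounded above by $\lambda_{\max}(|\nabla u|)$. Hence for any Lipschitz $\varphi$ the pointwise bound
\[
\langle A(\nabla u)\nabla\varphi,\nabla\varphi\rangle \le \lambda_{\max}(|\nabla u|)\,|\nabla\varphi|^2
\]
holds a.e.\ in $\Omega_0$, so that
\[
\int_{\Omega_0} |\nabla u|^2 \langle A(\nabla u)\nabla\varphi,\nabla\varphi\rangle \le \int_{\Omega_0} |\nabla u|^2 \lambda_{\max}(|\nabla u|)\,|\nabla\varphi|^2.
\]

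Next, I would set $f = |\nabla u|^2 \lambda_{\max}(|\nabla u|)\chi_{\Omega_0}$, extended by zero to all of $M$, and $\theta(R)=\log R$ (for $R \ge R_1$ with $R_1>1$). The growth assumption of the corollary gives $\int_{B_R} f \le C R^2\log R$ for $R$ large, up to harmlessly enlarging $\theta$ on $[R_1,R_2]$ to absorb the constant and the behaviour near the origin. Since
\[
\int_{R_1}^\infty \frac{\di s}{s\log s} = \infty,
\]
the hypotheses of Lemma \ref{lem_phij} are satisfied, and the lemma produces radial cut-offs $\varphi_j = \psi_j\circ r \in \lip_c(M)$ with $0 \le \psi_j \le 1$, $\varphi_j \to 1$ in $W^{1,\infty}_\loc(M)$ and
\[
\int_M f\,|\nabla\varphi_j|^2 \longrightarrow 0 \qquad \text{as } \, j\to\infty.
\]

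Finally, I would restrict the $\varphi_j$ to $\overline{\Omega}_0$. Since each $\varphi_j$ is compactly supported in $M$, its restriction lies in $\lip_c(\overline{\Omega}_0)$ and still converges to $1$ in $C_\loc(\overline{\Omega}_0)$. Combining this with the pointwise bound displayed above yields
\[
\int_{\Omega_0} |\nabla u|^2 \langle A(\nabla u)\nabla\varphi_j,\nabla\varphi_j\rangle \le \int_M f\,|\nabla\varphi_j|^2 \longrightarrow 0,
\]
which is exactly the moderate energy growth of $u$ in $\Omega_0$.
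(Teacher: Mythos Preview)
Your proof is correct and follows essentially the same approach as the paper: apply Lemma \ref{lem_phij} with $f = |\nabla u|^2\lambda_{\max}(|\nabla u|)\mathbf{1}_{\Omega_0}$ and $\theta(R) = \log R$, then use the pointwise eigenvalue bound $\langle A(\nabla u)\nabla\varphi,\nabla\varphi\rangle \le \lambda_{\max}(|\nabla u|)|\nabla\varphi|^2$. The paper's proof is more terse but the content is identical.
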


\begin{proof}
	It is enough to apply Lemma \ref{lem_phij} with the choices 
	\[
	f = |\nabla u|^2\lambda_{\max}(|\nabla u|) \mathbf{1}_{\Omega_0}, \qquad \theta(R) = \log R 
	\]
	and observe that 
	\[
	0 \le |\nabla u|^2 \langle A(\nabla u)\nabla \varphi_j, \nabla \varphi_j \rangle \le |\nabla u|^2 \lambda_{\max}(|\nabla u|)|\nabla \varphi_j|^2.   
	\]
\end{proof}

We next examine operators of mean curvature type, i.e. those for which 
\begin{equation}\label{eq_meancurv_type}
t^2 \lambda_1(t) \le C_1 \lambda_2(t) \le \frac{C_2}{t} \qquad \text{on } \, \R^+
\end{equation}
for some constants $C_1,C_2>0$. In this case, under suitable assumptions on $f$ and $u$ we can guarantee that $u$ has moderate energy growth by controlling the measures 
\[
|\Omega \cap B_R| \qquad \text{and possibly} \qquad \mathscr{H}^{n-1}(\partial \Omega \cap B_R).
\]

We first observe the following

\begin{proposition}\label{prop_moderate_boundgradient}
	Assume that $a$ satisfies \eqref{assu_A} and is of mean curvature type. If a solution $u$ to \eqref{equazu} satisfies
	\[
	|\nabla u| \in L^\infty(\Omega),
	\]
	and $|\Omega \cap B_R| = o(R^2 \log R)$ as $R \to \infty$, then $u$ has moderate energy growth.
\end{proposition}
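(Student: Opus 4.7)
The plan is to reduce the claim to Corollary~\ref{cor_lowenergy} by proving that the integrand $|\nabla u|^{2}\lambda_{\max}(|\nabla u|)$ is \emph{uniformly bounded} on $\Omega$. Once such a pointwise bound is in hand, the volume growth hypothesis immediately yields
\[
\int_{\Omega\cap B_R}|\nabla u|^{2}\lambda_{\max}(|\nabla u|)\;\le\;K\,|\Omega\cap B_R|=o(R^{2}\log R),
\]
which is \emph{a fortiori} $O(R^{2}\log R)$, and Corollary~\ref{cor_lowenergy} then concludes that $u$ has moderate energy growth.

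To establish the pointwise bound, set $M=\|\nabla u\|_{L^{\infty}(\Omega)}$ and examine the function $t\mapsto t^{2}\lambda_{\max}(t)$ on the compact interval $[0,M]$. By \eqref{assu_A} one has $a\in C(\R^{+}_{0})$, hence $\lambda_{2}(t)=a(t)$ is bounded on $[0,M]$; in particular $t^{2}\lambda_{2}(t)\le M^{2}\|a\|_{L^{\infty}[0,M]}$. The mean curvature type condition \eqref{eq_meancurv_type} then provides $t^{2}\lambda_{1}(t)\le C_{1}\lambda_{2}(t)$ for every $t\in\R^{+}$, so $t^{2}\lambda_{1}(t)$ is likewise bounded on $(0,M]$ and extends continuously to $0$. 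Taking the maximum of the two eigenvalue contributions, I obtain a constant $K=K(M,C_{1},\|a\|_{L^{\infty}[0,M]})$ with $t^{2}\lambda_{\max}(t)\le K$ throughout $[0,M]$. Evaluating at $t=|\nabla u(x)|$ gives the required uniform estimate on $\Omega$ (with the convention that the integrand vanishes at points where $\nabla u=0$).

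There is no real obstacle in this argument: the mean curvature type condition is precisely calibrated so that the factor $|\nabla u|^{2}$ can be absorbed into the bound on $\lambda_{\max}$, while the $C^{0}$ regularity of $a$ at the origin handles the small-gradient regime. In essence, the proposition records the observation that, for operators of mean curvature type and for solutions with globally bounded gradient, moderate energy growth reduces to a purely metric growth property of $\Omega$.
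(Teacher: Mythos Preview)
Your proof is correct and follows essentially the same approach as the paper: bound $|\nabla u|^{2}\lambda_{\max}(|\nabla u|)$ uniformly on $\Omega$ using $|\nabla u|\in L^\infty$ together with the mean curvature type condition, then invoke the volume growth hypothesis and Corollary~\ref{cor_lowenergy}. The only cosmetic difference is that the paper first establishes the global bound $\lambda_{\max}(t)\le C_3/(1+t)$ and then uses $t^2/(1+t)\le \|\nabla u\|_\infty$, whereas you work directly on the compact interval $[0,M]$; your route is if anything slightly more direct, and notably uses only the first inequality in \eqref{eq_meancurv_type}.
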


\begin{proof}
	The mean curvature type condition implies that the eigenvalues of $A$ can be bounded as follows: 
	\[
	\lambda_{\max}(t) \le \max_{t \in [0,1]}(\lambda_{\max}) + \frac{2C_2}{1+t} \le \frac{C_3}{1+t}
	\]
	whence
	\begin{equation}\label{eq_uppermean}
	\int_{\Omega \cap B_R} |\nabla u|^2 \lambda_{\max} (|\nabla u|) \le C_3 \int_{\Omega \cap B_R} \frac{|\nabla u|^2}{1+|\nabla u|} \le C_3 \|\nabla u\|_\infty |\Omega \cap B_R|,  
	\end{equation}
	and we conclude by Corollary \ref{cor_lowenergy}.
\end{proof}

For the mean curvature operator, condition $|\nabla u| \in L^\infty(\Omega)$ is satisfied under mild assumptions on $M$, $u$ and $f$ including $f'(u) \le 0$. Indeed, we have the following theorem which improves on \cite[Theorem 1.5]{lian_sicbaldi}. We postpone its proof to the appendix, see Theorem \ref{thm_prop_grad}. The case where $f(u)$ is constant was considered in \cite{cmmr,cmr}.

\begin{theorem} \label{prop_grad}
	Let $(M^n,\metric)$ be a complete Riemannian manifold and $\Omega\subseteq M$ a domain. Let $f\in C^1(\R)$ and let $u\in C^3(\Omega) \cap C^1(\overline{\Omega})$ be a solution to
	\[
		\MM[u] + f(u) = 0 \qquad \text{in } \, \Omega \, .
	\]
	Suppose that
	\[
		\sup_\Omega |f(u)| < \infty \, , \qquad f'(u) \leq 0 \, , \qquad \inf_\Omega u > -\infty
	\]
	and that
	\[
		\Ric \geq 0 \quad \text{in } \, \Omega , \qquad \Sec \geq - \kappa \quad \text{in } \, M
	\]
	for some $\kappa\in\R^+$. Then
	\[
		\sup_\Omega |\nabla u| \leq \max\left\{ \sqrt{n/2}, \, \sup_{\partial\Omega} |\nabla u|\right\} .
	\]
\end{theorem}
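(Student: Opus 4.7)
The strategy is to view the graph $\Sigma := \GG_u(\Omega) \subseteq M \times \R$ with upward unit normal $N$, and to bound from below the angle function
\[
\Theta := \langle N, \partial_t \rangle = (1+|\nabla u|^2)^{-1/2},
\]
where $\partial_t$ is the vertical unit field. The target $|\nabla u|^2 \le n/2$ translates to $\Theta^2 \ge 2/(n+2)$.

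The first step is to derive the PDE satisfied by $\Theta$ on $\Sigma$. Because $\partial_t$ is parallel in the product $M \times \R$, and the restriction of $u$ to $\Sigma$ coincides with the vertical coordinate $t$, a classical Jacobi-type computation gives
\[
\Delta_\Sigma \Theta + \bigl(|II|^2 + \overline{\Ric}(N,N)\bigr)\Theta = f'(u)(1-\Theta^2),
\]
where $\overline{\Ric}$ is the ambient Ricci tensor of $M \times \R$ and $II$ the second fundamental form of $\Sigma$. Under $\Ric_M \ge 0$ and $f'(u) \le 0$, both sides produce the right signs, and $\Theta$ is superharmonic on $\Sigma$: $\Delta_\Sigma \Theta \le 0$.

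The second step is a weak Omori--Yau-type maximum principle for $\Theta$ on $\Sigma$. If $\inf_\Sigma \Theta$ is attained at a boundary point, the claim reduces to $\sup_\Omega|\nabla u| = \sup_{\partial\Omega}|\nabla u|$ and we land in the second alternative of the stated max. Otherwise, one produces an almost-minimum sequence $\{p_k\}\subseteq \Sigma$ with $\Theta(p_k) \to \inf_\Sigma \Theta$, $|\nabla^\Sigma \Theta(p_k)| \to 0$ and $\liminf_k \Delta_\Sigma \Theta(p_k) \ge 0$. The needed Ricci lower bound on $\Sigma$ comes from the Gauss equation together with $\Sec_M \ge -\kappa$ and the uniform bound on $H = -f(u)$, while $\inf_\Omega u > -\infty$ supplies a Lyapunov function on $\Sigma$ (the vertical coordinate, bounded below) with which to localize the argument. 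Plugging the sequence into the PDE and taking the limit forces each non-negative term on the left to vanish. Since $\nabla^\Sigma \Theta = -II(\partial_t^\top, \cdot)$, the asymptotic vanishing of $\nabla^\Sigma \Theta$ imposes that $\partial_t^\top$ is a principal direction with zero eigenvalue, so the trace $H$ is carried by the remaining $n-1$ principal curvatures and Cauchy--Schwarz improves to $|II|^2 \ge H^2/(n-1)$. A careful balance of this sharp inequality against the other terms in the limiting equation then yields the quantitative bound $\Theta_*^2 \ge 2/(n+2)$.

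The principal technical obstacle is the maximum-principle argument in Step 2: the hypersurface $\Sigma$ is not complete, since $\partial \Sigma$ projects onto $\partial\Omega$, so no off-the-shelf Omori--Yau principle applies. The argument must be localized by constructing explicit cut-offs tailored both to the Lyapunov function $u$ and to the Hessian comparison estimates on $M$ afforded by $\Sec_M \ge -\kappa$, so that boundary contributions do not spoil the near-minimum analysis. A secondary subtlety is the extraction of the sharp dimensional constant $\sqrt{n/2}$: the naive inequality $|II|^2 \ge H^2/n$ would yield a weaker bound, and one must really exploit the vanishing of the eigenvalue of $II$ in the direction $\partial_t^\top$ (coming from $\nabla^\Sigma \Theta \to 0$) to close the estimate with the correct constant.
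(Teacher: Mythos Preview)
Your Step~1 matches the paper, but Step~2 has a genuine gap: the Jacobi equation for $\Theta$ together with an Omori--Yau sequence cannot by itself produce any quantitative lower bound on $\Theta_* = \inf_\Sigma \Theta$. Along an interior almost-minimum sequence, the inequality $0 \le \liminf \Delta_\Sigma \Theta(p_k)$ only forces each nonnegative term on the left of the Jacobi identity to vanish: $|\II|^2(p_k)\Theta(p_k) \to 0$, $\overline{\Ric}(N,N)(p_k)\Theta(p_k)\to 0$, $f'(u(p_k))(1-\Theta^2(p_k))\to 0$. None of these pins down $\Theta_*$. For instance, in the CMC case $f\equiv c\neq 0$ one has $|\II|^2 \ge c^2/n >0$, and the argument yields only $\Theta_*=0$, i.e.\ no bound at all; in the minimal case $f\equiv 0$ every term is already identically zero for a tilted plane and $\Theta_*$ is arbitrary. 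There is simply no ``other term'' to balance against, so the sentence ``a careful balance \ldots\ then yields $\Theta_*^2 \ge 2/(n+2)$'' has no content. Your proposed sharpening $|\II|^2 \ge H^2/(n-1)$ from $\nabla^\Sigma\Theta\to 0$ is correct but irrelevant: the paper obtains $\sqrt{n/2}$ with the crude bound $|\II|^2\ge H^2/n$.

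What is missing is precisely the mechanism the paper supplies. Following Korevaar, one does not apply a maximum principle to $\Theta$ (or $W=\Theta^{-1}$) directly but to $z = W\eta$ with the barrier $\eta = e^{-Cu-\varepsilon r^2}-\delta$, which has compact superlevel sets by $\inf_\Omega u > -\infty$ and $\Sec\ge -\kappa$ (via Hessian comparison for $r^2$). The point is that $\Delta_g \eta$ contains the strictly positive term $C^2\|\nabla^g u\|^2 = C^2(1-W^{-2})$; at an interior maximum of $z$ this term must be dominated, and after absorbing $Cf(u)/W$ into $|\II|^2\ge H^2/n$ via Young's inequality one gets $\tfrac{C^2}{2}(1-z^{-2}) \lesssim \tfrac{nC^2}{4z^2}$, whence $z^2 \le 1+n/2$. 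Letting $\varepsilon,\delta,C\to 0$ gives $W\le \sqrt{1+n/2}$, i.e.\ $|\nabla u|\le\sqrt{n/2}$. The parameter $C$ in the barrier is exactly what creates the ``balance'' you were looking for.
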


%

If we do not assume $f'(u) \le 0$ in $\Omega$, global gradient estimates are, to our knowledge, currently not available, not even for the mean curvature operator. However, if $f(u)$ satisfies suitable bounds in $\Omega$ and $u$ suitable conditions on $\partial_\star \Omega = \partial\Omega \cap \{\partial_\eta u \neq 0\}$ we can still guarantee that $u$ has moderate energy growth by controlling the measures 
\[
|\Omega \cap B_R| \qquad \text{and} \qquad \mathscr{H}^{n-1}(\partial_\star \Omega \cap B_R),
\]
without needing to control $|\nabla u|$. Moreover, in certain cases all requests on $\partial_\star \Omega \cap B_R$ can be dropped. The calibration argument below is inspired by \cite[p. 403]{gilbargtrudinger}, \cite[p. 24]{simon} and \cite{FSV}. We localize our estimates to subdomains $\Omega_0 \subseteq \Omega$ whose boundary satisfies
\[
\partial \Omega_0 \subseteq \partial \Omega \cup \critu,
\]
where
\[
\critu = \big\{ x \in \Omega \ : \ |\nabla u(x)| = 0 \big\}
\]
is the \emph{interior critical set of $u$}. 

Define
\[
	\rho = \sqrt{ r^2 + u^2}
\]
that is, $\rho$ is the restriction to the graph of $u$ of the distance from $(o,0)$ in $M \times \R$. Hereafter, positive constants will be denoted by $C,C_0,C_1$ and so forth.

\begin{proposition} \label{prop_1_calib}
	Let $u \in C^2(\Omega) \cap C^1(\overline\Omega)$ be a weak solution to
	\[
		\Delta_a u + f(u) = 0 \qquad \text{in } \, \Omega,
	\]
	where $0 \le a \in C(\R^+_0)$ satisfies $ta(t) \in L^\infty(\R^+) \cap \lip_\loc(\R^+_0)$ and $f \in C(\R)$. Let $\Omega_0\subseteq\Omega$ be an open set with locally finite perimeter in $\Omega$ such that $\partial\Omega_0 \subseteq \partial\Omega \cup \critu$.
	\begin{itemize}
		\item [$(i)$] Suppose that
		\[
			u_{|\overline{\Omega}_0 \cap \partial_\star\Omega} \, \text{ is bounded,} \qquad -C_0 \leq f(u) \leq 0 \quad \text{in } \, \Omega_0, \qquad \inf_{\Omega_0} u > -\infty \, .
		\]
		and let $b\in\R$. Then there exists $C>0$ such that
		\begin{equation}
			\int_{\Omega_0\cap\{\rho\leq R\}} \aaa |\nabla u|^2 \leq C \left(|\Omega_0\cap B_{4R}| + \haus^{n-1}(\overline{\Omega}_0 \cap \partial^b_\star\Omega \cap B_{4R}) \right)
		\end{equation}
		for all sufficiently large $R$, where $\partial_\star^b\Omega=\{ x \in \partial_\star \Omega : u(x) \neq b\}$. 
		\item[$(ii)$] Suppose that
		\[
		u_{|\overline{\Omega}_0 \cap \partial_\star\Omega} \, \text{ is constant,} \qquad -C_0 \leq f(u) \leq 0 \quad \text{in } \, \Omega_0, \qquad \inf_{\Omega_0} u > -\infty \, ,
		\]
		then there exists $C>0$ such that
		\begin{equation}
			\int_{\Omega_0\cap\{\rho\leq R\}} \aaa |\nabla u|^2 \leq C |\Omega_0\cap B_{4R}|
		\end{equation}
		for all sufficiently large $R$. 
		\item[$(iii)$] Suppose that 
		\[
		u_{|\overline{\Omega}_0 \cap \partial_\star\Omega} = b \, \text{ is constant,} \qquad f(u) \leq 0 \quad \text{in } \, \Omega_0, \qquad \inf_{\Omega_0} u = b ,
		\]
		then there exists $C>0$ such that
		\begin{equation}
			\int_{\Omega_0\cap\{\rho\leq R\}} \aaa |\nabla u|^2 \leq C |\Omega_0\cap B_{4R}|
		\end{equation}
		for all sufficiently large $R$.
		\item [$(iv)$] Suppose that
		\[
			u_{|\overline{\Omega}_0 \cap\partial_\star\Omega} \, \text{ is constant,} \qquad |f(u)| \leq C_0 \quad \text{in } \, \Omega_0 \, ,
		\]
		then there exists $C>0$ such that
		\begin{equation}
			\int_{\Omega_0\cap\{\rho\leq R\}} \aaa |\nabla u|^2 \leq (C + C_0 R) |\Omega_0\cap B_{8R}|
		\end{equation}
		for all sufficiently large $R$. 
		\item [$(v)$] Suppose that 
		\[
		u_{|\overline{\Omega}_0 \cap\partial_\star\Omega} \, \text{ is constant,}  \qquad u \in L^\infty(\Omega_0), 
		\]
		then there exists $C'>0$ such that
		\begin{equation}
			\int_{\Omega_0\cap\{\rho\leq R\}} \aaa |\nabla u|^2 \leq C' |\Omega_0\cap B_{8R}|
		\end{equation}
		for all sufficiently large $R$. 
		\item [$(vi)$] Suppose that
		\[
			f(u) \leq 0 \qquad \text{in } \, \Omega_0
		\]
		with no further assumptions on $u$. Then there exists $C>0$ such that
		\[
			\int_{\Omega_0\cap\{\rho\leq R\}} \aaa |\nabla u|^2 \leq C \left(|\Omega_0\cap B_{2R}| + R\haus^{n-1}(\overline{\Omega}_0 \cap \partial_\star\Omega \cap B_{2R}) \right)
		\]
		for all $R>1$.		
	\end{itemize}
\end{proposition}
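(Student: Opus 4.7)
The proof will be a calibration argument: test the weak equation $\Delta_a u + f(u) = 0$ against $(u-b)\varphi^2$ for a suitable Lipschitz cutoff $\varphi$ and constant $b$ chosen according to the case, then integrate by parts to express $\int_{\Omega_0} a(|\nabla u|)|\nabla u|^2\varphi^2$ in terms of the geometric quantities on the right-hand sides.

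\textbf{Setup.} For cases (i)--(iii), I take $\varphi=\psi(\rho)$ with $\psi$ piecewise-linear, equal to $1$ on $[0,2R]$ and vanishing on $[4R,\infty)$; for (iv)--(v) the same with $4R$, $8R$; for (vi) I use a purely spatial cutoff $\varphi=\psi(r)$ supported in $B_{2R}$. Since $r\le\rho$ and $|u|\le\rho$, on the support of the $\rho$-cutoff one has both $r\le cR$ and $|u|\le cR$. From $\nabla\rho=(r\nabla r+u\nabla u)/\rho$ one gets $|\nabla\rho|\le\sqrt{2(1+|\nabla u|^2)}$, hence $|\nabla\varphi|^2\le C(1+|\nabla u|^2)/R^2$ on the annular strip $\{cR\le\rho\le 2cR\}$. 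Integration by parts in the tested equation yields
\[
\int_{\Omega_0} a|\nabla u|^2\varphi^2 = \int_{\Omega_0} f(u)(u-b)\varphi^2 - 2\int_{\Omega_0} a(u-b)\varphi\langle\nabla u,\nabla\varphi\rangle - \int_{\partial\Omega_0} a(u-b)\varphi^2\partial_\eta u\,dS.
\]
On $\partial\Omega_0\subseteq\partial\Omega\cup\critu$, the boundary integral on $\critu$ vanishes because $ta(t)\to 0$ as $t\to 0^+$ (by the local Lipschitz hypothesis), and on $\partial\Omega\setminus\partial_\star\Omega$ it vanishes because $\partial_\eta u=0$; thus only $\overline\Omega_0\cap\partial_\star\Omega$ contributes.

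\textbf{Handling the $f$-term and cross term.} In (i)--(iii) and (vi), where $f(u)\le 0$, I split $f(u)(u-b)=-(-f(u))(u-b)_+ +(-f(u))(b-u)_+$, discard the non-positive first piece, and bound the second by $C_0(b-\inf_{\Omega_0}u)\cdot|\Omega_0\cap\supp\varphi|$, producing a term proportional to the volume of $\Omega_0\cap B_{4R}$. In (iv), $|f(u)|\le C_0$ and $|u-b|\le 8R+|b|$ on $\supp\varphi$ give the $C_0 R\cdot|\Omega_0\cap B_{8R}|$ contribution; in (v) both $|f(u)|$ and $|u-b|$ are bounded, giving only a volume term. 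For the cross term I apply Young's inequality $2|a(u-b)\varphi\langle\nabla u,\nabla\varphi\rangle|\le\epsilon a|\nabla u|^2\varphi^2+\epsilon^{-1}a(u-b)^2|\nabla\varphi|^2$, absorb the first summand to the left, and use $(u-b)^2|\nabla\varphi|^2\le C(1+|\nabla u|^2)$ plus the key bound $a(t)(1+t^2)\le C$ (which follows from $ta(t)\in L^\infty$) to obtain $\int a(u-b)^2|\nabla\varphi|^2\le C|\Omega_0\cap B_{2cR}|+C\int_{\{cR\le\rho\le 2cR\}} a|\nabla u|^2$.

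\textbf{Boundary term and assembly.} On $\overline\Omega_0\cap\partial_\star\Omega$, I use $a|\nabla u|\le\|ta\|_\infty$. In (i), since $u-b\equiv 0$ on $\partial_\star\Omega\setminus\partial^b_\star\Omega$ and $u-b$ is bounded on $\partial^b_\star\Omega$ by the hypothesis that $u|_{\overline\Omega_0\cap\partial_\star\Omega}$ is bounded, the boundary term is controlled by $C\haus^{n-1}(\overline\Omega_0\cap\partial^b_\star\Omega\cap B_{4R})$. In (ii)--(v), $u=b$ on $\partial_\star\Omega$ so the boundary term vanishes outright. In (vi), $|u-b|$ on $\partial_\star\Omega\cap B_{2R}$ is controlled only by the oscillation, of size $R$, producing the stated $R\haus^{n-1}(\partial_\star\Omega\cap B_{2R})$ factor. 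Collecting terms gives an inequality $E(cR)\le\theta E(2cR)+F(R)$ with $\theta<1$ and $F(R)$ the claimed right-hand side; standard hole-filling/iteration (or, in the simplest sub-cases, direct absorption) concludes.

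\textbf{Main obstacle.} Two difficulties stand out. First, in (i) the statement demands $\haus^{n-1}(\partial^b_\star\Omega)$ rather than $\haus^{n-1}(\partial_\star\Omega)$, which forces the test-function constant to equal the given $b$; the $f$-term therefore lacks a sign and must be handled by the positive/negative split described above. Second, controlling $\int a(u-b)^2|\nabla\varphi|^2$ depends crucially on the estimate $a(t)(1+t^2)\le C$ together with boundedness of $ta(t)$, which is really where the mean-curvature-type structure enters; without this, the cross term would produce $\int|\nabla u|^2$, which is not a priori controlled.
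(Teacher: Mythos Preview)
Your overall scheme—test the equation, integrate by parts, and control the three resulting terms—is the right one, but the specific test function $(u-b)\varphi^2$ creates problems that the paper avoids by a simple but decisive modification: instead of $(u-b)$ it uses a \emph{truncated} multiplier $\gamma(u)$ with $|\gamma|\le 1$, $\gamma(b)=0$, and $\gamma'=\tfrac{1}{R}\mathbf{1}_{(b_0,b_0+R)}$, together with a purely spatial cutoff $\psi(r)$. With this choice the cross term is bounded directly,
\[
\Bigl|\int_{\Omega_0}\gamma(u)\,a(|\nabla u|)\langle\nabla u,\nabla\psi\rangle\Bigr|
\;\le\;\|\gamma\|_\infty\,\|ta(t)\|_\infty\,\|\nabla\psi\|_\infty\,|\Omega_0\cap B_{2R}|
\;\le\;\frac{C}{R}\,|\Omega_0\cap B_{2R}|,
\]
so no Young inequality, no absorption, and no iteration are needed. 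The left side becomes $\tfrac{1}{R}\int_{\Omega_0\cap B_R\cap\{b_0<u<b_0+R\}} a|\nabla u|^2$, and multiplying through by $R$ gives the estimate in one shot.

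Your route has two genuine gaps. First, the claim $a(t)(1+t^2)\le C$ is false under the stated hypotheses (for the mean curvature operator $a(t)(1+t^2)=\sqrt{1+t^2}$); what actually holds is only $a\le C$ and $ta\le C$. Reading your conclusion, you seem to use the split $a(1+t^2)=a+at^2$, keeping $\int a|\nabla u|^2$ on the right—but then the hole-filling inequality $E(cR)\le\theta E(2cR)+F(R)$ with $\theta<1$ does \emph{not} close: iterating yields $E(R)\le\theta^k E(2^k R)+\sum_j\theta^jF(2^jR)$, and you have no a priori bound forcing $\theta^k E(2^k R)\to 0$. Second, case $(vi)$ fails outright: there $f(u)$ has no lower bound and $u$ no lower bound, so your $f$-term estimate $(-f(u))(b-u)_+\le C_0(b-\inf u)$ is unavailable; moreover with the purely spatial cutoff, $(u-b)$ is unbounded on $B_{2R}$. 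The paper's truncation handles both: taking $b_0=b$ makes $\gamma(u)\ge 0$, so $\gamma(u)f(u)\le 0$ whenever $f(u)\le 0$, killing the $f$-term regardless of any lower bound on $f$ or $u$; and $|\gamma|\le 1$ makes the boundary and cross terms independent of the size of $u$.
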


\begin{remark}
	The constant $C$ depends on $C_0, \|a\|_{L^\infty(\R^+)}, \|t a\|_{L^\infty(\R^+)}$ and, according to the case, $b$ (in $(i)$), $\|u\|_{L^\infty(\overline{\Omega}_0 \cap \partial_\star \Omega)}$ (in $(i)-(v)$) and $\|u\|_{L^\infty(\Omega_0)}$ (in $(v)$). Likewise, the lower bounds for $R$ guaranteeing the inequalities in $(i)-(vi)$ depend on the aforementioned quantities. 
\end{remark}

\begin{proof}
	For fixed $R>0$, define $\psi\in\lip_c(M)$ by
	\[
	\psi(x) = \left\{
	\begin{array}{l@{\quad}l}
		1 & \text{if } \, r(x) \leq R \\[0.2cm]
		2 - \dfrac{r(x)}{R} & \text{if } \, R < r(x) < 2R \\[0.3cm]
		0 & \text{if } \, r(x) \geq 2R
	\end{array}
	\right.
	\]
	Let $b_0,b\in\R$ be given satisfying $0 \leq b - b_0 \leq R$ and define $\gamma\in\lip(\R)$ by
	\[
		\gamma(t) = \left\{
			\begin{array}{l@{\quad}l}
				\dfrac{b_0-b}{R} & \text{if } \, t \leq b_0 \\[0.3cm]
				\dfrac{t-b}{R} & \text{if } \, b_0 < t < b_0 + R \\[0.3cm]
				\dfrac{b_0-b}{R}+1 & \text{if } \, t \geq b_0 + R
			\end{array}
		\right.
	\]
	Note that by construction
	\[
		\gamma(b) = 0, \qquad 0\leq\gamma' = \frac{1}{R}\mathbf{1}_{(b_0,b_0+R)}, \qquad |\gamma| \leq 1 \quad \text{on } \, \R .
	\]
	Consider the vector field
	\[
		W = \psi\gamma(u)a(|\nabla u|)\nabla u
	\]
	which is continuous and compactly supported in $\overline{\Omega}$, locally Lipschitz\footnote{For $|X|,|Y|\leq R$ we have $|a(|X|)X-a(|Y|)Y| \leq C_R|X-Y|$, with $C_R = 2\|a\|_{L^\infty} + [ta]_{\lip([0,R])}$, since
			\begin{align*}
			a(|X|)X - a(|Y|)Y & = [a(|X|) - a(|Y|)]X + a(|Y|)(X-Y), \\
			[a(|X|) - a(|Y|)]|X| & = |X|a(|X|) - |Y|a(|Y|) + (|Y|-|X|)a(|Y|)
			\end{align*}
	so $a(|\nabla u|)\nabla u$ is locally Lipschitz in $\Omega$ as $u \in C^2(\Omega)$.} in $\Omega$ and satisfies
	\[
		\diver \, W = - \psi\gamma(u)f(u) + \gamma(u) a(|\nabla u|) \langle\nabla u,\nabla\psi\rangle + \psi \gamma'(u) a(|\nabla u|) |\nabla u|^2
	\]
	weakly in $\Omega$. We apply the divergence theorem (see Lemma \ref{lem_divergence} in the appendix) to the vector field $W$ on the domain $\Omega_0$, observing that $W = 0$ on $\{\nabla u = 0\}$ since $ta(t) \to 0$ as $t\to0$. We obtain
	\begin{align*}
		\int_{\Omega_0} \psi \gamma'(u) \aaa |\nabla u|^2 & =  
		\int_{\Omega_0} \psi \gamma(u)f(u) - \int_{\Omega_0} \gamma(u) \aaa \langle \nabla u, \nabla \psi \rangle \\
		& \phantom{=\;} - \int_{\overline{\Omega}_0\cap\partial \Omega} a(|\nabla u|) \gamma(u)\psi \partial_\eta u 
	\end{align*}
	where, we recall, $\eta$ is the inward normal. We next  estimate the various terms. First, by our choice of $\gamma$ and $\psi$ we have
	\[
		\int_{\Omega_0} \psi \gamma'(u) \aaa |\nabla u|^2 \geq \frac{1}{R} \int_{\Omega_0 \cap B_R \cap \{b_0 < u < b_0+R\}} \aaa |\nabla u|^2
	\]
	and, since $ta(t) \in L^\infty(\R^+)$,
	\[
		- \int_{\Omega_0} \gamma(u) \aaa \langle \nabla u, \nabla \psi \rangle \leq \frac{1}{R} \int_{\Omega_0 \cap B_{2R}} \aaa |\nabla u| \leq \frac{C}{R} |\Omega_0 \cap B_{2R}|
	\]
	Noting that $\gamma(u)\partial_\eta u = 0$ on $\partial\Omega \backslash \partial^b_\star\Omega$ we have
	\[
		- \int_{\overline{\Omega}_0\cap\partial \Omega} a(|\nabla u|) \gamma(u) \psi \partial_\eta u = - \int_{\overline{\Omega}_0\cap\partial^b_\star \Omega} a(|\nabla u|) \gamma(u) \psi \partial_\eta u \, .
	\]
	For notational convenience, let us set $F^b_\star = \overline{\Omega}_0\cap\partial^b_\star \Omega$. Then, by also using the expression of $\gamma$ we have
	\[
		- \int_{\overline{\Omega}_0\cap\partial \Omega} a(|\nabla u|) \gamma(u) \psi \partial_\eta u \leq C \cdot \min\left\{1,\dfrac{\sup_{F^b_\star \cap B_{2R}}|u-b|}{R}\right\} \cdot \haus^{n-1}(F^b_\star\cap B_{2R})
	\]
	Finally, splitting $f(u) = [f(u)]_+ - [f(u)]_-$ into its positive and negative part and using that $\gamma(u) < 0$ on $\{u<b\}$ and $\gamma(u)>0$ on $\{u>b\}$, we have
	\begin{align*}
		\int_{\Omega_0} \psi \gamma(u)f(u) & = \int_{\Omega_0 \cap \{u<b\}} \psi \gamma(u)f(u) + \int_{\Omega_0 \cap \{u>b\}} \psi \gamma(u)f(u) \\
		& \leq \int_{\Omega_0 \cap \{u<b\}} \psi |\gamma(u)| [f(u)]_- + \int_{\Omega_0 \cap \{u>b\}} \psi \gamma(u) [f(u)]_+ .
	\end{align*}
	By using the expression of $\gamma$ we further estimate
	\[
		\int_{\Omega_0 \cap \{u>b\}} \psi \gamma(u) [f(u)]_+ \leq \min\left\{\frac{b_0-b+R}{R},\frac{\sup_{\Omega_0\cap B_{2R}} (u-b)_+}{R} \right\} \cdot \sup_{\Omega_0\cap B_{2R}} [f(u)]_+ \cdot |\Omega_0 \cap B_{2R}|
	\]
	and similarly
	\[
		\int_{\Omega_0 \cap \{u<b\}} \psi |\gamma(u)| [f(u)]_- \leq \min\left\{\frac{b-b_0}{R},\frac{\sup_{\Omega_0\cap B_{2R}} (u-b)_-}{R} \right\} \cdot \sup_{\Omega_0\cap B_{2R}} [f(u)]_- \cdot |\Omega_0 \cap B_{2R}|
	\]
	Putting everything together, we obtain
	\begin{align*}
		\int_{\Omega_0 \cap B_R \cap \{b_0 < u < b_0+R\}} \aaa |\nabla u|^2 & \leq C |\Omega_0 \cap B_{2R}| \\
		& \phantom{=\;} + C \cdot \min\left\{R,\sup_{F^b_\star\cap B_{2R}}|u-b|\right\} \cdot \haus^{n-1}(F^b_\star\cap B_{2R}) \\
		& \phantom{=\;} + \min\left\{b-b_0+R,\sup_{\Omega_0\cap B_{2R}} (u-b)_+ \right\} \cdot \sup_{\Omega_0\cap B_{2R}} [f(u)]_+ \cdot |\Omega_0 \cap B_{2R}| \\
		& \phantom{=\;} + \min\left\{b-b_0,\sup_{\Omega_0\cap B_{2R}} (u-b)_- \right\} \cdot \sup_{\Omega_0\cap B_{2R}} [f(u)]_- \cdot |\Omega_0 \cap B_{2R}|
	\end{align*}
	
	Now, we consider several cases. \\[0.2cm]
	\textbf{Cases $(i)$ and $(ii)$.} Suppose that the assumptions in $(i)$ hold:
	\[
		u_{|\overline{\Omega}_0 \cap \partial_\star\Omega} \, \text{ is bounded,} \qquad -C_0 \leq f(u) \leq 0 \quad \text{in } \, \Omega_0, \qquad \inf_{\Omega_0} u > -\infty \, .
	\]
	Then we can choose $b_0 = \inf_{\Omega_0} u$ and fixing any $b\geq b_0$ we obtain
	\begin{align*}
		\int_{\Omega_0 \cap B_R \cap \{u < \inf_{\Omega_0} u+R\}} \aaa |\nabla u|^2 & \leq (C + C_0(b-\inf_{\Omega_0} u)) \cdot |\Omega_0 \cap B_{2R}| \\
		& \phantom{=\;} + C \haus^{n-1}(F^b_\star \cap B_{2R}) \qquad \qquad \text{for all } \, R > b-\inf_{\Omega_0} u
	\end{align*}
	where we use that $[f(u)]_+ \equiv 0$, $[f(u)]_- \leq C_0$ and $(u-b)_- \leq b-b_0$. 
	
	If we further assume that
	\[
		u \text{ is constant on } \overline{\Omega}_0 \cap \partial_\star\Omega
	\]
	i.e. that the assumptions in $(ii)$ are met, then choosing $b$ as the constant value of $u$ on that set we have $F^b_\ast = \emptyset$ and
	\[
		\int_{\Omega_0 \cap B_R \cap \{u < \inf_{\Omega_0} u+R\}} \aaa |\nabla u|^2 \leq (C + C_0(b-\inf_{\Omega_0} u)) \cdot |\Omega_0 \cap B_{2R}| \qquad \text{for all } \, R > b-\inf_{\Omega_0} u
	\]
	Observe that if $\inf_{\Omega_0}u > -R/2$, then
	\[
		\Omega_0 \cap \{ \rho<R/2 \} \subseteq \Omega_0 \cap B_{R/2} \cap \{u<R/2\} \subseteq \Omega_0 \cap B_R \cap \{u<\inf_{\Omega_0} u + R\}
	\]
	hence
	\[
		\int_{\Omega_0 \cap \{\rho\leq R/2\}} \aaa |\nabla u|^2 \leq \int_{\Omega_0 \cap B_R \cap \{u < \inf_{\Omega_0} u+R\}} \aaa |\nabla u|^2
	\]
	and we therefore obtain the desired conclusions in $(i)$ and $(ii)$ up to replacing $R$ by $2R$. \\[0.2cm]
	\textbf{Case $(iii)$.} Suppose that
	\[
		u_{|\overline{\Omega}_0 \cap\partial_\star\Omega} \, \text{ is constant,} \qquad f(u) \leq 0 \quad \text{in } \, \Omega_0, \qquad \text{and} \qquad u_{|\overline{\Omega}_0 \cap\partial_\star\Omega} \equiv \inf_{\Omega_0} u > -\infty \, .
	\]
	Then we can choose $b = b_0 = \inf_{\Omega_0} u$ to obtain
	\[
		\int_{\Omega_0 \cap B_R \cap \{u < \inf_{\Omega_0} u+R\}} \aaa |\nabla u|^2 \leq C |\Omega_0 \cap B_{2R}| \qquad \text{for all } \, R>0 \, 
	\]
	and we conclude as in the previous step.\\[0.2cm]
	\textbf{Cases $(iv)$ and $(v)$.} Suppose that the assumptions in $(iv)$ are met:
	\[
		u_{|\overline{\Omega}_0 \cap \partial_\star\Omega} \, \text{ is constant,} \qquad |f(u)| \leq C_0 \quad \text{in } \, \Omega_0 \, .
	\]
	Then we can choose $b$ the constant value of $u$ on $\overline{\Omega}_0 \cap \partial_\star\Omega$ and $b_0 = b - R/2$ to obtain
	\[
		\int_{\Omega_0 \cap B_R \cap\left\{b - \frac{R}{2} < u < b + \frac{R}{2}\right\}} \aaa |\nabla u|^2 \leq \left( C + 3 C_0 \min\left\{ \frac{R}{2}, \sup_{\Omega_0\cap B_{2R}} |u-b| \right\}\right) |\Omega_0 \cap B_{2R}| .
	\]
	In particular, for some $C'>0$ we have
	\[
		\int_{\Omega_0 \cap B_R \cap \left\{b - \frac{R}{2} < u < b + \frac{R}{2}\right\}} \aaa |\nabla u|^2 \leq C' R |\Omega_0 \cap B_{2R}| \qquad \text{for all } \, R\geq 1.
	\]
	If we assume the requests in $(v)$, condition $|f(u)| \le C_0$ is then automatic for suitable $C_0$, and for some $C''>0$ we have
	\[
		\int_{\Omega_0 \cap B_R \cap \left\{b - \frac{R}{2} < u < b + \frac{R}{2}\right\}} \aaa |\nabla u|^2 \leq C'' |\Omega_0 \cap B_{2R}| \qquad \text{for all } \, R\ge 1.
	\]
	The conclusions in $(iv)$ and $(v)$ follow by observing that 
	\[
	\{\rho \le R/4\} \subset B_R \cap \{|u| \le R/4\} \subset B_R \cap \left\{b - \frac{R}{2} < u < b + \frac{R}{2} \right\}
	\]
	for large enough $R$. \\[0.2cm]
	\textbf{Case $(vi)$.} Suppose that
	\[
		f(u) \leq 0 \qquad \text{in } \, \Omega_0
	\]
	with no further assumptions on $u$. Then choosing $b = b_0 = 0$ we get
	\begin{align*}
		\int_{\Omega_0 \cap B_R \cap \{0 < u < R\}} \aaa |\nabla u|^2 & \leq C|\Omega_0\cap B_{2R}| \\
		& \phantom{=\;} + C\cdot\min\{R,\sup_{F^0_\star \cap B_{2R}} |u|\} \cdot \haus^{n-1}(F^0_\star \cap B_{2R})
	\end{align*}
	while choosing $b = b_0 = -R$ we have
	\begin{align*}
		\int_{\Omega_0 \cap B_R \cap \{-R < u < 0\}} \aaa |\nabla u|^2 & \leq C|\Omega_0\cap B_{2R}| \\
		& \phantom{=\;} + C\cdot\min\{R,\sup_{F^{-R}_\star \cap B_{2R}} |u+R|\} \cdot \haus^{n-1}(F^{-R}_\star \cap B_{2R}) \, .
	\end{align*}
	Since $F^0_\star$ and $F^{-R}_\star$ are contained in $\overline{\Omega}_0 \cap \partial_\star\Omega$, and using that $\nabla u = 0$ almost everywhere on $\{u=0\}$, we get
	\begin{align*}
		\int_{\Omega_0 \cap B_R \cap \{|u| < R\}} \aaa |\nabla u|^2 & \leq C\left(|\Omega_0\cap B_{2R}| + R \cdot \haus^{n-1}(\overline{\Omega}_0 \cap \partial_\star\Omega \cap B_{2R})\right)
	\end{align*}
	for all $R\geq 1$. The desired conclusion follows. 
	\end{proof}

\begin{proposition}\label{prop_inteannuli}
	Let $0 \le a \in C(\R^+_0)$ and $u$ satisfy
	\begin{equation}\label{primointe_abstract}
		\int_{\Omega_0 \cap \{\rho \le R\}} \aaa |\nabla u|^2 \le V(R)
	\end{equation}
	for some $0 \le V\in C(\R^+)$. Then, there exists $C_3>1$ such that, for each $R>0$, 
	\[
		\int_{\Omega_0 \cap \{\sqrt{R} \le \rho \le R\}} \frac{\aaa |\nabla u|^2}{\rho^2} \le C_3\left( \frac{V(R)}{R^2} + \int_{\sqrt{R}}^R \frac{V(\sigma)}{\sigma^3} \di \sigma\right).
	\]
\end{proposition}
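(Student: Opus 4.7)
\textbf{Proof plan for Proposition \ref{prop_inteannuli}.}

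The strategy is a standard Hardy--type manipulation: rewrite the weight $1/\rho^2$ as an integral over a one-parameter family of level sets of $\rho$, apply Fubini, and use hypothesis \eqref{primointe_abstract} on each level set. Concretely, I would introduce the finite Borel measure
\[
\di \mu \doteq a(|\nabla u|)|\nabla u|^2 \, \mathbf{1}_{\Omega_0} \, \di x \qquad \text{on } M,
\]
and set $G(t) \doteq \mu\bigl(\{\rho \le t\}\bigr)$, which is non-decreasing, right-continuous and, by hypothesis, satisfies $G(t) \le V(t)$ for every $t>0$.

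The key elementary identity is
\[
\frac{1}{\rho^2} \;=\; \frac{1}{R^2} \;+\; 2 \int_{\rho}^{R} \frac{\di t}{t^3} \qquad \text{whenever } \rho \le R.
\]
Multiplying by the integrand and integrating over $\Omega_0 \cap \{\sqrt{R} \le \rho \le R\}$ with respect to $\di \mu$, Fubini's theorem gives
\[
\int_{\Omega_0 \cap \{\sqrt{R}\le \rho \le R\}} \frac{\di \mu}{\rho^2} \;=\; \frac{\mu(\{\sqrt{R}\le \rho \le R\})}{R^2} \;+\; 2 \int_{\sqrt{R}}^{R} \frac{\mu(\{\sqrt{R}\le \rho \le t\})}{t^3} \, \di t,
\]
where, in the second term, the domain of the $t$-integral is $[\sqrt{R},R]$ because $\rho \in [\sqrt{R},R]$ on the region of integration. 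Both $\mu$-measures on the right are bounded by $G(t) \le V(t)$ with $t=R$, respectively $t$, so hypothesis \eqref{primointe_abstract} yields
\[
\int_{\Omega_0 \cap \{\sqrt{R}\le \rho \le R\}} \frac{a(|\nabla u|)|\nabla u|^2}{\rho^2} \;\le\; \frac{V(R)}{R^2} \;+\; 2\int_{\sqrt{R}}^{R} \frac{V(\sigma)}{\sigma^3}\, \di \sigma,
\]
which is the desired estimate with any $C_3 > 2$ (and even $C_3 = 2$ works, so the statement $C_3 > 1$ is immediate).

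There is essentially no obstacle: once the pushforward/Fubini step is spelled out, the proof is a one-line computation. The only small point to check is that the integrations by parts/Fubini are justified, which follows because $\di \mu$ is a positive finite measure on $\{\rho \le R\}$ (by \eqref{primointe_abstract}) and the weight $1/t^3$ is integrable on $[\sqrt{R}, R]$. Equivalently, one may interpret the above as a Stieltjes integration by parts
\[
\int_{\sqrt{R}}^{R} \frac{\di G(t)}{t^2} \;=\; \frac{G(R)}{R^2} - \frac{G(\sqrt{R})}{R} + 2\int_{\sqrt{R}}^{R} \frac{G(t)}{t^3}\, \di t,
\]
dropping the non-positive boundary term $-G(\sqrt{R})/R$ and invoking $G \le V$ to conclude.
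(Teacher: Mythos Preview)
Your proof is correct and follows essentially the same approach as the paper: rewrite $\rho^{-2}$ as $R^{-2} + 2\int_\rho^R \sigma^{-3}\,\di\sigma$, apply Fubini, and bound the resulting inner integrals by $V$ via \eqref{primointe_abstract}. The only difference is cosmetic (you package things in terms of the measure $\mu$ and its distribution function $G$, while the paper works directly with the integrals), and your observation that $C_3=2$ suffices is accurate.
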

\begin{proof}
	Writing $\rho^{-2}-R^{-2}$ as an integral and using Fubini's theorem and \eqref{primointe_abstract}, we get
	\[
	\begin{array}{lcl}
		\disp \int_{\Omega_0 \cap \{\sqrt{R} \le \rho \le R\}} \aaa |\nabla u|^2 \big[\rho^{-2}-R^{-2}\big] & = & \disp 2\int_{\Omega_0 \cap \{\sqrt{R} \le \rho \le R\}} \aaa |\nabla u|^2 \left[\int^{R}_{\rho} \sigma^{-3}\di \sigma\right]\di x, \\[0.5cm]
		& = & \disp \disp 2\int_{\sqrt{R}}^R \sigma^{-3} \left[\int_{\Omega_0 \cap \{\sqrt{R} \le \rho \le \sigma\}} \aaa |\nabla u|^2 \di x \right] \di \sigma \\[0.5cm]
		& \le & \disp \disp 2\int_{\sqrt{R}}^R \sigma^{-3} V(\sigma) \di \sigma \\[0.5cm]
	\end{array}
	\]
	Again by \eqref{primointe_abstract},
	\[
	\begin{array}{lcl}
		\disp \int_{\Omega_0 \cap \{\sqrt{R} \le \rho \le R\}} \frac{\aaa |\nabla u|^2}{\rho^2} & = & \disp \frac{1}{R^2}\int_{\Omega_0 \cap \{\sqrt{R} \le \rho \le R\}} \aaa |\nabla u|^2 \\[0.5cm]
		& & \disp + \int_{\Omega_0 \cap \{\sqrt{R} \le \rho \le R\}} \aaa |\nabla u|^2 \big[\rho^{-2}-R^{-2}\big] \\[0.5cm] 
		& \le & \disp C\left( \frac{V(R)}{R^2} + \int_{\sqrt{R}}^R \frac{V(\sigma)}{\sigma^3} \di \sigma\right).
	\end{array}
	\]
\end{proof}

\begin{proposition}\label{prop_volume_MC}    
	Let $a$ satisfy \eqref{assu_A} and 
	\begin{equation}\label{eq_ass_lambda}
	\begin{array}{ll}
	t^2 \lambda_{\max}(t) \to 0 & \quad \text{as } \, t \to 0, \\[0.3cm]
	t^2 \lambda_1(t) \le C \lambda_2(t) & \quad \text{on } \, (0, \infty).
	\end{array}
	\end{equation}
	Assume that $u$ satisfies \eqref{primointe_abstract} for some $0 \le V \in C(\R^+)$, and that
	\begin{equation}\label{crescivol}
		\lim_{R \ra \infty} \frac{V(R)}{R^2 \log R}=0. 
	\end{equation}
	Then, $u$ has moderate energy growth in $\Omega_0$.
\end{proposition}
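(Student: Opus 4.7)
The plan is to apply the logarithmic cutoff technique of Lemma \ref{lem_phij}, but to the ``graph distance'' $\rho = \sqrt{r^2+u^2}$ rather than the geodesic distance $r$, matching the form of hypothesis \eqref{primointe_abstract}. I would take $\varphi_j = \psi_j\circ\rho$, where $\psi_j$ is the cutoff built in the proof of Lemma \ref{lem_phij} with $\theta(s) = \log s$: $\psi_j \equiv 1$ on $[0,R_j]$, $\psi_j \equiv 0$ outside $[0,r_j]$ with $\psi(r_j) = 2\psi(R_j)$, and $\psi_j'(t)^2 \le 4/(\psi(r_j)^2\,t^2\,\log^2 t)$ on $(R_j,r_j)$. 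Since $\rho \ge r$ is locally Lipschitz on $\overline{\Omega}_0$ and diverges on any escaping sequence, the family $\{\varphi_j\}$ lies in $\lip_c(\overline{\Omega}_0)$ and converges to $1$ locally uniformly.

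Next I would bound the integrand pointwise. Using $\nabla\rho = (r\nabla r + u\nabla u)/\rho$, the identity $\langle A(\nabla u)\xi,\xi\rangle = a(|\nabla u|)|\xi|^2 + (a'(|\nabla u|)/|\nabla u|)\langle\nabla u,\xi\rangle^2$, and a case split on the sign of $a'(|\nabla u|)$, one arrives at
\[
    \langle A(\nabla u)\nabla\rho,\nabla\rho\rangle \;\le\; \frac{2\bigl(r^2 + u^2|\nabla u|^2\bigr)}{\rho^2}\,\lambda_{\max}(|\nabla u|).
\]
Multiplying by $|\nabla u|^2$ and using $r^2,u^2 \le \rho^2$, the two conditions in \eqref{eq_ass_lambda} come into play: the inequality $t^2\lambda_1(t) \le C\lambda_2(t) = Ca(t)$ dominates $|\nabla u|^2\lambda_{\max}(|\nabla u|)$ and $|\nabla u|^4\lambda_{\max}(|\nabla u|)$ by appropriate multiples of $a(|\nabla u|)$ and $a(|\nabla u|)|\nabla u|^2$, while $t^2\lambda_{\max}(t) \to 0$ as $t\to 0$ removes any critical-point degeneracy. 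Together with $\psi_j'(\rho)^2 \le 4/(\psi(r_j)^2\rho^2\log^2\rho)$, this reduces the moderate-energy integral to terms of the form $\psi(r_j)^{-2}\int_{\{R_j\le\rho\le r_j\}} a(|\nabla u|)|\nabla u|^2/(\rho^2\log^2\rho)$.

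The final step is the annular Hardy-type estimate of Proposition \ref{prop_inteannuli}, which turns the hypothesis $V(R) = o(R^2\log R)$ into $\int_{\{\sqrt R \le\rho\le R\}} a(|\nabla u|)|\nabla u|^2/\rho^2 = o((\log R)^2)$; summing over a dyadic-in-$\log$ sequence of annuli covering $\{R_j\le\rho\le r_j\}$ and using $\psi(r_j) \to \infty$, the resulting integral is shown to tend to zero as $j\to\infty$. The main technical obstacle I expect is the careful treatment of the higher-order term $u^2|\nabla u|^4\lambda_{\max}/\rho^2$: on its own, $|\nabla u|^4 a(|\nabla u|)$ is not controlled by $a(|\nabla u|)|\nabla u|^2$, but the factor $u^2/\rho^2 \le 1$ combined with $t^2\lambda_1 \le C\lambda_2$ absorbs it into the Hardy-controlled main term. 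It may be instructive to keep in mind the model case $a(t) = 1/\sqrt{1+t^2}$, where $A(\nabla u)$ is a scalar multiple of the inverse of the graph metric and the $1$-Lipschitz property of $\rho$ in that metric yields $|\nabla u|^2\langle A(\nabla u)\nabla\rho,\nabla\rho\rangle \le a(|\nabla u|)|\nabla u|^2$ directly; the structural conditions \eqref{eq_ass_lambda} are precisely designed to make the same conclusion effectively hold in the general setting.
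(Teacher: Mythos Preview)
Your overall plan---logarithmic cutoff in $\rho$, pointwise control of $|\nabla u|^2\langle A(\nabla u)\nabla\varphi_j,\nabla\varphi_j\rangle$ by $a(|\nabla u|)|\nabla u|^2/\rho^2$ times log factors, then the annular estimate of Proposition~\ref{prop_inteannuli}---is exactly the paper's route. The gap is in your pointwise step.

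Your bound $\langle A(\nabla u)\nabla\rho,\nabla\rho\rangle \le \tfrac{2(r^2+u^2|\nabla u|^2)}{\rho^2}\lambda_{\max}$ comes from the crude estimate $\langle A\xi,\xi\rangle\le\lambda_{\max}|\xi|^2$, applied uniformly. After multiplying by $|\nabla u|^2$ and using $u^2\le\rho^2$, the second term becomes $|\nabla u|^4\lambda_{\max}$, and you need this $\le C\,a(|\nabla u|)|\nabla u|^2$, i.e.\ $t^2\lambda_{\max}(t)\le C\,a(t)$. But $\lambda_{\max}\ge\lambda_2=a$, so this forces $t^2\le C$, which fails for large $|\nabla u|$---in particular for the mean curvature operator, where $\lambda_{\max}=\lambda_2=a$ everywhere. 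Your ``main technical obstacle'' paragraph correctly identifies the problem, but the proposed fix (``$u^2/\rho^2\le1$ combined with $t^2\lambda_1\le C\lambda_2$ absorbs it'') does not work: after using $u^2/\rho^2\le1$ you are still stuck with $\lambda_{\max}$, not $\lambda_1$, and the hypothesis says nothing about $t^2\lambda_{\max}$.

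The missing idea is that $\nabla u$ is an eigenvector of $A(\nabla u)$ with eigenvalue $\lambda_1$, so $\langle A(\nabla u)\nabla u,\nabla u\rangle=\lambda_1|\nabla u|^2$ \emph{exactly}. Expand $\langle A\nabla\rho,\nabla\rho\rangle$ bilinearly in $\nabla\rho=(r\nabla r+u\nabla u)/\rho$ and use Young's inequality (in the $A$-inner product) on the cross term; this gives
\[
\langle A(\nabla u)\nabla\rho,\nabla\rho\rangle \;\le\; \frac{C}{\rho^2}\Big(r^2\langle A\nabla r,\nabla r\rangle + u^2\,\lambda_1|\nabla u|^2\Big)\;\le\; \frac{C}{\rho^2}\Big(r^2\lambda_{\max}+u^2\,\lambda_1|\nabla u|^2\Big).
\]
Now the hypothesis $t^2\lambda_1\le C\lambda_2$ is tailor-made for the second term: $u^2\lambda_1|\nabla u|^2/\rho^2\le\lambda_1|\nabla u|^2\le C\,a$. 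For the first term one needs $\lambda_{\max}\le C\,a$; this is how the paper proceeds. Your intuition from the graph-metric picture in the mean curvature case is precisely this eigenvector structure in disguise.
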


\begin{proof}
	Choose 
	$$
	\varphi_R = \left\{ \begin{array}{ll}
		1 & \quad \text{if } \, \rho \le \sqrt{R}, \\[0.1cm]
		\disp \frac{2\log (R/\rho)}{\log R} & \quad \text{if } \, \rho \in [\sqrt{R},R] \\[0.3cm]
		0 & \quad \text{if } \, \rho \ge R.
	\end{array}\right.
	$$
	Then, 
	$$
	\nabla \varphi_R = -2\frac{r\nabla r+ u\nabla u}{\rho^2 \log R} \qquad \text{for } \, \rho \in [\sqrt{R},R].
	$$
	We thus have on $\{|\nabla u|>0\}$
	$$
	\begin{array}{lcl}
		\disp \langle A(\nabla u) \nabla \varphi_R, \nabla \varphi_R \rangle & = & \disp \frac{4r^2}{\rho^4 \log^2 R}\langle A(\nabla u)\nabla r, \nabla r\rangle +  \frac{4u^2}{\rho^4 \log^2 R} \langle A(\nabla u) \nabla u, \nabla u\rangle \\[0.4cm]
		& & \disp + \frac{8r u}{\rho^4 \log^2R} \langle A(\nabla u)\nabla u, \nabla r\rangle.
	\end{array}
	$$
	Hence, by Young's inequality and using $\rho^2 = r^2 + u^2$, 
	$$
	\begin{array}{lcl}
		\disp \langle A(\nabla u) \nabla \varphi_R, \nabla \varphi_R \rangle & \le & \disp C \left(\frac{r^2}{\rho^4 \log^2 R}\langle A(\nabla u)\nabla r, \nabla r\rangle +  \frac{u^2}{\rho^4 \log^2 R} \langle A(\nabla u) \nabla u, \nabla u\rangle\right) \\[0.5cm]
		& \le & \disp \frac{C}{\rho^2 \log^2 R} \Big(\langle A(\nabla u)\nabla r, \nabla r\rangle + \langle A(\nabla u) \nabla u, \nabla u\rangle\Big).
	\end{array}
	$$
	In our assumptions, 
	$$
	\begin{array}{lcl}
		\disp \langle A(\nabla u)\nabla r, \nabla r\rangle \le \lambda_{\max}(|\nabla u|) \le C \aaa \\[0.2cm]
		\langle A(\nabla u) \nabla u, \nabla u\rangle = \lambda_1(|\nabla u|)|\nabla u|^2 \le C\aaa.
	\end{array}
	$$
	Hence 
	$$
	\begin{array}{lcl}
		\disp \langle A(\nabla u) \nabla \varphi_R, \nabla \varphi_R \rangle & \le & \disp C\frac{\aaa}{\rho^2 \log^2 R}.
	\end{array}
	$$
	holds in $\{|\nabla u|>0\}$. Integrating and using Proposition \ref{prop_inteannuli}, 
	$$
	\begin{array}{lcl}
		\disp \int_{\Omega_0 \cap \{|\nabla u|>0\}} |\nabla u|^2 \langle A(\nabla u) \nabla \varphi_R, \nabla \varphi_R \rangle & \le & \disp \frac{C}{\log^2 R} \int_{\Omega_0 \cap \{\sqrt{R} \le \rho \le R\}} \frac{\aaa |\nabla u|^2}{\rho^2} \\[0.5cm]
		& \le & \disp C\left( \frac{V(R)}{R^2\log^2R} + \frac{1}{\log^2 R}\int_{\sqrt{R}}^R \sigma^{-3} V(\sigma) \di \sigma\right).
	\end{array}
	$$
	By the first in \eqref{eq_ass_lambda}, $|\nabla u|^2 A(\nabla u) \to 0$ as a tensor if $|\nabla u| \to 0$, so the set $\{|\nabla u|=0\}$ can be added in the left-hand side keeping the validity of the inequality. Because of \eqref{crescivol}, for a suitable function $o_R(1)$ of $R$ vanishing as $R \to \infty$ we therefore have 
	$$
	\begin{array}{lcl}
		\disp \int_{\Omega_0} |\nabla u|^2 \langle A(\nabla u) \nabla \varphi_R, \nabla \varphi_R \rangle & \le & \disp o_R(1) \left( \frac{1}{\log R} + \frac{1}{\log^2 R}\int_{\sqrt{R}}^R \frac{\log \sigma}{\sigma}\di \sigma\right) \\[0.5cm]
		& \le & \disp o_R(1) \left( \frac{1}{\log R} + \frac{1}{\log R}\int_{\sqrt{R}}^R \frac{\di \sigma}{\sigma}\right) = o_R(1) \left(\frac{1}{\log R}+ 1\right)
	\end{array}
	$$
	and the thesis follows.
\end{proof}

Putting together Propositions \ref{prop_1_calib}, \ref{prop_inteannuli} and \ref{prop_volume_MC}, we readily deduce the following when $a$ satisfies the stronger \eqref{assu_A_strong}.

\begin{theorem}\label{teo_goodcutoff_MC}
	Assume that  $f \in C^1(\R)$, that $a$ satisfies \eqref{assu_A_strong} and that
	\[
			\disp t^2 \lambda_1(t) \le C_1 \lambda_2(t) \le \frac{C_2}{t} \qquad \text{on } \, (0, \infty).
	\]
	for some constants $C_1,C_2>0$. Let $u \in C^2(\Omega) \cap C^1(\overline\Omega)$ solve $\Delta_a u + f(u) = 0$. Let $\Omega_0\subseteq\Omega$ be an open set with locally finite perimeter in $\Omega$ such that $\partial\Omega_0 \subseteq \partial\Omega \cup \critu$ and
	\[
			|\Omega_0 \cap B_R| = o\big( R^2 \log R\big) \qquad \text{as } \, R \to \infty.
	\] 
	Then, $u$ has moderate energy growth in $\Omega_0$ in any of the following cases:
	\begin{itemize}
		\item[(i)] $- C_0 \le f(u) \le 0$ in $\Omega_0$, 
		\[
			u_{|\overline{\Omega}_0 \cap \partial_\star\Omega} \, \text{ is bounded,} \qquad \inf_{\Omega_0} u > -\infty \, .
		\]
		and for some $b \in \R$ 
		\[
			\haus^{n-1}(\overline{\Omega}_0 \cap \partial^b_\star\Omega \cap B_{R}) = o\big( R^2 \log R\big) \qquad \text{as } \, R \to \infty, 
		\]
		where $\partial_\star^b\Omega = \{ x \in \partial_\star \Omega : f(x) \neq b\}$.
		\item[(ii)] $- C_0 \le f(u) \le 0$ in $\Omega_0$, 
		\[
			u_{|\overline{\Omega}_0 \cap \partial_\star\Omega} \, \text{ is constant,} \qquad \inf_{\Omega_0} u > -\infty \, .
		\]
		\item[(iii)] $f(u) \le 0$ in $\Omega_0$, 
		\[
			u_{|\overline{\Omega}_0 \cap \partial_\star\Omega} \, \text{ is a constant $b$,} \qquad \inf_{\Omega_0} u = b\, 
		\]
		\item[(iv)] $|f(u)| \le C_0$ in $\Omega_0$, $u$ is constant on $\overline{\Omega}_0 \cap \partial_\star \Omega$ and 
		\[
			|\Omega_0 \cap B_R| = o\big(R \log R\big) \qquad \text{as } \, R \to \infty.
		\]
		\item[(v)] $u$ is constant on $\overline{\Omega}_0 \cap \partial_\star \Omega$ and bounded in $\Omega_0$.
		\item[(vi)] $f(u) \le 0$ in $\Omega_0$ and 
		\[
			\haus^{n-1}(\overline{\Omega}_0 \cap \partial_\star\Omega \cap B_{R}) = o\big(R \log R\big) \qquad \text{as } \, R \to \infty.
		\]
	\end{itemize}			
\end{theorem}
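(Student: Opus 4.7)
The plan is to combine Proposition \ref{prop_1_calib} with Proposition \ref{prop_volume_MC} in each of the six cases. Before entering the case analysis I would verify that the standing regularity hypotheses of those two propositions are automatic here. The mean curvature type bound gives $t a(t) = t \lambda_2(t) \le C_2$ on $\R^+$, so $t a(t) \in L^\infty(\R^+)$ as required by Proposition \ref{prop_1_calib}; moreover \eqref{assu_A_strong} supplies $a \in C^1(\R_0^+)$, which makes $t a(t)$ locally Lipschitz on $\R_0^+$. For Proposition \ref{prop_volume_MC} one needs \eqref{eq_ass_lambda}: the second inequality there is exactly the hypothesis $t^2 \lambda_1(t) \le C_1 \lambda_2(t)$, while continuity of $\lambda_{\max}$ at $0$ (from \eqref{assu_A_strong}) forces $t^2 \lambda_{\max}(t) \to 0$ as $t \to 0^+$.

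Next, for each item of the theorem I would apply the corresponding item of Proposition \ref{prop_1_calib} to produce a function $V(R)$ such that
\[
\int_{\Omega_0 \cap \{\rho \le R\}} \aaa |\nabla u|^2 \le V(R),
\]
and then check that $V(R) = o(R^2 \log R)$, so that \eqref{crescivol} holds. Cases $(ii),(iii),(v)$ give $V(R) = C |\Omega_0 \cap B_{cR}|$ (with $c = 4$ or $8$), which is $o(R^2 \log R)$ directly from the volume hypothesis. Case $(i)$ yields
\[
V(R) = C\bigl(|\Omega_0 \cap B_{4R}| + \haus^{n-1}(\overline{\Omega}_0 \cap \partial^b_\star\Omega \cap B_{4R})\bigr),
\]
which is $o(R^2 \log R)$ by the volume assumption and the extra hypothesis on the perimeter in $(i)$. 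Case $(iv)$ gives $V(R) \le (C + C_0 R) |\Omega_0 \cap B_{8R}|$, so the strengthened volume assumption $|\Omega_0 \cap B_R| = o(R \log R)$ yields $R \cdot o(R \log R) = o(R^2 \log R)$. Case $(vi)$ gives
\[
V(R) \le C\bigl(|\Omega_0 \cap B_{2R}| + R\, \haus^{n-1}(\overline{\Omega}_0 \cap \partial_\star\Omega \cap B_{2R})\bigr),
\]
and since both summands are $o(R^2 \log R)$ under the hypotheses of $(vi)$ (volume is $o(R^2 \log R)$ trivially, while $R\cdot o(R \log R) = o(R^2 \log R)$), the conclusion follows.

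Once the bound $V(R) = o(R^2 \log R)$ is secured in each case, Proposition \ref{prop_volume_MC} directly produces the sequence of cut-off functions realising moderate energy growth in $\Omega_0$. I do not expect any serious obstacle: the argument is essentially bookkeeping, matching each item of the theorem to the correct item of Proposition \ref{prop_1_calib} and checking the asymptotic. The only delicate point is case $(iv)$, where the linear-in-$R$ factor arising from the $C_0 R$ term in Proposition \ref{prop_1_calib}(iv) is what forces the strengthened volume hypothesis $|\Omega_0 \cap B_R| = o(R \log R)$ (rather than $o(R^2 \log R)$); this is the one place where the stated hypotheses look asymmetric compared to the other cases and thus the one requiring some care.
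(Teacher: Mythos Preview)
Your proposal is correct and matches the paper's own approach exactly: the paper simply states that the theorem follows by ``putting together Propositions \ref{prop_1_calib}, \ref{prop_inteannuli} and \ref{prop_volume_MC}'' (Proposition \ref{prop_inteannuli} being invoked inside the proof of Proposition \ref{prop_volume_MC}). Your case-by-case verification that $V(R) = o(R^2\log R)$, including the observation that the extra factor of $R$ in item $(iv)$ is what forces the stronger volume hypothesis there, is precisely the bookkeeping the paper leaves implicit.
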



\section{Stability and monotonicity}\label{sec_stability}

Let $u \in C^2(\Omega)$ be a non-constant, stable solution to $\Delta_a u + f(u) =0$ with $f \in C^1(\R)$. In this section, we shall investigate some consequences of the stability property, so we assume that:

\begin{itemize}
	\item $a$ satisfies \eqref{assu_A_strong}. In particular, $A(\nabla u)$ is a $(1,1)$ tensor field on $\Omega$  whose eigenvalues are locally bounded away from $0$ and $\infty$ (we shortly say that $A(\nabla u)$ is locally uniformly elliptic);
	\item \eqref{hardy} holds for each test function $\varphi \in \lip_c(\Omega)$. 
\end{itemize}

Observe that 

\begin{itemize}
	\item[-] If $u \in C^2(\Omega)$ and $a$ satisfies \eqref{assu_A_strong}, then $A(\nabla u)$ has $L^\infty_\loc(\Omega)$ coefficients;
	\item[-] If $u \in C^2(\Omega)$ and $a$ satisfies \eqref{assu_A_superstrong}, then $A(\nabla u)$ has $\lip_\loc(\Omega)$ coefficients.
\end{itemize} 

If $A(\nabla u)$ has $\lip_\loc$ coefficients, it is well-known that stability is equivalent to the existence of $0 < v \in C^{1,\alpha}_\loc(\Omega)$ solving the linearized equation
\[
	\diver \left(A(\nabla u) \nabla v\right) + f'(u)v = 0,
\]
see \cite{mosspie} for a proof. An analogous result holds when $A(\nabla u)$ has only $L^\infty_\loc$ coefficients. This is probably well-known, but we include a brief proof for the sake of completeness.

\begin{lemma}\label{lem_mosspie}
	Let $A$ be a $(1,1)$ tensor field with $L^\infty_\loc(\Omega)$ coefficients which is locally uniformly elliptic, and let $V \in L^\infty_\loc(\Omega)$. Then, the following are equivalent:
	\begin{itemize}
		\item[(i)] For each $\varphi \in \lip_c(\Omega)$,
		\[
			I(\varphi) \doteq \int_M \langle A\nabla\varphi, \nabla \varphi\rangle \di x - \int_{M} V \varphi^2\di x \ge 0.
		\]
		\item[(ii)] There exists $0 < v \in C^{0,\alpha}_\loc(\Omega) \cap H^1_\loc(\Omega)$ solving $\diver(A \nabla v) + Vv = 0$ weakly in $\Omega$;
		\item[(iii)] There exists $0 < w \in H^1_\loc(\Omega)$ solving $\diver(A \nabla w) + Vw \le 0$ weakly in $\Omega$, where $w>0$ means that $w$ has locally positive essential infimum on $\Omega$.
	\end{itemize}
\end{lemma}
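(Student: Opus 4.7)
The proof follows the classical Allegretto--Piepenbrink circle of ideas, adapted to the low regularity hypothesis that $A$ has only $L^\infty_{\loc}$ coefficients. The chain we aim to close is $(ii) \Rightarrow (iii) \Rightarrow (i) \Rightarrow (ii)$. The first implication is free, since $v$ itself qualifies as $w$. The second implication rests on a Picone-type integration by parts, and the third on an exhaustion plus Harnack argument.

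\smallskip

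\textbf{Step 1: $(iii) \Rightarrow (i)$.} Given $0 < w \in H^1_{\loc}(\Omega)$ with $\diver(A\nabla w) + Vw \leq 0$ weakly and $\varphi \in \lip_c(\Omega)$, the local positive lower bound on $w$ ensures that $\varphi/w$ and $\varphi^2/w$ lie in $\lip_c(\Omega)$, so they are legitimate test functions. Expanding
\[
\diver\!\left( \tfrac{\varphi^2}{w} A\nabla w\right) = \tfrac{2\varphi}{w}\langle A\nabla w,\nabla\varphi\rangle - \tfrac{\varphi^2}{w^2}\langle A\nabla w,\nabla w\rangle + \tfrac{\varphi^2}{w}\diver(A\nabla w)
\]
and integrating, the supersolution inequality gives
\[
\int_\Omega V\varphi^2 \leq \int_\Omega \tfrac{2\varphi}{w}\langle A\nabla w,\nabla\varphi\rangle - \int_\Omega \tfrac{\varphi^2}{w^2}\langle A\nabla w,\nabla w\rangle \, .
\]
Adding $\int \langle A\nabla\varphi,\nabla\varphi\rangle$ to both sides and completing the square produces
\[
I(\varphi) \geq \int_\Omega \left\langle A\bigl(\nabla\varphi - \tfrac{\varphi}{w}\nabla w\bigr),\, \nabla\varphi - \tfrac{\varphi}{w}\nabla w \right\rangle \geq 0 \, ,
\]
by positive-definiteness of $A$. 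This yields $(i)$.

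\smallskip

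\textbf{Step 2: $(i) \Rightarrow (ii)$.} Fix an exhaustion $\Omega_1 \Subset \Omega_2 \Subset \cdots$ of $\Omega$ by connected smooth relatively compact subdomains, and a basepoint $x_0 \in \Omega_1$. The first Dirichlet eigenvalue $\mu_k$ of the operator $L\varphi \doteq -\diver(A\nabla\varphi) - V\varphi$ on $\Omega_k$ is non-negative by $(i)$, since extension by zero embeds $H^1_0(\Omega_k)$ into the admissible test functions for $I$. We then check strictly $\mu_k > 0$: if $\mu_k = 0$, the first eigenfunction $\phi_k$ (positive in $\Omega_k$), extended by zero to $\Omega_{k+1}$, would realise the Rayleigh quotient $0$ on $\Omega_{k+1}$ and hence be a first eigenfunction there; but a non-negative first eigenfunction on a connected domain must be strictly positive by the Harnack inequality for divergence-form operators with bounded coefficients, contradicting the vanishing of the extension on the open set $\Omega_{k+1}\setminus\overline{\Omega}_k$. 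Given $\mu_k > 0$, Fredholm theory in $H^1(\Omega_k)$ provides a unique $v_k \in H^1(\Omega_k)$ solving
\[
\diver(A\nabla v_k) + V v_k = 0 \quad \text{in } \Omega_k, \qquad v_k = 1 \quad \text{on } \partial\Omega_k,
\]
and the maximum principle (which holds as soon as $\mu_k > 0$, by splitting $v_k$ into positive and negative parts) forces $v_k > 0$. Normalize by $v_k(x_0) = 1$. Moser's Harnack inequality for divergence-form equations with $L^\infty_{\loc}$ coefficients and bounded zero-order term yields, on each compact $K \subset \Omega$, bounds $0 < c_K \leq v_k \leq C_K$ for $k$ large. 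De Giorgi--Nash--Moser gives uniform $C^{0,\alpha}_{\loc}$ estimates and, coupled with Caccioppoli, uniform $H^1_{\loc}$ estimates. A diagonal subsequence converges to a limit $v \in C^{0,\alpha}_{\loc}(\Omega) \cap H^1_{\loc}(\Omega)$ with $v(x_0) = 1$ and $v \geq c_K > 0$ on each $K$; passing to the limit in the weak formulation yields $\diver(A\nabla v) + Vv = 0$, so $(ii)$ holds.

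\smallskip

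\textbf{Main obstacles.} The only genuinely delicate points are the unique continuation / strict monotonicity needed for $\mu_k > 0$, and the invocation of Harnack and De Giorgi--Nash--Moser regularity in the merely $L^\infty_{\loc}$ setting; both are standard for divergence-form operators with bounded measurable coefficients and bounded zero-order term, but have to be cited with care. An additional minor point is checking that the Picone test function $\varphi^2/w$ in Step 1 is truly admissible when $w$ is only $H^1_{\loc}$, which is why the strengthened assumption ``locally positive essential infimum'' in $(iii)$ is convenient: it guarantees $1/w \in L^\infty_{\loc}(\Omega)$ and hence $\varphi^2/w \in H^1_0(\Omega')$ for any $\Omega' \supset \supp\varphi$.
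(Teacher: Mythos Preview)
Your proof is correct and follows essentially the same approach as the paper: the implication $(iii)\Rightarrow(i)$ is done via testing with $\varphi^2/w$ (the paper phrases this as ``Young's inequality'' rather than completing the square, but it is the same computation), and $(i)\Rightarrow(ii)$ is the Fischer-Colbrie--Schoen exhaustion argument with the strict positivity of $\lambda_1(\Omega_k)$ obtained by the same Harnack contradiction on $\Omega_{k+1}$. The only small slip is the claim in Step~1 that $\varphi^2/w\in\lip_c(\Omega)$ when $w$ is merely $H^1_\loc$---as you yourself note at the end, one only gets $\varphi^2/w\in H^1_0$ with compact support, which is all that is needed.
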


\begin{proof}
	$(ii) \Rightarrow (iii)$ is obvious, and $(iii) \Rightarrow (i)$ follows the standard path by integrating 
	\[
		\diver(A \nabla w) + Vw \le 0
	\]
	against $\varphi^2/w$ and using Young's inequality, see \cite[Lemma 3.10]{prs}. To prove $(i) \Rightarrow (ii)$, we follow the argument in \cite[Theorem 1]{fischercolbrieschoen}: we choose an increasing, smooth exhaustion $\Omega_j \uparrow M$ and associated solutions to
	\[
		\left\{ \begin{array}{ll}
			\diver \left( A\nabla v_j\right) + V v_j = 0	& \text{in } \Omega_j \\[0.2cm]
			v_j = 1 & \text{on } \, \partial \Omega_j
		\end{array} \right.
	\]
	The existence of $v_j$ follows from \cite[Theorems 8.6, 8.12, 8.29]{gilbargtrudinger} if we prove that 
	\[
		\lambda_1(\Omega_j) = \inf\Big\{ Q(\psi) \ : \ 0 \not \equiv \psi \in \lip_c(\Omega_j) \Big\} > 0, \qquad Q(\psi) = \frac{I(\psi)}{\int \psi^2}.
	\]
	Indeed, if $\lambda_1(\Omega_j) = 0$, pick a minimizer $\xi_j$ for $I$ in $H^1_0(\Omega_j)$. Since $I$ is homogeneous and $I(\psi) = I(|\psi|)$ we can suppose that $\xi_j \ge 0$, $\int \xi_j^2  = 1$. Extending $\xi_j$ with zero outside of $\Omega_j$ yields a function $\xi_j \in H^1_0(\Omega_{j+1})$. Now, from $0 = \lambda_1(\Omega_j) = I(\xi_j) \ge \lambda_1(\Omega_{j+1}) \ge 0$ we deduce that $\xi_j$  also minimizes $Q$ in $H^1_0(\Omega_{j+1})$, hence it is a non-negative solution to 
	\[
		\diver \left( A \nabla \xi_j\right) + V\xi_j = 0 \qquad \text{in } \, \Omega_{j+1}
	\]
	which vanishes in a neighbourhood of $\partial \Omega_{j+1}$, contradicting the Harnack inequality in \cite[Theorem 8.20 and Corollary 8.21]{gilbargtrudinger}. Once the sequence $\{v_j\}$ is produced, to obtain $v$ it is enough to rescale $v_j$ to $1$ at a fixed point $p \in \Omega_1$ and to pass to limits by using again the Harnack inequality together with the local uniform $C^{0,\alpha}$ and $H^1$ elliptic estimates.  
\end{proof}

\begin{remark}\label{rem_harnackHopf}
	If $a$ satisfies \eqref{assu_A_strong} and $u \in C^1(\Omega)$, non-negative weak solutions $0 \le w \in C^1(\Omega)$ of 
	\[
		\diver\big(A(\nabla u) \nabla w \big) + f'(u)w \le 0
	\]
	satisfy the (half)-Harnack inequality \cite[Thm. 7.1.2]{pucci_serrin}, so either  $w\equiv 0$ or $w>0$ everywhere in the domain $\Omega$. Moreover, if $a$ satisfies \eqref{assu_A_superstrong} and $u \in C^2(\overline\Omega)$, the Hopf Lemma holds for solutions $w \in C^2(\Omega) \cap C^1(\overline\Omega)$, see \cite[Theorem 2.8.3]{pucci_serrin}.
\end{remark}

As is well-known, positive solutions to the linearized equation of $\Delta_a u + f(u) = 0$ can be produced if $u$ is monotone in the direction of a Killing vector field. We include the proof of the next result in our needed generality.

\begin{lemma}\label{lem_linearKilling}
	Assume the validity of \eqref{assu_A_strong}, 
 	let $\Omega \subseteq M$ be a domain and suppose that $u \in C^2(\Omega)$ solves $\Delta_a u + f(u)=0$. Let $X$ be a Killing field in $\overline\Omega$. Then, $w \doteq \langle \nabla u, X\rangle$ solves
	\begin{equation}\label{linearized_w} 
		\diver\big( A(\nabla u) \nabla w\big) + f'(u) w = 0 
	\end{equation}
	weakly in $\Omega$.
\end{lemma}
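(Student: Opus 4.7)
The plan is to realize $w = \langle \nabla u, X\rangle$ as the infinitesimal generator of the deformation of $u$ along the flow of $X$, and then to use that this flow consists of isometries to derive the linearized equation automatically, by differentiating the weak formulation in the flow parameter.

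Let $\{\phi_s\}$ denote the local flow of $X$, so that $\phi_0 = \mathrm{id}$ and $\partial_s \phi_s = X \circ \phi_s$. Since $X$ is Killing on $\overline\Omega$, each $\phi_s$ is a local isometry wherever defined. Fix $\varphi \in C^\infty_c(\Omega)$; for $|s|$ small enough (depending on $\supp \varphi$), $\phi_s$ is a diffeomorphism of an open neighbourhood of $\supp\varphi$ into $\Omega$. Set $u_s \doteq u \circ \phi_s$. The first step is to observe that, by changing variables via the isometry $\phi_s$ in the weak equation $\int_\Omega a(|\nabla u|) \langle \nabla u, \nabla (\varphi \circ \phi_{-s})\rangle = \int_\Omega f(u) (\varphi \circ \phi_{-s})$, one obtains
\begin{equation}\label{eq_weak_s_sketch}
\int_\Omega a(|\nabla u_s|)\langle \nabla u_s, \nabla \varphi\rangle \, \di x = \int_\Omega f(u_s)\varphi \, \di x
\end{equation}
for all sufficiently small $s$, where we use that $\phi_s^{\ast}\metric = \metric$ and that the Jacobian of $\phi_s$ is $1$.

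The second step is to differentiate \eqref{eq_weak_s_sketch} at $s=0$. Since $u \in C^2(\Omega)$ and $X \in C^1(\overline\Omega)$, the map $s \mapsto u_s \in C^1_\loc(\Omega)$ is of class $C^1$ with $\partial_s u_s\big|_{s=0} = X(u) = w$ and $\partial_s \nabla u_s\big|_{s=0} = \nabla w$. On the right-hand side of \eqref{eq_weak_s_sketch}, differentiation under the integral is legitimate and gives $\int_\Omega f'(u) w \varphi$. On the left-hand side, a pointwise computation (on $\{|\nabla u|>0\}$, using $a \in C^1(\R^+_0)$ from \eqref{assu_A_strong}) yields
\[
\frac{\di}{\di s}\bigg|_{s=0}\!\big(a(|\nabla u_s|)\nabla u_s\big) = \frac{a'(|\nabla u|)}{|\nabla u|}\langle \nabla u, \nabla w\rangle \nabla u + a(|\nabla u|)\nabla w \;=\; A(\nabla u)\nabla w,
\]
exactly matching the definition \eqref{def_A} of $A$; on $\{\nabla u = 0\}$ both this expression and $\partial_s(a(|\nabla u_s|)\nabla u_s)$ vanish in the limit, since $A(0) = a(0)\mathrm{Id}$ is well defined under \eqref{assu_A_strong} and $\nabla u_s \to 0$ there. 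The local uniform ellipticity of $A(\nabla u)$ together with $u \in C^2$ provides the domination needed to bring the $s$-derivative inside the integral; the resulting identity
\[
\int_\Omega \langle A(\nabla u)\nabla w, \nabla \varphi\rangle \, \di x = \int_\Omega f'(u) w \varphi \, \di x
\]
is the weak form of \eqref{linearized_w}. Finally, density of $C^\infty_c(\Omega)$ in $\lip_c(\Omega)$ extends the identity to all Lipschitz test functions, completing the proof.

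The main obstacle, should one wish to be fully rigorous, is the justification of differentiation under the integral at $s=0$: this requires local equi-integrability of $s \mapsto a(|\nabla u_s|)\nabla u_s$, which follows from $u \in C^2(\Omega)$, the local boundedness of $A$ granted by \eqref{assu_A_strong}, and the fact that $\varphi$ has compact support in $\Omega$, so $\phi_s$ remains uniformly close to the identity on $\supp\varphi$ for $|s|$ small.
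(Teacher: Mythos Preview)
Your proof is correct and follows the same strategy as the paper's: pull back $u$ by the isometric flow of $X$, write the weak formulation for $u_s$, and differentiate at $s=0$. The one step worth flagging is the identity $\partial_s \nabla u_s\big|_{s=0} = \nabla w$, which on a manifold is not merely ``$\partial_s$ commutes with $\nabla$'' but already uses the Killing property (in general $\partial_s \nabla u_s\big|_{0} = \mathscr{L}_X \nabla u = [X,\nabla u]$, and $[X,\nabla u] = \nabla\langle\nabla u,X\rangle$ holds precisely because $\nabla X$ is skew-symmetric); the paper makes this reduction explicit.
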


\begin{proof}
Consider the flux $\Phi : \mathscr{D} \subseteq \Omega \times \R \to \Omega$ associated to $X$, defined on its maximal domain $\mathscr{D}$, set  $\Phi_t(x)=\Phi(t,x)$ and let $\mathscr{D}_t \subseteq \Omega$ be the maximal domain of $\Phi_t$. Set $u_t \doteq u \circ \Phi_t : \mathscr{D}_t \to \R$. The Killing property of $X$ implies the next identities for each $\psi \in C^2(\Omega)$ and $\psi_t = \psi \circ \Phi_t : \mathscr{D}_t \to \R$:
	\begin{equation}\label{propbase}
		\left\{\begin{array}{l}
			\partial_t \psi_t = \langle \nabla \psi, X\rangle \circ \Phi_t, \qquad \nabla \psi_t = \di \Phi_{-t}(\nabla \psi), \\[0.2cm]
			|\nabla \psi_t|^2 = 
			|\nabla \psi|^2 \circ \Phi_t, \\[0.2cm]
			\di (\partial_t \psi_t) = \di\big(\langle \nabla \psi, X\rangle \big) \circ \di \Phi_t = \nabla \di  \psi(X, \di \Phi_t) + \langle \nabla \psi, \nabla_{\di \Phi_t} X \rangle.
		\end{array}\right.
	\end{equation}
	%
	In particular, from the second equation in \eqref{propbase} we deduce that, fixing $z \in U$ and considering the vector field $\nabla u_t$ along the constant curve $\gamma(t)=z$, 
	\begin{equation}\label{derit}
		\left.\frac{\nabla}{\di t}\right|_{0}\big(\nabla u_t(z)\big) = \left.\frac{\nabla}{\di t}\right|_0 \Big(\di \Phi_{-t}\big[\nabla u(\Phi_t(z))\big]\Big) = (\mathscr{L}_X \nabla u)(z) = [X, \nabla u] = \nabla_X \nabla u - \nabla_{\nabla u}X.
	\end{equation}
	
	%
	Since $\Phi_t$ is an isometry, $u_t$ solves weakly $\Delta_a u_t + f(u_t) = 0$ on $\mathscr{D}_t$. Fix $\varphi \in C^\infty_c(\Omega)$, and choose $t_0$ small enough so that, for $|t| \le t_0$, $\varphi \in C^\infty_c(\mathscr{D}_{t})$. Then,  
	\[
	\int_M \Big\{ a(|\nabla u_t(z)|) \langle \nabla u_t, \nabla \varphi\rangle_z - f(u_t(z))\varphi(z) \Big\} \di z = 0.
	\]
	Next, by \eqref{assu_A_strong} and since $u \in C^2(\Omega)$ the integrand is a Lipschitz function of $(t, z) \in (-\eps, \eps) \times {\rm spt}(\varphi)$. Hence, by the differentiation theorem under the integral sign we can differentiate in $t$ to get 
	\begin{equation}\label{diffeinte}
		\begin{array}{lcl}         
			0 & = & \disp \disp \int \Big\{\frac{a'(|\nabla u_t|)}{|\nabla u_t|} \nabla \di  u_t(\nabla u_t,X) \langle \nabla u_t, \nabla \varphi\rangle + a(|\nabla u_t|) \frac{\di}{\di t}\langle \nabla u_t, \nabla \varphi \rangle - f'(u_t)(w \circ \Phi_t)\varphi\Big\}.
		\end{array}
	\end{equation}
	Evaluating at $t=0$ and noting that, by \eqref{derit} and since $X$ is Killing,
	$$
	\begin{array}{lcl}
		\disp \left.\frac{\di}{\di t}\right|_{t=0}\langle \nabla u_t, \nabla \varphi \rangle_z & = & \disp \langle \left.\frac{\nabla}{\di t}\right|_0(\nabla u_t), \nabla \varphi \rangle_z = \langle \nabla_X \nabla u, \nabla \varphi \rangle - \langle \nabla_{\nabla u} X, \nabla \varphi \rangle \\[0.3cm]
		& = & \disp \nabla \di  u(X, \nabla \varphi) - \langle \nabla_{\nabla u} X, \nabla \varphi \rangle = \disp \nabla \di  u(X, \nabla \varphi) + \langle \nabla_{\nabla \varphi} X, \nabla u \rangle
	\end{array}
	$$
	we obtain
	\begin{equation}\label{diffeinte_2}
		\begin{array}{lcl}
			0 & = & \disp \disp \disp \int \Big\{ \frac{a'(|\nabla u|)}{|\nabla u|} \nabla \di  u(\nabla u,X) \langle \nabla u, \nabla \varphi\rangle + a(|\nabla u|) \big[\nabla \di  u(X, \nabla \varphi) + \langle \nabla_{\nabla \varphi} X, \nabla u \rangle\big] - f'(u)w \varphi\Big\}.
		\end{array}
	\end{equation}
	Using now that 
	$$
	\langle \nabla w, \nabla \varphi \rangle = \nabla \varphi\big( \langle \nabla u, X \rangle \big) = \nabla \di  u (\nabla \varphi, X) + \langle \nabla u, \nabla_{\nabla \varphi}X\rangle
	$$
	we get
	\begin{equation}\label{final}
		\begin{array}{lcl}
			0 & = & \disp \int \Big\{\frac{a'(|\nabla u|)}{|\nabla u|} \langle \nabla w, \nabla u \rangle \langle \nabla u, \nabla \varphi \rangle + a(|\nabla u|)\langle \nabla w, \nabla \varphi \rangle - f'(u)w \varphi\Big\} \\[0.4cm]
			& = & \disp \int \Big\{ \langle A(\nabla u)\nabla w, \nabla \varphi \rangle - f'(u)w \varphi \Big\},
		\end{array}
	\end{equation}
	that concludes the proof. 
\end{proof}

\begin{lemma} \label{Lemma_w-}
	Let $(M^n,\metric)$ be a Riemannian manifold and $\Omega \subseteq M$ a domain. Let $a$ satisfy \eqref{assu_A_strong}, $f \in C^1(\R)$ 
	and let $w\in C^1(\Omega)\cap C(\overline{\Omega})$ be a solution to
	\begin{equation} \label{w-eq}
	\left\{
	\begin{array}{ll}
		\diver\big( A(\nabla u) \nabla w\big) + f'(u) w = 0 & \qquad \text{weakly in } \Omega \\[0.2cm]
		w \geq 0 & \qquad \text{on } \, \partial\Omega \\
	\end{array}
	\right.
	\end{equation}
	for some $u \in C^1(\overline\Omega)$. Suppose that there exists $\{\varphi_j\} \subseteq \lip_c(\overline\Omega)$ satisfying 
	\begin{equation} \label{Hgrowth}
	\varphi_j \to 1 \quad \text{in } \, C_\loc(\overline\Omega) \qquad \text{and} \qquad \int_\Omega w_-^2 \langle A(\nabla u) \nabla \varphi_j, \nabla \varphi_j \rangle \to 0
\end{equation}
as $j \to \infty$, where $w_- = \max\{-w,0\}$ denotes the negative part of $w$. Then either
	\begin{equation} \label{w-alt3}
		w\equiv 0 \text{ in } \Omega \qquad \text{or} \qquad w<0 \text{ in } \Omega \qquad \text{or} \qquad w>0 \text{ in } \Omega.
	\end{equation}
\end{lemma}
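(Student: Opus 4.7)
By the standing stability assumption of Section \ref{sec_stability}, \eqref{hardy} is in force, so Lemma \ref{lem_mosspie} yields a strictly positive function $v \in C^{0,\alpha}_\loc(\Omega) \cap H^1_\loc(\Omega)$ that weakly solves the linearized equation $\diver(A(\nabla u)\nabla v) + f'(u) v = 0$ in $\Omega$. The idea is to analyze the quotient $g \doteq w/v \in H^1_\loc(\Omega) \cap C(\Omega)$. Expanding $v^2 A(\nabla u)\nabla g = v\, A(\nabla u)\nabla w - w\, A(\nabla u)\nabla v$ and using symmetry of $A(\nabla u)$, one verifies that for every $\phi \in C^\infty_c(\Omega)$
\[
\int_\Omega \langle v^2 A(\nabla u)\nabla g, \nabla\phi\rangle = \int_\Omega \langle A(\nabla u)\nabla w, \nabla(v\phi)\rangle - \int_\Omega \langle A(\nabla u)\nabla v, \nabla(w\phi)\rangle ;
\]
testing the weak equations for $w$ and $v$ against $v\phi,\, w\phi \in H^1_c(\Omega)$ respectively, both integrals on the right equal $\int_\Omega f'(u)\,wv\,\phi$, so they cancel. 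Hence $g$ satisfies, in divergence form and with no zeroth-order term,
\[
\diver\bigl(v^2 A(\nabla u)\nabla g\bigr) = 0 \qquad \text{weakly in } \Omega.
\]

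\textbf{Caccioppoli estimate.} The heart of the argument is to test this equation against $g_-\varphi_j^2$; modulo a standard double-cutoff approximation (see the last paragraph), integration by parts together with the identity $A(\nabla u)\nabla g = -A(\nabla u)\nabla g_-$ on $\{g<0\}$ and the arithmetic-geometric mean inequality produce
\[
\int_\Omega v^2\,\varphi_j^2\,\langle A(\nabla u)\nabla g_-, \nabla g_-\rangle \;\leq\; 4\int_\Omega v^2 g_-^2\,\langle A(\nabla u)\nabla\varphi_j, \nabla\varphi_j\rangle \;=\; 4\int_\Omega w_-^2\,\langle A(\nabla u)\nabla\varphi_j, \nabla\varphi_j\rangle ,
\]
where we have crucially used the algebraic identity $v\, g_- = w_-$. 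By \eqref{Hgrowth} the right-hand side vanishes as $j \to \infty$; since $\varphi_j \to 1$ in $C_\loc(\overline\Omega)$, Fatou's lemma combined with $v>0$ and local uniform ellipticity of $A(\nabla u)$ yield $\nabla g_- \equiv 0$ a.e. on $\Omega$. Connectedness of $\Omega$ then gives $g_- \equiv c$ for some constant $c \geq 0$.

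\textbf{Conclusion and main obstacle.} If $c=0$, then $w = gv \geq 0$ pointwise in $\Omega$ and Remark \ref{rem_harnackHopf}, applied to the nonnegative solution $w$, forces $w \equiv 0$ or $w>0$ in $\Omega$. If $c>0$, then $g \equiv -c$ and hence $w = -cv < 0$ throughout $\Omega$. These are precisely the three alternatives in \eqref{w-alt3}. The main technical difficulty is the rigorous justification of $g_-\varphi_j^2$ as a test function: since $v$ may degenerate or blow up near $\partial\Omega$, $g_-$ need not extend continuously to $\partial\Omega$, even though $w_- = 0$ there. One circumvents this by inserting a second cutoff $\eta_k \in \lip_c(\Omega)$ with $\eta_k \uparrow 1$, testing with $g_-\varphi_j^2\eta_k^2$, and showing that the additional error term involving $\nabla\eta_k$ can be driven to $0$ by choosing $\eta_k$ adapted to the continuous boundary vanishing of $w_-$ (together with the moderate growth provided by \eqref{Hgrowth}).
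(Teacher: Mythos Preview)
Your overall strategy coincides with the paper's: produce the positive solution $v$ from stability via Lemma \ref{lem_mosspie}, form the quotient $w_-/v$ (your $g_-$ equals the paper's $z$), derive a Caccioppoli-type inequality with right-hand side $4\int_\Omega w_-^2\langle A(\nabla u)\nabla\varphi,\nabla\varphi\rangle$, and conclude via \eqref{Hgrowth} that the quotient is a nonnegative constant. The final trichotomy then follows exactly as you describe.

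The divergence is in how the boundary obstacle is resolved, and here your proposal has a genuine gap. The paper does \emph{not} use an interior cutoff $\eta_k$. Instead it regularizes $v$ to $v_\eps = v+\eps$, sets $z_\eps = w_-/v_\eps$, and tests the resulting inequality $\diver(v_\eps^2 A(\nabla u)\nabla z_\eps) \geq -\eps w_- f'(u)$ against $\varphi^2(z_\eps-\delta)_+$. The key observation is that
\[
\{z_\eps>\delta\}\subseteq\{w_->\eps\delta\},
\]
and since $w_-\in C(\overline\Omega)$ vanishes on $\partial\Omega$, the set $\overline{\{w_->\eps\delta\}}\cap\supp\varphi$ is compact in $\Omega$. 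Thus $\varphi^2(z_\eps-\delta)_+\in\lip_c(\Omega)$ automatically, with \emph{no} extra boundary error term; one then lets $\eps\to0$ (the inhomogeneity $\eps w_- f'(u)$ disappears by dominated convergence) and $\delta\to0$.

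Your fix via $\eta_k\in\lip_c(\Omega)$ with $\eta_k\uparrow 1$ produces the additional term $\int_\Omega w_-^2\varphi_j^2\langle A(\nabla u)\nabla\eta_k,\nabla\eta_k\rangle$, and driving it to zero would require quantitative control on the rate at which $w_-$ vanishes near $\partial\Omega$ relative to the blow-up of $|\nabla\eta_k|$. Mere continuity of $w_-$ does not suffice: already in one dimension, $w_-(x)\sim\sqrt{x}$ near $0$ (compatible with $w\in C^1(\Omega)\cap C(\overline\Omega)$) together with standard cutoffs $\eta_k$ supported on $[1/k,1]$ with $|\eta_k'|\sim k$ gives $\int_{1/k}^{2/k} w_-^2|\eta_k'|^2\sim 1$. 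Since the lemma imposes no regularity on $\partial\Omega$ (see the Remark immediately after it) and only $w\in C(\overline\Omega)$ at the boundary, there is no general mechanism to guarantee the needed decay; the invocation of \eqref{Hgrowth} does not help here because for fixed $j$ everything lives on the compact set $\supp\varphi_j$. The $v_\eps$--truncation device sidesteps the issue entirely by converting the boundary condition $w_-=0$ directly into compact support of the test function.
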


\begin{remark}
		We emphasize that in Lemma \ref{Lemma_w-} no assumptions are made on the regularity of $\partial\Omega$, as we only require $\Omega$ to be open and connected.
\end{remark}

\begin{proof}
By Lemma \ref{lem_mosspie}, since $u$ is stable there exists a weak solution $0 < v \in C^{0,\alpha}_\loc(\Omega) \cap H^1_\loc(\Omega)$ to
\[
		\diver \left( A(\nabla u)\nabla v\right) + f'(u)v = 0.
\]

Set $z = w_-/v$. We first show that
	\begin{equation} \label{A-L1}
		\int_\Omega \varphi^2 v^2 \langle A(\nabla u) \nabla z, \nabla z \rangle \leq 4 \int_\Omega w_-^2 \langle A(\nabla u)\nabla\varphi,\nabla\varphi \rangle 
		\qquad \text{for all } \, \varphi \in \lip_c(M) \, .
	\end{equation}
	To do so, for $\eps>0$ let us set $v_\eps \doteq v+\eps$ and $z_\eps = w_-/v_\eps$. The function $v_\eps$ solves
	\[
	\diver \left( A(\nabla u)\nabla v_\eps\right) + f'(u)v_\eps = \eps f'(u)
	\]
	weakly in  $\Omega$, so $z_\eps$ satisfies
	\begin{equation} \label{veps_weak}
		\diver \left( v_\eps^2 A(\nabla u)\nabla z_\eps \right) \ge -\eps w_-f'(u) \qquad \text{weakly in } \, \Omega.
	\end{equation}
	Let $\varphi \in \lip_c(\overline\Omega)$ be a cut-off function on $M$, $\delta>0$ and set $U_\delta \doteq \{z > \delta\}$ and $U_{\eps,\delta} \doteq \{z_\eps > \delta\}$.
	By construction
	\[
	\overline{U}_{\eps,\delta} \subseteq \overline{\{w_- > \eps\delta\}} \subseteq \Omega
	\]
	so $(z_\eps -\delta)_+$ vanishes in a neighbourhood of $\partial \Omega$ and thus, testing with $\varphi^2 (z_\eps -\delta)_+ \in \lip_c(\Omega)$ we get
	\begin{align*}
		\disp - \eps \int_{\Omega} \varphi^2 f'(u) & w_-(z_\eps -\delta)_+ \le - \disp \int_\Omega \langle A(\nabla u) \nabla z_\eps, \nabla(\varphi^2(z_\eps -\delta)_+) \rangle v_\eps^2\\[0.5cm]
		& = \disp - \int_\Omega \varphi^2 \langle A(\nabla u) \nabla z_\eps, \nabla z_\eps \rangle v_\eps^2 \mathbf{1}_{U_{\eps,\delta}} + 2 \int_\Omega \varphi(z_\eps - \delta)_+ \langle A(\nabla u) \nabla z_\eps, \nabla \varphi \rangle v_\eps^2 \\[0.5cm]
		& \leq \disp - \frac{1}{2} \int_\Omega \varphi^2 v_\eps^2 \langle A(\nabla u) \nabla z_\eps, \nabla z_\eps \rangle \mathbf{1}_{U_{\eps,\delta}} + 2 \int_\Omega (z_\eps - \delta)_+^2v_\eps^2 \langle A(\nabla u) \nabla \varphi, \nabla \varphi \rangle.	    
	\end{align*}
	Since $(z_\eps-\delta)^2_+ v_\eps^2 \le w_-^2$ on $\Omega$, we have	
	\[
	\frac{1}{2} \int_\Omega \varphi^2 v_\eps^2 \langle A(\nabla u) \nabla z_\eps, \nabla z_\eps \rangle \mathbf{1}_{U_{\eps,\delta}} \le 2 \int_\Omega w_-^2 \langle A(\nabla u) \nabla \varphi, \nabla \varphi \rangle + \eps \int_\Omega \varphi^2 f'(u)w_-(z_\eps -\delta)_+ .
	\]
	We keep $\delta>0$ fixed and we let $\eps\to0^+$ in this inequality. Since
	\[
	|\eps\varphi^2 f'(u)w_-(z_\eps-\delta)_+| \leq \eps \varphi^2 |f'(u)| w_- z_\eps = \varphi^2 |f'(u)| w_-^2 \frac{\eps}{v_\eps} \leq \varphi^2 |f'(u)| w_-^2 \in L^1(\Omega)
	\]
	by dominated convergence we have
	\[
	\lim_{\eps\to0^+} \eps \int_\Omega \varphi^2 f'(u)w_-(z_\eps -\delta)_+ = 0 \, .
	\]
	On the other hand, expanding $\nabla z_\eps$ and using Fatou's lemma we have
	\[
	\liminf_{\eps\to0^+} \int_\Omega \varphi^2 v_\eps^2 \langle A(\nabla u) \nabla z_\eps, \nabla z_\eps \rangle \mathbf{1}_{U_{\eps,\delta}} \geq \int_{\Omega} \varphi^2 v^2 \langle A(\nabla u) \nabla z, \nabla z \rangle \mathbf{1}_{U_\delta}
	\]
	so we get
	\[
	\int_\Omega \varphi^2 v^2 \langle A(\nabla u) \nabla z, \nabla z \rangle \mathbf{1}_{U_\delta} \le 4 \int_\Omega w_-^2 \langle A(\nabla u) \nabla \varphi, \nabla \varphi \rangle
	\]
	and then letting $\delta\to0^+$ we obtain, by monotone convergence, \eqref{A-L1}. In our assumptions, choosing $\varphi = \varphi_j$ in \eqref{A-L1} and letting $j \to \infty$ we deduce
%
	\[
	\int_\Omega v^2 \langle A(\nabla u)\nabla z,\nabla z\rangle = 0 \, .
	\]
	Since $v>0$ in $\Omega$ and $\langle A(\nabla u)\,\cdot\,,\,\cdot\,\rangle$ is positive definite we infer $\nabla z\equiv0$ almost everywhere in $\Omega$ and therefore $z$ is constant in $\Omega$, that is, there exists a constant $\lambda\geq0$ such that
	\[
		w_- = \lambda v \qquad \text{in } \, \Omega \, .
	\]
	Since $v>0$ in $\Omega$, this implies that either $w_->0$ everywhere in $\Omega$ or $w_- \equiv 0$ in $\Omega$, that is, either $w<0$ everywhere in $\Omega$ or $w\geq 0$ in $\Omega$. In the latter case, by the (half)-Harnack inequality (see Remark \ref{rem_harnackHopf}) we either have $w\equiv 0$ or $w>0$ everywhere in $\Omega$.
\end{proof}

\begin{remark} \label{rem-Lemma_w-}
	In the proof of Lemma \ref{Lemma_w-} we showed that if there exists a strictly positive solution $v\in C^{0,\alpha}_\loc(\Omega) \cap H^1_\loc(\Omega)$ to
	\[
		\diver(A(\nabla u)\nabla v) + f'(u)v = 0 \qquad \text{in } \, \Omega
	\]
	and $w\in C^1(\Omega)\cap C(\overline{\Omega})$ is a weak solution to
	\begin{equation} \label{weq-rem}
		\diver(A(\nabla u)\nabla w) + f'(u)w = 0 \qquad \text{in } \, \Omega
	\end{equation}
	satisfying $w\geq 0$ on $\partial\Omega$ and \eqref{Hgrowth}, then either $w > 0$ in $\Omega$ or $w = \lambda v$ in $\Omega$ for some constant $\lambda\leq 0$. 
	The same argument applied to both $w_+$ and $w_-$ shows that if $w\in C^1(\Omega)\cap C(\overline{\Omega})$ is a weak solution to \eqref{weq-rem} satisfying both $w=0$ on $\partial \Omega$ and \eqref{Hgrowth} with $w$ in place of $w_-$, then $w = \lambda v$ in $\Omega$ for some constant $\lambda\in\R$.
\end{remark}

\begin{theorem}\label{teo_monotonicity}
Let $(M, \metric)$ be a complete Riemannian manifold, let $\Omega$ be a $C^1$ domain and let $u \in C^2(\Omega) \cap C^1(\overline{\Omega})$ be a non-constant, stable solution of
\[
\Delta_a u + f(u) = 0 \qquad \text{in } \Omega, 
\]	
where $a$ satisfies \eqref{assu_A_strong} and $f \in C^1(\R)$. Let $\Omega_0 \subseteq \Omega$ be a subdomain such that $\partial \Omega_0 \subseteq \partial \Omega \cup \crit(u)$. Assume that $X$ is a bounded Killing vector field in $\Omega_0$, continuous in $\overline\Omega_0$ and satisfying 
\begin{equation}\label{eq_bound_Kil}
\langle \nabla u, X \rangle \ge 0, \quad \langle \nabla u, X \rangle \not \equiv 0 \qquad \text{on } \, \overline{\Omega}_0 \cap \partial \Omega. 
\end{equation}
If $u$ has moderate energy growth in $\Omega_0$, then $\langle \nabla u, X \rangle > 0$ in $\Omega_0$.
\end{theorem}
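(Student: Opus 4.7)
The plan is to set $w \doteq \langle \nabla u, X \rangle$ and deduce $w > 0$ in $\Omega_0$ by applying Lemma \ref{Lemma_w-} to the subdomain $\Omega_0$ and then upgrading a positive boundary value of $w$ to strict positivity in the interior. First, I would invoke Lemma \ref{lem_linearKilling}, whose hypotheses are met since $a$ satisfies \eqref{assu_A_strong}, $u \in C^2(\Omega)$, and $X$ is Killing on $\Omega_0$: this gives that $w$ solves
\[
	\diver\bigl(A(\nabla u)\nabla w\bigr) + f'(u)\, w = 0 \qquad \text{weakly in } \, \Omega_0.
\]
Moreover $w \in C^1(\Omega_0) \cap C(\overline{\Omega}_0)$ because $u \in C^2(\Omega) \cap C^1(\overline{\Omega})$ and $X$ is smooth in $\Omega_0$ and continuous up to $\overline{\Omega}_0$.

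Next I would verify the sign condition $w \ge 0$ on $\partial \Omega_0$. By hypothesis $\partial \Omega_0 \subseteq \partial \Omega \cup \critu$: on $\partial \Omega_0 \cap \partial \Omega$ we have $w \ge 0$ directly from \eqref{eq_bound_Kil}, while on $\partial \Omega_0 \cap \critu$ we have $\nabla u = 0$, so $w = 0$. Stability of $u$ on $\Omega$ trivially restricts to stability on $\Omega_0$ by inclusion $\lip_c(\Omega_0) \subseteq \lip_c(\Omega)$, so the positive linearized solution $v$ used in the proof of Lemma \ref{Lemma_w-} is available on $\Omega_0$. To check the growth hypothesis \eqref{Hgrowth}, I would use the cutoffs $\{\varphi_j\} \subseteq \lip_c(\overline{\Omega}_0)$ witnessing the moderate energy growth of $u$ in $\Omega_0$ together with the bound $w_-^2 \le |X|^2 |\nabla u|^2 \le \|X\|_{L^\infty(\Omega_0)}^2 |\nabla u|^2$, which immediately yields
\[
	\int_{\Omega_0} w_-^2 \, \langle A(\nabla u) \nabla \varphi_j, \nabla \varphi_j \rangle \le \|X\|_{L^\infty(\Omega_0)}^2 \int_{\Omega_0} |\nabla u|^2 \, \langle A(\nabla u) \nabla \varphi_j, \nabla \varphi_j \rangle \longrightarrow 0.
\]
Lemma \ref{Lemma_w-} then delivers the trichotomy: in $\Omega_0$, either $w \equiv 0$, or $w < 0$, or $w > 0$.

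To conclude, I would use the non-degeneracy assumption in \eqref{eq_bound_Kil}: there exists $p \in \overline{\Omega}_0 \cap \partial \Omega$ with $w(p) > 0$. By continuity of $w$ on $\overline{\Omega}_0$ there is a neighbourhood $U$ of $p$ in $\overline{\Omega}_0$ where $w > 0$; since $p \in \partial \Omega_0$ and $\Omega_0$ is open, the set $U \cap \Omega_0$ is non-empty, so $w$ is strictly positive at some interior point. This rules out both $w \equiv 0$ and $w < 0$ in $\Omega_0$, leaving only $w > 0$ in $\Omega_0$, as desired. The main obstacle is not conceptual but organisational: one must carefully check that Lemmas \ref{lem_linearKilling} and \ref{Lemma_w-} apply verbatim on the possibly irregular subdomain $\Omega_0$ (whose boundary may contain interior critical points of $u$), and that the pairing of moderate energy growth with the uniform bound on $|X|$ produces the correct admissible sequence for $w_-$ without requiring any extra regularity of $\partial \Omega_0$.
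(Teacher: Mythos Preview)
Your proof is correct and follows essentially the same approach as the paper: apply Lemma \ref{lem_linearKilling} to see that $w=\langle\nabla u,X\rangle$ solves the linearized equation, use the boundedness of $X$ together with the moderate energy growth to feed the hypothesis of Lemma \ref{Lemma_w-}, and then exclude the cases $w\equiv 0$ and $w<0$ using the nondegeneracy in \eqref{eq_bound_Kil}. The only cosmetic difference is in the final step: you use continuity of $w$ up to $\overline{\Omega}_0$ to produce an interior point with $w>0$, whereas the paper phrases this via the Harnack inequality (which is already built into the trichotomy in Lemma \ref{Lemma_w-}); both are valid.
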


\begin{proof}
	By Lemma \ref{lem_linearKilling}, $w = \langle \nabla u,X \rangle$ solves the linearized equation 
	\[
	\diver \left( A(\nabla u)\nabla w\right) + f'(u)w = 0
	\]
	weakly in $\Omega_0$ and $w\geq 0$, $w \not \equiv 0$ on $\partial\Omega_0$. Since, in our assumptions, $w^2 \leq C|\nabla u|^2$ and $u$ has moderate energy growth in $\Omega_0$, we can apply Lemma \ref{Lemma_w-} in $\Omega_0$ to deduce that $w \geq 0$ there. Since $w\not\equiv 0$ on $\partial\Omega_0$, it must be $w>0$ somewhere in $\Omega_0$ and therefore, by the Harnack inequality in \cite[Thm. 7.1.2]{pucci_serrin}, $w>0$ everywhere in $\Omega_0$.
\end{proof}

\section{The geometric Poincar\'e formula and splitting}\label{sec_poinc}
For $x \not \in \critu$, fix a local orthonormal frame $\{e_i\} = \{\nu, e_\alpha\}$, where $\nu=\nabla u/|\nabla u|$ and $1 \le \alpha \le n-1$. Given $\phi \in C^\infty(M)$, we denote by  $\nabla^\top  \phi$ the component of $\nabla \phi$ orthogonal to $\nu$, and by $\II(x)$ the second fundamental form of the level set $\{u=u(x)\}$ at $x$. The subscript $\nu$ will mean derivation in the direction of $\nu$, so that, for instance, $u_\nu = |\nabla u|$. The coefficients of (the $(0,2)$-version of) $A(X)$ will be denoted by $A^{ij}$.

\begin{lemma}\label{lem_basicomp}
	Let $A$ satisfy \eqref{assu_A}. Near a point where $|\nabla u| \neq 0$, the following identities hold:
	\begin{equation}
		\begin{array}{ll}\label{identities}
			1) & \quad \langle \nabla |\nabla u|, e_i\rangle = \nabla \di  u(\nu, e_i) = \frac{1}{u_\nu} u_{ki}u^k, \qquad \big| \nabla |\nabla u|\big|^2 = u_{\nu\nu}^2 + \big| \nabla^\top  |\nabla u|\big|^2 \\[0.3cm]
			2) & \quad |\nabla \di  u|^2 - \big| \nabla |\nabla u|\big|^2 = \big| \nabla^\top   |\nabla u|\big|^2 + u_\nu^2|\II|^2 \\[0.3cm]
			3) & \quad \langle A(\nabla u)\nabla |\nabla u|, \nabla |\nabla u|\rangle 
			= \lambda_2|\nabla^\top  |\nabla u||^2 + \lambda_1 u_{\nu\nu}^2 \\[0.3cm]
			4) & \quad \nabla \di u^{(2)}(\nabla u, \nabla u) = u_\nu^2( u_{\nu\nu}^2 + |\nabla^\top  |\nabla u||^2) \\[0.3cm]
			5) & \quad A^{ij}u_{jk}u_{ir}A^{kr} = 2\lambda_1\lambda_2 \big|\nabla^\top  |\nabla u|\big|^2 + \lambda_1^2u_{\nu\nu}^2 + u_\nu^2\lambda_2^2|\II|^2 \\[0.3cm]
			6) & \quad \langle A(\nabla u) \nabla |\nabla u|, \nabla u \rangle 
			= u_\nu \lambda_1 u_{\nu\nu}. 
		\end{array}
	\end{equation}
	where $\nabla \di u^{(2)}$ is the tensor whose components are $u_{ik}u^k_j$ and $\lambda_j = \lambda_j(|\nabla u|)$.
\end{lemma}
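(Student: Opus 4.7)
The proof is a direct computation in a local adapted orthonormal frame; the real task is setting up the frame once and harvesting the identities. My plan is as follows.

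Fix $x_0 \in \Omega \setminus \critu$, choose $\nu = \nabla u/|\nabla u|$, extend it smoothly to a unit vector field on a neighbourhood $U$ of $x_0$, and complete it to a local orthonormal frame $\{\nu, e_\alpha\}_{\alpha=1}^{n-1}$ tangent to the level sets of $u$. In this frame $\nabla u$ has components $(u_\nu, 0, \ldots, 0)$ with $u_\nu = |\nabla u|$, and by the very definition \eqref{def_A} the endomorphism $A(\nabla u)$ is diagonal with
\[
A^{\nu\nu} = a + u_\nu a' = \lambda_1(u_\nu), \qquad A^{\alpha\beta} = a\,\delta_{\alpha\beta} = \lambda_2(u_\nu)\,\delta_{\alpha\beta}, \qquad A^{\nu\alpha}=0.
\]
I would also record that the second fundamental form of $\{u = u(x)\}$ with respect to the unit normal $\nu$ satisfies $\II_{\alpha\beta} = u_{\alpha\beta}/u_\nu$, so that $u_\nu^2|\II|^2 = \sum_{\alpha,\beta} u_{\alpha\beta}^2$.

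With this setup, the identities follow in a mechanical order. For (1), differentiating $|\nabla u|^2 = u_k u^k$ gives $(|\nabla u|)_i = u^k u_{ki}/u_\nu = \nabla\di u(\nu,e_i)$; evaluating at $i=\nu$ yields $\langle \nabla|\nabla u|,\nu\rangle = u_{\nu\nu}$, and the orthogonal decomposition $|\nabla|\nabla u||^2 = u_{\nu\nu}^2 + |\nabla^\top|\nabla u||^2$ is immediate. For (2), $(|\nabla u|)_\alpha = u_{\nu\alpha}$ gives $|\nabla^\top |\nabla u||^2 = \sum_\alpha u_{\nu\alpha}^2$, and then $|\nabla\di u|^2 = u_{\nu\nu}^2 + 2\sum_\alpha u_{\nu\alpha}^2 + \sum_{\alpha,\beta} u_{\alpha\beta}^2$ gives (2) at once after subtracting $|\nabla|\nabla u||^2$ computed in (1). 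Identities (3) and (6) are a plug-in of $\nabla|\nabla u|$ into the formula $\langle A(\nabla u)Y,Z\rangle = \frac{a'}{|\nabla u|}\langle \nabla u,Y\rangle\langle \nabla u,Z\rangle + a\langle Y,Z\rangle$, combined with $\langle \nabla u, \nabla|\nabla u|\rangle = u_\nu u_{\nu\nu}$ and the block form of $A$ given above. Identity (4) is immediate since $\nabla\di u^{(2)}(\nabla u, \nabla u) = u_\nu^2 \sum_k u_{\nu k}^2$, which in the frame splits as $u_\nu^2(u_{\nu\nu}^2 + |\nabla^\top|\nabla u||^2)$.

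The only computation with more indices is (5). Writing it as the trace $\Tr\bigl((AH)^2\bigr)$ with $H$ the Hessian $(u_{ij})$, and using that $A$ is diagonal in the frame, I get
\[
(AH)_{\nu k} = \lambda_1 u_{\nu k}, \qquad (AH)_{\alpha k} = \lambda_2 u_{\alpha k},
\]
so that
\[
\Tr\bigl((AH)^2\bigr)=\sum_{i,k}(AH)_{ik}(AH)_{ki}= \lambda_1^2 u_{\nu\nu}^2 + 2\lambda_1\lambda_2 \sum_\alpha u_{\nu\alpha}^2 + \lambda_2^2 \sum_{\alpha,\beta} u_{\alpha\beta}^2,
\]
and substituting $\sum_\alpha u_{\nu\alpha}^2 = |\nabla^\top|\nabla u||^2$ and $\sum_{\alpha,\beta}u_{\alpha\beta}^2 = u_\nu^2|\II|^2$ yields (5). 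The main ``obstacle'' is therefore really a bookkeeping one, namely keeping the two off-diagonal cross terms $u_{\nu\alpha}$ correctly accounted for in both (2) and (5); once the frame is fixed and $A$ is observed to be diagonal there, nothing further is required.
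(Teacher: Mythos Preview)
Your proof is correct and follows essentially the same approach as the paper: both set up the adapted orthonormal frame $\{\nu, e_\alpha\}$, record that $A(\nabla u)$ is diagonal there with eigenvalues $\lambda_1,\lambda_2$, and read off each identity. The only cosmetic difference is in (5): the paper expands $A^{ij}u_{jk}u_{ir}A^{kr}$ directly in terms of $a$ and $a'$ and then regroups into $\lambda_1,\lambda_2$, whereas your computation via $\Tr\bigl((AH)^2\bigr)$ with $A$ diagonal is a bit more streamlined---but the substance is identical.
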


\begin{remark}
	Item $2)$ was used by Sternberg and Zumbrun in \cite{sternbergzumbrun1}.
\end{remark}
\begin{proof}
	$1)$ and $2)$ are standard identities. As for $3)$, by $1)$
	$$
	A(\nabla u)\nabla\abs{\nabla u}=\nabla \di   u(\nu,\nu)\lambda_1\nu+\nabla \di   u(\nu,e_\alpha)\lambda_2e_\alpha=u_{\nu\nu}\lambda_1\nu+u_{\nu\alpha}\lambda_2e_\alpha,
	$$
	so
	$$
	\langle A(\nabla u)\nabla\abs{\nabla u},\nabla\abs{\nabla u}\rangle=\lambda_1u_{\nu\nu}^2+\lambda_2u_{\nu\alpha}u_{\nu\alpha}
	$$
	and, by $1)$, $u_{\nu\alpha}u_{\nu\alpha}=\abs{\nabla^\top  \abs{\nabla u}}^2$.\\
	To prove $4)$, observe that
	$$
	\nabla \di u^{(2)}(\nabla u,\nabla u)=u_{ik}u^k_ju^iu^j=u_\nu^2 u_{\nu k}u_{\nu k}=u_\nu^2\left(u_{\nu\nu}^2+\abs{\nabla^\top  \abs{\nabla u}}^2\right).
	$$
	For identity $5)$, we write
	\begin{align*}
		A^{ij}u_{jk}u_{ir}A^{kr}=&\frac{a'(\abs{\nabla u})^2}{\abs{\nabla u}^2}\left[\nabla \di  u(\nabla u,\nabla u)\right]^2+2\frac{a(\abs{\nabla u})a'(\abs{\nabla u})}{\abs{\nabla u}}\nabla \di u^{(2)}(\nabla u,\nabla u)+a(\abs{\nabla u})^2\abs{\nabla \di  u}^2\\
		=&\frac{a'(\abs{\nabla u})^2}{\abs{\nabla u}^2}u_{\nu\nu}^2u_\nu^4+2\frac{a(\abs{\nabla u})a'(\abs{\nabla u})}{\abs{\nabla u}}u_\nu^2\left(u_{\nu\nu}^2+\abs{\nabla^\top  \abs{\nabla u}}^2\right)+\\
		&+a(\abs{\nabla u})^2\left(u_{\nu\nu}^2+2\abs{\nabla^\top  \abs{\nabla u}}^2+u_\nu^2\abs{\operatorname{II}}^2\right)\\
		=&\left[\abs{\nabla u}a'(\abs{\nabla u})+a(\abs{\nabla u})\right]^2u_{\nu\nu}^2+\\
		&+2a(\abs{\nabla u})\left[a(\abs{\nabla u})+\abs{\nabla u}a'(\abs{\nabla u})\right]\abs{\nabla^\top  \abs{\nabla u}}^2+a(\abs{\nabla u})u_\nu^2\abs{\operatorname{II}}^2\\
		=&\lambda_1^2u_{\nu\nu}^2+2\lambda_1\lambda_2\abs{\nabla^\top  \abs{\nabla u}}^2+\lambda_2^2u_\nu^2\abs{\operatorname{II}}^2.
	\end{align*} 
	Finally, to prove $6)$, we just write
	$$
	\langle A(\nabla u)\nabla\abs{\nabla u},\nabla u\rangle=A^{ij}\frac1{u_\nu}u_{ik}u^ku_j=\lambda_1\frac{u^i}{u_\nu}u_{ik}u^k=\frac{\lambda_1}{u_\nu}\nabla \di  u(\nabla u,\nabla u)=\lambda_1u_{\nu\nu}u_\nu^2.
	$$
\end{proof}

We begin with the following Bochner type formulas:

\begin{proposition}
	Let $u \in C^3(\Omega)$, and let $a$ satisfy \eqref{assu_A_superstrong}. Then, at points of $\Omega \backslash \critu$ where $a(|\nabla u|)$ is twice differentiable, it holds
	\begin{equation}\label{bochner_better}
		\begin{array}{lcl}
			\disp \frac{1}{2} \diver\big( A(\nabla u) \nabla |\nabla u|^2 \big) & = & \lambda_1 u_{\nu\nu}^2 + (\lambda_1+\lambda_2) \big|\nabla^\top  |\nabla u|\big|^2 +\lambda_2u_\nu^2|\II|^2 \\[0.3cm]
			& & \disp + \lambda_2\Ric(\nabla u, \nabla u) + \langle \nabla \Delta_a u, \nabla u \rangle.
		\end{array} 
	\end{equation}
	and 
	\begin{equation}\label{bochner_betterfornablau}
		\begin{array}{lcl}
			\disp |\nabla u| \diver\big( A(\nabla u) \nabla |\nabla u| \big) & = & \lambda_1 \big|\nabla^\top  |\nabla u|\big|^2 +\lambda_2u_\nu^2|\II|^2 \\[0.3cm]
			& & \disp + \lambda_2\Ric(\nabla u, \nabla u) + \langle \nabla \Delta_a u, \nabla u \rangle,
		\end{array} 
	\end{equation}
	where $\lambda_j = \lambda_j(|\nabla u|)$. Moreover, if $\Delta_a u + f(u) = 0$ with $f \in C^1(\R)$ then \eqref{bochner_better} and \eqref{bochner_betterfornablau} hold, with the inequality sign $\geq$ in place of equality, weakly in the entire $\Omega$.
\end{proposition}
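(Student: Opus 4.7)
The pointwise equality \eqref{bochner_better} on the open set $\{|\nabla u|>0\}\cap\{a\text{ twice diff.\ at }|\nabla u|\}$ is a direct computation that I would carry out in a local orthonormal frame $\{\nu,e_\alpha\}$ adapted to $\nu=\nabla u/|\nabla u|$. The two ingredients are: (i) the representation $\Delta_a u = A^{ij}u_{ij}$, which yields $\langle\nabla\Delta_a u,\nabla u\rangle = u^k(\nabla_kA^{ij})u_{ij} + A^{ij}(\nabla_k u_{ij})u^k$; and (ii) a Leibniz expansion of $\tfrac12\nabla_i(A^{ij}u_{jk}u^k)$, namely $\tfrac12\diver(A(\nabla u)\nabla|\nabla u|^2) = (\nabla_iA^{ij})u_{jk}u^k + A^{ij}(\nabla_i u_{jk})u^k + A^{ij}u_{jk}u_i^{\ k}$. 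Subtracting, the commutator $A^{ij}[\nabla_i u_{jk}-\nabla_k u_{ij}]u^k$ is handled by the Ricci identity on $\nabla u$; since $A^{ij}$ decomposes as $a(|\nabla u|)g^{ij} + \tfrac{a'(|\nabla u|)}{|\nabla u|}u^iu^j$, the anisotropic piece is annihilated by the antisymmetry of the Riemann tensor against $u^iu^k$, leaving precisely $\lambda_2\Ric(\nabla u,\nabla u)$. Next, I would verify the cancellation $(\nabla_iA^{ij})u_{jk}u^k = u^k(\nabla_kA^{ij})u_{ij}$ by direct evaluation in the adapted frame (both sides reduce to $u_\nu[\lambda_1'u_{\nu\nu}^2 + a'u_{\nu\nu}(\Delta u-u_{\nu\nu}) + 2a'|\nabla^\top|\nabla u||^2]$). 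The remaining quadratic term $A^{ij}u_{jk}u_i^{\ k}$ is computed in the adapted frame using $u_{11}=u_{\nu\nu}$, $u_{1\alpha} = \partial_\alpha|\nabla u|$, and $\sum_{\alpha,\beta}u_{\alpha\beta}^2 = u_\nu^2|\II|^2$, giving $\lambda_1u_{\nu\nu}^2 + (\lambda_1+\lambda_2)|\nabla^\top|\nabla u||^2 + \lambda_2u_\nu^2|\II|^2$, which closes the identity.

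Formula \eqref{bochner_betterfornablau} would follow from \eqref{bochner_better} via the product rule: since $\nabla|\nabla u|^2 = 2|\nabla u|\nabla|\nabla u|$,
\[
\tfrac12\diver\bigl(A(\nabla u)\nabla|\nabla u|^2\bigr) = |\nabla u|\diver\bigl(A(\nabla u)\nabla|\nabla u|\bigr) + \langle A(\nabla u)\nabla|\nabla u|,\nabla|\nabla u|\rangle.
\]
Lemma \ref{lem_basicomp}, item $3)$, evaluates the last term as $\lambda_1 u_{\nu\nu}^2 + \lambda_2|\nabla^\top|\nabla u||^2$; subtracting from the RHS of \eqref{bochner_better} leaves exactly the RHS of \eqref{bochner_betterfornablau}.

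For the weak inequality under $\Delta_a u+f(u)=0$, note that $a\in C^{1,1}_\loc(\R_0^+)$ implies that $a(|\nabla u|)$ is twice differentiable a.e.\ on $\{|\nabla u|>0\}$, so the pointwise identity \eqref{bochner_better} holds a.e.\ there; moreover $\langle\nabla\Delta_a u,\nabla u\rangle = -f'(u)|\nabla u|^2$ is globally well-defined. Fix a non-negative $\phi\in C^\infty_c(\Omega)$ and a smooth non-decreasing cutoff $\eta_\eps:[0,\infty)\to[0,1]$ with $\eta_\eps\equiv 0$ on $[0,\eps]$ and $\eta_\eps\equiv 1$ on $[2\eps,\infty)$. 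Multiplying the a.e.\ identity by $\eta_\eps(|\nabla u|)\phi$ (compactly supported in $\{|\nabla u|>\eps\}$) and integrating by parts produces
\[
-\tfrac12\int_\Omega\eta_\eps\,\langle A(\nabla u)\nabla|\nabla u|^2,\nabla\phi\rangle \;-\; \int_\Omega \eta_\eps'(|\nabla u|)\,\phi\,|\nabla u|\,\langle A(\nabla u)\nabla|\nabla u|,\nabla|\nabla u|\rangle \;=\; \int_\Omega\eta_\eps\,\phi\cdot\mathrm{RHS}.
\]
The second integrand on the left is non-negative (by positive-definiteness of $A$ and $\eta_\eps'\ge 0$), so dropping it gives the desired inequality with $\eta_\eps$; passing to the limit $\eps\to 0^+$ by dominated convergence, using that $u\in C^3$ makes all relevant quantities locally bounded and that the vector field $A(\nabla u)\nabla|\nabla u|^2$ extends continuously by zero across the critical set, yields the weak inequality. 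The weak version of \eqref{bochner_betterfornablau} is obtained identically, or by applying the product-rule identity of the second paragraph in the weak sense.

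The main obstacle will be the algebraic bookkeeping of the first paragraph, and in particular verifying the exact cancellation $(\nabla_iA^{ij})u_{jk}u^k = u^k(\nabla_kA^{ij})u_{ij}$ and the vanishing of the anisotropic curvature contribution: these are the features specific to the $\diver(a(|\nabla u|)\nabla u)$ structure that make the clean coefficient $\lambda_2$ (rather than a mixture of $\lambda_1$ and $\lambda_2$) appear in front of $\Ric(\nabla u,\nabla u)$.
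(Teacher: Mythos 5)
Your proof of \eqref{bochner_better} is correct, but it is organized quite differently from the paper's. The paper computes $\diver\big(A(\nabla u)\nabla|\nabla u|^2\big)$ by a direct Leibniz expansion exploiting the explicit form $A(\nabla u)=a\,\mathrm{Id}+\frac{a'}{|\nabla u|}\,\di u\otimes\nabla u$, and then reduces to the scalar Bochner formula $\tfrac12\Delta|\nabla u|^2=|\nabla\di u|^2+\langle\nabla\Delta u,\nabla u\rangle+\Ric(\nabla u,\nabla u)$, converting $\langle\nabla\Delta u,\nabla u\rangle$ back to $\langle\nabla\Delta_a u,\nabla u\rangle$ through an algebraic cancellation. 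You instead work in index notation from the identity $\Delta_a u=A^{ij}u_{ij}$, expand both $\tfrac12\nabla_i(A^{ij}u_{jk}u^k)$ and $u^k\nabla_k(A^{ij}u_{ij})$ by Leibniz, and handle the commutator $A^{ij}[\nabla_iu_{jk}-\nabla_ku_{ij}]u^k$ via the Ricci identity for 1-forms. The two approaches are genuinely different: the paper hides the curvature term inside the classical Bochner formula and never needs to display the cancellation $(\nabla_iA^{ij})u_{jk}u^k=u^k(\nabla_kA^{ij})u_{ij}$, whereas your route makes this cancellation explicit (I checked your common value $u_\nu[\lambda_1'u_{\nu\nu}^2+a'u_{\nu\nu}(\Delta u-u_{\nu\nu})+2a'|\nabla^\top|\nabla u||^2]$ in the adapted frame, and it is correct) and also transparently explains why only $\lambda_2=a(|\nabla u|)$ multiplies $\Ric(\nabla u,\nabla u)$: the anisotropic part $\frac{a'}{|\nabla u|}u^iu^j$ of $A$ is killed by the antisymmetry of the Riemann tensor against the symmetric $u^iu^k$. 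Your derivation of \eqref{bochner_betterfornablau} from \eqref{bochner_better} via the product rule and item 3) of Lemma \ref{lem_basicomp} is exactly the paper's argument, and your treatment of the weak inequality (multiply by $\eta_\eps(|\nabla u|)\varphi$, integrate by parts, drop the non-negative $\eta_\eps'$-term, pass to the limit by dominated convergence using the continuous-by-zero extension across $\critu$) matches the paper's, which uses $\eta_\eps(|\nabla u|^2)$ instead — an inessential cosmetic difference.
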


\begin{remark}\label{rem_stampa}
	The weak inequality requires to give a proper meaning to $\nabla^\top  |\nabla u|$, $u_{\nu\nu}$ and $u^2_\nu |\II|^2$ on $\critu$. By Stampacchia's inequality, $\nabla \di u \equiv 0$ a.e. on $\critu$. Since  
	\[
		u_{\nu\nu}^2 + 2|\nabla^\top |\nabla u||^2 + u_\nu^2|\II|^2 = |\nabla \di u|^2 \qquad \text{where } \, |\nabla u|> 0
	\]
	the terms $|\nabla^\top |\nabla u||^2$, $u_{\nu\nu}^2$ and $u_\nu^2|\II|^2$ extend continuously to the entire $\Omega$ by setting them to zero on $\critu$.
\end{remark}

\begin{proof}
	First, observe that $a(|\nabla u|) \in C^{1,1}_\loc(\{|\nabla u|>0\})$, hence the subset $E$ where it is twice differentiable has full measure in $\{|\nabla u|>0\}$. For $x \in E$ define
	\[
		(\star) = \diver\big( A(\nabla u) \nabla |\nabla u|^2 \big). 
	\]
	Using $1)$ in Lemma \ref{lem_basicomp}, we compute 
	$$
		\begin{array}{lcl}
			(\star) & = & \disp \diver \left( \frac{a'(|\nabla u|)}{|\nabla u|} \langle \nabla u , \nabla |\nabla u|^2 \rangle \nabla u + a(|\nabla u|) \nabla |\nabla u|^2\right) \\[0.4cm]
			& = & \disp \diver \left( 2\frac{a'(|\nabla u|)}{|\nabla u|} \nabla \di  u(\nabla u, \nabla u) \nabla u + a(|\nabla u|) \nabla |\nabla u|^2\right) \\[0.4cm]
			& = & \disp \left( 2\frac{a'(|\nabla u|)}{|\nabla u|}\right)' \frac{\big[\nabla \di  u(\nabla u, \nabla u)\big]^2}{|\nabla u|} + 2\frac{a'(|\nabla u|)}{|\nabla u|} \langle \nabla \big(\nabla \di  u(\nabla u, \nabla u)), \nabla u \rangle\\[0.4cm]
			& & \disp + 2\frac{a'(|\nabla u|)}{|\nabla u|} \nabla \di  u(\nabla u, \nabla u) \Delta u + \frac{a'(|\nabla u|)}{|\nabla u|} \nabla \di  u( \nabla u, \nabla |\nabla u|^2) + a(|\nabla u|) \Delta|\nabla u|^2\\[0.4cm]
			& = & \disp 2\left(\frac{a'(|\nabla u|)}{|\nabla u|}\right)' \frac{\big[\nabla \di  u(\nabla u, \nabla u)\big]^2}{|\nabla u|} + 2\frac{a'(|\nabla u|)}{|\nabla u|} \langle \nabla \big(\nabla \di  u(\nabla u, \nabla u)), \nabla u \rangle\\[0.4cm]
			& & \disp + 2\frac{a'(|\nabla u|)}{|\nabla u|} \nabla \di  u(\nabla u, \nabla u) \Delta u + 2\frac{a'(|\nabla u|)}{|\nabla u|} \nabla \di u^{(2)}(\nabla u, \nabla u) + \\[0.4cm]
			& & + \disp 2a(|\nabla u|) \big[ |\nabla \di  u|^2 + \langle \nabla \Delta u, \nabla u \rangle + \Ric(\nabla u, \nabla u)\big].
		\end{array}
	$$
	Next, 
	\begin{equation}\label{dos}
		\begin{array}{lcl}
			\disp a(|\nabla u|) \langle \nabla \Delta u, \nabla u \rangle & = & \disp \langle \nabla \Big(a(|\nabla u|)\Delta u\Big), \nabla u \rangle - \frac{a'(|\nabla u|)}{|\nabla u|}\nabla \di  u(\nabla u, \nabla u) \Delta u. \\[0.4cm]
		\end{array}
	\end{equation}
	Since 
	$$
		\Delta_a u = \frac{a'(|\nabla u|)}{|\nabla u|}\nabla \di  u(\nabla u, \nabla u) + a(|\nabla u|) \Delta u,
	$$
	inserting it into \eqref{dos} we get
	\begin{equation}\label{tres}
		\begin{array}{lcl}
			\disp a(|\nabla u|) \langle \nabla \Delta u, \nabla u \rangle & = & \disp \langle \nabla \left(-\frac{a'(|\nabla u|)}{|\nabla u|}\nabla \di  u(\nabla u, \nabla u) + \Delta_a u \right), \nabla u \rangle \\[0.4cm]
			& & \disp - \frac{a'(|\nabla u|)}{|\nabla u|}\nabla \di   u(\nabla u, \nabla u) \Delta u. \\[0.4cm]
			& = & \disp - \left(\frac{a'(|\nabla u|)}{|\nabla u|}\right)' \frac{\big[\nabla \di  u(\nabla u, \nabla u)\big]^2}{|\nabla u|} - \frac{a'(|\nabla u|)}{|\nabla u|} \langle \nabla \big( \nabla \di  u(\nabla u, \nabla u) \big), \nabla u \rangle \\[0.4cm]
			& &  \disp + \langle \nabla \left(\Delta_a u\right), \nabla u \rangle - \frac{a'(|\nabla u|)}{|\nabla u|}\nabla \di  u(\nabla u, \nabla u)\Delta u.
		\end{array}
	\end{equation}
	Inserting this in turn into $(\star)$ and simplifying, we obtain that
	$$
		\begin{array}{lcl}
			(\star) & = & \disp 2\frac{a'(|\nabla u|)}{|\nabla u|} \nabla \di u^{(2)}(\nabla u, \nabla u) + \disp 2a(|\nabla u|) \big[ |\nabla \di  u|^2 + \Ric(\nabla u, \nabla u)\big] \\[0.4cm]
			& & + 2\langle \nabla \Delta_a u, \nabla u \rangle.
		\end{array}
	$$
	Using then identities $2)$ and $4)$ in Lemma \ref{lem_basicomp},
	$$
		\begin{array}{lcl}
			(\star) & = & \disp \disp 2 a'(|\nabla u|)|\nabla u|\Big( u_{\nu\nu}^2 + |\nabla^\top   u_\nu|^2\Big) + 2\langle \nabla \Delta_a u, \nabla u \rangle \\[0.4cm]
			& & + \disp 2a(|\nabla u|) \big[u_{\nu\nu}^2 + 2|\nabla^\top   u_\nu|^2 + u_\nu^2|\II|^2 + \Ric(\nabla u, \nabla u)\big] \\[0.4cm]
			& = & \disp 2 \lambda_1 u_{\nu\nu}^2 + 2(\lambda_1+ \lambda_2) |\nabla^\top   u_\nu|^2 + 2\lambda_2 \Big[u_\nu^2|\II|^2 + \Ric(\nabla u, \nabla u)\Big] \\[0.4cm]
			& & \disp + 2\langle \nabla \Delta_a u, \nabla u \rangle,
		\end{array}
	$$
	proving \eqref{bochner_better}. To show \eqref{bochner_betterfornablau}, simply observe that expanding $(\star)$ and using $3)$ in Lemma \ref{lem_basicomp} we get
	$$
		(\star) = 2 |\nabla u| \diver \big( A(\nabla u) \nabla |\nabla u|\big) + 2\lambda_2 |\nabla^\top   u_\nu|^2 + 2 \lambda_1 u_{\nu\nu}^2.
	$$
	To prove that the inequalities hold weakly in $\Omega$, first observe that the vector fields 
	\[
		A(\nabla u)\nabla |\nabla u|^2, \qquad A(\nabla u) \nabla |\nabla u|
	\]
	are locally Lipschitz in $\{|\nabla u|>0\}$. Therefore, the pointwise equalities we just showed hold weakly against test functions supported there. Let now $0 \le \varphi \in \lip_c(\Omega)$. Fix a cutoff $0 \le \psi \in C^\infty_c(\R^+_0)$ supported in $[0,2)$, identically one in $[0,1]$ and with $\psi' \le 0$, and for $\eps > 0$ define the function
	\[
		\eta_\eps(t) = 1-\psi(t/\eps) \qquad \text{for } \, t \in [0,\infty).
	\]
	Let $(\star \star)$ be the right-hand side of \eqref{bochner_better}. We multiply \eqref{bochner_better} by $\eta_\eps(|\nabla u|^2)\varphi \in \lip_c(\{|\nabla u|>0\})$ and integrate by parts to get
	\[
		\begin{array}{lcl}
			\disp \int (\star\star)\eta_\eps(|\nabla u|^2)\varphi & = & \disp - \frac{1}{2} \int \langle A(\nabla u) \nabla |\nabla u|^2, \nabla \left( \eta_\eps(|\nabla u|^2)\varphi\right) \rangle \\[0.5cm]
			& \le & \disp - \frac{1}{2} \int \eta_\eps(|\nabla u|^2) \langle A(\nabla u) \nabla |\nabla u|^2, \nabla\varphi \rangle,  
		\end{array}
	\]
	where we used the ellipticity of $A$ and $\eta_\eps' \ge 0$. Letting $\eps \to 0$ we get
	\[
		\int_{\{|\nabla u|>0\}} (\star\star)\varphi \le \disp - \frac{1}{2} \int_{\{|\nabla u|>0\}} \langle A(\nabla u) \nabla |\nabla u|^2, \nabla\varphi \rangle.  
	\]
	We are left to prove that the domain of integration $\{|\nabla u|>0\}$ can be replaced by the entire $\Omega$. This is clear, in our assumptions on $a$, for the left-hand side. This is also the case of the right-hand side, since 
	\[
		\langle \nabla \Delta_a u, \nabla u \rangle = - f'(u)|\nabla u|^2 \in C(\Omega)
	\]
	and by Remark \ref{rem_stampa}. The case of \eqref{bochner_betterfornablau} is analogous.
\end{proof}

Next, we use the above Bochner's formula to get  a geometric Poincar\'e inequality. A related inequality was obtained in \cite{DPV}. The first step is the following proposition.

\begin{proposition} \label{prop_Poincare_loc}
	Let $\Omega \subseteq M$ be a $C^2$ domain with interior normal $\eta$, and let $u \in C^3(\Omega) \cap C^2(\overline\Omega)$ be a solution to 
	\[
		\Delta_a u + f(u) = 0,
	\]
	where $a$ satisfies \eqref{assu_A_superstrong} and $f \in C^1(\R)$. Let $\Omega_0\subseteq\Omega$ be a domain with locally finite perimeter in $\Omega$ such that 
	\begin{equation}\label{eq_patialomega0}
	\partial\Omega_0 \subseteq \partial\Omega \cup \critu. 
	\end{equation}
	If $w \in C^2(\Omega) \cap C^1(\overline{\Omega})$ solves
	\begin{equation} \label{poinloc0}
		w \ge 0 \ \ \text{in } \, \Omega_0, \qquad \diver \big( A(\nabla u) \nabla w\big) + f'(u)w \le 0 \qquad \text{weakly in } \, \Omega_0,
	\end{equation}
	then, setting $\lambda_j(t)$ as in \eqref{eigenvalues}, for each $\varphi \in \lip_c(M)$ and $\eps>0$ it holds
	\begin{equation} \label{poinloc}
		\begin{split}
			\int_{\Omega_0} |\nabla u|^2 \langle A(\nabla u)\nabla\varphi,\nabla\varphi\rangle & \geq \int_{\Omega_0} \varphi^2 \left\{ \lambda_1|\nabla^\top|\nabla u||^2 + \lambda_2 [|\nabla u|^2|\II|^2 + \Ric(\nabla u,\nabla u)] \right\} \\
			& \phantom{=\;} + \int_{\Omega_0} (w+\eps)^2 \langle A(\nabla u)\nabla\left(\frac{\varphi|\nabla u|}{w+\eps}\right),\nabla\left(\frac{\varphi|\nabla u|}{w+\eps}\right)\rangle \\
			& \phantom{=\;} - \int_{\Omega_0} \frac{\eps \varphi^2}{w+\eps} f'(u)|\nabla u|^2 \\
			& \phantom{=\;} + \int_{\overline\Omega_0\cap\partial\Omega} \varphi^2 \langle A(\nabla u)\nabla\frac{|\nabla u|^2}{2} - \frac{|\nabla u|^2}{w+\eps} A(\nabla u)\nabla w,\eta\rangle \,
		\end{split}
	\end{equation}
	where $\lambda_j = \lambda_j(|\nabla u|)$.
\end{proposition}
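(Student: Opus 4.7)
The plan is to combine the Bochner-type inequality \eqref{bochner_betterfornablau} for $|\nabla u|$ with a Picone-type identity driven by the supersolution $w$, so that the boundary terms produced by the two integrations by parts merge into the single expression appearing in \eqref{poinloc}.

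First, since $\Delta_a u+f(u)=0$, we have $\langle\nabla\Delta_a u,\nabla u\rangle=-f'(u)|\nabla u|^2$, and the weak inequality \eqref{bochner_betterfornablau} reads
\[
|\nabla u|\diver\bigl(A(\nabla u)\nabla|\nabla u|\bigr) \geq \lambda_1|\nabla^\top|\nabla u||^2 + \lambda_2 u_\nu^2 |\II|^2 + \lambda_2\Ric(\nabla u,\nabla u) - f'(u)|\nabla u|^2
\]
weakly in $\Omega_0$. The strategy is to multiply by $\varphi^2$ and integrate, observing that $|\nabla u|A(\nabla u)\nabla|\nabla u|=\tfrac{1}{2}A(\nabla u)\nabla|\nabla u|^2$ is continuous on $\overline\Omega$ and vanishes on $\critu$ thanks to $u\in C^2(\overline\Omega)$ and $a\in C^{1,1}_\loc(\R^+_0)$. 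This will allow the use of the generalized divergence theorem (Lemma \ref{lem_divergence} in the appendix) on $\Omega_0$, whose boundary lies in $\partial\Omega\cup\critu$. Expanding via the polarization identity
\[
\langle A\nabla(\varphi|\nabla u|),\nabla(\varphi|\nabla u|)\rangle = |\nabla u|^2\langle A\nabla\varphi,\nabla\varphi\rangle + 2\varphi|\nabla u|\langle A\nabla\varphi,\nabla|\nabla u|\rangle + \varphi^2\langle A\nabla|\nabla u|,\nabla|\nabla u|\rangle,
\]
and using item $3)$ of Lemma \ref{lem_basicomp} to write $\langle A\nabla|\nabla u|,\nabla|\nabla u|\rangle=\lambda_1 u_{\nu\nu}^2+\lambda_2|\nabla^\top|\nabla u||^2$, the $u_{\nu\nu}^2$ contributions cancel and one arrives at the intermediate estimate
\[
\int_{\Omega_0}|\nabla u|^2\langle A\nabla\varphi,\nabla\varphi\rangle \geq \int_{\Omega_0}\varphi^2\bigl\{\lambda_1|\nabla^\top|\nabla u||^2+\lambda_2\bigl[|\nabla u|^2|\II|^2+\Ric(\nabla u,\nabla u)\bigr]\bigr\} + \int_{\Omega_0}\langle A\nabla\psi,\nabla\psi\rangle - \int_{\Omega_0}\psi^2 f'(u) + \int_{\partial\Omega_0}\tfrac{\varphi^2}{2}\langle A\nabla|\nabla u|^2,\eta\rangle,
\]
where $\psi:=\varphi|\nabla u|$.

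The second ingredient is the algebraic Picone identity
\[
\langle A\nabla\psi,\nabla\psi\rangle - (w+\eps)^2\left\langle A\nabla\tfrac{\psi}{w+\eps},\nabla\tfrac{\psi}{w+\eps}\right\rangle = \left\langle A\nabla w,\nabla\tfrac{\psi^2}{w+\eps}\right\rangle,
\]
which holds pointwise since $A(\nabla u)$ is a symmetric bilinear form and $w+\eps>0$. Integrating on $\Omega_0$ and applying the divergence theorem to the vector field $\tfrac{\psi^2}{w+\eps}A(\nabla u)\nabla w$, together with the hypothesis $\diver(A\nabla w)+f'(u)w\leq0$ and the elementary decomposition $\tfrac{w}{w+\eps}=1-\tfrac{\eps}{w+\eps}$, will yield
\[
\int_{\Omega_0}\langle A\nabla\psi,\nabla\psi\rangle - \int_{\Omega_0}\psi^2 f'(u) \geq \int_{\Omega_0}(w+\eps)^2\left\langle A\nabla\tfrac{\psi}{w+\eps},\nabla\tfrac{\psi}{w+\eps}\right\rangle - \int_{\Omega_0}\frac{\eps\psi^2}{w+\eps}f'(u) - \int_{\partial\Omega_0}\frac{\psi^2}{w+\eps}\langle A\nabla w,\eta\rangle.
\]
Substituting this back into the intermediate inequality, the two boundary integrals combine into the single expression
\[
\int_{\partial\Omega_0}\varphi^2\left\langle A\nabla\tfrac{|\nabla u|^2}{2} - \tfrac{|\nabla u|^2}{w+\eps}A\nabla w,\eta\right\rangle,
\]
and since $\partial\Omega_0\subseteq\partial\Omega\cup\critu$ and both $\nabla|\nabla u|^2$ and $|\nabla u|^2/(w+\eps)$ vanish on $\critu$, this boundary integral reduces to the one on $\overline\Omega_0\cap\partial\Omega$ appearing in \eqref{poinloc}.

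The main obstacle is to justify rigorously the integrations by parts on $\Omega_0$, whose boundary is only of locally finite perimeter and may touch the critical set $\critu$. All relevant vector fields, namely $\varphi^2 A\nabla|\nabla u|^2$ and $\tfrac{\psi^2}{w+\eps}A\nabla w$, are nonetheless Lipschitz on compact subsets of $\Omega$ and vanish continuously on $\critu$ — the former because $\nabla|\nabla u|^2=2\nabla\di u(\nabla u,\cdot)$ is zero on $\{\nabla u=0\}$, the latter via the factor $\psi^2=\varphi^2|\nabla u|^2$. This permits the invocation of the generalized Gauss--Green formula of Lemma \ref{lem_divergence}, and is precisely what allows the boundary integral in the final inequality to be taken over $\overline\Omega_0\cap\partial\Omega$ rather than over the entire $\partial\Omega_0$.
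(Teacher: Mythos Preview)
Your proposal is correct and follows essentially the same approach as the paper: the two vector fields you integrate by parts separately, $\tfrac{\varphi^2}{2}A(\nabla u)\nabla|\nabla u|^2$ and $\tfrac{\psi^2}{w+\eps}A(\nabla u)\nabla w$, are precisely the two summands of the single vector field $\varphi^2 V$ that the paper introduces (with $V=\tfrac12 A\nabla|\nabla u|^2-\tfrac{|\nabla u|^2}{w+\eps}A\nabla w$). The paper combines them first, computes $\diver(\varphi^2 V)$ in one stroke via the Bochner formula and the supersolution inequality for $w$, and applies Lemma~\ref{lem_divergence} once; you apply it twice and add the results, but the ingredients and the cancellation mechanism (item~3) of Lemma~\ref{lem_basicomp} together with the Picone identity) are identical.
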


\begin{proof}
	Set $w_\eps = w+\eps$, $z = |\nabla u|/w_\eps$ and consider the vector field
	\[
		V = w_\eps |\nabla u| A(\nabla u) \nabla z \equiv \frac{1}{2} A(\nabla u)\nabla|\nabla u|^2 - \frac{|\nabla u|^2}{w_\eps} A(\nabla u)\nabla w
	\]
	which is continuous in $\overline{\Omega}$ and locally Lipschitz continuous in $\Omega$. From \eqref{bochner_betterfornablau} and \eqref{poinloc0} we have
	\[
		\diver\, V = \lambda_1|\nabla^\top|\nabla u||^2 + \lambda_2 [|\nabla u|^2|\II|^2 + \Ric(\nabla u,\nabla u)] - \frac{\eps}{w_\eps} f'(u) |\nabla u|^2 + w_\eps^2 \langle A(\nabla u)\nabla z,\nabla z\rangle \, 
	\]
	weakly in $\Omega$. Let $\varphi\in\lip_c(M)$ be given. We have
	\[
		\diver(\varphi^2 V) = \varphi^2 \diver\, V + 2 w_\eps \varphi |\nabla u| \langle A(\nabla u)\nabla z, \nabla \varphi\rangle
	\]
	and from the identity
	\[
	\begin{array}{lcl}	
	\disp \varphi^2 w_\eps^2 \langle A(\nabla u)\nabla z,\nabla z\rangle + 2 \varphi w_\eps |\nabla u| \langle A(\nabla u)\nabla z,\nabla \varphi\rangle & = & \disp w_\eps^2 \langle A(\nabla u)\nabla (\varphi z),\nabla(\varphi z)\rangle \\[0.3cm]
	& & \disp - |\nabla u|^2 \langle A(\nabla u)\nabla\varphi,\nabla\varphi\rangle
	\end{array}
	\]
	we get
	\begin{equation}
		\begin{split}
			\diver\,(\varphi^2 V) & \ge \varphi^2 \left\{\lambda_1|\nabla^\top|\nabla u||^2 + \lambda_2 [|\nabla u|^2|\II|^2 + \Ric(\nabla u,\nabla u)]\right\} - \frac{\eps \varphi^2}{w_\eps} f'(u)|\nabla u|^2 \\
			& \phantom{=\;} + w_\eps^2 \langle A(\nabla u)\nabla (\varphi z),\nabla(\varphi z)\rangle - |\nabla u|^2 \langle A(\nabla u)\nabla\varphi,\nabla\varphi\rangle \, .
		\end{split}
	\end{equation}
	The vector field $W = \varphi^2 V$ vanishes on $\partial\Omega_0 \cap \Omega$ and is compactly supported in $\overline{\Omega}$, so by the divergence theorem (see Lemma \ref{lem_divergence} in the appendix) we have
	\begin{align*}
		0 & \geq \int_{\Omega_0} \varphi^2 \left\{\lambda_1|\nabla^\top|\nabla u||^2 + \lambda_2 [|\nabla u|^2|\II|^2 + \Ric(\nabla u,\nabla u)]\right\} - \int_{\Omega_0} \frac{\eps \varphi^2}{w_\eps} f'(u)|\nabla u|^2 \\
		& \phantom{=\;} + \int_{\Omega_0} w_\eps^2 \langle A(\nabla u)\nabla (\varphi z),\nabla(\varphi z)\rangle - \int_{\Omega_0} |\nabla u|^2 \langle A(\nabla u)\nabla\varphi,\nabla\varphi\rangle \\
		& \phantom{=\;} + \int_{\overline{\Omega}_0\cap\partial\Omega} \varphi^2 \langle V,\eta\rangle
	\end{align*}
	which is \eqref{poinloc}.

\end{proof}

In the next Lemma, we deal with the boundary term and extend to a quasilinear setting a ``magical identity"  first discovered in \cite{Arma,Arma2}, see also \cite{FarMarVal,cmr}.

\begin{lemma}\label{lem_bordo}
	Let $\Omega\subseteq M$ be a $C^2$ domain and let $u \in C^2(\overline\Omega)$ be such that both $u$ and $c_j \doteq \partial_\eta u$ are constant on a connected component $\partial_j \Omega$ of $\partial\Omega$, and let $a$ satisfy \eqref{assu_A}. Then, for each vector field $X$ in a neighbourhood of $\partial_j \Omega$, the function $w= \langle \nabla u, X\rangle$ satisfies
	\begin{equation}\label{identity_quasilinear}
		\langle w A(\nabla u) \nabla \frac{|\nabla u|^2}{2} - |\nabla u|^2  A(\nabla u)\nabla w, \eta\rangle = - \lambda_1(|\nabla u|)|\nabla u|^2 c_j \langle \eta, \nabla_{\eta} X \rangle
	\end{equation}
	on $\partial_j \Omega$. In particular, if $X$ is Killing then the right hand side of \eqref{identity_quasilinear} vanishes.
\end{lemma}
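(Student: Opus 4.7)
The plan is to reduce \eqref{identity_quasilinear} to a pointwise computation on $\partial_j\Omega$, based on the geometric constraint that both $u$ and $\partial_\eta u$ being constant on $\partial_j\Omega$ rigidly determine $\nabla u$ and partially constrain $\nabla\di u$ there. The case $c_j=0$ is trivial since then $\nabla u \equiv 0$ on $\partial_j\Omega$, forcing $w\equiv 0$ and $|\nabla u|^2\equiv 0$, so both sides of \eqref{identity_quasilinear} vanish; hence I may assume $c_j\neq 0$.

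First I would establish three preliminary facts on $\partial_j\Omega$. \emph{(i)} Since $u$ is constant on $\partial_j\Omega$, $\nabla u$ is normal to $\partial_j\Omega$, and from $\langle\nabla u,\eta\rangle = c_j$ we get $\nabla u = c_j \eta$, so $|\nabla u| = |c_j|$ is constant and \eqref{def_A} gives $A(\nabla u)\eta = \lambda_1(|\nabla u|)\eta$. \emph{(ii)} For any vector $V$ tangent to $\partial_j\Omega$, $\nabla\di u(V,\eta)=0$. To see this, extend $\eta$ to a unit normal field near $\partial_j\Omega$ and differentiate the identity $\partial_\eta u\equiv c_j$ along $V$:
\[
    0 = V(\langle\nabla u,\eta\rangle) = \nabla\di u(V,\eta) + \langle\nabla u,\nabla_V\eta\rangle = \nabla\di u(V,\eta) + c_j\langle\eta,\nabla_V\eta\rangle,
\]
and $\langle\eta,\nabla_V\eta\rangle = \tfrac12 V(|\eta|^2) = 0$. \emph{(iii)} Decomposing $X = \langle X,\eta\rangle\eta + X^T$ with $X^T$ tangent to $\partial_j\Omega$, item (ii) yields $\nabla\di u(\eta,X) = \langle X,\eta\rangle u_{\eta\eta}$, where $u_{\eta\eta}\doteq\nabla\di u(\eta,\eta)$.

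Using (i) and symmetry of $A$, the two boundary ingredients become
\[
    \langle A(\nabla u)\nabla\tfrac{|\nabla u|^2}{2},\eta\rangle = \lambda_1\,\nabla\di u(\eta,\nabla u) = \lambda_1 c_j u_{\eta\eta},
\]
\[
    \langle A(\nabla u)\nabla w,\eta\rangle = \lambda_1\partial_\eta w = \lambda_1\bigl(\nabla\di u(\eta,X) + \langle\nabla u,\nabla_\eta X\rangle\bigr) = \lambda_1\bigl(\langle X,\eta\rangle u_{\eta\eta} + c_j\langle\eta,\nabla_\eta X\rangle\bigr),
\]
where the last equality uses (iii) and $\nabla u = c_j\eta$. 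Since $w = c_j\langle X,\eta\rangle$ and $|\nabla u|^2 = c_j^2$ on $\partial_j\Omega$, multiplying and subtracting produces an exact cancellation of the $u_{\eta\eta}$-terms:
\[
    \langle wA(\nabla u)\nabla\tfrac{|\nabla u|^2}{2} - |\nabla u|^2 A(\nabla u)\nabla w,\eta\rangle = \lambda_1 c_j^2\langle X,\eta\rangle u_{\eta\eta} - \lambda_1 c_j^2\langle X,\eta\rangle u_{\eta\eta} - \lambda_1 c_j^3\langle\eta,\nabla_\eta X\rangle,
\]
which equals $-\lambda_1(|\nabla u|)\,|\nabla u|^2 c_j\langle\eta,\nabla_\eta X\rangle$, proving \eqref{identity_quasilinear}.

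For the final assertion, if $X$ is Killing then $\mathscr{L}_X\langle\,,\,\rangle = 0$ implies in particular that the symmetric bilinear form $(Y,Z)\mapsto \langle\nabla_Y X,Z\rangle + \langle Y,\nabla_Z X\rangle$ vanishes; evaluating on $(\eta,\eta)$ gives $\langle\nabla_\eta X,\eta\rangle = 0$, whence the right-hand side of \eqref{identity_quasilinear} is zero. The argument presents no genuine obstacle: the only point requiring care is the tangential vanishing (ii) of the Hessian, which encodes the full force of the overdetermined boundary conditions and produces the crucial cancellation that makes the ``magical identity'' work.
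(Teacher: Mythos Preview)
Your proof is correct and follows essentially the same approach as the paper: both reduce the quasilinear expression to the semilinear one via $A(\nabla u)\eta=\lambda_1\eta$, and then exploit the overdetermined boundary data to produce the cancellation. The paper simply cites \cite[Lemma 32]{FarMarVal} for the identity $\langle w\nabla\tfrac{|\nabla u|^2}{2} - |\nabla u|^2\nabla w,\eta\rangle = -|\nabla u|^2 c_j\langle\eta,\nabla_\eta X\rangle$, whereas you spell out the full computation (your facts (ii)--(iii) and the ensuing cancellation) self-containedly; the arguments are otherwise identical.
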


\begin{proof}
	The identity is obvious if $\partial_\eta u = 0$, since in this case $\nabla u \equiv 0$ on $\partial_j \Omega$. Thus, assume $c_j \neq 0$, so that $c_j \eta = \nabla u$. Since $A(\nabla u)$ is symmetric and $A(\nabla u)\eta = \lambda_1(|\nabla u|) \eta$, we deduce
	\[
		\begin{array}{l}
			\disp \langle w A(\nabla u) \nabla \frac{|\nabla u|^2}{2} - |\nabla u|^2  A(\nabla u)\nabla w, \eta\rangle \\[0.3cm]
			= \ \disp \langle w \nabla \frac{|\nabla u|^2}{2} - |\nabla u|^2 \nabla w, \eta\rangle \lambda_1(|\nabla u|) \qquad \text{on } \, \partial_j \Omega.
	\end{array}
	\]
	The conclusion therefore follows from the identity 
	\begin{equation}\label{eq_FMV}
		\langle w \nabla \frac{|\nabla u|^2}{2} - |\nabla u|^2 \nabla w, \eta \rangle =  - |\nabla u|^2 c_j \langle \eta, \nabla_\eta X \rangle \qquad \text{on } \partial_j \Omega,
	\end{equation} 
	which was proved\footnote{There is an uninfluent sign mistake there, since the right-hand side of \eqref{eq_FMV} is written as $- |\nabla u|^3 \langle \eta, \nabla_\eta X \rangle$.} in \cite[Lemma 32]{FarMarVal}.
\end{proof}

We are ready to pass to limits as $\eps \to 0$ in Proposition \ref{prop_Poincare_loc}.

\begin{theorem} \label{teo_Poincare_loc}
	Let $\Omega \subseteq M$ be a $C^2$ domain and let $u \in C^3(\Omega) \cap C^2(\overline\Omega)$ be a stable solution to 
	\[
	\left\{ \begin{array}{ll}
		\Delta_a u + f(u)=0 & \quad \text{in } \, \Omega, \\[0.2cm]
		u, \partial_\eta u \, \text{ locally constant} & \quad \text{on } \, \partial \Omega, 
	\end{array}
	\right.
	\]
	where $f \in C^1(\R)$ and $a$ satisfies \eqref{assu_A_superstrong}. Let $\Omega_0\subseteq\Omega$ be a domain with locally finite perimeter in $\Omega$ such that $\partial\Omega_0 \subseteq \partial\Omega \cup \critu$, let $X$ be a Killing vector field in $\overline\Omega$ and suppose that $w = \langle \nabla u, X \rangle$ satisfies $w \ge 0$, $w \not\equiv 0$ in $\Omega_0$. Then for each $\varphi \in \lip_c(M)$ it holds
	\begin{equation} \label{piufinalissima_teo3}
		\begin{split}
			\int_{\Omega_0} |\nabla u|^2 \langle A(\nabla u)\nabla\varphi,\nabla\varphi\rangle & \geq \int_{\Omega_0} \varphi^2 \left\{ \lambda_1|\nabla^\top|\nabla u||^2 + \lambda_2 [|\nabla u|^2|\II|^2 + \Ric(\nabla u,\nabla u)] \right\} \\
			& \phantom{=\;} + \int_{\Omega_0} w^2 \langle A(\nabla u)\nabla\left(\frac{\varphi|\nabla u|}{w}\right),\nabla\left(\frac{\varphi|\nabla u|}{w}\right)\rangle \, .
		\end{split}
	\end{equation}
\end{theorem}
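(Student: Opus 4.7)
The strategy is to derive \eqref{piufinalissima_teo3} as the $\epsilon\to 0^+$ limit of the family of inequalities furnished by Proposition \ref{prop_Poincare_loc} applied to the specific choice $w=\langle\nabla u,X\rangle$. I first need to verify that this $w$ legitimately plays the role of the supersolution in that proposition. Lemma \ref{lem_linearKilling} gives that $w$ weakly solves $\diver(A(\nabla u)\nabla w)+f'(u)w=0$ in $\Omega_0$; since by assumption $w\geq 0$ and $w\not\equiv 0$ there, the (half-)Harnack inequality recalled in Remark \ref{rem_harnackHopf} upgrades this to $w>0$ throughout $\Omega_0$. Thus \eqref{poinloc0} holds as an equality and Proposition \ref{prop_Poincare_loc} yields, for every $\epsilon>0$ and every $\varphi\in\lip_c(M)$, the full inequality \eqref{poinloc}.

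The central new ingredient is the Killing hypothesis on $X$, which I will use to kill the boundary term. By Lemma \ref{lem_bordo}, the pointwise identity
\[
w\,\bigl\langle A(\nabla u)\nabla\tfrac{|\nabla u|^2}{2},\eta\bigr\rangle = |\nabla u|^2\,\bigl\langle A(\nabla u)\nabla w,\eta\bigr\rangle
\]
holds on $\partial\Omega$ for Killing $X$. A one-line algebraic rearrangement then converts the boundary integrand in \eqref{poinloc} into the compact form
\[
\bigl\langle A(\nabla u)\nabla\tfrac{|\nabla u|^2}{2} - \tfrac{|\nabla u|^2}{w+\epsilon}A(\nabla u)\nabla w,\eta\bigr\rangle = \frac{\epsilon}{w+\epsilon}\,\bigl\langle A(\nabla u)\nabla\tfrac{|\nabla u|^2}{2},\eta\bigr\rangle,
\]
valid everywhere on $\overline{\Omega}_0\cap\partial\Omega$, with the explicit infinitesimal prefactor $\epsilon/(w+\epsilon)$.

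The final step is the passage to the limit $\epsilon\to 0^+$ in the three $\epsilon$-dependent pieces of \eqref{poinloc}. The interior contribution $-\int_{\Omega_0}\frac{\epsilon\varphi^2}{w+\epsilon}f'(u)|\nabla u|^2$ is dominated by $\varphi^2|f'(u)||\nabla u|^2\in L^1_{\loc}$ and converges pointwise to $0$ on $\{w>0\}=\Omega_0$, hence vanishes by dominated convergence. The same mechanism handles the boundary term: its integrand is uniformly bounded by $\varphi^2\bigl|\langle A(\nabla u)\nabla\tfrac{|\nabla u|^2}{2},\eta\rangle\bigr|$, which is compactly supported and continuous up to $\partial\Omega$, while $\epsilon/(w+\epsilon)\to 0$ pointwise wherever $w>0$. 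The remaining quadratic term $\int_{\Omega_0}(w+\epsilon)^2\langle A(\nabla u)\nabla(\tfrac{\varphi|\nabla u|}{w+\epsilon}),\nabla(\tfrac{\varphi|\nabla u|}{w+\epsilon})\rangle$ is lower semicontinuous in $\epsilon$, and Fatou's lemma supplies the desired lower bound with $w$ in place of $w+\epsilon$. Assembling the three limits in \eqref{poinloc} produces \eqref{piufinalissima_teo3}.

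I expect the delicate point to be the boundary limit at those points where $w=0$ and $|\nabla u|\neq 0$ (i.e.\ where $X$ becomes tangent to a component of $\partial\Omega$ with $c_j\neq 0$): on such a set $\epsilon/(w+\epsilon)$ stays equal to $1$ as $\epsilon\to 0^+$, so the dominated convergence argument for the boundary term requires this set to be $\haus^{n-1}$-negligible inside $\mathrm{spt}(\varphi)$. This negligibility follows from the strict positivity of $w$ in $\Omega_0$ (so $w$ can vanish on the boundary only where $\langle X,\eta\rangle=0$, a proper zero set of a $C^1$ function on each component, transverse to $w>0$ from the interior), combined with the $C^2$-regularity of $\partial\Omega$ and the continuity of $X$ and $\nabla u$ up to $\overline{\Omega}$.
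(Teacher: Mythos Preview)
Your overall strategy matches the paper's exactly: apply Proposition \ref{prop_Poincare_loc} to $w=\langle\nabla u,X\rangle$, use Lemma \ref{lem_bordo} to rewrite the boundary integrand as $\frac{\eps}{w+\eps}\langle A(\nabla u)\nabla\tfrac{|\nabla u|^2}{2},\eta\rangle$, then send $\eps\to 0$ via dominated convergence on the $f'(u)$ term and the boundary term, and Fatou on the quadratic term. The one substantive divergence is your treatment of the set
\[
E_0 \;=\; \bigl\{x\in\overline{\Omega}_0\cap\partial\Omega : w(x)=0,\ |\nabla u(x)|\neq 0\bigr\},
\]
where $\eps/(w+\eps)\equiv 1$ and dominated convergence alone does not kill the boundary contribution.

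Your claim that $E_0$ is $\haus^{n-1}$-negligible is not justified by the reasoning you give. On $\partial_j\Omega$ with $c_j\neq 0$, the function $\langle X,\eta\rangle$ is merely $C^1$, and the zero set of a $C^1$ function on an $(n-1)$-manifold can have positive $\haus^{n-1}$-measure; ``properness'' of the zero set is neither guaranteed nor sufficient. The phrase ``transverse to $w>0$ from the interior'' does not produce a measure bound on $E_0$ either. The paper closes this gap differently and more sharply: it shows $E_0=\emptyset$. At any $x\in E_0$ the condition $|\nabla u(x)|\neq 0$ forces a neighbourhood in which $\partial\Omega_0$ coincides with the $C^2$ hypersurface $\partial\Omega$, so the interior ball condition holds and Hopf's lemma (Remark \ref{rem_harnackHopf}) gives $\partial_\eta w(x)>0$. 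Since $\eta$ is an eigenvector of $A(\nabla u)$ with eigenvalue $\lambda_1$, this yields $\langle|\nabla u|^2 A(\nabla u)\nabla w,\eta\rangle=\lambda_1|\nabla u|^2\partial_\eta w\neq 0$ at $x$; but the boundary identity from Lemma \ref{lem_bordo} equates this to $w\langle A(\nabla u)\nabla\tfrac{|\nabla u|^2}{2},\eta\rangle=0$, a contradiction. With $E_0=\emptyset$ established, the boundary integrand actually vanishes on $\{w=0\}\cap\overline{\Omega}_0\cap\partial\Omega$ because $\nabla u=0$ there implies $\nabla|\nabla u|^2=0$, and dominated convergence handles the remaining part $\{w>0\}$.
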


\begin{proof}
	Since $w$ satisfies $\diver(A(\nabla u)\nabla w) + f'(u) w = 0$ in $\Omega$ and $w\geq 0$, $w\not\equiv 0$ in $\Omega_0$, by the Harnack inequality (see Remark \ref{rem_harnackHopf}) we have $w>0$ in $\Omega_0$. From Lemma \ref{lem_bordo} (applied to any $C^1$ extension of $X$ in a neighbourhood of $\partial\Omega$) we know that
	\begin{equation} \label{w_bordo}
		\langle w A(\nabla u) \nabla \frac{|\nabla u|^2}{2},\eta\rangle = \langle |\nabla u|^2  A(\nabla u)\nabla w, \eta\rangle \qquad \text{on } \, \partial\Omega \, .
	\end{equation}
	We claim that
	\begin{equation} \label{w_bordo2}
		\{ x \in \overline{\Omega}_0 \cap \partial\Omega : w(x) = 0 \} \subseteq \{ x \in \overline{\Omega} : \nabla u(x) = 0 \} \, .
	\end{equation}
	Indeed, suppose by contradiction that there exists $x\in\overline{\Omega}_0 \cap \partial\Omega$ such that $w(x) = 0$ and $\nabla u(x)\neq 0$. Since $\partial\Omega_0 \subseteq \critu \cup \partial\Omega$ and by assumption we have $\nabla u\neq 0$ at $x$, there exists a neighbourhood $U$ of $x$ in $M$ such that $U\cap\critu = \emptyset$ and therefore $U\cap\partial\Omega_0 = U\cap\partial\Omega$. Since $\partial\Omega$ is $C^2$ regular, $\Omega_0$ satisfies an interior ball condition at $x$. Having $w>0$ in $\Omega_0$ and $w(x)=0$, by Hopf's lemma (see again Remark \ref{rem_harnackHopf}) we deduce $\partial_\eta w(x) > 0$. Then, we have
	\[
	\langle|\nabla u|^2 A(\nabla u)\nabla w,\eta\rangle = \lambda_1(|\nabla u|) |\nabla u|^2 \partial_\eta w \neq 0 \qquad \text{at } \, x
	\]
	where we used that $A(\nabla u)\eta = \lambda_1(|\nabla u|)\eta$ since $\eta$ is proportional to $\nabla u$ at $x$. But then the left-hand side of \eqref{w_bordo} must also be nonzero at $x$, contradicting the assumption $w(x) = 0$.
	
	Let $\varphi\in\lip_c(M)$ be given. From Proposition \ref{prop_Poincare_loc} and \eqref{w_bordo} we have
	\begin{equation} \label{poinalt}
		\begin{split}
			\int_{\Omega_0} |\nabla u|^2 \langle A(\nabla u)\nabla\varphi,\nabla\varphi\rangle & = \int_{\Omega_0} \varphi^2 \left\{ \lambda_1|\nabla^\top|\nabla u||^2 + \lambda_2 [|\nabla u|^2|\II|^2 + \Ric(\nabla u,\nabla u)] \right\} \\
			& \phantom{=\;} + \int_{\Omega_0} (w+\eps)^2 \langle A(\nabla u)\nabla\left(\frac{\varphi|\nabla u|}{w+\eps}\right),\nabla\left(\frac{\varphi|\nabla u|}{w+\eps}\right)\rangle \\
			& \phantom{=\;} - \int_{\Omega_0} \frac{\eps \varphi^2}{w+\eps} f'(u)|\nabla u|^2 \\
			& \phantom{=\;} + \int_{\overline\Omega_0\cap\partial\Omega} \frac{\eps\varphi^2}{w+\eps} \langle A(\nabla u)\frac{\nabla|\nabla u|^2}{2},\eta\rangle
		\end{split}
	\end{equation}
	for any $\eps>0$. We now let $\eps\to0$ to obtain \eqref{piufinalissima_teo3} from \eqref{poinalt}. Since $|\eps/(w+\eps)|<1$ in $\Omega_0$, by the Lebesgue convergence theorem we have
	\[
	\int_{\Omega_0} \frac{\eps \varphi^2}{w+\eps} f'(u)|\nabla u|^2 \to 0 \qquad \text{as } \, \eps \to 0 \, .
	\]
	Setting $E_0 = \{w=0\} \cap \overline\Omega_0\cap\partial\Omega$ and $E_1 = \{w>0\} \cap \overline\Omega_0\cap\partial\Omega$, we have
	\begin{align*}
		\int_{\overline\Omega_0\cap\partial\Omega} \frac{\eps\varphi^2}{w+\eps} \langle A(\nabla u)\frac{\nabla|\nabla u|^2}{2},\eta\rangle & = \int_{E_0} \varphi^2 \langle A(\nabla u)\frac{\nabla|\nabla u|^2}{2},\eta\rangle \\
		& \phantom{=\;} + \int_{E_1} \frac{\eps\varphi^2}{w+\eps} \langle A(\nabla u)\frac{\nabla|\nabla u|^2}{2},\eta\rangle \, .
	\end{align*}
	When letting $\eps\to0$, the integral over $E_1$ converges to $0$ by the Lebesgue dominated convergence theorem. On the other hand, by \eqref{w_bordo2} we have $\nabla u = 0$ and therefore $\nabla|\nabla u|^2 = 0$ on $E_0$, so the integral over $E_0$ also vanishes and we get
	\[
	\int_{\overline\Omega_0\cap\partial\Omega} \frac{\eps\varphi^2}{w+\eps} \langle A(\nabla u)\frac{\nabla|\nabla u|^2}{2},\eta\rangle \to 0 \qquad \text{as } \, \eps \to 0 \, .
	\]
	Finally, by explicit computation of $\nabla(\varphi|\nabla u|/(w+\eps))$ and Fatou's lemma we have
	\[
	\liminf_{\eps\to0} \int_{\Omega_0} (w+\eps)^2 \langle A(\nabla u)\nabla\left(\frac{\varphi|\nabla u|}{w+\eps}\right),\nabla\left(\frac{\varphi|\nabla u|}{w+\eps}\right)\rangle \geq \int_{\Omega_0} w^2 \langle A(\nabla u)\nabla\left(\frac{\varphi|\nabla u|}{w}\right),\nabla\left(\frac{\varphi|\nabla u|}{w}\right)\rangle
	\]
	and this concludes the proof.
\end{proof}

\begin{remark}
	In the above proof, since $w>0$ in $\Omega_0$ then $\critu\cap\Omega_0 = \emptyset$, whence $\Omega_0$ is a connected component of $\Omega\backslash\critu$.
\end{remark}

Theorem \ref{teo_Poincare_loc} will be used later to infer that solutions with moderate energy growth satisfy $\nabla^\top|\nabla u| \equiv 0$ and $\II \equiv 0$ in $\Omega_0$. Once this is achieved, the next proposition provides a general splitting result, of independent interest.

\begin{proposition}[\textbf{The splitting lemma}] \label{prop_splitting_alt}
	Let $(M,\metric)$ be a complete Riemannian manifold and $\Omega\subseteq M$ a $C^1$ domain. Let $a$ satisfy \eqref{assu_A}, $f \in C^1(\R)$ and let $u\in C^1(\overline{\Omega})$ be a non-constant weak solution to
	 
	\[
		\left\{
			\begin{array}{ll}
				\Delta_a u + f(u) = 0 & \qquad \text{in } \, \Omega \\[0.2cm]
				u \text{ locally constant} & \qquad \text{on } \, \partial\Omega \, .
			\end{array}
		\right.
	\]
	If $\Omega_0$ is a connected component of $\Omega\backslash\critu$ such that
	\begin{equation} \label{Ddu_trivial}
		u \in C^2(\Omega_0), \qquad \big|\nabla^\top  |\nabla u|\big|^2 \equiv 0 \qquad \text{and} \qquad |\II|^2 \equiv 0 \qquad \text{in } \, \Omega_0
	\end{equation}
	then:
	\begin{itemize}
		\item [i)] there exists an isometry $\Psi : I \times N \to \Omega_0$, where $I\subseteq\R$ is an open interval and $N$ is a complete, totally geodesic hypersurface of $M$;
		\item [ii)] $u(\Psi(t,x)) = u_0(t)$ for all $(t,x)\in I\times N$, for a suitable function $u_0 : I \to \R$ such that $u_0'>0$.
	\end{itemize}
\end{proposition}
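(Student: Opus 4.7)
The strategy is to show that $\nu := \nabla u/|\nabla u|$ is a parallel unit vector field on $\Omega_0$, and then exploit this structure via the flow of $\nu$ to produce the isometric splitting.

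Parallelism is a direct consequence of the hypotheses. In a local orthonormal frame $(\nu, e_\alpha)$ on $\Omega_0$, item (1) of Lemma \ref{lem_basicomp} gives $u_{\nu\alpha} = \langle\nabla|\nabla u|, e_\alpha\rangle$, so $|\nabla^\top|\nabla u|| \equiv 0$ forces $u_{\nu\alpha}\equiv 0$, while item (2) gives $u_\nu^2|\II|^2 = \sum_{\alpha,\beta} u_{\alpha\beta}^2$, so $|\II|\equiv 0$ (combined with $|\nabla u|>0$) forces $u_{\alpha\beta}\equiv 0$. Thus the Hessian reduces to $\nabla \di u = u_{\nu\nu}\,\nu^\flat\otimes\nu^\flat$ and $\nabla|\nabla u| = u_{\nu\nu}\nu$. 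Expanding $\nabla_Y\nu = \nabla_Y\nabla u/|\nabla u| - Y(|\nabla u|)\nu/|\nabla u|$ for $Y\in\{\nu, e_\alpha\}$ then yields $\nabla\nu\equiv 0$. Consequently $\nu$ is Killing, integral curves of $\nu$ are geodesics of $M$, $\Delta u = u_{\nu\nu}$, and the PDE $\Delta_a u + f(u) = 0$ reduces along these curves (parameterized by arclength $\tau$) to the autonomous system $\dot u = |\nabla u|$, $\di|\nabla u|/\di\tau = -f(u)/\lambda_1(|\nabla u|)$.

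I then pick $x_0\in\Omega_0$ with $c_0 := u(x_0)$ avoiding the countable set of boundary values $\{b_j\}$ where $u$ is locally constant on $\partial\Omega$ (possible since $u(\Omega_0)$ is an open interval), and take $N$ to be the connected component through $x_0$ of $\{u = c_0\}\cap\Omega_0$. Then $N$ is a smooth embedded hypersurface (as $|\nabla u|>0$) and totally geodesic (as $\II\equiv 0$). Completeness is the first delicate point: any Cauchy sequence $y_n\in N$ converges in $M$ to some $y_\infty$ with $u(y_\infty) = c_0$ and $|\nabla u|(y_\infty) = |\nabla u|(x_0) > 0$ by continuity; the choice $c_0\notin\{b_j\}$ together with the local constancy of $u$ on $\partial\Omega$ excludes $y_\infty\in\partial\Omega$, while $|\nabla u|(y_\infty)>0$ excludes $y_\infty\in\critu$, so $y_\infty\in\Omega_0\cap N$. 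Geodesic completeness then follows from closedness of $N$ in $M$ combined with total geodesicity. A leaf-space argument---the foliation of $\Omega_0$ by level sets descends to a connected smooth $1$-manifold that maps onto the open interval $u(\Omega_0)$ as a local diffeomorphism, hence a diffeomorphism---further shows that every level set $\{u=c\}\cap\Omega_0$ is connected, a fact used below.

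Finally, define $\Psi : \R \times N \to M$ by $\Psi(t,x) := \gamma_x(t)$, where $\gamma_x$ is the geodesic of $M$ with $\gamma_x(0) = x$, $\gamma_x'(0) = \nu(x)$ (globally defined by $M$-completeness). Being the flow of the Killing field $\nu$, each $\Psi_t := \Psi(t,\cdot)$ is a local isometry, and an explicit computation gives $\Psi^*g_M = \di t^2 + g_N$ wherever $\Psi$ is defined. The main technical step is to show that $\{t\in\R : \Psi_t(x)\in\Omega_0\text{ for all }x\in N\}$ is a common open interval $I\ni 0$ and that $\Psi|_{I\times N}$ is a bijection onto $\Omega_0$. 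The crux is that $(u,|\nabla u|)\circ\Psi(t,x)$ satisfies, for every $x\in N$, the same autonomous ODE with identical initial data $(c_0, |\nabla u|(x_0))$, hence depends only on $t$; this synchronises exits through $\critu$ (equivalent to $|\nabla u|\to 0$) and through $\partial\Omega$ (which requires $u$ to approach some $b_j$, impossible for $t\in I$ by the choice of $c_0$). Surjectivity then follows from the connectedness of $\Omega_0$ together with the connectedness of level sets (any $y\in\Omega_0$ lies on the integral curve of $\nu$ through its leaf, which is $t$-synchronised with the flow from $N$ via the phase-space ODE). Injectivity follows from strict monotonicity of $u_0(t) := u(\gamma_{x_0}(t))$ (forcing $t_1 = t_2$) combined with uniqueness of geodesics in $M$ (forcing $x_1 = x_2$). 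Setting $u_0(t) := u\circ\Psi(t,x)$ (which is $x$-independent), one has $u_0' = |\nabla u| > 0$, concluding the splitting. The main obstacle is this uniformity/synchronisation step, resolved by the autonomous ODE structure along the flow and the genericity of $c_0$.
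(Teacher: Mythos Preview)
Your overall strategy---parallelism of $\nu=\nabla u/|\nabla u|$, completeness of a generic level component $N$, the normal geodesic map $\Psi$, and the autonomous ODE along integral curves---is exactly the paper's, and your treatment of parallelism, of the completeness of $N$, and of injectivity is fine. Two steps, however, have real gaps. First, the leaf-space argument is incomplete: the quotient of $\Omega_0$ by connected level components is a priori only a \emph{non-Hausdorff} $1$-manifold, and a local diffeomorphism from such an object onto an interval need not be injective, so you have not actually shown that level sets are connected. Second, the claim that exits through $\partial\Omega$ are ``impossible for $t\in I$ by the choice of $c_0$'' does not hold: the choice $c_0\notin\{b_j\}$ only ensures that $N$ itself avoids $\partial\Omega$; it does not prevent $u_0(t)$ from reaching some $b_j$ at a later time, nor does it explain why, if one integral curve hits $\partial\Omega$ at that time, \emph{every} curve must.

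The paper avoids both issues by direct open-closed arguments, never invoking connectedness of level sets as an input. For the uniformity $I_x=I$, one shows that $N_t=\{x\in N: t\in I_x\}$ is open (immediate from openness of the domain) and closed: if $x_n\to x$ with $t\in I_{x_n}$ but $t\notin I_x$, then $\gamma_x$ exits $\Omega_0$ at some $t^*(x)\le t$; the common ODE gives $|\nabla u|(\Psi(t^*(x),x))=y_0'(t^*(x))>0$, so the exit is through $\partial\Omega$, and since $\gamma_x$ meets $\partial\Omega$ transversally the nearby curves $\gamma_{x_n}$ also meet $\partial\Omega$ at times $s_n\to t^*(x)$, on the \emph{same} boundary component; local constancy of $u$ on $\partial\Omega$ then forces $y_0(t^*(x_n))=y_0(t^*(x))$, contradicting strict monotonicity of $y_0$ together with $t^*(x_n)>t^*(x)$. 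Surjectivity is handled analogously by proving that $\Psi(I\times N)$ is closed in $\Omega_0$, using completeness of $N$ to extract a limit for the $N$-coordinates of any convergent sequence of preimages.
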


\begin{remark}
	Observe that we assume no regularity at all on the the portion $\partial \Omega_0 \cap \critu$. The fact that the latter is totally geodesic, in particular smooth, is a consequence of the proof.	
\end{remark}

\begin{proof}
	By the identities 1) and 2) in \eqref{identities} and by \eqref{Ddu_trivial}, the only nontrivial component of $\nabla \di u$ in $\Omega_0$ is the one in the direction of $\nu = \nabla u/|\nabla u|$:
	\begin{equation}\label{eq_Ddu}
		\nabla \di u = \nabla \di u\left(\nu,\nu\right) \frac{\di u}{|\nabla u|} \otimes \frac{\di u}{|\nabla u|} \qquad \text{in } \, \Omega_0 .
	\end{equation}
	A straightforward computation allows us to deduce from \eqref{eq_Ddu} that $|\nabla u|$ is locally constant on level sets of $u$ intersected with $\Omega_0$ and that $\nu$ is a parallel vector field. Indeed, from the standard identity $\di|\nabla u|^2 = 2\nabla\di u(\nabla u,\,\cdot\,)$ we have $\di|\nabla u| = \nabla\di u(\nu,\,\cdot)$ in $\{\nabla u\neq 0\}$, so by \eqref{eq_Ddu} we get
	\[
		\nabla|\nabla u| = \nabla\di u(\nu,\nu) \nu \qquad \text{in } \, \Omega_0
	\]
	from which we have
	\begin{align*}
		\langle\nabla_X \nu,Y\rangle & = \frac{\langle\nabla_X \nabla u,Y\rangle}{|\nabla u|} - \frac{\langle X,\nabla|\nabla u|\rangle\langle\nabla u,Y\rangle}{|\nabla u|^2} \\
		& = \frac{\nabla\di u(X,Y)}{|\nabla u|} - \frac{\nabla\di u(\nu,\nu)\langle\nu, X\rangle\langle\nabla u,Y\rangle}{|\nabla u|^2} \\
		& = \frac{1}{|\nabla u|} \left(\nabla\di u(X,Y) - \nabla\di u(\nu,\nu)\langle\nu,X\rangle\langle\nu,Y\rangle\right) = 0
	\end{align*}
	for any pair of tangent vectors $X,Y\in T_x M$, $x\in\Omega_0$, that is, $\nabla\nu = 0$ in $\Omega_0$. In particular, integral curves of $\nu$ are geodesics in $M$. By the very definition of $\II$ and by \eqref{Ddu_trivial} we also have that level sets of $u$ intersected with $\Omega_0$ are totally geodesic hypersurfaces in $M$. 
	
	Fix $b\in u(\Omega_0)\backslash u(\partial\Omega)$, which exists since $u(\partial\Omega)$ is at most countable and $u$ is non-constant. Let $N \subseteq \Omega_0$ be a connected component of $\Sigma_b \doteq \{x\in\Omega_0 : u(x) = b\}$ and let $c>0$ be the constant value of $|\nabla u|$ on $N$. Since $\Sigma_b$ is closed in $\Omega_0$ and $N$ is a connected component of $\Sigma_b$, we have that $N$ is closed in $\Sigma_b$, hence in $\Omega_0$. We claim that $N$ is also closed in $M$. This is equivalent to saying that the closure $\overline{N}$ of $N$ in $M$ is contained in $\Omega_0$, that is, that it does not intersect $\partial\Omega_0\equiv\overline{\Omega}_0\backslash\Omega_0$. Indeed, by continuity of $u$ and $|\nabla u|$ in $\overline{\Omega}$ we have $u\equiv b$ and $|\nabla u|\equiv c$ on $\overline{N}$; since $\partial\Omega_0$ is contained in $\critu \cup \partial\Omega$, $u\neq b$ on $\partial\Omega$ and $|\nabla u| = 0 \neq c$ on $\critu$, we have $\overline{N}\cap\partial\Omega_0 = \emptyset$ as claimed. Furthermore, $N$ is also embedded and totally geodesic in $\Omega$, therefore in $M$. In particular, $N$ is also complete. Let
	\[
		\Psi : \R \times N \to M : (t,x) \mapsto \exp_x(t\nu_x) \, .
	\]
	Note that $\Psi$ is well defined and surjective by completeness of $M$ and closedness of $N$, and that $\Psi^{-1}(\Omega_0)$ is open in $\R\times N$. For each $x\in N$ we denote by $I_x \subseteq \R$ the maximal open interval containing $0$ such that $\Psi(t,x) \in \Omega_0$ for all $t\in I_x$ and we denote by $\gamma_x : I_x \to \Omega_0$ the geodesic curve given by
	\[
		\gamma_x(t) = \Psi(t,x) \qquad \text{for all } \, t \in I_x \, .
	\]
	For each $x\in N$ we also set
	\[
		t_\ast(x) = \inf I_x \, , \qquad t^\ast(x) = \sup I_x \, .
	\]
	Note that if $t_\ast(x) > -\infty$ then $\Psi(t_\ast(x),x) \in \partial\Omega_0$. Similarly, $\Psi(t^\ast(x),x) \in \partial\Omega_0$ if $t^\ast(x) < \infty$.
	
	We define
	\[
		\mathscr D = \{ (t,x) : x \in N, \, t \in I_x \} \subseteq \Psi^{-1}(\Omega_0) \, .
	\]
	Our ultimate goal is to show that $\mathscr D = I \times N$ for a fixed open interval $I\subseteq\R$, that $\Psi : \mathscr D \to \Omega_0$ is an isometry when $\mathscr D$ is equipped with the product metric $\di t^2 + \metric_{|N}$ and that $u$ is of the form $u(\Psi(t,x)) = u_0(t)$ for some function $u_0 : I \to \R$.
	
	First, we show that $\mathscr D$ is open in $\Psi^{-1}(\Omega_0)$, and therefore also in $\R\times N$. Suppose, by contradiction, that this is false, that is, suppose that $\Psi^{-1}(\Omega_0)\backslash\mathscr D$ is not closed in $\Psi^{-1}(\Omega_0)$. Then there exist $(t,x) \in \mathscr D$ and a sequence $\{(t_n,x_n)\} \in \Psi^{-1}(\Omega_0)\backslash\mathscr D$ such that $(t_n,x_n) \to (t,x)$. Suppose, without loss of generality, that $t>0$. Then we also have $t_n > 0$ for all sufficiently large $n\in\N$. Since $(t_n,x_n)\not\in\mathscr D$, that is, $t_n \not\in I_{x_n}$, this implies that $t^\ast(x_n) \leq t_n$ for all sufficiently large $n$. Since $0 \leq t^\ast(x_n) \leq t_n$ and $t_n \to t < \infty$, the sequence $t^\ast(x_n)$ is bounded, so there exists a subsequence $x_{n_k}$ such that $t^\ast(x_{n_k})$ converges to some limit $\bar t \in [0,t]$. By continuity, we have $\Psi(t^\ast(x_{n_k}),x_{n_k}) \to \Psi(\bar t,x)$ as $k\to\infty$. Since $\bar t \in [0,t] \subseteq I_x$, we have $\Psi(\bar t,x) \in \Omega$. But then $\{\Psi(t^\ast(x_{n_k}),x_{n_k})\}$ is a sequence of points of $\partial\Omega$ converging to a point of $\Omega$, absurd.
	
	From \eqref{eq_Ddu} we deduce that the vector field $\nu$ is parallel in $\Omega_0$. We let
	\[
		\Phi : E \subseteq \R \times \Omega_0 \to \Omega_0
	\]
	be the maximally defined flow of $\nu$. By the very definition of $\Psi$ and $\mathscr D$ and by parallelism of $\nu$, we have $\mathscr D \subseteq E$ and
	\[
		\Psi(t,x) = \Phi(t,\Psi(0,x)) \equiv \Phi(t,x) \qquad \text{for all } \, (t,x) \in \mathscr D \, .
	\]
	Moreover, since $\nu$ is a parallel vector field and since $\Psi \equiv \Phi_\nu$ on $\mathscr D$, the pullback $\Psi_{|\mathscr D}^\ast \metric$ of the metric $\metric$ of $\Omega_0$ induced by $\Psi_{|\mathscr D}$ on $\mathscr D$ is the product metric
	\[
		\di t^2 + \metric_{|N} \, .
	\]
	Indeed, for each $(t,x)\in\mathscr D$ we have
	\begin{align*}
		(\Psi_{|\mathscr D}^\ast \metric)(\partial_t,\partial_t) & = \langle\Psi_\ast\partial_t,\Psi_\ast\partial_t\rangle \underset{(i)}{=} \langle\nu,\nu\rangle = 1 \\
		(\Psi_{|\mathscr D}^\ast \metric)(\partial_t,X) & = \langle\Psi_\ast\partial_t,\Psi_\ast X\rangle = \langle(\Phi_t)_\ast\nu,(\Phi_t)_\ast X\rangle \underset{(ii)}{=} \langle\nu,X\rangle = 0 \\
		(\Psi_{|\mathscr D}^\ast \metric)(X,Y) & = \langle\Psi_\ast X,\Psi_\ast Y\rangle = \langle(\Phi_t)_\ast X,(\Phi_t)_\ast Y\rangle \underset{(iii)}{=} \langle X,Y\rangle = \langle X,Y\rangle_N
	\end{align*}
	for any $X,Y\in TN$, where $(i)$ holds because $\Psi$ is a restriction of the exponential map and $(ii)$-$(iii)$ hold because the map $y \mapsto \Phi_t(y) \doteq \Phi(t,y)$ is a local isometry by parallelism of $\nu$. In particular, $\Psi_{|\mathscr D}$ is an open map and a local diffeomorphism onto its image (in fact, we will show that it is also injective, hence a global diffeomorphism) and $\Psi(\mathscr D)$ is open in $\Omega_0$.
	
	Let $c>0$ be the constant value of $|\nabla u|$ on $N$ and let $y_0 : J \to \R$ be the maximal solution to the Cauchy problem
	\[
	\begin{cases}
		[a(y')+y'a'(y')] y'' + f(y) = 0 \\
		y(0) = b \\
		y'(0) = c
	\end{cases}
	\]
	subject to the constraint
	\[
		y_0'(t) \neq 0 \qquad \text{for all } \, t \in J \, .
	\]
	We claim that $I_x \subseteq J$ for all $x\in N$ and that $u(\Psi(t,x)) = y_0(t)$ for all $x\in N$, $t\in I_x$. For a given $x\in N$, consider the function $u_x = u\circ\gamma_x : I_x \to \R$. We have
	\[
		u_x' = |\nabla u|\circ\gamma_x > 0 \, , \qquad u_x'' = \nabla\di u(\nu,\nu) \circ \gamma_x
	\]
	so $u_x$ satisfies
	\[
		[a(u_x') + u_x' a'(u_x')] u_x'' + f(u_x) = 0 \qquad \text{on } \, I_x
	\]
	and $u_x(0) = b$, $u_x'(0) = c$. Hence, the claim follows by uniqueness of the local solution to the Cauchy problem for the ODE
	\[
		[a(y')+y'a'(y')]y'' + f(y) = 0
	\]
	for any initial datum $(y(t_0),y'(t_0)) \in \R\times\R^+$.
	
	We now show that the intervals $I_x$, $x\in N$, are all equal to some given interval $I\subseteq\R$ independent from $x$. This is equivalent to saying that for each $t\in\R$ the set
	\[
		N_t = \{ x \in N : t \in I_x \}
	\]
	is either empty or equal to $N$. We prove this latter claim. Let $t\in\R$ be such that $N_t\neq\emptyset$, then we prove that $N_t = N$ by showing that $N_t$ is both open and closed in the connected $N$. The first property is an immediate consequence of openness of $\mathscr D$: since $\mathscr D$ is open in $\R\times N$, the intersection $\mathscr D \cap (\{t\}\times N)$ is also open in $\{t\}\times N$; since $\mathscr D \cap (\{t\}\times N) \simeq N_t$ and $\{t\}\times N \simeq N$ via the trivial homeomorphism $\psi : \{t\}\times N \to N : (t,x) \mapsto x$, it follows that $N_t$ is open in $N$. We now prove that $N_t$ is closed in $N$. Without loss of generality, assume that $t>0$. Let $\{x_n\} \subseteq N_t$ be a sequence converging to some point $x\in N$. Suppose, by contradiction, that $x\not\in N_t$. Then we have $t^\ast(x) \leq t < t^\ast(x_n)$. In particular, $t^\ast(x) < \infty$ and so $\Psi(t^\ast(x),x) \in \partial\Omega_0 \subseteq \critu\cup\partial\Omega$. By continuity,
	\[
		|\nabla u|(\Psi(t^\ast(x),x)) = \lim_{n\to\infty} |\nabla u|(\Psi(t^\ast(x),x_n)) = \lim_{n\to\infty} u_{x_n}'(t^\ast(x)) = y_0'(t^\ast(x)) > 0.
	\]
	Therefore, $\nabla u\neq 0$ at $\Psi(t^\ast(x),x)$ and we have $\Psi(t^\ast(x),x) \in \partial\Omega$. Since $u$ is locally constant on $\partial\Omega$, the gradient $\nabla u = |\nabla u|\dot\gamma_x$ must be orthogonal to $\partial\Omega$ at $\Psi(t^\ast(x),x)$, that is, the curve $s \mapsto \Psi(s,x) \equiv \gamma_x(s)$ meets $\partial\Omega$ orthogonally at time $s = t^\ast(x)$. Since $\partial\Omega$ is an embedded $C^1$ hypersurface of $M$, for each large enough $n\in\N$ the curve $s \mapsto \Psi(s,x_n)$ must intersect $\partial\Omega$ for some value $s_n\in\R$ of $s$ such that $s_n \to t^\ast(x)$ as $n\to\infty$. Since $t^\ast(x) \leq t < t^\ast(x_n) \leq s_n$, by comparison it must be $t^\ast(x_n) \to t^\ast(x)$ as $n\to\infty$, hence $\Psi(t^\ast(x_n),x_n) \to \Psi(t^\ast(x),x)$. Since $\partial\Omega$ is $C^1$ and embedded, for all sufficiently large $n\in\N$ the point $\Psi(t^\ast(x_n),x_n)$ belongs to the same connected component of $\partial\Omega$ containing $\Psi(t^\ast(x),x)$. Since $u$ is constant along that component of $\partial\Omega$, it must be
	\[
		u(\Psi(t^\ast(x),x)) = u(\Psi(t^\ast(x_n),x_n))
	\]
	for all sufficiently large $n\in\N$. By continuity, we have
	\[
		u(\Psi(t^\ast(x),x)) = \lim_{s\to t^\ast(x)} u(\Psi(s,x)) = \lim_{s\to t^\ast(x)} y_0(s) = y_0(t^\ast(x)) \, .
	\]
	On the other hand, since $y_0$ is strictly increasing and $t^\ast(x) \leq t < t^\ast(x_n)$, we also have
	\[
		y_0(t^\ast(x)) < \lim_{s\to t^\ast(x_n)} y_0(s) = \lim_{s\to t^\ast(x_n)} u(\Psi(s,x_n)) = u(\Psi(t^\ast(x_n),x_n))
	\]
	for each $n\in\N$, contradiction. This shows that $N_t$ is also closed. By connectedness of $N$ and since $N_t\neq\emptyset$ we deduce $N_t=N$.
	
	So far, we have proved that
	\[
		\mathscr D = I \times N
	\]
	for some open interval $I\subseteq J$, that $\Psi_{|\mathscr D} : \mathscr D \to \Omega_0$ is a local isometry (when $\mathscr D$ is equipped with the product metric), that $\Psi(\mathscr D)$ is open in $\Omega_0$ and that $u$ satisfies $u(\Psi(t,x)) = y_0(t)$ for all $(t,x) \in \mathscr D$ for a fixed function $y_0 : J \to \R$. Since $y_0$ is strictly increasing on $J$ and integral curves of $\nu$ are disjoint, we further deduce that $\Psi_{|\mathscr D}$ is injective, hence a global isometry onto its image. We now prove that $\Psi(\mathscr D)$ is also closed in $\Omega_0$, thus concluding $\Psi(\mathscr D) = \Omega_0$ by connectedness of $\Omega_0$. Let $\{p_n\}\subseteq \Psi(\mathscr{D})$ be a given sequence converging to some point $p \in \Omega_0$. We have to show that $p \in \Psi(\mathscr{D})$. We consider the (unique) sequence $\{(t_n,x_n)\} \subseteq \mathscr D$ such that $p_n = \Psi(t_n,x_n)$ for each $n\in\N$. Let $U\subseteq\Omega_0$ be a geodesically convex open neighbourhood of $p$, which exists by openness of $\Omega_0$. For all $n,m\in\N$ large enough we have $p_n,p_m\in U$ and there exists a minimizing geodesic $\gamma = \gamma_{n,m} : [0,1] \to U$ such that $\gamma(0) = p_n$ and $\gamma(1) = p_m$. Let
	\[
		v = \dot\gamma(0) - \langle\dot\gamma(0),\nu\rangle \nu
	\]
	be the component of $\dot\gamma(0)$ orthogonal to $\nu$ and let $\sigma : [0,1] \to N$ be the geodesic with initial point $x_n$ and initial velocity $(\Psi(t_n,\,\cdot\,)^{-1})_\ast v \in T_{x_n} N$, which exists by completeness of $N$. Consider the curve $c : [0,1] \to M$ defined by
	\[
		c(s) = \Psi(t_n+s\langle\dot\gamma(0),\nu\rangle,\sigma(s)) \, .
	\]
	We claim that $c(s) = \gamma(s)$ for all $s\in[0,1]$. Indeed, by construction we have $c(0) = \gamma(0)$ and $\dot c(0) = \dot\gamma(0)$. Let
	\[
		\bar s = \sup\{ s_0 \in [0,1] : t_n + s\langle\dot\gamma(0),\nu\rangle \in I \text{ for all } s \in [0,s_0] \} \, .
	\]
	The restriction $c_{|[0,\bar s]} : [0,\bar s] \to \Omega_0$ is a geodesic, and by uniqueness of geodesics we have $c(s) = \gamma(s)$ for all $s\in[0,\bar s]$. Clearly $\bar s > 0$, since $t_n \in I$ and $I$ is open. In fact, we have $\bar s = 1$, since otherwise $t_n + \bar s\langle\dot\gamma(0),\nu\rangle$ would be a boundary point for $I$, yielding $\gamma(\bar s) = c(\bar s) \in \partial\Omega_0$, contradiction. So, we have $c(s) = \gamma(s)$ for all $s\in[0,1]$, as claimed. In particular,
	\[
		\Psi(t_n + \langle\dot\gamma(0),\nu\rangle,\sigma(1)) = c(1) = \gamma(1) = \Psi(t_m,x_m)
	\]
	and by injectivity of $\Psi_{|\mathscr D}$ this implies
	\[
		t_n + \langle\dot\gamma(0),\nu\rangle = t_m \qquad \text{and} \qquad \sigma(1) = x_m \, .
	\]
	Since $\gamma([0,1])$ lies entirely in $\Psi(\mathscr D)$, we have
	\[
		\dist_\Omega(p_n,p_m)^2 = |\dot\gamma(s)|^2 = \langle\dot\gamma(0),\nu\rangle^2 + |\dot\sigma(s)|^2 \geq |t_n-t_m|^2 + \dist_N(x_n,x_m)^2 .
	\]
	Hence, $\{t_n\}$ and $\{x_n\}$ are Cauchy sequences in $\R$ and $N$, respectively, so we have the existence of $(t,x) \in \R\times N$ such that $(t_n,x_n) \to (t,x)$. By continuity of $\Psi$ we also have $p_n = \Psi(t_n,x_n) \to \Psi(t,x)$, so $p = \Psi(t,x)$. If $t\not\in I$ then we would have either $t = \sup I$ or $t = \inf I$; in both cases, $\Psi(t,x_n)$ would be a sequence of points of $\partial\Omega_0$ converging to the point $p\in\Omega_0$, absurd. Hence, it must be $t\in I$ and so $p\in\Psi(\mathscr D)$, as desired.
\end{proof}

We are ready to prove Theorem \ref{teo_splitting_intro} in the Introduction, in the following more general form:

\begin{theorem} \label{teo_splitting}
	Let $(M^n, \metric)$ be a complete Riemannian manifold. Let $\Omega$ be a $C^2$ domain such that $\Ric\geq 0$ in $\Omega$. Let $a$ satisfy \eqref{assu_A_superstrong}, $f \in C^1(\R)$ and let $u \in C^3(\Omega) \cap C^2(\overline\Omega)$ be a non-constant, stable solution to
	\[
	\left\{
	\begin{array}{l@{\qquad}l}
		\Delta_a u + f(u) = 0 & \text{in } \Omega \\[0.3cm]
		u, \partial_\eta u \text{ locally constant} & \text{on } \partial\Omega
	\end{array}
	\right.
	\]
	with moderate energy growth. Let $\Omega_0$ be a connected component of $\Omega\backslash\critu$ with locally finite perimeter in $\Omega$. Assume that $X$ is a bounded Killing vector field in  $\overline\Omega$ such that
	\begin{equation}\label{eq_monotonoc}
		\langle \nabla u, X \rangle > 0 \qquad \text{in } \, \Omega_0.
	\end{equation}
	Then,
	\begin{itemize}
		\item [i)] there exists an isometry $\Psi : I \times N \to \Omega_0$, where $I\subseteq\R$ is an open interval and $N$ is a complete, totally geodesic hypersurface of $M$ with $\Ric_N \geq 0$;
		\item [ii)] $u(\Psi(t,x)) = u_0(t)$ for all $(t,x)\in I\times N$, for some $u_0 : I \to \R$ with $u_0'>0$;
		\item [iii)] $\langle \nu,X \rangle$ is a positive constant in $\Omega_0$, where $\nu = \frac{\nabla u}{|\nabla u|}$.
	\end{itemize}
\end{theorem}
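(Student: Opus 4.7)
The plan is to combine the geometric Poincar\'e inequality of Theorem \ref{teo_Poincare_loc}, applied with $w \doteq \langle\nabla u, X\rangle$, with the moderate energy growth hypothesis in order to force the vanishing of the geometric quantities that govern splitting, and then to invoke Proposition \ref{prop_splitting_alt}. First, by Lemma \ref{lem_linearKilling} the function $w$ weakly solves $\diver(A(\nabla u)\nabla w) + f'(u)w = 0$ in $\Omega$, is positive in $\Omega_0$ by \eqref{eq_monotonoc}, and lies in $C^2(\Omega)\cap C^1(\overline\Omega)$ because $u$ does and $X$ is Killing. Setting $z \doteq |\nabla u|/w \in C^1(\Omega_0)$, Theorem \ref{teo_Poincare_loc} then gives, for every $\varphi \in \lip_c(M)$,
\[
\int_{\Omega_0} |\nabla u|^2 \langle A(\nabla u)\nabla\varphi,\nabla\varphi\rangle \,\geq\, I_1(\varphi) + I_2(\varphi),
\]
with
\[
I_1(\varphi) = \int_{\Omega_0} \varphi^2\bigl\{\lambda_1\bigl|\nabla^\top|\nabla u|\bigr|^2 + \lambda_2[|\nabla u|^2|\II|^2 + \Ric(\nabla u,\nabla u)]\bigr\},
\]
\[
I_2(\varphi) = \int_{\Omega_0} w^2\langle A(\nabla u)\nabla(\varphi z),\nabla(\varphi z)\rangle,
\]
both non-negative thanks to $\Ric \geq 0$ and the ellipticity of $A$.

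Next, I would plug $\varphi = \varphi_j$ from the moderate energy growth sequence and let $j \to \infty$, so that the left-hand side vanishes. Fatou's lemma on $I_1(\varphi_j)$, together with $\lambda_1,\lambda_2 > 0$ in $\Omega_0$ (where $|\nabla u| > 0$ since $w > 0$), forces
\[
\nabla^\top|\nabla u| \equiv 0, \qquad \II \equiv 0 \quad \text{in } \Omega_0. \qquad (\ast)
\]
For $I_2$, the Leibniz expansion $\nabla(\varphi_j z) = z\nabla\varphi_j + \varphi_j\nabla z$, the triangle inequality in the $A(\nabla u)$-seminorm, and the key identity $w^2 z^2 = |\nabla u|^2$ give
\[
\int_{\Omega_0} \varphi_j^2 w^2 \langle A(\nabla u)\nabla z,\nabla z\rangle \,\leq\, 2 I_2(\varphi_j) + 2\int_{\Omega_0}|\nabla u|^2\langle A(\nabla u)\nabla\varphi_j,\nabla\varphi_j\rangle,
\]
and both terms on the right are dominated by the left-hand side of the Poincar\'e inequality, hence vanish as $j \to \infty$. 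A further Fatou argument then yields $\nabla z \equiv 0$ in $\Omega_0$, and by connectedness $z$ is a positive constant there.

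Property $(\ast)$, combined with $u \in C^2(\Omega_0)$, is exactly what Proposition \ref{prop_splitting_alt} requires: it supplies the isometry $\Psi : I \times N \to \Omega_0$ with $N$ complete and totally geodesic and $u\circ\Psi(t,x) = u_0(t)$ with $u_0' > 0$, giving $i)$ and $ii)$. The bound $\Ric_N \geq 0$ follows at once from the product formula $\Ric_{I\times N}(V,V) = \Ric_N(V_N,V_N)$ applied on the isometric image $\Omega_0 \simeq I\times N$, where $\Ric_M \geq 0$. Finally, $iii)$ is a restatement of the conclusion of the $I_2$-analysis: $\langle \nu, X \rangle = w/|\nabla u| = 1/z$ is a positive constant on $\Omega_0$.

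The main obstacle I foresee is the rigorous handling of the $I_2$ term: one has to convert integrated control of $\langle A(\nabla u)\nabla(\varphi_j z),\nabla(\varphi_j z)\rangle$ into pointwise control of $\langle A(\nabla u)\nabla z,\nabla z\rangle$ itself, and the cancellation $w^2 z^2 = |\nabla u|^2$ is precisely what reduces the remainder to a quantity already within the moderate energy growth budget. Apart from this point, the argument is essentially a clean assembly of the tools developed in the previous sections.
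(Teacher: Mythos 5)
Your proposal is correct and follows essentially the same route as the paper's own proof: apply Theorem \ref{teo_Poincare_loc} with $w=\langle\nabla u,X\rangle$, plug in the moderate energy growth test functions to annihilate the left-hand side, conclude $\nabla^\top|\nabla u|\equiv 0$, $\II\equiv 0$ and $\nabla(|\nabla u|/w)\equiv 0$, and finish with Proposition \ref{prop_splitting_alt}. The paper's text is considerably more terse on the extraction of $\nabla z\equiv 0$ from the second Poincar\'e term; your Leibniz expansion combined with the cancellation $w^2z^2=|\nabla u|^2$ and Fatou is precisely the right way to justify it, so what you feared as the main obstacle is handled correctly.
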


\begin{proof}[Proof of Theorem \ref{teo_splitting}]
Setting $w \doteq \langle \nabla u, X \rangle$, by Theorem \ref{teo_Poincare_loc} we have
	\begin{align*}
		\int_{\Omega_0} |\nabla u|^2 \langle A(\nabla u) \nabla \varphi, \nabla \varphi \rangle & \geq \int_{\Omega_0} \varphi^2 \Big\{\lambda_1 \big|\nabla^\top  |\nabla u|\big|^2 +\lambda_2\Big[|\nabla u|^2|\II|^2 +\Ric(\nabla u, \nabla u)\Big]\Big\} \\[0.2cm]
		& \phantom{=\;} + \int_{\Omega_0} w^2 \langle A(\nabla u) \nabla \left(\frac{\varphi|\nabla u|}{w}\right), \nabla \left(\frac{\varphi|\nabla u|}{w}\right) \rangle
	\end{align*}
	for all $\varphi\in\lip_c(M)$. 
	Testing the inequality with $\varphi = \varphi_j$ in the definition of moderate energy growth, and letting $j \to \infty$, we obtain
	\[
		|\nabla^\top  |\nabla u|\big|^2 \equiv 0 \, , \qquad |\II|^2 \equiv 0 \, , \qquad \Ric(\nabla u, \nabla u) \equiv 0 \, , \qquad \nabla \frac{|\nabla u|}{w} \equiv 0 \qquad \text{in } \Omega_0 \, .
	\]
	Proposition \ref{prop_splitting_alt} ensures the validity of clauses $i)$ and $ii)$ of the statement, 
	while $iii)$ follows by observing that
	\[
		\frac{|\nabla u|}{w} = \frac{|\nabla u|}{\langle\nabla u,X\rangle} = \frac{1}{\langle \nu,X\rangle} \, .
	\]
\end{proof}

\begin{proof}[Proof of Theorem \ref{teo_splitting_intro}]
	Condition \eqref{condi_Hcj_con_a} guarantees that 
	\[
	\langle \nabla u, X \rangle \ge 0, \qquad \langle \nabla u, X \rangle \not\equiv 0 \qquad \text{on } \, \partial \Omega,
	\]
	and by Corollary \ref{cor_lowenergy} $u$ has moderate energy growth. Hence, using Theorem \ref{teo_monotonicity} we deduce 
	\[
	\langle \nabla u, X \rangle > 0 \qquad \text{in } \, \Omega,
	\]
	and we conclude by Theorem \ref{teo_splitting} applied to $\Omega_0 = \Omega$.
\end{proof}

\section{Euclidean space and the critical set of $u$}\label{sec_critical}

Let $u \in C^3(\Omega)$ be a non-constant solution to $\Delta_a u + f(u) = 0$. In this section, we examine more closely the case of Euclidean space and provide some results that allow to ``glue" different solutions along boundaries on which $\nabla u$ vanishes. Towards this aim, we assume as usual $f \in C^1(\R)$ and \eqref{assu_A_superstrong}, that is, 
\[
a\in C^{1,1}(\R_0^+), \qquad a(t) > 0 \qquad \text{and} \qquad ta'(t) + a(t) > 0 \qquad \text{for all } \, t \in \R^+_0
\]
to guarantee a certain amount of regularity for the set of critical points $\critu$. In fact, each partial derivative $w_j = \partial_j u$ belongs to $C^2(\Omega)$ and satisfies the linearized equation
\begin{equation}\label{eq_partialder}
\diver(A(\nabla u)\nabla w_j) + f'(u) w_j = 0 \qquad \text{in } \, \Omega \, ,
\end{equation}
where in our assumptions $A(\nabla u)$ has $\lip_\loc$ coefficients and $f'(u) \in C(\Omega)$. Therefore, the unique continuation principle holds and so either $w_j \equiv 0$ or $w_j$ has finite order of vanishing at every point 
\[
x \in Z_j = \{ x \in \Omega : w_j(x) = 0 \} \supseteq \critu.
\]
In particular, if $u$ is constant in an open subset $\Omega_0$ of $\Omega$, then $u$ is constant in the entire $\Omega$.

\begin{lemma} \label{Om_finper}
	Let $\Omega\subseteq\R^n$ be a $C^1$ domain, Let $a$ satisfy \eqref{assu_A_superstrong} and let $f \in C^1(\R)$. Consider a non-constant solution $u \in C^3(\Omega)$ to
	\[
	\Delta_a u + f(u) = 0 \qquad \text{in } \, \Omega \, .
	\]
	If $\Omega_0\subseteq\Omega$ is an open set such that $\partial\Omega_0 \subseteq \partial\Omega \cup \critu$
	then $\mathscr{H}^{n-1}(K\cap\partial\Omega_0) < \infty$ for all compact sets $K\subseteq\Omega$. In particular, $\Omega_0$ is a set of locally finite perimeter in $\Omega$.
\end{lemma}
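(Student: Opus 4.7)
The strategy is to find a single nontrivial nodal function whose zero set contains $\partial\Omega_0 \cap \Omega$, and then invoke the classical $(n-1)$-dimensional measure bound for nodal sets of linear elliptic equations with Lipschitz coefficients. Since $u$ is non-constant and $u \in C^3(\Omega)$, at least one of the partial derivatives $w_j = \partial_j u$ is not identically zero on $\Omega$; fix such an index $j$. By \eqref{eq_partialder}, $w_j$ satisfies
\[
\diver\bigl(A(\nabla u)\nabla w_j\bigr) + f'(u) w_j = 0 \qquad \text{in } \Omega,
\]
which is a linear, locally uniformly elliptic equation in divergence form. Under \eqref{assu_A_superstrong} and $u \in C^3(\Omega)$, the matrix field $A(\nabla u)$ has locally Lipschitz coefficients, and $f'(u)\in C(\Omega)$; this is precisely the regularity required by the Hardt--Simon theorem on nodal sets (see H.~Hardt, L.~Simon, \emph{Nodal sets for solutions of elliptic equations}, J.~Differential Geom.~30 (1989)).

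The key step is then to apply that theorem: the zero set
\[
Z_j = \{ x \in \Omega \ : \ w_j(x) = 0 \}
\]
has locally finite $(n-1)$-dimensional Hausdorff measure in $\Omega$. Since the interior critical set satisfies $\critu = \bigcap_{k=1}^n Z_k \subseteq Z_j$, and by hypothesis $\partial\Omega_0 \cap \Omega \subseteq \critu$, we obtain the chain of inclusions $\partial\Omega_0 \cap \Omega \subseteq Z_j$. Consequently, for any compact $K\subseteq\Omega$ (so that $K$ is disjoint from $\partial\Omega$),
\[
\haus^{n-1}(K \cap \partial\Omega_0) \le \haus^{n-1}(K\cap Z_j) < \infty,
\]
which is the first assertion.

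To deduce that $\Omega_0$ has locally finite perimeter in $\Omega$, I would invoke the classical result (Federer, \emph{Geometric Measure Theory}, 4.5.11; see also Maggi, \emph{Sets of Finite Perimeter and Geometric Variational Problems}, Prop.~12.21) stating that if $\Omega_0$ is an open subset of $\Omega$ whose topological boundary in $\Omega$ has locally finite $\haus^{n-1}$-measure, then $\mathbf{1}_{\Omega_0}\in BV_\loc(\Omega)$ with $P(\Omega_0,\cdot)\le \haus^{n-1}\mres\partial\Omega_0$. Combined with the previous step, this yields the conclusion.

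\textbf{Main obstacle.} The whole argument hinges on having the Hardt--Simon bound available; the tighter assumption \eqref{assu_A_superstrong} (as opposed to just \eqref{assu_A_strong}) is introduced precisely so that $A(\nabla u)$ is locally Lipschitz, matching the sharp regularity hypothesis in that theorem. The only other delicacy is the choice of the index $j$: one must use that $u$ being non-constant on the open, connected $\Omega$ forces some $\partial_j u$ to be non-identically zero there, which is immediate. No analysis of the geometric structure of $\critu$ itself is required for this statement.
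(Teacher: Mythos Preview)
Your proposal is correct and follows essentially the same route as the paper: pick a non-identically-zero partial derivative $w_j$, observe it solves the linearized equation with $\lip_\loc$ coefficients, apply Hardt--Simon to bound $\haus^{n-1}(K\cap Z_j)$, and then invoke Federer's criterion for locally finite perimeter. The only cosmetic difference is that the paper explicitly records that $w_j$ has finite order of vanishing at each point of $Z_j$ (a hypothesis in the Hardt--Simon theorem, ensured by unique continuation), whereas you leave this implicit in your citation of that result.
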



\begin{proof}
	Let $K\subseteq\Omega$ be a compact set. Since $u$ is non-constant, there exists some $j\in\{1,\dots,n\}$ such that $w_j = \partial_j u \not\equiv 0$ in $\Omega$. As $\critu \subseteq Z_j = \{ w_j = 0 \}$, it suffices to show that $\mathscr H^{n-1}(K\cap Z_j) < \infty$.
	As observed at the beginning of this section, $w_j$ satisfies the linear elliptic equation \eqref{eq_partialder} and has finite order of vanishing at every point of $Z_j$. By \cite[Theorem 1.7]{HS89}, for each point $x\in Z_j$ there exist $\rho_x>0$ and $C=C(n,x)>0$ such that $B_{\rho_x}(x) \subseteq \Omega$ and
	\[
	\mathscr{H}^{n-1}(B_\rho(x) \cap Z_j) \leq C \rho^{n-1} \qquad \text{for all } \, 0 < \rho \leq \rho_x \, .
	\]
	By covering the compact set $K\cap Z_j$ with a finite number of balls $B_{\rho_{x_1}}(x_1),\dots,B_{\rho_{x_k}}(x_k)$ centered at points $x_1,\dots,x_k$ of $K\cap Z_j$ we deduce $\mathscr{H}^{n-1}(K \cap Z_j) < \infty$, as desired.
	
	By \cite[Theorem 4.5.11]{federer} (also \cite[Proposition 3.62]{ambrosiofuscopallara} together with the introductory remark), since $\mathscr H^{n-1}(K\cap\partial\Omega_0) < \infty$ for all compact $K\subseteq\Omega$ it follows that $\Omega_0$ has finite perimeter in every compact set $K\subseteq\Omega$, as claimed.
\end{proof}

We are ready to prove our main results of this section.

\begin{lemma} \label{lem_Du0}
	Let $\Omega\subseteq\R^n$ be a $C^1$ domain, let $a$ satisfy \eqref{assu_A_superstrong} and let $f \in C^1(\R)$. Assume that $u \in C^3(\Omega)\cap C^1(\overline{\Omega})$ is a non-constant, stable solution to
	\[
		\Delta_a u + f(u) = 0 \qquad \text{in } \Omega.
	\]
	%
	%
%
	Let $\Omega_0\subseteq\Omega$ be a connected open subset such that $\nabla u = 0$ everywhere on $\partial\Omega_0$. If $u$ has moderate energy growth in $\Omega_0$, then $\Omega_0 \equiv \Omega$ and there exists $X\in\mathbb{S}^{n-1}$ such that $\langle\nabla u,X\rangle > 0$ in $\Omega$. In particular, $\critu = \emptyset$.
\end{lemma}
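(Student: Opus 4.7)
The plan is to exploit the stability of $u$ together with the moderate energy growth on $\Omega_0$ to force $\nabla u$ to have a constant direction throughout $\Omega$, and then to rule out interior critical points. First I would set up the linearised framework. Because $a\in C^{1,1}_\loc(\R^+_0)$ and $u\in C^3(\Omega)$, each partial derivative $w_j=\partial_j u$ belongs to $C^2(\Omega)\cap C(\overline\Omega)$ and is a classical solution of $\diver(A(\nabla u)\nabla w_j)+f'(u)w_j=0$ on $\Omega$, with $A(\nabla u)\in\lip_\loc(\Omega)$ uniformly elliptic on compacts; the hypothesis $\nabla u=0$ on $\partial\Omega_0$ gives $w_j\equiv 0$ on $\partial\Omega_0$ and $\partial\Omega_0\subseteq\partial\Omega\cup\critu$, so Lemma \ref{Om_finper} ensures $\Omega_0$ has locally finite perimeter in $\Omega$. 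Stability of $u$ on $\Omega$ restricts to $\Omega_0$ via $\lip_c(\Omega_0)\subseteq\lip_c(\Omega)$, so Lemma \ref{lem_mosspie} provides $v_0\in C^{0,\alpha}_\loc\cap H^1_\loc(\Omega_0)$, $v_0>0$, solving the same linearised equation on $\Omega_0$.

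Next, I would apply Remark \ref{rem-Lemma_w-} to each $w_j$ on $\Omega_0$: the pointwise bound $w_j^2\le|\nabla u|^2$ transfers moderate energy growth from $u$ to $w_j$, and $w_j\equiv 0$ on $\partial\Omega_0$, producing constants $\lambda_j\in\R$ with $w_j=\lambda_j v_0$ on $\Omega_0$. The vector $\lambda=(\lambda_1,\dots,\lambda_n)$ cannot vanish, for otherwise $\nabla u\equiv 0$ on the open set $\Omega_0$ and Aronszajn's unique continuation for the linearised equation (valid because its coefficients are Lipschitz and the zeroth-order term continuous) would force $u$ constant on $\Omega$, contradicting the hypothesis. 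Setting $X=\lambda/|\lambda|\in\mathbb{S}^{n-1}$, I get $\langle\nabla u,X\rangle=|\lambda|v_0>0$ on $\Omega_0$, while $\langle\nabla u,Y\rangle=v_0\langle\lambda,Y\rangle\equiv 0$ on $\Omega_0$ for every $Y\perp X$. Applying Aronszajn's theorem once more to each such $\langle\nabla u,Y\rangle$, which is a classical solution of the linearised equation on the whole of $\Omega$ and vanishes on the open subset $\Omega_0$, extends the identity $\nabla u=wX$ to all of $\Omega$, where $w:=\langle\nabla u,X\rangle\in C^2(\Omega)$ still solves the linearised equation, $w>0$ on $\Omega_0$, and $\critu=\{w=0\}$.

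The hardest step, which I expect to be the main obstacle, is to upgrade $w>0$ from $\Omega_0$ to the whole of $\Omega$ (equivalently $\critu=\emptyset$ and $\Omega_0=\Omega$), since moderate energy growth is not available on $\Omega\setminus\overline\Omega_0$. My plan is to exploit the rigidity $\nabla u=wX$, $\nabla w\parallel X$ and the equation $\Delta_a u+f(u)=0$ to derive the first integral $\Phi(w)+F(u)\equiv C_0$ on the connected $\Omega$, where $\Phi(s)=\int_0^s\lambda_1(|\sigma|)\sigma\,\di\sigma\ge 0$ (with equality iff $s=0$) and $F'=f$, obtained by checking that $\nabla[\Phi(w)+F(u)]\equiv 0$ pointwise on $\Omega\setminus\critu$ and then extending by continuity; this forces $F(u(p))=C_0$ at every $p\in\critu$, whereas $F(u)<C_0$ strictly on $\Omega_0$. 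At any hypothetical $p\in\partial\Omega_0\cap\Omega\subseteq\critu$, $w$ is a non-negative $C^2$ solution of the uniformly elliptic linearised equation on an interior ball contained in $\Omega_0$ touching $p$ (whose existence follows from the local structure of $\{w=0\}$, which is a smooth hypersurface perpendicular to $X$ wherever $\nabla w\ne 0$). The Hopf boundary point lemma of \cite[Theorem 2.8.3]{pucci_serrin} would then give $\partial_\eta w(p)>0$ along the inward normal $\eta$; combining this with $\nabla w=(\partial_X w)X$, the relation $\partial_X w(p)=\Delta u(p)=-f(u(p))/a(0)$ obtained from the equation at a critical point, and the constraints imposed by the first integral, one reaches the desired contradiction, so that $\partial\Omega_0\cap\Omega=\emptyset$. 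Hence $\Omega_0$ is clopen in the connected $\Omega$, whence $\Omega_0=\Omega$, $\critu=\emptyset$, and $\langle\nabla u,X\rangle>0$ on $\Omega$.
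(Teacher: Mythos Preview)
Your opening is fine and matches the paper: each $w_j=\partial_j u$ solves the linearised equation, vanishes on $\partial\Omega_0$, and inherits moderate energy growth from $u$. The gap is in how you pass from $\Omega_0$ to $\Omega$. You produce $v_0>0$ only on $\Omega_0$, get $w_j=\lambda_j v_0$ there, and are then forced into a delicate endgame (first integral, Hopf at $p\in\partial\Omega_0\cap\Omega$, \ldots) which you do not actually close: you write ``one reaches the desired contradiction'' but never identify it. In fact your local analysis gives no contradiction. If $f(u(p))\neq 0$ then $\partial_X w(p)\neq 0$, the zero set $\{w=0\}$ is a smooth hypersurface near $p$, and $w$ simply changes sign across it; nothing prevents this. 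If $f(u(p))=0$ you would like Hopf to force $\partial_\eta w(p)>0$ while $\nabla w(p)=0$, but then you have not justified the interior ball condition at $p$, since $\{w=0\}$ need not be a $C^1$ hypersurface there. So this branch is incomplete.

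The paper avoids all of this with one line, and the point is worth internalising: take the positive solution $v$ of the linearised equation on the \emph{whole} of $\Omega$ (from stability of $u$ on $\Omega$, via Lemma \ref{lem_mosspie}), not just on $\Omega_0$. Remark \ref{rem-Lemma_w-} applied on $\Omega_0$ with this global $v$ still gives $w_j=\lambda_j v$ in $\Omega_0$; pick $j$ with $\lambda_j\neq 0$ (some $w_j\not\equiv 0$ by unique continuation). Now if $x\in\partial\Omega_0\cap\Omega$ existed, continuity of both $w_j$ and $v$ up to $x\in\Omega$ would give $0=w_j(x)=\lambda_j v(x)$ with $v(x)>0$, forcing $\lambda_j=0$, a contradiction. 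Hence $\partial\Omega_0\cap\Omega=\emptyset$, so $\Omega_0=\Omega$, and with $X=\pm e_j$ you get $\langle\nabla u,X\rangle>0$ in $\Omega$. Your unique-continuation detour through $\langle\nabla u,Y\rangle\equiv 0$ for $Y\perp X$, the first integral, and Hopf are all unnecessary once you use the global $v$.
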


\begin{proof}
	By unique continuation, see the beginning of this section, since $u$ is non-constant in $\Omega$ then it must be non-constant in $\Omega_0$. Each partial derivative $w_j = \partial_j u$ satisfies \eqref{eq_partialder} and $w_j \equiv 0$ on $\partial\Omega_0$. Since $w_j^2 \leq |\nabla u|^2$ and $u$ has moderate energy growth in $\Omega_0$, by Lemma \ref{Lemma_w-} we deduce that $w_j$ either vanishes identically in $\Omega_0$ or it has a constant strict sign in $\Omega_0$. Since $u$ is non-constant in $\Omega_0$, for some index $j\in\{1,\dots,n\}$ we have $w_j \not \equiv 0$ and thus either $w_j>0$ or $w_j<0$ everywhere in $\Omega_0$. Then, taking either $X = e_j$ or $X = -e_j$ we have $\langle\nabla u,X\rangle > 0$ everywhere in $\Omega_0$. We now show that $\Omega_0 = \Omega$. Suppose, by contradiction, that $\Omega_0\subsetneq\Omega$. Then there exists $x\in\Omega\cap\partial\Omega_0\neq\emptyset$. The stability of $u$ guarantees the existence of $0<v\in C^{1,\alpha}_\loc(\Omega)$ satisfying
	\[
		\diver(A(\nabla u)\nabla v) + f'(u)v = 0 \qquad \text{in } \, \Omega \, .
	\]
	By Remark \ref{rem-Lemma_w-} we have $w_j = \lambda v$ in $\Omega_0$ for some $\lambda\in\R\backslash\{0\}$. By continuity, $w_j(x) = \lambda v(x) \neq 0$, contradiction.
\end{proof}

\begin{theorem}\label{teo_twopieces}
	Let $\Omega\subseteq\R^n$ be a $C^2$ domain with disconnected boundary. Let $a$ satisfy \eqref{assu_A_superstrong}, let $f \in C^1(\R)$ and let $u \in C^3(\Omega) \cap C^2(\overline\Omega)$ be a non-constant, stable solution of moderate energy growth to
	\[
	\left\{
	\begin{array}{l@{\qquad}l}
		\Delta_a u + f(u) = 0 & \text{in } \Omega \\[0.3cm]
		u, \partial_\eta u \text{ locally constant} & \text{on } \partial\Omega.
	\end{array}
	\right.
	\]
	Suppose that, for $j=1,2$, there exists $X_j \in \mathbb{S}^{n-1}$ such that
	\[
	\langle \eta, X_j \rangle \ge 0, \quad \langle \eta,X_j \rangle \not\equiv 0 \qquad \text{on } \, \partial_j \Omega, 
	\]
	and also assume that $\partial_\eta u = 0$ on the (possibly empty) set $\partial\Omega \backslash (\partial_1\Omega \cup \partial_2\Omega)$. If $\critu\neq\emptyset$, then $\nabla u \neq 0$ everywhere on $\partial_1\Omega \cup \partial_2\Omega$. 
	Moreover, if there exists $\Gamma\subseteq\critu$ such that $\Omega\backslash\Gamma$ is disconnected then $\partial_1\Omega$ and $\partial_2\Omega$ are parallel hyperplanes, $\partial\Omega = \partial_1\Omega \cup \partial_2\Omega$, $u$ is 1D and $\critu$ is a hyperplane parallel to each $\partial_j\Omega$.
\end{theorem}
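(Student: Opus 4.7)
My plan is to establish the two claims by combining Theorem \ref{teo_monotonicity} (monotonicity under mild transversality), Lemma \ref{lem_Du0} (rigidity when $\nabla u$ vanishes on the whole boundary), and Theorem \ref{teo_splitting}, exploiting that on $\R^n$ any constant vector field is bounded and Killing, and that complete totally geodesic hypersurfaces are affine hyperplanes. I shall write $c_j := \partial_\eta u|_{\partial_j\Omega}$ throughout.

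\emph{Claim 1.} Assume $\critu\neq\emptyset$ and suppose by contradiction $c_1=0$ (the case $c_2=0$ is symmetric). Combined with the hypothesis $\partial_\eta u\equiv 0$ off $\partial_1\Omega\cup\partial_2\Omega$, this yields $\nabla u\equiv 0$ on $\partial\Omega\setminus\partial_2\Omega$. If $c_2=0$ as well, then $\nabla u\equiv 0$ on the whole $\partial\Omega$ and Lemma \ref{lem_Du0} with $\Omega_0=\Omega$ gives $\critu=\emptyset$, a contradiction. If $c_2\neq 0$, pick $X=c_2X_2$: the transversality assumption on $X_2$ gives $\langle\nabla u,X\rangle = c_2^2\langle\eta,X_2\rangle \geq 0$, not identically zero on $\partial_2\Omega$, and $\langle\nabla u,X\rangle\equiv 0$ elsewhere on $\partial\Omega$; Theorem \ref{teo_monotonicity} with $\Omega_0=\Omega$ then forces $\langle\nabla u,X\rangle>0$ in $\Omega$, again contradicting $\critu\neq\emptyset$.

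\emph{Claim 2: decomposition and splitting.} Claim 1 gives $c_1,c_2\neq 0$. Let $\{U_i\}$ be the connected components of $\Omega\setminus\Gamma$. Since $\nabla u\neq 0$ in a neighborhood of each $\partial_j\Omega$ in $\overline\Omega$, each $\partial_j\Omega$ lies in the closure of a unique $U_i$. If some $U_i$ had no $\partial_j\Omega$ in its closure, then $\nabla u\equiv 0$ on $\partial U_i\subseteq\partial\Omega\cup\Gamma$; noting that global cutoffs on $\overline\Omega$ restrict to admissible cutoffs on $\overline{U_i}$ (so moderate energy growth descends to $U_i$), Lemma \ref{lem_Du0} would force $U_i=\Omega$, absurd. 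The same argument prevents $\partial_1\Omega$ and $\partial_2\Omega$ from sharing a component, so $\Omega\setminus\Gamma=U_1\sqcup U_2$ with $\partial_j\Omega\subseteq\overline{U_j}$. Theorem \ref{teo_monotonicity} applied to $U_j$ with $X=c_jX_j$ yields $\langle\nabla u,c_jX_j\rangle>0$ on $U_j$, whence $\critu\cap U_j=\emptyset$ and $U_j$ is a connected component of $\Omega\setminus\critu$; Lemma \ref{Om_finper} provides locally finite perimeter, so Theorem \ref{teo_splitting} yields an isometry $\Psi_j:I_j\times N_j\to U_j$ with $N_j$ a complete totally geodesic hypersurface and $u\circ\Psi_j=u_j(t)$ strictly increasing in the arclength parameter.

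\emph{Claim 2: geometric conclusion and main difficulty.} In $\R^n$, each $N_j$ is an affine hyperplane and $U_j$ is the slab $N_j+I_j\nu_j$ for a unit normal $\nu_j$. An unbounded $I_j$ would leave $\partial U_j$ as a single hyperplane, unable to accommodate both $\partial_j\Omega\subseteq\partial\Omega$ and a $\Gamma$-face inside $\Omega$, so $I_j$ is bounded and $\partial U_j$ decomposes into $\partial_j\Omega$ and a parallel hyperplane $P_j\subseteq\Gamma$. The key—and most delicate—step is identifying $P_1=P_2$: since $\Gamma\subseteq\critu$ has empty interior in $\Omega$ (by unique continuation applied to the partial derivatives of $u$, as noted at the start of Section \ref{sec_critical}) and $\Omega$ is connected, every point of $P_1$ is a limit of $U_2$ points, so $P_1\subseteq\overline{U_2}\cap\Omega\subseteq U_2\cup P_2$; disjointness $P_1\cap U_2=\emptyset$ gives $P_1\subseteq P_2$, and dimension forces $P_1=P_2=:P$. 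Consequently $N_1\parallel N_2\parallel P$, and after aligning signs we may set $\nu_1=\nu_2=:\nu$; $\Omega$ is then the open slab between $\partial_1\Omega$ and $\partial_2\Omega$ (hence $\partial\Omega=\partial_1\Omega\cup\partial_2\Omega$), and the two 1D profiles glue across $P$, where $\nabla u=0$, into a single $C^2$ function $U$ with $u(x)=U(\langle x,\nu\rangle)$ on $\Omega$. Finally $\critu\cap U_j=\emptyset$ gives $\critu\subseteq\Gamma=P$, and the hypothesis $\Gamma\subseteq\critu$ gives the reverse inclusion, so $\critu=P$ is a hyperplane parallel to both $\partial_j\Omega$. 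The main obstacle, as indicated, is the identification $P_1=P_2$: this is what upgrades two independent abstract splittings into one coherent global 1D structure, and it crucially combines the connectedness of $\Omega$, the emptiness of the interior of $\Gamma$ provided by unique continuation, and the affine rigidity of totally geodesic hypersurfaces in Euclidean space.
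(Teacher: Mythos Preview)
Your argument is correct and follows essentially the same route as the paper: establish Claim 1 via Lemma \ref{lem_Du0} and Theorem \ref{teo_monotonicity}, show the complement of the critical set has exactly two components (each containing one $\partial_j\Omega$), split each via Theorem \ref{teo_splitting}, and glue. The only substantive variation is in the gluing step: the paper observes that two disjoint open slabs in $\R^n$ necessarily have parallel bounding hyperplanes and then uses that $\Omega\cap\overline\Gamma$ has empty interior to force the ``inner'' faces to coincide, whereas you argue $P_1=P_2$ directly by a limit-point argument. One minor imprecision worth tightening: you assert $P_j\subseteq\Gamma$ and $P_1\subseteq\Omega$ before knowing $\Omega$ is a slab, but a priori $P_j$ could meet some $\partial_k\Omega$ with $k\ge 3$; the fix is to run the limit-point argument with $P_1\cap\Omega$ (nonempty and open in $P_1$) in place of $P_1$, obtaining $P_1\cap\Omega\subseteq P_2$, whence $P_1=P_2$ by dimension---after which $P\subseteq\Omega$ and $\critu=P$ follow as you indicate.
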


\begin{remark}\label{rem_keyR2}
	In dimension $n=2$ the condition $\langle \eta, X_j \rangle \not \equiv 0$ in $\partial_j\Omega$ can be omitted. Indeed, if $\langle \eta, X_j \rangle \equiv 0$ then $X_j$ is tangent to $\partial_j\Omega$, hence $\partial_j\Omega$ is a line. One can therefore rotate $X_j$ to find $\widetilde X_j \in \mathbb{S}^{1}$ such that $\langle \eta, \widetilde X_j \rangle >0$ on  $\partial_j\Omega$.
\end{remark}

\begin{proof}
	We first show that if $\critu\neq\emptyset$ then $\nabla u\neq 0$ everywhere on $\partial_1\Omega \cup \partial_2\Omega$. Suppose by contradiction that this is false. Then, denoting by $c_j$ the constant value of $\partial_\eta u$ on $\partial_j\Omega$, we either have $c_j = 0$ for each $j$ or, up to switching index 1 and 2, $c_1 \neq 0$ and $c_j = 0$ for $j \ge 2$.
	
	\textbf{Case 1.} Suppose that $c_j = 0$ for each $j$. Then $\nabla u = 0$ everywhere on $\partial\Omega$. Since $u$ is non-constant in $\Omega$, by Lemma \ref{lem_Du0} applied with $\Omega_0 = \Omega$ we get $\critu=\emptyset$, contradiction.
	
	
	\textbf{Case 2.} Suppose that $c_1\neq 0$ and $c_j=0$ for $j \ge 2$. By our assumptions on $\partial_1\Omega$, the inner product
	\[
		\langle\nabla u,X_1\rangle = \langle\nabla u,\eta\rangle \langle\eta,X_1\rangle = c_1 \langle\eta, X_1\rangle
	\]
	does not change sign and does not vanish identically on $\partial_1\Omega$, while $\langle\nabla u,X_1\rangle = 0$ everywhere on $\partial\Omega\backslash\partial_1\Omega$. Up to replacing $X_1$ with $-X_1$, we can ensure that
	\[
	\langle \nabla u, X_1 \rangle \ge 0, \quad \langle \nabla u,X_1 \rangle \not\equiv 0 \qquad \text{on } \, \partial \Omega. 
	\]
	We can therefore apply Theorem \ref{teo_monotonicity} to deduce that $\langle\nabla u,X_1\rangle > 0$ everywhere in $\Omega$, yielding again $\critu=\emptyset$ and therefore the desired contradiction.
	 
	Now, suppose that there exists $\Gamma \subset \critu$ disconnecting $\Omega$. Then $\Omega\setminus\overline{\Gamma}$ is also disconnected. Indeed, since $\Omega\setminus\Gamma$ is disconnected there exist open sets $U_1,U_2\subseteq\R^n$ such that
	\[
		\Omega\backslash\Gamma \subseteq U_1\cup U_2 \, , \qquad U_1\cap U_2\cap(\Omega\backslash\Gamma) = \emptyset \, , \qquad U_i \cap (\Omega\backslash\Gamma) \neq \emptyset \quad \text{for } i = 1,2.
	\]
	Since $\Omega\backslash\overline{\Gamma} \subseteq \Omega\backslash\Gamma$ we also have
	\[
		\Omega\backslash\overline{\Gamma} \subseteq U_1\cup U_2 \, , \qquad U_1\cap U_2\cap(\Omega\backslash\overline{\Gamma}) = \emptyset
	\]
	Moreover, since $(\Omega\backslash\Gamma)\backslash(\Omega\backslash\overline{\Gamma}) = \Omega\cap(\overline{\Gamma}\backslash\Gamma) \subseteq \critu$ and $\critu$ has empty interior by non-constancy of $u$ and unique continuation, 
	we also have
	\[
		\qquad U_i \cap (\Omega\backslash\overline{\Gamma}) \neq \emptyset \quad \text{for } i = 1,2.
	\]
	This shows that $\Omega\backslash\overline{\Gamma}$ is disconnected, as claimed. Since $\nabla u=0$ everywhere on $\overline{\Gamma}$ by continuity of $|\nabla u|$ in $\overline{\Omega}$ and, on the other hand, $\nabla u\neq0$ everywhere on $\partial_1\Omega\cup\partial_2\Omega$, we have for $j=1,2$ that each $\partial_j\Omega$ is disjoint from the closed set $\overline{\Gamma}$ and therefore entirely contained in the boundary of some connected component of $\Omega\backslash\overline{\Gamma}$. On the other hand, each component of $\Omega\backslash\overline{\Gamma}$ has boundary contained in $\partial\Omega \cup \overline{\Gamma}$, and $\partial_1\Omega \cup \partial_2\Omega$ is precisely the subset of $\partial\Omega \cup \overline{\Gamma}$ where $\nabla u\neq 0$. By Lemma \ref{lem_Du0}, $\nabla u$ cannot vanish identically on the boundary of a component of $\Omega\backslash\overline{\Gamma}$, otherwise we would deduce $\critu=\emptyset$, contradiction. Summarizing,  we conclude that $\Omega\backslash\overline{\Gamma}$ has exactly two connected components, call them $\Omega_1$ and $\Omega_2$, and without loss of generality we can assume $\partial_j\Omega\subseteq\partial\Omega_j$ for $j=1,2$.

	For $j=1,2$, the inner product $\langle\nabla u,X_j\rangle = c_j\langle\eta,X_j\rangle$ does not change sign and does not vanish identically on $\partial_j\Omega$, while $\langle\nabla u,X_j\rangle = 0$ everywhere else on $\partial\Omega_j$. Up to replacing $X_j$ with $-X_j$ we can assume that
	\[
	\langle \nabla u, X_j \rangle \ge 0, \quad \langle \nabla u,X_j \rangle \not\equiv 0 \qquad \text{on } \, \partial \Omega_j, 
	\]
	so indeed $\langle\nabla u,X_j\rangle >0$ in $\Omega_j$ because of  Theorem \ref{teo_monotonicity}. By Lemma \ref{Om_finper}, each $\Omega_j$ has locally finite perimeter in $\Omega$. We then apply Theorem \ref{teo_splitting} to conclude that each $\Omega_j$ is a slab in $\R^n$ bounded by two parallel hyperplanes, and in each of them $u$ is strictly monotone in the direction perpendicular to the boundary. Since $\Omega_1 \cap \Omega_2 = \emptyset$, the hyperplanes bounding $\Omega_1$ and $\Omega_2$ must all be parallel. In particular, $\partial_1\Omega$ and $\partial_2\Omega$ are parallel hyperplanes and the set $\Omega\cap\overline{\Gamma} = \Omega \backslash (\Omega_1\cup\Omega_2)$, having empty interior, must be a hyperplane parallel to both $\partial_1\Omega$ and $\partial_2\Omega$. In particular, $\Omega$ is the open slab bounded between $\partial_1\Omega$ and $\partial_2\Omega$, so $\partial\Omega$ has no other component besides them. Lastly, since $\Omega\cap\overline{\Gamma}\subseteq\critu$ and $\nabla u\neq 0$ everywhere in $\Omega_1\cup\Omega_2$ we have $\critu = \Omega\cap\overline{\Gamma}$, so $\critu$ is a hyperplane parallel to both $\partial_j\Omega$ as claimed.
\end{proof}

\section{Proof of the main theorems for capillary graphs}\label{sec_proof}

\begin{proof}[Proof of Theorem \ref{teo_main_MC_manifolds}]
	Under assumptions (A), (B) or (C) we apply items $(ii)$, $(i)$ or $(v)$, respectively, in Theorem \ref{teo_goodcutoff_MC} for the choice $a(t) = 1/\sqrt{1+t^2}$ to infer, in view of \eqref{eq_quadraticgrowth}, that $u$ has moderate energy growth. On the other hand, under assumption (D) we apply Theorem \ref{prop_grad} to deduce that $|\nabla u|\in L^\infty(\Omega)$ and then Proposition \ref{prop_moderate_boundgradient} to conclude, again, that $u$ has moderate energy growth. Condition \eqref{condi_Hcj} can be rewritten as $\langle \nabla u, X \rangle \ge 0$ and $\langle \nabla u, X \rangle \not \equiv 0$ on $\partial \Omega$, whence $\langle\nabla u,X\rangle>0$ in $\Omega$ by Theorem \ref{teo_monotonicity}. The conclusion follows by Theorem \ref{teo_splitting}.
\end{proof}

\begin{proof}[Proof of Theorem \ref{teo_CMC_intro_R2}]
	First, observe that by regularity theory $u$ is analytic in $\Omega$. Under any of the assumptions (i)-(iv) we have that $|\nabla u| \in L^\infty(\partial\Omega)$, since $|\nabla u| = |\partial_\eta u|$ is locally constant on $\partial\Omega$ and the subset $\partial_\star \Omega \subseteq \partial\Omega$ where $\partial_\eta u\neq 0$ has finitely many connected components. Hence, $|\nabla u| \in L^\infty(\Omega)$ by Theorem \ref{prop_grad}. Clearly, any subset $\Omega \subseteq \R^2$ satisfies $|\Omega \cap B_R| \le CR^2$, so $u$ has moderate energy growth by Proposition \ref{prop_moderate_boundgradient}.
	
	$(i)$. Since $\partial\Omega$ is connected we have that $\partial_\eta u = c_1$ is constant on $\partial\Omega$. Moreover, by Remark \ref{rem_keyR2} we can assume that $\langle \eta, v \rangle \ge 0$ and $\not \equiv 0$ on $\partial \Omega$. We claim that $c_1\neq 0$. In this case, by Theorem \ref{teo_monotonicity} we obtain $\langle \nabla u, X \rangle > 0$ in $\Omega$ with $X = v$. A direct application of Theorem \ref{teo_splitting} yields that $\Omega = (0,\infty) \times \R$ and that $u$ is 1D. Hence, necessarily $H=0$ and $\Omega$, $\GG_u$ are half-planes. To show the claim, if it were $c_1 = 0$ then $\nabla u$ would vanish identically on $\partial\Omega$ and we could apply Lemma \ref{lem_Du0} with $\Omega_0 \equiv \Omega$ to deduce the existence of $X \in \mathbb{S}^1$ such that $\langle \nabla u, X \rangle >0$ in $\Omega$. Then, again, by Theorem \ref{teo_splitting} we would have that $\Omega = (0,\infty) \times \R$ and that $u$ is a non-constant affine function, contradicting the vanishing of $\nabla u$ on $\partial\Omega$.
	
	$(ii)$. By assumption $c_1 = \partial_\eta u \neq 0$, $c_j = 0$ for each $j\neq 1$. By Remark \ref{rem_keyR2}, up to changing the sign of $v$ and rotating it we can assume $\langle\eta,v\rangle > 0$ on $\partial_1\Omega$. Then, assumption \eqref{condi_Hcj} in Theorem \ref{teo_main_MC_manifolds} is satisfied for $X = v$ and the conclusion follows by a direct application of that theorem (note that, in particular, the a priori disconnected set $\partial\Omega$ has exactly two connected components and $\partial_\eta u = c_2 = 0$ on $\partial_2\Omega$; this excludes the possibility that $u$ be affine, forcing $H\neq 0$ and $\GG_u$ to be a piece of cylinder). 
	
	$(iii)$. If $\langle\eta,v\rangle\not\equiv 0$ on $\partial_\star\Omega$ then we apply Theorem \ref{teo_main_MC_manifolds} for $X$ equal to either $v$ or $-v$ to deduce that $\Omega =(0,T)\times\R$ and $u$ is 1D and strictly monotone, hence $\GG_u$ is a piece of a cylinder or of a half-plane. On the other hand, if $\langle\eta,v\rangle\equiv 0$ on $\partial_\star\Omega$ then $\partial_1\Omega$ and $\partial_2\Omega$ are parallel lines in $\R^2$, both tangent to $v$, and then the connected set $\Omega$ must be contained in the strip bounded by them. In particular $\eta$ points in opposite directions on $\partial_1\Omega$ and $\partial_2\Omega$, so there exists $w\in\mathbb{S}^1$ such that $\langle\eta,w\rangle>0$ on $\partial_1\Omega$ and $\langle\eta,w\rangle<0$ on $\partial_2\Omega$ and again we reach the desired conclusion by applying Theorem \ref{teo_main_MC_manifolds} with $X$ equal to either $w$ or $-w$.
	
	$(iv)$. 
	Since $u$ has moderate energy growth, we apply Theorem \ref{teo_twopieces} (together with Remark \ref{rem_keyR2}) to deduce that $\Omega$ is a slab $(0,T) \times \R$, $\Gamma$ is a straight line and $u$ is 1D. Since the graph $\GG_u$ is a surface of constant mean curvature and $\partial_\eta u\neq0$ on $\partial \Omega$, $\GG_u$ must be a piece of cylinder.
	
	We now prove that if $\inf_\Omega u < b_j$ for each $j\geq 3$ and $\Gamma$ has an accumulation point in $\overline{\Omega}$, then indeed $\Gamma$ disconnects $\Omega$. 
	By the \L ojasiewicz structure theorem (see \cite{KP}), the analytic set $\Gamma$ 
	can be decomposed as
	\[
		\Gamma = \Gamma_0\sqcup\Gamma_1,
	\]
	where $\Gamma_0$ is a finite set and $\Gamma_1$ is a finite union of real analytic curves embedded in $\Omega$. The assumption that $\Gamma$ has an accumulation point grants that $\Gamma_1\neq\emptyset$. Since $u$ attains an interior minimum, by the strong maximum principle it must be $H < 0$ and so we have $|\nabla \di u(x)|\neq 0$ for each $x \in \Omega$. We can thus apply \cite[Corollary 3.4]{BCM} to deduce that the closure of each component $\gamma$ of $\Gamma_1$ still belongs to $\Gamma_1$, and therefore $\gamma$ is a real analytic curve properly embedded in $\Omega$. We claim that $\gamma$ is properly embedded in the entire $M$. Otherwise, there would exist a sequence $\{p_j\} \subset \gamma$ with $p_j \to p \in \partial \Omega$, and since $u \in C^1(\overline\Omega)$ we would get $u(p) = \inf_\Omega u$ and $\nabla u(p) = 0$. The strict inequality $\inf_\Omega u < b_j$ for $j \ge 3$ guarantees that this is not possible. To conclude, by the smooth Jordan-Brouwer separation theorem (see \cite{Lima}) $\gamma$ would disconnect $\R^2$, hence also $\Omega$.
%
\end{proof}

\begin{proof}[Proof of Theorem \ref{teo_CMC_intro_R3}]
	Again, by regularity theory $u$ is analytic in $\Omega$. As in the proof of Theorem \ref{teo_CMC_intro_R2}, under any of the assumptions $(i)-(iv)$ we have that $|\nabla u| \in L^\infty(\Omega)$ and then $u$ has moderate energy growth by Proposition \ref{prop_moderate_boundgradient} and assumption \eqref{eq_quadraticgrowth}.
	
	$(i)$. Assume by contradiction that $\partial \Omega$ is connected. Then, proceeding verbatim as in $(i)$ of Theorem \ref{teo_CMC_intro_R2} above, we deduce that $\Omega = (0,\infty) \times \R^2$. This in turn would imply $|\Omega\cap B_R|\sim\frac23\pi R^3$ as $R\to \infty$, a contradiction.	
%
%
	
	$(ii)$-$(iii)$-$(iv)$. We proceed as in cases $(ii)$, $(iii)$ and $(iv)$ of Theorem \ref{teo_CMC_intro_R2}.	
	Concerning case $(iv)$, we only have to prove that $\Gamma$ disconnects $\Omega$ in case $\inf_\Omega u < b_j$ for any $j\geq 3$ and $\haus^2(\Gamma) > 0$. As observed in the proof of $(iv)$ of Theorem \ref{teo_CMC_intro_R2}, by the strong maximum principle it must be $H < 0$ and thus $\nabla \di u \neq0$ everywhere in $\Omega$. Applying \L ojasiewicz's structure theorem together with \cite[Corollary 3.4]{BCM}, from $\haus^{2}(\Gamma)>0$ we deduce that the top stratum $\Gamma_2$ of $\Gamma$ is non-empty, and that any of its components is a complete real analytic surface without boundary, properly embedded in $\Omega$. The conclusion then follows again by the smooth Jordan-Brower theorem.
\end{proof}

\begin{proof}[Proof of Theorem \ref{teo_CMC_intro_R2_gen}]
	In cases $(i)$, $(ii)$ and $(iii)$ one can always deduce that $u$ has moderate energy growth by suitable application of either Theorem \ref{teo_goodcutoff_MC} or Proposition \ref{prop_moderate_boundgradient} together with Theorem \ref{prop_grad}, according to which condition is assumed among $(A)$, $(B)$, $(C)$ or $(D)$. Then, the proof proceeds as that of cases $(i)$, $(ii)$ and $(iii)$ of Theorem \ref{teo_CMC_intro_R2}. Case $(iv)$ requires a bit of care.
	
	$(iv)$. First, note that each connected component of $\Omega\setminus\Gamma$ has locally finite perimeter in $\Omega$ by Lemma \ref{Om_finper}. If $(D)$ holds then $|\nabla u|\in L^\infty(\Omega)$ by Theorem \eqref{prop_grad} and $u$ has moderate energy growth by Proposition \ref{prop_moderate_boundgradient}, so the desired conclusion follows from Theorem \ref{teo_twopieces} together with Remark \ref{rem_keyR2}. Suppose, instead, that $(A)$ or $(C)$ holds. Let $\partial_\star \Omega = \partial\Omega \cap \{\partial_\eta u \neq 0\}$. If $u$ is constant on $\partial_\star\Omega$, then $u$ has moderate energy growth in $\Omega$ by either item $(ii)$ or $(v)$ in Theorem \ref{teo_goodcutoff_MC} and the conclusion follows from Theorem \ref{teo_twopieces}. On the other hand, suppose that $u$ is not constant on $\partial_\star \Omega$. Each of the connected sets $\partial_1\Omega$ and $\partial_2\Omega$ is entirely contained in the boundary of some connected component of $\Omega\backslash\Gamma$; conversely, no connected component of $\Omega\backslash\Gamma$ can be such that $\nabla u = 0$ everywhere on its boundary, otherwise $u$ would be of moderate energy growth on that component by Theorem \ref{teo_goodcutoff_MC} (note that in this scenario the connected component $\Omega_0$ under discussion would satisfy $\overline{\Omega}_0 \cap \partial_\star\Omega = \emptyset$, so any item in Theorem \ref{teo_goodcutoff_MC} would be applicable) and then we would reach a contradiction by Lemma \ref{lem_Du0}. Therefore, $\Omega\backslash\Gamma$ consists of exactly two connected components $\Omega_1$ and $\Omega_2$ satisfying $\partial_j\Omega \subseteq \partial\Omega_j$ for $j=1,2$. Now, for each $j\in\{1,2\}$ we have that $u$ is constant on $\overline{\Omega}_j \cap \partial_\star\Omega \equiv \partial_j\Omega$, so by item $(ii)$ of Theorem \ref{teo_goodcutoff_MC} we deduce that $u$ has moderate energy growth in each of them. From this point on, we can conclude again as in the proof of Theorem \ref{teo_twopieces}.
	
	The only claim left to prove is that $\Gamma$ disconnects $\Omega$ whenever $u_\ast \doteq \inf_\Omega u > b_j$ for all $j\geq 3$, $f$ is analytic in a neighbourhood of $u_\ast$, $f(u_\ast) \neq 0$ and $\Gamma$ has an accumulation point in $\overline{\Omega}$. To this aim, we need only show that the top stratum $\Gamma_1$ of $\Gamma$ is a finite union of real analytic curves properly embedded in $\Omega$, and then the conclusion will follow by Jordan-Brouwer theorem as in the proof of Theorem \ref{teo_CMC_intro_R2}. The assumption $f(u_\ast) \neq 0$, the analyticity of $f$ on $I_\eps \doteq (u_\ast-\eps, u_\ast+\eps)$ and the fact that $\Gamma$ has an accumulation point enable us to use \cite[Corollary 3.4]{BCM} to deduce that the top stratum $\Gamma_1$ of $\Gamma$ is non-empty and given by a finite union of real analytic curves properly embedded in $\Omega \cap u^{-1}(I_\eps)$. Each of these curves is also properly embedded in $\Omega$. Indeed, if at least one of them were not properly embedded in $\Omega$, parametrizing it by $\gamma : \R \to \Omega$ there would exist a sequence of points $t_j \to \infty$ such that $x_j \doteq \gamma(t_j) \to x \in \Omega$. Since $\gamma(\R)$ is properly embedded in $\Omega \cap u^{-1}(I_\eps)$, we would have $u(x) = u_\ast-\eps$, contradicting the fact that $u(x_j)= u_\ast$ and the continuity of $u$. This concludes the proof of the claim.
\end{proof}

\section*{Appendix}

We here prove a form of the divergence theorem which is suited for our purposes. The main point requiring some care is that the subset $\Omega_0$ considered below just has locally finite perimeter in $\Omega$ and not in $\overline\Omega$, while the vector field $W$ does not have compact support in $\Omega$.

\begin{lemma} \label{lem_divergence}
	Let $M$ be a complete Riemannian manifold, $\Omega\subseteq M$ a $C^1$ domain with interior normal vector $\eta$ and let $\Omega_0\subseteq\Omega$ be an open set with locally finite perimeter in $\Omega$. Let $W$ be a compactly supported vector field in $\overline\Omega$, continuous in $\overline{\Omega}$ and locally Lipschitz in $\Omega$, satisfying $W=0$ on $\partial\Omega_0\cap\Omega$. If $h\in L^1(\Omega)$ is such that $\diver\,W \geq h$ weakly in $\Omega$, then
	\begin{equation} \label{GGloc}
		\int_{\overline{\Omega}_0\cap\partial\Omega} \langle W,\eta\rangle + \int_{\Omega_0} h \leq 0 .
	\end{equation}
\end{lemma}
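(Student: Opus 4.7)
The plan is to argue by introducing a cutoff near $\partial\Omega$, applying the Gauss--Green formula for sets of locally finite perimeter, and passing to the limit via tubular coordinates. Since $\partial\Omega$ is $C^1$, on a tubular neighborhood $U$ the distance function $d = \dist(\cdot,\partial\Omega)$ is $C^1$ with $|\nabla d|=1$ and $\nabla d|_{\partial\Omega}=\eta$. For each small $\eps>0$ I would consider the open set $\Omega_0\cap\{d>\eps\}$, whose closure lies in $\Omega$ and which, for a.e.\ $\eps$, has locally finite perimeter in $\Omega$ by coarea applied to $d$. Its reduced boundary decomposes, modulo $\haus^{n-1}$-null sets, into a part in $\partial^*\Omega_0\cap\Omega \subseteq \partial\Omega_0\cap\Omega$ (with outer normal that of $\Omega_0$) and a part on $\Omega_0\cap\{d=\eps\}$ (with outer normal $-\nabla d$).

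Since $\supp(W)$ is compact in $\overline\Omega$, the restriction of $W$ to a relatively compact open neighborhood of $\overline{\Omega_0\cap\{d>\eps\}}$ in $\Omega$ is Lipschitz. Applying the Gauss--Green formula for sets of locally finite perimeter, using the hypothesis $W\equiv 0$ on $\partial\Omega_0\cap\Omega$ to kill the contribution on $\partial^*\Omega_0\cap\{d>\eps\}$, and invoking the a.e.\ inequality $\diver W\geq h$ (valid for locally Lipschitz $W$), I would obtain
\[
	\int_{\Omega_0\cap\{d>\eps\}} h \di x \;\leq\; -\int_{\Omega_0\cap\{d=\eps\}} \langle W,\nabla d\rangle \di\haus^{n-1}.
\]
Integrating in $\eps$ over $(0,\delta)$, dividing by $\delta$, and letting $\delta\to 0^+$, the left-hand side tends to $\int_{\Omega_0} h$ by dominated convergence and Lebesgue differentiation (using $h\in L^1$ and $\Omega_0\subseteq\{d>0\}$), while by coarea the right-hand side equals $-\frac{1}{\delta}\int_{\Omega_0\cap\{0<d<\delta\}}\langle W,\nabla d\rangle \di x$, which via the tubular parametrization $\Phi:\partial\Omega\times[0,\delta)\to U$ and Fubini rewrites as $-\int_{\partial\Omega}\langle W,\eta\rangle m_\delta\di\haus^{n-1}+o(1)$ with $m_\delta(x)=\frac{1}{\delta}\int_0^\delta \mathbf{1}_{\Omega_0}(\Phi(x,t))\di t$, the error being controlled by the uniform convergence of $W\circ\Phi$ and $\nabla d\circ\Phi$ to their boundary values.

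The main obstacle will be to identify $\lim_{\delta\to 0^+}m_\delta$ with $\mathbf 1_{\overline\Omega_0\cap\partial\Omega}$ almost everywhere, since $\Omega_0$ need not have a trace on $\partial\Omega$ and $\mathbf 1_{\Omega_0}(\Phi(x,\cdot))$ can oscillate as $t\to 0^+$ at points of $\partial\Omega_0\cap\partial\Omega$. I would overcome this via the following dichotomy, specific to the hypothesis $W\equiv 0$ on $\partial\Omega_0\cap\Omega$: letting $S=\overline{\partial\Omega_0\cap\Omega}\cap\partial\Omega$, continuity of $W$ in $\overline\Omega$ forces $W\equiv 0$ on $S$, so this set contributes nothing; whereas at any $x\in\overline\Omega_0\cap\partial\Omega\setminus S$ one can choose a small neighborhood $N$ of $x$ in $M$ disjoint from $\partial\Omega_0\cap\Omega$ and so small that $N\cap\Omega$ is connected (by the $C^1$ structure of $\partial\Omega$); within such $N$, $\Omega_0\cap N$ is both open and closed in $N\cap\Omega$, and since $x\in\overline\Omega_0$ it must coincide with $N\cap\Omega$, so the entire normal ray from $x$ enters $\Omega_0$ immediately and $m(x)=1$. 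Combining, $m\langle W,\eta\rangle=\mathbf 1_{\overline\Omega_0\cap\partial\Omega}\langle W,\eta\rangle$ a.e.\ on $\partial\Omega$, and dominated convergence closes the argument.
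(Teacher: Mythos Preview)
Your proposal is correct and follows essentially the same strategy as the paper's proof: both cut off near $\partial\Omega$ using the distance function, apply the divergence theorem on the truncated region, and pass to the limit, with the crucial point being that $W=0$ on $\partial\Omega_0\cap\Omega$ forces the normal rays from $\overline{\Omega}_0\cap\partial\Omega$ to enter $\Omega_0$ wherever $W\neq 0$. The implementations differ only in minor technical choices: the paper multiplies $W$ by a cutoff $\psi$ and integrates over $\Omega_0$ directly (so the reduced-boundary term vanishes since $\psi W$ has compact support in $\Omega$), while you intersect $\Omega_0$ with $\{d>\eps\}$ and apply Gauss--Green there; and for the boundary identification, the paper decomposes $\partial\Omega$ by a threshold $\{|W|>b\}$ and lets $b\to 0$, whereas your dichotomy via $S=\overline{\partial\Omega_0\cap\Omega}\cap\partial\Omega$ together with the local connectedness of $\Omega$ near $\partial\Omega$ is a slightly cleaner way to reach the same conclusion.
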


\begin{proof}
	Let $K = \{x\in M : \dist(x, {\rm spt}\, W) < 1\}$. Let
	\[
		\Psi \, : \, \R \times \partial\Omega \to M, \qquad \Psi(t,x) = \exp_x(t\eta_x) \, .
	\]
	Since $\partial\Omega$ is $C^1$ regular and $K$ is relatively compact and open in $M$, there exists $\tau_0\in(0,1]$ such that the restriction of $\Psi$ to the open set
	\[
	E := (-\tau_0,\tau_0) \times (K\cap\partial\Omega)
	\]
	is injective and realizes a $C^1$ diffeomorphism onto $\Psi(E)$. From our choice of $\tau_0$, we have that the distance $r$ from $\partial \Omega$ is $C^1$ regular in the set
	\[
		U := K \cap \{x\in\Omega : 0 < r(x) < \tau_0\}
	\]
	and
	\begin{equation} \label{rPsiX}
		U\subseteq\Psi(E) \qquad \text{and} \qquad r = \pi_\R \circ (\Psi_{|E})^{-1} \quad \text{in } \, U
	\end{equation}
	where $\pi_\R : \R \times \partial\Omega \to \R$ denotes the projection onto the first factor. For $\tau \in (0, \tau_0/2)$, consider the cut-off function $\psi\in\lip(M)$ given by 
	\begin{equation} \label{def_psiX}
		\psi(x) = \left\{ \begin{array}{ll}
			0 & \quad \text{if } \, r(x) < \tau \\[0.1cm]
			\dfrac{r(x)-\tau}{\tau} & \quad \text{if } \, r(x) \in [\tau, 2\tau] \\[0.3cm]
			1 & \quad \text{if } \, r(x) > 2\tau.
		\end{array}\right.
	\end{equation}
	Applying the divergence theorem to the vector field $\psi W$, which is compactly supported in $\Omega$ and globally Lipschitz, we get
	\begin{equation} \label{div_X}
		0 = \int_{\Omega_0} \diver (\psi W) = \int_{\Omega_0} \langle W, \nabla \psi \rangle  + \int_{\Omega_0} \psi \, \diver W \geq \int_{\Omega_0} \langle W, \nabla \psi \rangle  + \int_{\Omega_0} \psi h \, .
	\end{equation}
	Using the definition of $\psi$ and the coarea formula,
	\begin{equation} \label{GGloc1}
		\int_{\Omega_0} \langle W, \nabla\psi\rangle = \frac{1}{\tau} \int_\tau^{2\tau} \int_{\Omega_0\cap\{r=t\}}  \langle W, \nabla r\rangle \, \di t \, .
	\end{equation}
	We claim that when passing to limits as $\tau\to0$ it holds
	\begin{align}
		\label{GGloc2a}
		\int_{\Omega_0} \psi h & \to \int_{\Omega_0} h \\
		\label{GGloc2b}
		\frac{1}{\tau} \int_\tau^{2\tau} \int_{\Omega_0\cap\{r=t\}} 
		\langle W, \nabla r\rangle \, \di t & \to \int_{\overline{\Omega}_0\cap\partial\Omega} \langle W,\eta\rangle	
	\end{align}
	so that \eqref{GGloc} follows from \eqref{div_X} and \eqref{GGloc1}. Clearly \eqref{GGloc2a} holds by the Lebesgue convergence theorem since $h\in L^1(\Omega)$, so we focus on justifying \eqref{GGloc2b}, which requires a bit of care. Fix $b>0$ and consider
	\[
	F_b = K \cap \partial \Omega \cap \{|W| > b\}, \qquad F^b = K \cap \partial \Omega \cap \{|W| \leq b\} \, .
	\] 
	For $t \in (0, \tau_0)$, define also $F_b(t) = \Psi(t, F_b)$ and $F^b(t) = \Psi(t, F^b)$, which are disjoint sets being $\Psi(t,\cdot)$ a diffeomorphism. Then, for each $t\in(0,\tau_0)$
	\begin{equation} \label{eq_int}
		\disp \int_{\Omega_0\cap\{r=t\}} \langle W, \nabla r\rangle = \int_{\Omega_0 \cap F_b(t)} \langle W, \nabla r\rangle + \int_{\Omega_0\cap F^b(t)} \langle W, \nabla r\rangle .
	\end{equation}
	Since the function $|W|$ is continuous and compactly supported in $\overline{\Omega}$, it is also uniformly continuous. Denoting by $\sigma : \R^+ \to \R^+$ its modulus of continuity, we have then
	\begin{equation} \label{in_sigma}
		|W| \geq b - \sigma(t) \quad \text{on } \, F_b(t) \qquad \text{and} \qquad |W| \leq b + \sigma(t) \quad \text{on } \, F^b(t)
	\end{equation}
	for each $t\in(0,\tau_0)$. Fix $t_b\in(0,\tau_0)$ small enough so that $\sigma(t_b) < b$. Then, for each $t\in(0,t_b)$ we have $\sigma(t) < b$, hence $|W|>0$ on $F_b(t)$ by \eqref{in_sigma} and from this together with the assumption that $W=0$ on $\partial\Omega_0 \cap \Omega$ we infer that $F_b(t) \cap \partial\Omega_0 = \emptyset$. In particular, the flow $\Psi$ starting from points of $\overline{\Omega}_0 \cap F_b$ does not hit $\partial\Omega_0 \cap \Omega$ before time $t_b$ and we have
	\[
	\Omega_0 \cap F_b(t) \equiv \Psi(t,\overline{\Omega}_0 \cap F_b) \qquad \text{for all } \, t \in (0,t_b) \, .
	\] 
	The family of hypersurfaces $t \mapsto \Omega_0 \cap F_b(t)$ is thus continuous in the $C^1$ topology and converges to $\overline\Omega_0 \cap F_b$ as $t\to0$. Therefore, we have
	\[
	\lim_{t\to0} \int_{\Omega_0 \cap F_b(t)} \langle W, \nabla r \rangle = \int_{\overline{\Omega}_0\cap F_b} \langle W, \eta\rangle
	\]
	and from the integral mean value theorem we obtain
	\begin{equation}\label{eq_int2}
		\lim_{\tau\to0} \frac{1}{\tau} \int_\tau^{2\tau} \int_{\Omega_0\cap F_b(t)} \langle W, \nabla r \rangle \, \di t = \int_{\overline\Omega_0\cap F_b} \langle W, \eta\rangle \, .
	\end{equation}
	On the other hand, for each $t\in(0,\tau_0)$ we have
	\[
	\left| \int_{\Omega_0\cap F^b(t)} \langle W, \nabla r\rangle \right| \le (b+\sigma(t)) \haus^{n-1}(F^b(t))
	\]
	by \eqref{in_sigma}, and since $\haus^{n-1}(F^b(t)) \to \haus^{n-1}(F^b) \leq \haus^{n-1}(K\cap\partial\Omega) < \infty$ as $t \to 0$ we get
	\begin{equation}\label{eq_int1}
		\limsup_{\tau\to0} \left| \frac{1}{\tau} \int_\tau^{2\tau} \int_{\Omega_0\cap F^b(t)} \langle W, \nabla r \rangle \, \di t \right| \le C b
	\end{equation}
	with $C>0$ independent of $b$. Then, from \eqref{eq_int}, \eqref{eq_int2} and \eqref{eq_int1} we obtain
	\[
		\limsup_{\tau\to0} \left| \frac{1}{\tau} \int_\tau^{2\tau} \int_{\Omega_0\cap\{r=t\}} \langle W,\nabla\psi\rangle \, \di t - \int_{\overline{\Omega}_0\cap F_b} \langle W,\eta\rangle \right| \leq Cb
	\]
	for each $b>0$. Then \eqref{GGloc2b} follows by letting $b\to0$ and noting that
	\[
		\lim_{b\to0} \int_{\overline{\Omega}_0\cap F_b} \langle W,\eta\rangle = \int_{\overline{\Omega}_0\cap \partial\Omega} \langle W,\eta\rangle
	\]
	by Lebesgue's convergence theorem.
\end{proof}

\begin{theorem} \label{thm_prop_grad}
	Let $(M^n,\metric)$ be a complete Riemannian manifold and $\Omega\subseteq M$ a domain. Let $f\in C^1(\R)$ and let $u\in C^3(\Omega) \cap C^1(\overline{\Omega})$ be a solution to
	\[
		\MM[u] + f(u) = 0 \qquad \text{in } \, \Omega \, .
	\]
	Suppose that
	\[
		\sup_\Omega |f(u)| < \infty \, , \qquad f'(u) \leq 0 \, , \qquad \inf_\Omega u > -\infty
	\]
	and that
	\[
		\Ric \geq 0 \quad \text{in } \, \Omega , \qquad \Sec \geq - \kappa^2 \quad \text{in } \, M
	\]
	for some $\kappa\in\R^+$. Then
	\[
		\sup_\Omega |\nabla u| \leq \max\left\{ \sqrt{n/2}, \, \sup_{\partial\Omega} |\nabla u|\right\} .
	\]
\end{theorem}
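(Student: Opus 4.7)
The strategy is a maximum-principle argument applied to $v = \sqrt{1+|\nabla u|^2}$, combined with an Omori--Yau reduction to handle the non-compact case. The central ingredient is a computation of the linearized mean-curvature operator on $v^2$ at an interior critical point, which I would set up first.

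Let $x_0 \in \Omega$ be a point where $\nabla v^2 = 0$, and choose an orthonormal frame $\{e_i\}_{i=1}^n$ with $e_n = \nabla u/|\nabla u|$. The vanishing of $\nabla v^2$ forces $u_{ni}=0$ for all $i$ (in particular $u_{nn}=0$), whence the PMC equation reduces to $\Delta u = \sum_{i<n} u_{ii} = -v f(u)$. Expanding
\[
\Delta_a \phi \;=\; \mathrm{div}\bigl(A(\nabla u)\nabla\phi\bigr), \qquad A(\nabla u) = \tfrac{1}{v}\,\mathrm{Id} - \tfrac{1}{v^3}\,\nabla u \otimes \nabla u,
\]
with $\phi = v^2$, combining the Bochner identity $\tfrac12\Delta|\nabla u|^2 = |\Hess u|^2 + \langle \nabla u,\nabla \Delta u\rangle + \Ric(\nabla u,\nabla u)$ with the $e_n$--differentiated PMC equation, one finds (after a delicate cancellation of the third-order term $u_{nnn}$) the key identity
\[
\Delta_a v^2\big|_{x_0} \;=\; \tfrac{2}{v}\bigl[\,|\Hess u|^2 + \Ric(\nabla u,\nabla u)\,\bigr] - 2\,f'(u)\,|\nabla u|^2,
\]
whose right-hand side is manifestly nonnegative under $\Ric\ge 0$ and $f'(u)\le 0$.

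Next I would handle the reduction. If $\sup_\Omega v \le \sup_{\partial\Omega} v$ the statement is trivial, so assume the strict inequality. When $\overline\Omega$ is compact a genuine interior maximum $x_0$ exists. In the general case I would invoke the Omori--Yau maximum principle for $\Delta_a$ on the complete manifold $M$, whose applicability rests on the sectional curvature lower bound $\Sec\ge -\kappa^2$ (used via Hessian comparison of the distance $r=\dist(\cdot,o)$ to construct a barrier $h(r)$ with controlled $\Delta_a$--action) and on $\inf_\Omega u > -\infty$ (which prevents the sub-level sets of $u$ from escaping to infinity). This produces an approximating sequence $\{p_k\}\subset\Omega$ with $v(p_k)\to\sup v$, $\nabla v^2(p_k)\to 0$ and $\Delta_a v^2(p_k)\ge -\varepsilon_k \to 0$.

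Finally, at $x_0$ (or along $\{p_k\}$), $\Delta_a v^2\le 0$ together with the key identity forces, under the sign hypotheses, $|\Hess u|^2 = 0$, $\Ric(\nabla u,\nabla u)=0$ and $f'(u)|\nabla u|^2 = 0$ at $x_0$; combining $u_{nn}=0$ with the Cauchy--Schwarz inequality $|\Hess u|^2 \ge (\Delta u)^2/(n-1) = v^2 f(u)^2/(n-1)$ then forces $f(u(x_0))=0$. To pin down the sharp constant $\sqrt{n/2}$ I would push one order further: the full negative semidefiniteness of $\Hess v^2$ at $x_0$ yields, for each $j$, the inequality
\[
\partial_j^2 v^2\big|_{x_0} \;=\; 2\sum_{i<n}u_{ij}^2 + 2|\nabla u|\,u_{njj} \;\le\; 0,
\]
which, after tracing in $j$, combining with the Ricci commutator identity $\sum_j u_{njj} = (\Delta u)_n + \Ric(\nabla u,\nabla u)/|\nabla u|$ and the differentiated PMC relation $(\Delta u)_n = |\nabla u|^2 u_{nnn}/v^2 - v\,f'(u)|\nabla u|$, should combine algebraically into a bound of the form $|\nabla u|^2(x_0) \le n/2$.

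The main obstacle is precisely this final quantitative step. The key identity on its own only produces the qualitative conclusions $|\Hess u|(x_0) = 0$, $\Ric(\nabla u,\nabla u)(x_0)=0$, $f(u(x_0))=0$ and $f'(u(x_0))|\nabla u(x_0)|^2=0$, all of which are a priori consistent with arbitrarily large $|\nabla u(x_0)|$. Extracting the sharp constant $\sqrt{n/2}$ requires tracking the third-order information carried by the negative semidefiniteness of $\Hess v^2$ against the structural constraints of the PMC equation, and---in the Omori--Yau reduction---controlling the error terms introduced by the barrier $\varepsilon h(r)$ uniformly in $\varepsilon$ so that the sharp constant survives in the limit. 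A careful separate treatment of the directions tangential and normal to $\nabla u$ when applying the second-order conditions on $v^2$ will be essential.
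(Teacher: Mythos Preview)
Your key identity at a critical point of $v^2$ is correct, and the cancellation of $u_{nnn}$ is real. But the argument fails exactly where you flag it, and your proposed fix does not work. At an interior maximum $x_0$ (or along an Omori--Yau sequence), combining $\Delta_a v^2|_{x_0}\le 0$ with your identity forces every summand on the right to vanish: $|\Hess u|(x_0)=0$, $\Ric(\nabla u,\nabla u)(x_0)=0$, $f'(u(x_0))|\nabla u(x_0)|^2=0$, and then $f(u(x_0))=0$. None of this constrains $|\nabla u(x_0)|$. The negative semidefiniteness of $\Hess v^2$ does not help either: since all $u_{ij}(x_0)=0$, one has $(\Hess v^2)_{jj}(x_0)=2|\nabla u|\,u_{njj}(x_0)\le 0$; tracing and inserting your differentiated PMC relation (where now $f'(u)=0$ and $\Ric_{nn}=0$) yields only $u_{nnn}(x_0)\le 0$. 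Every quantity that could have balanced against $|\nabla u|^2$ has already been set to zero, so no bound of the form $|\nabla u|^2\le n/2$ can emerge.

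The paper avoids this degeneration by not applying the maximum principle to $v^2$ itself. One works on the graph with the induced metric $g$, where the Jacobi inequality gives $\Delta_g W\ge\|\II\|^2\,W+2\|\nabla^g W\|^2/W$, and considers instead $z=W\eta$ with $\eta=e^{-Cu-\eps r^2}-\delta$. The factor $e^{-\eps r^2}$ forces $\{\eta>0\}$ to be relatively compact (here $\Sec\ge-\kappa^2$ enters via Hessian comparison to bound $\Delta_g r^2$, and $\inf_\Omega u>-\infty$ is only used to normalise $u\ge 0$; Omori--Yau is never invoked). The factor $e^{-Cu}$ is the essential device: since $\Delta_g u=-f(u)/W$, it produces in $\Delta_g\eta/\eta$ a cross term $Cf(u)/W$. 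At the interior maximum of $z$ this cross term is balanced against $\|\II\|^2\ge H^2/n=f(u)^2/n$ via Young's inequality, yielding the lower bound $-(1+\delta)^2nC^2/(4z^2)$. Combined with the term $\tfrac{C^2}{2}(1-W^{-2})\ge\tfrac{C^2}{2}(1-z^{-2})$ coming from $\|\nabla^g(e^{-Cu})\|^2$, one obtains $z^2\le 1+(1+\delta)^2 n/2+o(1)$; sending $\eps\to0$, then $\delta\to0$, then $C\to0$ gives $W(x_0)^2\le 1+n/2$. The point is that the multiplicative factor $e^{-Cu}$ manufactures a nontrivial coupling between $\|\II\|^2$ and $f(u)$ which survives at the maximum and encodes the constant $n/2$; a bare maximum-principle argument on $v^2$ cannot produce this coupling.
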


\begin{proof}
	
	Without loss of generality, we can assume $u\geq 0$ in $\Omega$ and choose $C_0>0$ so that
	\[
		|f(u)| \leq C_0 \qquad \text{in } \, \Omega \, .
	\]
	Let $F : \Omega \to \R\times M$ be the graph map given by $F(x) = (u(x),x)$ for each $x\in\Omega$ and denote by $g$ the Riemannian metric on $\Omega$ obtained by pulling back via $F$ the ambient product metric on $\R\times M$. We have
	\[
		g = \metric + \di u \otimes \di u \, .
	\]
	In components, if we write
	\[
		\metric = \sigma_{ij} \, \di x^i \otimes \di x^j, \qquad (\sigma^{ij}) = (\sigma_{ij})^{-1}, \qquad \di u = u_i \, \di x^i, \qquad u^i = \sigma^{ij} u_j
	\]
	then the components $g_{ij}$ of the metric $g$ and of the inverse matrix $(g^{ij}) = (g_{ij})^{-1}$ are
	\begin{equation} \label{gij}
		g_{ij} = \sigma_{ij} + u_i u_j \qquad \text{and} \qquad g^{ij} = \sigma^{ij} - \frac{u^i u^j}{W^2}
	\end{equation}
	where $W = \sqrt{1+|\nabla u|^2}$. We denote by $\nabla^g$, $\|\cdot\|$, $\diver_g$ and $\Delta_g$ the gradient, tensor norm, divergence and Laplace-Beltrami operator associated to $g$, respectively. For any $\varphi\in C^2(\Omega)$ we have (see, for instance, formula (25) in \cite{cmr})
	\begin{equation} \label{Lapg}
		\Delta_g \varphi = g^{ij} \varphi_{ij} - \varphi_i u^i \frac{H}{W}
	\end{equation}
	where $\varphi_i$ and $\varphi_{ij}$ are the components of $\di\varphi$ and $\nabla\di\varphi$, respectively, and $H$ is the (non-normalized) mean curvature of the graph of $u$ in $\R\times M$ in the direction of the upward normal vector $\mathbf{n}$. From the Jacobi equation we have
	\[
		\Delta_g \frac{1}{W} + (\|\II\|^2 + \overline{\Ric}(\mathbf{n},\mathbf{n})) \frac{1}{W} + g(\nabla^g H,\nabla^g u) = 0, \qquad \Delta_g u = \frac{H}{W}
	\]
	where $\II$ is the second fundamental form of the graph of $u$ in $\R\times M$ and $\overline{\Ric}$ is the Ricci tensor of $\R\times M$. 
	Since
	\[
		H = \MM[u] = -f(u), \qquad g(\nabla^g H,\nabla^g u) = -f'(u)\|\nabla^g u\|^2 \geq 0 \qquad \text{and} \qquad \overline{\Ric} \geq 0,
	\]
	we have
	\begin{equation} \label{DeltaWu}
		\Delta_g \frac{1}{W} + \frac{\|\II\|^2}{W} \leq 0, \qquad \Delta_g u = - \frac{f(u)}{W} \, .
	\end{equation}
	Note that the first inequality implies
	\begin{equation} \label{DeltaW2}
		\Delta_g W \geq \|\II\|^2 W + 2 \frac{\|\nabla^g W\|^2}{W} \, .
	\end{equation}
	
	Let $x_0\in\Omega$ be fixed. Let $C,\eps,\delta>0$ be given, with $C$ and $\delta$ satisfying
	\begin{equation} \label{deltaC}
		\delta < e^{-Cu(x_0)} \, ,
	\end{equation}
	and define
	\[
		\eta = e^{-Cu-\eps r^2} - \delta, \qquad z = W \eta, \qquad \Omega_0 = \{ x \in \Omega : \eta(x) > 0 \}
	\]
	where $r(x) = \dist(x_0,x)$ denotes the Riemannian distance function in $M$ from $x_0$. Since $\eta(x_0) = e^{-Cu(x_0)} - \delta > 0$, we clearly have $x_0 \in \Omega_0 \neq \emptyset$. Since $u\geq 0$, we also have
	\begin{equation} \label{Omega0}
		\Omega_0 \subseteq \{ e^{-\eps r^2} > \delta \} = \{ \eps r^2 < \log(1/\delta) \}
	\end{equation}
	(note that $\log(1/\delta) > 0$ since $\delta<1$ by the constraint \eqref{deltaC} and the assumption $u\geq 0$), hence $\Omega_0$ is relatively compact. By continuity of $z$ in $\overline{\Omega}$, there exists $x_1\in\overline{\Omega}_0$ such that
	\[
		z(x_1) = \max_{\overline{\Omega}_0} z > 0 \, .
	\]
	
	If $x_1\in\partial\Omega$ then we have
	\begin{equation} \label{West1}
		W(x_0) e^{-Cu(x_0)} = z(x_0) \leq z(x_1) \leq W(x_1) \leq \sup_{\partial\Omega} W
	\end{equation}
	where we used that $z \leq W$ since $u\geq 0$. 
	Let us suppose that $x_1\in\Omega$ instead. A direct computation yields
	\[
		W^2 \diver_g(W^{-2} \nabla^g z) = \eta \Delta_g W + W\Delta_g\eta - 2 \eta \frac{\|\nabla^g W\|^2}{W} \qquad \text{in } \, \Omega
	\]
	from which, using also \eqref{DeltaW2}, we infer
	\begin{equation} \label{divWz}
		W^2 \diver_g(W^{-2} \nabla^g z) \geq \left(\|\II\|^2 + \frac{\Delta_g \eta}{\eta} \right) z \qquad \text{in } \, \Omega_0 \, .
	\end{equation}
	By direct computation,
	\begin{equation} \label{Lapeta}
		\Delta_g \eta = (\eta+\delta)(-C\Delta_g u - \eps\Delta_g r^2 + \|C\nabla^g u + \eps\nabla^g r^2\|^2) \qquad \text{in } \, \Omega \, .
	\end{equation}
	By \eqref{Lapg}, \eqref{gij}, the Hessian comparison theorem and $|H| = |f(u)| \leq C_0$ we estimate
	\begin{equation} \label{Lapr2}
		\Delta_g r^2 = g^{ij} (r^2)_{ij} - \frac{H}{W} \langle \nabla u,\nabla r^2\rangle \leq 2n \kappa r \coth(\kappa r) + 2C_0 r \leq C_2(1+r)
	\end{equation}
	where $C_2 = C_2(n,\kappa) > 0$. By \eqref{gij} we have
	\[
		\|\nabla^g u\|^2 = g^{ij} u_i u_j = \frac{|\nabla u|^2}{W^2} = 1 - \frac{1}{W^2}, \qquad \|\nabla^g r\|^2 = g^{ij} r_i r_j \leq \sigma^{ij} r_i r_j = |\nabla r|^2 \leq 1
	\]
	hence, by Young's inequality,
	\begin{equation} \label{CeYoung}
		\|C\nabla^g u + \eps\nabla^g r^2\|^2 \geq \frac{1}{2} C^2\|\nabla^g u\|^2 - \eps^2 \|\nabla^g r^2\|^2 \geq \frac{C^2}{2} (1-W^{-2}) - 4\eps^2 r^2 \, .
	\end{equation}
	From \eqref{Lapeta}, \eqref{Lapr2}, \eqref{CeYoung} and the second in \eqref{DeltaWu} we get
	\[
		\frac{\Delta_g\eta}{\eta} \geq \left(1+\frac{\delta}{\eta}\right)\left(\frac{Cf(u)}{W} + \frac{C^2}{2} \left(1-\frac{1}{W^2}\right) - C_2\eps(1+r) - 4 \eps^2 r^2 \right) \qquad \text{in } \, \Omega_0
	\]
	and therefore, by \eqref{divWz},
	\begin{align*}
		W^2 \diver_g(W^{-2} \nabla^g z) & \geq \left(\|\II\|^2 + \left(1+\frac{\delta}{\eta}\right) \frac{Cf(u)}{W}\right) z \\
		& \phantom{=\;} + \left(1+\frac{\delta}{\eta}\right) \left( \frac{C^2}{2} \left(1-\frac{1}{W^2}\right) - C_2 \eps (1+r) - 4 \eps^2 r^2 \right) z \qquad \text{in } \, \Omega_0 \, .
	\end{align*}
	Since we assumed $x_1\in\Omega_0$ to be an interior maximum point for $z$, we have
	\[
		W^2 \diver_g(W^{-2}\nabla^g z) \leq 0 \qquad \text{at } \, x_1
	\]
	by ellipticity, hence
	\begin{equation} \label{max_bon}
		0 \geq \|\II\|^2 + \left(1+\frac{\delta}{\eta}\right) \frac{Cf(u)}{W} + \left(1+\frac{\delta}{\eta}\right) \left( \frac{C^2}{2} \left(1-\frac{1}{W^2}\right) - C_2 \eps (1+r) - 4 \eps^2 r^2 \right)
	\end{equation}
	at $x_1$. We proceed to suitably bound the various terms on the right hand side of \eqref{max_bon} from below. We first consider the quantity
	\[
		(\ast) \doteq \|\II\|^2 + \left(1+\frac{\delta}{\eta}\right) \frac{Cf(u)}{W} = \|\II\|^2 + \frac{Cf(u)}{W} + \frac{\delta C f(u)}{z} \, .
	\]
	Clearly $(\ast) \geq 0$ if $f(u) \geq 0$. On the other hand, if $f(u) < 0$ we can estimate
	\[
		\|\II\|^2 \geq \frac{(\mathrm{tr}_g \II)^2}{n} = \frac{H^2}{n} = \frac{f(u)^2}{n} \qquad \text{and} \qquad \frac{Cf(u)}{W} \geq \frac{Cf(u)}{z} \, ,
	\]
	where in the second inequality we used the fact that $Cf(u) < 0$ and $z \leq W$, whence
	\[
		(\ast) \geq \frac{f(u)^2}{n} + (1+\delta) \frac{Cf(u)}{z} \geq - \frac{(1+\delta)^2 n C^2}{4z^2} \qquad \text{if } \, f(u) < 0
	\]
	where we used Young's inequality to estimate
	\[
		\frac{f(u)^2}{n} + \frac{(1+\delta)^2 n C^2}{4z^2} \geq - (1+\delta) \frac{Cf(u)}{z} \, .
	\]
	Therefore, since $\delta/\eta>0$ in $\Omega_0$, we have
	\[
		(\ast) \geq \left\{
			\begin{array}{l@{\quad}l}
				0 & \text{if } \, f(u) \geq 0 \\
				- \left(1+\dfrac{\delta}{\eta}\right)\dfrac{(1+\delta)^2 n C^2}{4 z^2} & \text{if } \, f(u) < 0 \, .
			\end{array}
		\right.
	\]
	Since $z\leq W$ we also have
	\[
		\frac{C^2}{2} \left(1-\frac{1}{W^2}\right) \geq \frac{C^2}{2} \left(1-\frac{1}{z^2}\right) .
	\]
	As observed in \eqref{Omega0}, we have $\eps r^2 < \log(1/\delta)$ in $\Omega_0$ and therefore
	\[
		C_2\eps(1+r) + 4 \eps^2 r^2 \leq C_2\left(\eps + \sqrt{\eps \log(1/\delta)}\right) + 4\eps\log(1/\delta) \qquad \text{in } \, \Omega_0 \, .
	\]
	Then, from \eqref{max_bon} we deduce
	\begin{equation} \label{max_bon2}
		\frac{C^2}{2} \left(1-\frac{1}{z^2}\right) - C_2\left(\eps + \sqrt{\eps \log(1/\delta)}\right) - 4\eps\log(1/\delta) \leq \frac{(1+\delta)^2 n C^2}{4 z^2} \qquad \text{at } \, x_1,
	\end{equation}
	that is,
	\[
		1 - \frac{C_\delta}{z(x_1)^2} \leq \frac{2\sqrt{\eps}}{C^2} \left(C_2\sqrt{\log(1/\delta)} + \sqrt{\eps}(C_2+4\log(1/\delta))\right)
	\]
	where $C_\delta = 1 + (1+\delta)^2n/2$. Since $z(x_0) \leq z(x_1)$, we further have
	\begin{equation} \label{max_bon3}
		1 - \frac{C_\delta}{z(x_0)^2} \leq \frac{2\sqrt{\eps}}{C^2} \left(C_2\sqrt{\log(1/\delta)} + \sqrt{\eps}(C_2+4\log(1/\delta))\right) .
	\end{equation}
	(Note that if $f(u) \geq 0$ then \eqref{max_bon2} holds true, more strongly, with $(1+\delta)^2 n C^2 / 4z^2$ replaced by $0$, so \eqref{max_bon3} still holds with $C_\delta$ replaced by $1$.) We emphasize that \eqref{max_bon3} holds for \emph{any} choice of parameters $C,\eps,\delta>0$ provided that \eqref{deltaC} is satisfied and $z$ attains its global maximum at an interior point of $\Omega_0$, so in view of \eqref{West1} we have
	\[
		z(x_0) \leq \max\left\{ A(C,\eps,\delta), \sup_{\partial\Omega} W \right\} \, .
	\]
	where
	\[
		A(C,\eps,\delta) = \sup\left\{ t > 0 : 1 - \frac{C_\delta}{t^2} \leq \frac{2\sqrt{\eps}}{C^2} \left(C_2\sqrt{\log(1/\delta)} + \sqrt{\eps}(C_2+4\log(1/\delta))\right) \right\} .
	\]
	For a fixed $C>0$, letting first $\eps\to0$ and then $\delta\to0$ we get $A(C,\eps,\delta) \to A = \sqrt{1+n/2}$, hence 
	\[
		W(x_0) e^{-Cu(x_0)} = z(x_0) \leq \max\left\{A,\sup_{\partial\Omega} W\right\} .
	\]
	Passing to the limit as $C\to0$ we obtain
	\[
		W(x_0) \leq \max\left\{A,\sup_{\partial\Omega} W\right\},
	\]
	that is,
	\[
		|\nabla u|(x_0) \leq \max\left\{ \sqrt{A^2-1},\,\sup_{\partial\Omega} |\nabla u| \right\} = \max\left\{ \sqrt{n/2},\,\sup_{\partial\Omega} |\nabla u| \right\}
	\]
	as desired.
\end{proof}

%
%
%
%
%
%
%

\bibliographystyle{plain}


%
%

\end{document}